\newtheorem{theorem}{Theorem}[section]
\newtheorem{corollary}[theorem]{Corollary}
\newtheorem{lemma}[theorem]{Lemma}
\newtheorem{proposition}[theorem]{Proposition}
\numberwithin{equation}{section}\setcounter{secnumdepth}{3}
\newcommand\CC {{\mathbb C}}
\newcommand\EE {{\mathbb E}}
\newcommand\NN {{\mathbb N}}
\newcommand\PP {{\mathbb P}}
\newcommand\QQ {{\mathbb Q}}
\newcommand\RR {{\mathbb R}}
\newcommand\TT {{\mathbb T}}
\newcommand\ZZ {{\mathbb Z}}
\newcommand\gltwor{{\rm GL(2,\RR)}}
\newcommand\sltwor{{\rm SL(2,\RR)}}
\newcommand\sltwoz{{\rm SL(2,\ZZ)}}
\newcommand\slgroup{{\rm SL}}
\newcommand\hol {{\rm Hol}}
\newcommand\area{{\rm Area }}
\newcommand\leb{{\rm Leb }}
\newcommand\re{{\rm Re }}
\newcommand\im{{\rm Im }}
\newcommand\distance{{\rm Dist }}
\newcommand\id{{\rm Id}}
\newcommand\wrecsup{\overline{w}_{\mathrm{ret}}}
\newcommand\whitsup{\overline{w}_{\mathrm{hit}}}
\newcommand\wrecinf{\underline{w}_{\mathrm{ret}}}
\newcommand\whitinf{\underline{w}_{\mathrm{hit}}}
\newcommand\rt{R}
\newcommand\cA{{\mathcal{A}  }}
\newcommand\cB{{\mathcal{B}  }}
\newcommand\cE{{\mathcal{E}  }}
\newcommand\cF{{\mathcal{F}  }}
\newcommand\cG{{\mathcal{G}  }}
\newcommand\cH{{\mathcal{H}  }}
\newcommand\cM{{\mathcal{M}  }}
\newcommand\cN{{\mathcal{N}  }}
\newcommand\cO{{\mathcal{O}  }}
\newcommand\cP{{\mathcal{P}  }}
\newcommand\cQ{{\mathcal{Q}  }}
\newcommand\cR{{\mathcal{R}  }}
\newcommand\cV{{\mathcal{V}  }}
\begin{document}

\title[Long Hitting time]{Long Hitting time for translation flows and L-shaped billiards}

\author[D.H. Kim]{Dong Han Kim}
\address{Department of Mathematics Education, Dongguk University -- Seoul, 30 Pildong-ro 1-gil, Jung-gu, Seoul, 04620 Korea}

\email{kim2010@dongguk.edu}

\author[L. Marchese]{Luca Marchese}
\address{Universit\'e Paris 13, Sorbonne Paris Cit\'e,
LAGA, UMR 7539, 99 Avenue Jean-Baptiste Cl\'ement, 93430 Villetaneuse, France.}

\email{marchese@math.univ-paris13.fr}

\author[S. Marmi]{Stefano Marmi}
\address{Scuola Normale
Superiore and  C.N.R.S. UMI 3483 Laboratorio Fibonacci, Piazza dei Cavalieri 7, 56126 Pisa, Italy}

\email{stefano.marmi@sns.it}

%----------------------------------------------------------------

\maketitle

\begin{abstract}
We consider the flow in direction $\theta$ on a translation surface and we study the asymptotic behavior for $r\to 0$ of the time needed by orbits to hit the $r$-neighborhood of a prescribed point, or more precisely the exponent of the corresponding power law, which is known as \emph{hitting time}. For flat tori the limsup of hitting time is equal to the diophantine type of the direction $\theta$. In higher genus, we consider a generalized geometric notion of diophantine type of a direction $\theta$ and we seek for relations with hitting time. For genus two surfaces with just one conical singularity we prove that the limsup of hitting time is always less or equal to the square of the diophantine type. For any square-tiled surface with the same topology the diophantine type itself is a lower bound, and any value between the two bounds can be realized, moreover this holds also for a larger class of origamis satisfying a specific topological assumption. Finally, for the so-called \emph{Eierlegende Wollmilchsau} origami, the equality between limsup of hitting time and diophantine type subsists. Our results apply to L-shaped billiards.
\end{abstract}

\maketitle

%\tableofcontents

%%%%%%%%%%%%%%%%%%%%%%%%%%%%%
%%%%%%%%%%%%%%%%%%%%%%%%%%%%%%
\section{Introduction}
\label{Intro}
%%%%%%%%%%%%%%%%%%%%%%%%%%%%
%%%%%%%%%%%%%%%%%%%%%%%%%%%

Consider a minimal dynamical system 
$
\phi:\RR\times X\to X,\,(t,p)\mapsto \phi^t(p)
$ 
in continuous time $t\in\RR$ on a metric space $X$, whose balls are the sets 
$
B(p,r):=\{p' \in X \mid  \distance(p,p')<r\}
$ 
with $p\in X$ and $r>0$. 
For any pair of points $p,p'$ in $X$ and any $r>0$ small enough, the \emph{hitting time} of $p$ to the ball $B(p',r)$ of radius $r$ around $p'$ is
$$
\rt(\phi,p,p',r):=\inf\{t>r \mid \distance(\phi^t(p),p')<r\}
$$ 
We are interested to study the scaling law of $\rt(\phi,p,p',r)$ when $r\to 0$, that is to say we consider the two quantities
$$
\whitinf(\phi,p,p'):=
\liminf_{r \to 0^+} \frac{\log \rt(\phi,p,p',r)}{-\log r},
\quad
\whitsup(\phi,p,p'):=
\limsup_{r \to 0^+} \frac{\log \rt(\phi,p,p',r)}{-\log r}.
$$
One can consider also the \emph{return time} $\rt(\phi,p,r):=\rt(\phi,p,p,r)$ of a point $p$ to its $r$-ball $B(p,r)$ and 
and define analogously 
$$ 
\wrecinf(\phi,p):=
\whitinf(\phi,p,p) 
\quad 
\text{ and } 
\quad 
\wrecsup(\phi,p):= 
\whitsup(\phi,p,p). 
$$ 
The same quantities are also obviously defined for dynamical systems in integer time $t\in\ZZ$. 
In particular, in \cite{KimSeo} it is proved that for the irrational rotation by parameter $\alpha$, that is for the map
$
T_\alpha:[0,1)\to[0,1)
$, 
$
x\mapsto x+\alpha \pmod\ZZ
$, 
we have 
\begin{equation}
\label{EquationTheoremDongHanRotations}
\whitinf(T_\alpha,p,p')=1
\quad \text{ and } \quad 
\whitsup(T_\alpha,p,p')=w(\alpha) 
\quad 
\textrm{ for a.e. } \ p,p'\in [0,1),
\end{equation}
where $w(\alpha)\geq 1$ denotes the \emph{diophantine type} of the irrational number $\alpha$, that is the supremum of those $w\geq 1$ such that there exist infinitely many rational numbers $p/q$ such that
\begin{equation}
\label{EquationDiophantineTypeClassical}
\left| \alpha - \frac pq \right| < \frac{1}{q^{w+1}} 
\quad \text{ equivalently } \quad 
\left| q\alpha - p \right| < \frac{1}{q^w}.  
\end{equation}
Moreover it is easy to check (See \cite{ChoeSeo}) that 
\begin{equation}\label{eq:1.3}
\wrecinf(T_\alpha,p) =  \frac{1}{w(\alpha)} 
\quad
\text{ and }
\quad
\wrecsup(T_\alpha,p)= 1
\quad 
\textrm{ for any} 
\quad
\ p\in [0,1).
\end{equation}

In particular, since $w(\alpha)=1$ for almost any $\alpha$, then for any such $\alpha$ and for generic points $p,p'$ one has 
$$
\lim_{r\to 0_+} 
\frac{\log R(T_\alpha,p,r)}{-\log r} = 1
\quad 
\textrm{ and }
\quad
\lim_{r\to 0_+} 
\frac{\log R(T_\alpha,p,p',r)}{-\log r} = 1
$$
In \cite{KimMarmi}, the same result was shown replacing the generic rotation $T_\alpha$ by the generic \emph{interval exchange map} $T$ over any number of intervals. On the other hand, Equation \eqref{EquationTheoremDongHanRotations} also implies the existence of many parameters $\alpha$ with $\whitsup(T_\alpha,p,p')>1$ for generic $p,p'$, indeed according to Jarn\'ik Theorem (see \cite{Jarnik}), for any $\eta$ 
$$
\dim_H \big( \{\alpha\in\RR \mid w(\alpha)=\eta\} \big)=\frac{2}{1+\eta},
$$
where $\dim_H$ denotes the Hausdorff dimension. The result in Equation \eqref{EquationTheoremDongHanRotations} stands in the same form replacing $T_\alpha$ by the linear flow $\phi_\theta$ in direction $\theta$ with slope $\alpha=\tan\theta$ on the standard torus $\TT^2:=\RR^2/\ZZ^2$, that is the flow defined for any $t\in\RR$ by  
$$
\phi_\theta^t(x,y):=(x,y)+t(\sin\theta,\cos\theta)\pmod{\ZZ^2}.
$$
Moreover, via a well known \emph{unfolding procedure}, one can derive the same conclusion for the \emph{billiard flow} in the square $[0,1/2]^2$. The aim of this paper is to study the relation between hitting time and the diophantine exponent in translation flows in higher genus. 
Definitions and statements of main results are given in \S~\ref{SectionDefinitionsResults}. Here, as a motivational result, we state a consequence of our main Theorems for the billiard flow in L-shaped polygons.

\smallskip

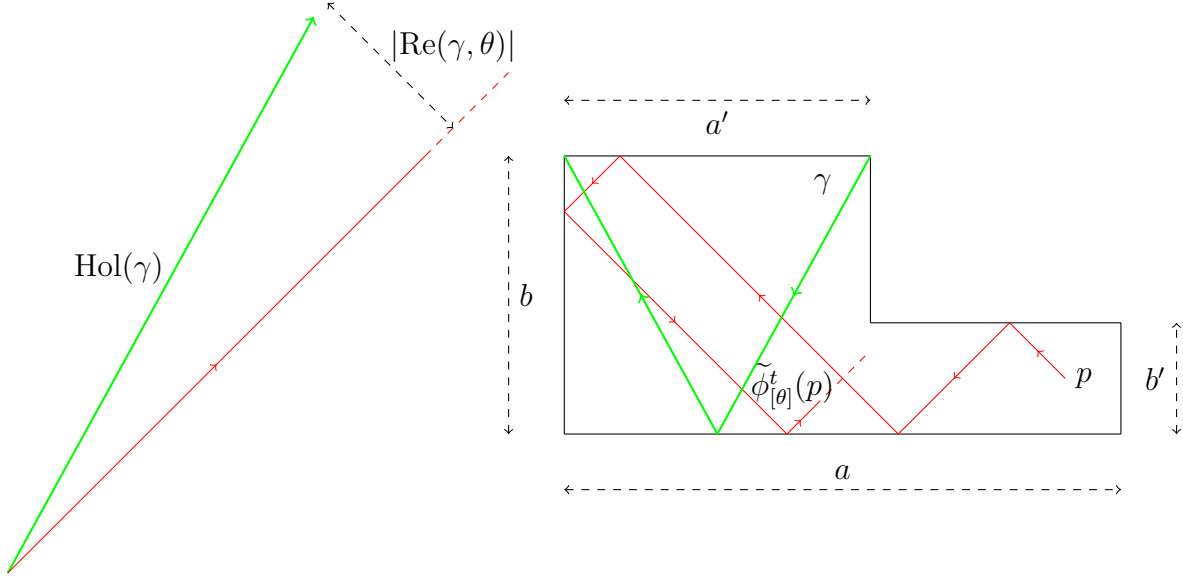
\begin{figure}[ht]
\begin{center} 
\begin{tikzpicture}[scale=0.37]

\tikzset
{->-/.style={decoration={markings,mark=at position .5 with {\arrow{>}}},postaction={decorate}}}

% billiard profile

\draw[-] (0,0) -- (20,0);
\draw[-] (20,0) -- (20,4);
\draw[-] (20,4) -- (11,4);
\draw[-] (11,4) -- (11,10);
\draw[-] (11,10) -- (0,10);
\draw[-] (0,10) -- (0,0);

% billiard trajectory

\node[] at (18,2) [right] {$p$};
\draw[->-,red] (18,2) -- (16,4);
\draw[->-,red] (16,4) -- (12,0);
\draw[->-,red] (12,0) -- (2,10);
\draw[->-,red] (2,10) -- (0,8);
\draw[->-,red] (0,8) -- (8,0);
\draw[->-,red] (8,0) -- (9,1);
\draw[-,red,dashed] (9,1) -- (11,3);
\node[] at (8.2,1.7) {$\widetilde{\phi}^t_{[\theta]}(p)$};

%Generalized diagonal

\draw[->-,thick,green] (11,10) -- (5.5,0);
\draw[->-,thick,green] (5.5,0) -- (0,10);
\node[] at (10,9) [left] {$\gamma$};

\draw[->,thick,green] (-20,-5) -- (-9,15);
\node[] at (-16,5) [left,above] {$\hol(\gamma)$};

\draw[<->,dashed] (-8.5,15.5) -- (-4,11);
\node[] at (-4,13) [right,above] {$|\re(\gamma,\theta)|$};

\draw[->-,red] (-20,-5) -- (-5,10);
\draw[-,red,dashed] (-5,10) -- (-2,13);

%size parameters

\draw[<->,thin,dashed] (0,-2) -- (20,-2);
\node[] at (10,-2) [above] {$a$};

\draw[<->,thin,dashed] (0,12) -- (11,12);
\node[] at (5.5,12) [below] {$a'$};

\draw[<->,thin,dashed] (-2,0) -- (-2,10);
\node[] at (-2,5) [right] {$b$};

\draw[<->,thin,dashed] (22,0) -- (22,4);
\node[] at (22,2) [left] {$b'$};

\end{tikzpicture}

\end{center}
\caption{In red, a billiard trajectory in a L-shaped billiard $L(a,b,a',b')$. The green path $\gamma$ is a generalized diagonal, the vector on the left of the figure its planar development $\hol(\gamma)$. The deviation of the direction $\theta$ from $\gamma$ is $|\re(\gamma,\theta)|$.}
\label{FigureLShapedBilliard}
\end{figure}

Fix four real numbers $0<a'<a$ and $0<b'<b$ and let $L=L(a,a',b,b')\subset\RR^2$ be the polygon whose vertices, listed in counterclockwise order, are $(0,0)$, $(a,0)$, $(a,b')$, $(a',b')$, $(a',b)$, $(0,b)$. 
An example of such polygon appears in Figure~\ref{FigureLShapedBilliard}. 
Let 
$
\widetilde{\phi}:L\times S^1\to L\times S^1
$ 
be the billiard flow. Reflections at sides of $L$ are affine maps with linear part given by the linear reflections $s_h:\RR^2\to\RR^2$ and $s_v:\RR^2\to\RR^2$ defined respectively by 
$$
s_h(x,y):=(-x,y)
\quad
\textrm{ and }
\quad
s_v(x,y):=(x,-y).
$$
Let $D \simeq (\ZZ/2\ZZ)^2$ be the group generated by $s_h$ and $s_v$ and consider its action on $S^1$. 
Any direction $\theta$ has an orbit $[\theta]$ of four elements, thus the phase space $L\times S^1$ decomposes into invariant subspaces $L\times[\theta]$ where the directional billiard flow 
$
\widetilde{\phi}_{[\theta]}
$ 
is defined, modulo the action of $D$ on the second factor. 
A \emph{generalized diagonal} $\gamma$ is a finite segment of billiard trajectory connecting two vertices of $L$ and without any other vertex in its interior. A direction $\theta$ is said rational if 
$
\widetilde{\phi}_{[\theta]}
$ 
admits a generalized diagonal, the set of rational directions being of course countable. We give a preliminary version of the notion of \emph{diophantine type} of a non rational direction $\theta$ in terms of the deviation of finite trajectories of 
$
\widetilde{\phi}_{[\theta]}
$ 
from generalized diagonals.

Let $\gamma$ be a generalized diagonal, parametrized as a continuous piecewise smooth path 
$
\gamma:[0,T]\to L
$ 
with unitary speed, that is $|d\gamma(t)/dt|=1$ when it is defined. Consider the sequence of instants  $0=t_0<t_1<\dots<t_N =T$ such that for any $i=1,\dots,N$ the restricted path 
$
\gamma:[t_{i-1},t_i]\to L
$ 
is a straight segment with 
$
\gamma(t_{i-1})\in \partial L
$, 
$
\gamma(t_{i})\in \partial L
$ 
and $\gamma(t)$ in the interior of $L$ for $t_{i-1}<t<t_i$. There exist an unitary vector 
$
v\in\RR_+\times\RR_+
$ 
in the first quadrant, depending only on $\gamma$, such that for any $i=1,\dots,N$ there exists an unique $s_i\in D$ with  
$$
s_i\left(\frac{d}{dt}\gamma(t)\right)=v
\quad
\textrm{ for any }
\quad
t_{i-1}<t<t_i.
$$
The \emph{planar development} of $\gamma$ is the vector 
$
\hol(\gamma):=T\cdot v\in\RR_+\times\RR_+
$. 
Consider a non rational direction $\theta$ on the L-shaped polygon $L$, and modulo replacing $\theta$ by an element in its orbit $[\theta]$, assume that $0\leq\theta\leq\pi/2$, where $\theta=0$ corresponds to the vertical direction. The diophantine type $w_L(\theta)$ of $\theta$ on the L-shaped polygon $L$ is the supremum of those $w\geq1$ such that there exist infinitely many generalized diagonal $\gamma$ with
$$
|\re(\theta,\gamma)|<\frac{1}{|\im(\gamma,\theta)|^{w}},
$$
where 
$
\re(\gamma,\theta):=\langle \hol(\gamma),(\cos\theta,-\sin\theta)\rangle
$ 
and 
$
\im(\gamma,\theta):=\langle \hol(\gamma),(\sin\theta,\cos\theta)\rangle
$.

\smallskip
  
The notion of generalized diagonal can be given also for the billiard in the square $[0,1/2]^2$, which has the same reflection group $D$ as any L-shaped polygon $L(a,a',b,b')$, thus the diophantine type of a direction $\theta$ can be defined in the same way. 
It is an exercise to check that such alternative notion of diophantine type in fact coincides with the quantity $w(\alpha)$ defined by Equation \eqref{EquationDiophantineTypeClassical}, modulo the change of variable $\alpha=\tan\theta$. 
Moreover the two notions also coincide for a class of L-shaped billiards $L=L(a,a',b,b')$. More precisely, according to Equation \eqref{EquationEquivalenceTypes} and to the discussion in \S~\ref{SectionBackToBilliards}, we have 
$
w_L(\theta)=w(\tan\theta)
$ 
whenever $a,a',b,b'$ are rationally dependent. The next Theorem is a direct consequence (derived in 
\S~\ref{SectionBackToBilliards}) of our main results, namely Theorem \ref{TheoremUpperBoundHittingTime} and Theorem \ref{TheoremHittingSpectrumOrigami}, stated in \S~\ref{SectionTheoremsHittingTime} below. It is practical to introduce the function 
$
f_\eta:[1,2]\to \RR_+
$ 
defined for any $s\in[1,2]$ by 
\begin{equation}
\label{EquationLowerBoundDimension}
f_\eta(s):=
\left\{
\begin{array}{ll}
\displaystyle{\frac{1}{\eta+1}}
&
\quad
\textrm{ if }
\quad
s=1
\\
\displaystyle{\frac{\eta^{s-1}-1}{\eta^s-1}}
&
\quad
\textrm{ if }
\quad
1<s\leq 2.
\end{array}
\right.
\end{equation}

\begin{theorem}
\label{TheoremLShapedBilliard}
Let $L=L(a,a',b,b')$ be the L-shaped billiard with parameter $a,a',b,b'$ and consider any non rational direction $\theta$ on $L$. 
\begin{enumerate}
\item
For any pair of points $p,p'$ in $L\times[\theta]$ we have 
$$
\whitsup(\widetilde{\phi}_{[\theta]},p,p')\leq w_L(\theta)^2.
$$
\end{enumerate}
Assume now that $a,a',b,b'$ are rationally dependent, so that 
$
w_L(\theta)=w(\tan\theta)
$ 
for any $\theta$. 
\begin{enumerate}
\setcounter{enumi}{1}
\item
For almost any $p,p'$ in $L\times[\theta]$ we have 
$$
\whitsup(\widetilde{\phi}_{[\theta]},p,p')\geq w(\tan\theta).
$$
\item
Finally, for any $\eta>1$ and any $s$ with $1\leq s\leq 2$ there exists a set of directions $\EE(L,\eta,s)$ with
$$
\dim_H\big(\EE(L,\eta,s)\big)\geq f_\eta(s)
$$
such that for any $\theta\in\EE(L,\eta,s)$ we have $w(\tan\theta)=\eta$ and moreover for almost any $p,p'$ in $L\times[\theta]$ we have
$$
\whitsup(\widetilde{\phi}_{[\theta]},p,p')=w(\tan\theta)^s=\eta^s.
$$
\end{enumerate}
\end{theorem}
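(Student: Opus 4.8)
The plan is to deduce Theorem~\ref{TheoremLShapedBilliard} from our main results, Theorem~\ref{TheoremUpperBoundHittingTime} and Theorem~\ref{TheoremHittingSpectrumOrigami}, via the classical unfolding procedure; the details are carried out in \S\ref{SectionBackToBilliards}. First I would set up the unfolding dictionary. Every angle of $L=L(a,a',b,b')$ is a multiple of $\pi/2$, so the linear parts of the side reflections generate exactly the group $D\simeq(\ZZ/2\ZZ)^2$; gluing the four copies $\{s\cdot L:s\in D\}$ along the edges paired by reflections produces a translation surface $X=X(a,a',b,b')$. Since the interior angles of $L$ sum to $4\pi$, the four copies carry a total angle of $16\pi$, distributed as one cone point of angle $6\pi$ --- the unfolded reflex vertex of angle $3\pi/2$ --- together with regular points of angle $2\pi$; hence $\chi(X)=-2$ and $X$ lies in $\cH(2)$, i.e.\ has genus two and a single conical singularity. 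By construction the directional billiard flow $\widetilde\phi_{[\theta]}$ on $L\times[\theta]$ is isomorphic, modulo the $D$-action on the $S^1$-factor, to the translation flow $\phi_\theta$ on $X$, the isomorphism being a finite-to-one local isometry; generalized diagonals of $L$ in direction $[\theta]$ correspond bijectively to saddle connections of $X$ in direction $\theta$, with $\hol(\gamma)$ the holonomy vector of the corresponding saddle connection and $\re(\gamma,\theta)$, $\im(\gamma,\theta)$ its components along and transverse to $\theta$. Consequently $w_L(\theta)$ coincides with the geometric diophantine type $w_X(\theta)$ from \S\ref{SectionDefinitionsResults}, and non-rational directions on $L$ match non-rational directions on $X$.

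Granting this dictionary, item~(1) is immediate: $X\in\cH(2)$ has a single singularity, so Theorem~\ref{TheoremUpperBoundHittingTime} gives $\whitsup(\phi_\theta,q,q')\le w_X(\theta)^2$ for \emph{every} pair $q,q'\in X$; pulling this inequality back through the unfolding isomorphism and using $w_X(\theta)=w_L(\theta)$ yields the bound for all $p,p'\in L\times[\theta]$.

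For items~(2) and~(3) I would invoke the hypothesis that $a,a',b,b'$ are rationally dependent. As explained in \S\ref{SectionBackToBilliards}, this is precisely the condition under which, after rescaling, one may assume $a,a',b,b'\in\ZZ$: then $X$ is tiled by unit squares, hence a square-tiled surface (an origami) in $\cH(2)$, in particular a torus cover branched over one point whose saddle connections project to primitive integer vectors --- the geometric content of the identity $w_L(\theta)=w(\tan\theta)$ recorded in Equation~\eqref{EquationEquivalenceTypes}. Being an origami in $\cH(2)$, $X$ satisfies the hypotheses of Theorem~\ref{TheoremHittingSpectrumOrigami}, in particular the topological assumption required there, having a single conical point. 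That theorem then provides, on the one hand, the lower bound $\whitsup(\phi_\theta,q,q')\ge w_X(\theta)$ for almost every $q,q'\in X$, which transported to $L$ gives item~(2); and, on the other hand, for every $\eta>1$ and every $s\in[1,2]$, a set of directions of Hausdorff dimension at least $f_\eta(s)$ on which $w_X(\theta)=\eta$ and $\whitsup(\phi_\theta,q,q')=\eta^s$ for a.e.\ $q,q'$. Transporting this set of directions to $L$ through the above identification (a bijection, bi-Lipschitz on $(0,\pi/2)$, hence dimension-preserving) and using that the unfolding isomorphism pushes Lebesgue-a.e.\ statements on the phase space both ways, one obtains $\EE(L,\eta,s)$ and item~(3).

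Once the two main theorems are granted, the remaining work is bookkeeping, and the point requiring the most care is the unfolding combinatorics: one must check, uniformly in the admissible parameters $(a,a',b,b')$, that $X$ genuinely lies in $\cH(2)$ with a single cone point --- so that Theorem~\ref{TheoremUpperBoundHittingTime} and the topological hypothesis of Theorem~\ref{TheoremHittingSpectrumOrigami} apply --- and, in the rationally dependent case, that the rescaled surface is honestly square-tiled and that the correspondence between generalized diagonals and primitive vectors of $\TT^2$ is exact, so that $w_L(\theta)=w(\tan\theta)$ holds with no loss. These verifications are elementary and local, and are carried out in \S\ref{SectionBackToBilliards}.
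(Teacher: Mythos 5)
Your overall strategy matches the paper's: unfold $L(a,a',b,b')$ to a translation surface $X\in\cH(2)$ and pull the bounds on $\whitsup(\phi_\theta,\cdot,\cdot)$ back through the commuting diagram \eqref{EquationBilliardTranslationFlows}. The unfolding bookkeeping you sketch (angle count, identification of generalized diagonals with saddle connections, $w_L(\theta)=w_X(\theta)$, rational dependence $\Rightarrow$ square-tiled) is the paper's, and item~(1) is correctly reduced to Theorem~\ref{TheoremUpperBoundHittingTime}.

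Two points need correction. First, you derive item~(2) from Theorem~\ref{TheoremHittingSpectrumOrigami}, but that theorem only concludes $\whitsup(\phi_\theta,p,p')=\eta^s$ for directions $\theta$ lying in the special Cantor sets $\EE(X,\eta,s)$; it says nothing about an arbitrary non-rational direction. Item~(2) is the claim that for \emph{every} non-rational $\theta$ the lower bound $\whitsup(\phi_\theta,p,p')\ge w(\tan\theta)$ holds a.e.\ $(p,p')$. In the paper this comes from Proposition~\ref{PropositionLimsupHittingUniquelyErgodic(cylinders)} together with the identity $w^{cyl}(X,\theta)=w(X,\theta)$ on Veech surfaces, recorded as Equation~\eqref{EquationLimSupHittingVeech}, not from Theorem~\ref{TheoremHittingSpectrumOrigami}.

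Second, you state that the topological hypothesis of Theorem~\ref{TheoremHittingSpectrumOrigami} is ``having a single conical point.'' That is not the hypothesis. The theorem requires $X$ to admit both a splitting direction $\theta_{split}$ and a one-cylinder direction $\theta_{one\text{-}cyl}$, two nontrivial conditions on the $\sltwoz$-orbit of $X$. That every origami in $\cH(2)$ satisfies them is a separate fact: the one-cylinder direction is provided by McMullen's Corollary~A.2, and the splitting direction by Lemma~\ref{LemmaExistenceSplittingPairs}, whose proof (Appendix~\ref{SectionProofExistenceSplittingPairsH(2)}) goes through the Hubert--Leli\`evre/McMullen classification of $\sltwoz$-orbits in $\cH(2)$ and an explicit case check of separatrix diagrams. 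Collapsing this to ``single cone point'' hides a genuine step in the argument; as written, your proof would not extend to (or even be correctly posed for) other strata where having one singularity does not guarantee these directions exist.
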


When the Lebesgue measure is ergodic for $\widetilde{\phi}_{[\theta]}$, we also show that 
$
\whitinf(\widetilde{\phi}_{[\theta]},p,p')=1
$ 
for Lebesgue almost any $p,p'$ in $L\times[\theta]$. For recurrence times our results simply extend those obtained for rotations. More precisely, for any L-shaped billiard $L$ and for almost any $p\in L\times[\theta]$ we have $\wrecsup(\widetilde{\phi}_{[\theta]},p)=1$. Moreover, when $a,a',b,b'$ are rationally dependent we have 
$
\wrecinf(\widetilde{\phi}_{[\theta]},p)=1/w(\tan\theta)
$ 
for almost any $p\in L\times[\theta]$. For more details see \S~\ref{SectionBackToBilliards}.

\subsection*{Acknowledgements} 

The authors are grateful to J. Chaika, V. Delecroix, P. Hubert and S. Leli\`evre. 
This research has been supported by the following institutions: CNRS, FSMP, National Research Foundation of Korea(NRF-2015R1A2A2A01007090), Scuola Normale Superiore, UnicreditBank.

%%%%%%%%%%%%%%%%%%%%%%%%%%%%%
%%%%%%%%%%%%%%%%%%%%%%%%%%%%%
\section{Definitions and statement of main results}
\label{SectionDefinitionsResults}
%%%%%%%%%%%%%%%%%%%%%%%%%%%%%
%%%%%%%%%%%%%%%%%%%%%%%%%%%%%

\subsection{Translation surfaces}

Let $P\subset\RR^2$ be a polygon in the plane. The polygon $P$ is not necessarily connected, that is we allow $P$ to be the disjoint union of finitely many connected polygons $P_1,\dots,P_l$. We assume that $\partial P$ is union of $2d\geq 4$ segments which come in pairs and are denoted  
$
(\zeta_1,\zeta'_1),\dots,(\zeta_d,\zeta'_d)
$, 
and that there exist vectors $z_1,\dots,z_d$ in $\RR^2$ such that for any $i=1,\dots,d$ the boundary segments $\zeta_i$ and $\zeta'_i$ have the same direction and length of $z_i$, and the opposite orientation induced by the interior of $P$ (that is any $\zeta_i$ touches the interior of $P$ from the opposite side as $\zeta_i'$). A \emph{translation surface} is the quotient space $X=P/\sim$ obtained identifying for any $i=1,\dots,d$ the sides $\zeta_i$ et $\zeta'_i$ by a translation. We assume that the identification gives a connected quotient space $X$. If $P$ is a parallelogram then $X=P/\sim$ is a flat torus. In general a translation surface is a compact surface of genus $g\geq1$, with a metric which is flat outside of a finite set $\Sigma$ of points $p_1,\dots,p_r$ of $X$, where the metric has a conical singularity with angle $2(k_j+1)\pi$ and $k_j\in\NN$. Any $p_j$ corresponds to a subset of the vertices of $P$, all identified to the same point in $X$ by the equivalence relation $\sim$ on $\partial P$. We have 
$
k_1+\dots+k_r=2g-2
$.
For a general overview on the subject we recommend the surveys \cite{ForniMatheus} and \cite{Zorich}.

\subsubsection{Dynamics in moduli spaces}

A \emph{stratum} $\cH=\cH(k_1,\dots,k_r)$ is the set of all translation surfaces $X$ with the same order of conical singularities $k_1,\dots,k_r$. Any stratum is an affine \emph{orbifold}, the affine coordinates around some $X\in\cH$ being the vectors $z_1,\dots,z_d$ defined above, possibly modulo some linear equations with coefficients in $\QQ$. Consider any $G\in\gltwor$ and any translation surface $X=P/\sim$, represented via the polygon $P$. We define $G\cdot X$ as the quotient space $G\cdot P/\sim$, where $G\cdot P$ is the affine image of $P$ under the action of $G$ on $\RR^2$ and where the identifications in $\partial (G\cdot P)$ have the same combinatorics of those between the sides of $P$. This gives an action of $\gltwor$ on any stratum $\cH$. We will consider the subgroup action of rotations and diagonal elements, thus for $t\in\RR$ and $0\leq \theta<2\pi$ we set
$$
g_t:=
\begin{pmatrix}
e^t & 0 \\
0   & e^{-t}
\end{pmatrix}
\quad
\textrm{ and }
\quad
r_\theta:=
\begin{pmatrix}
\cos\theta & -\sin\theta \\
\sin\theta   & \cos\theta
\end{pmatrix}.
$$

According to the celebrated results of Eskin, Mirzakani \cite{EskinMirzakani} and Eskin, Mirzakaniand Mohammadi \cite{EskinMirzakaniMohammadi}, any subset $\cM$ on $\cH$ which is closed and invariant under $\gltwor$, is a sub-orbifold defined by linear equations in the coordinates $z_1,\dots,z_d$. Particularly simple closed invariant sets are closed orbits. It is known that the $\gltwor$-orbit of a surface $X\in\cH$ is closed in $\cH$ if and only if the stabiliser $\slgroup(X)$ of $X$ under the action of $\gltwor$, called the \emph{Veech group} of $X$, is a lattice in $\sltwor$ (see \S~5 of \cite{SmillieWeiss} for a proof). Such a surface is called \emph{Veech surface} and its orbit is an isometric image of $\gltwor/\slgroup(X)$ embedded in $\cH$, and is locally defined by a system of linear equations in the coordinates $z_1,\dots,z_d$, with real rank four. In general the Veech group of any translation surface $X$ is a discrete and non co-compact subgroup of $\sltwor$, and it is trivial for generic $X$.

\subsubsection{Saddle connections, cylinders and planar developments}

Let $|\cdot|$ denote the Euclidian norm on $\RR^2$, and recall that the flat metric of a translation surface $X$ is locally isometric to the Euclidian metric of $\RR^2$. A \emph{saddle connection} of a translation surface $X$ is a segment $\gamma$ of a geodesic for the flat metric of $X$ connecting two conical singularities $p_i$ and $p_j$ and not containing other conical singularities in its interior. We consider also \emph{closed geodesics} $\sigma$ of $X$. For any such $\sigma$ there exists a family of closed geodesics which are parallel to $\sigma$ with the same length and the same orientation. A \emph{cylinder} for $X$ is a maximal connected open set $C_\sigma$ foliated by such a family of parallel closed geodesics. By maximality, the boundary of a cylinder $C_\sigma$ around a closed geodesic $\sigma$ is union of saddle connections parallel to $\sigma$. The \emph{transversal width} $W(C_\sigma)$ of $C_\sigma$ is the length of a segment orthogonal to $\sigma$ which connects the two components of the boundary $\partial C_\sigma$, so that in particular 
$
\area(C_\sigma)=W(C_\sigma)\cdot |\sigma|
$.

Let $\gamma\subset X$ be a finite segment of a geodesic for the flat metric of $X$, for example either a saddle connection or a closed geodesic, and abusing the notation denote with the same symbol also a smooth parametrization of it $\gamma:[0,T]\to X$, $t\mapsto\gamma(t)$. There exists a vector $v\in\RR^2$ with $|v|=1$ such that $d\gamma(t)/dt=v$ for any $0<t<T$, and the segment $\gamma$ has a planar development in the plane, denoted by $\hol(\gamma,X)\in\RR^2$ and defined by 
$$
\hol(\gamma,X):=T\cdot v.
$$ 
Any such $\gamma$ is a geodesic segment also on the surface $G\cdot X$ for any $G\in\gltwor$, and we denote by $\hol(\gamma,G\cdot X)$ its holonomy with respect to the surface $G\cdot X$. We have
\begin{equation}
\label{EquationCovarianceHolonomy}
\hol(\gamma,G\cdot X)=G\big(\hol(\gamma,X)\big).
\end{equation}
The set $\hol(X)$ of \emph{relative periods} of $X$ is the set of vectors 
$
\hol(\gamma,X)\in\RR^2
$, 
where $\gamma$ is a saddle connection for $X$.

\subsubsection{Phase space dynamics}
\label{SectionPhaseSpaceDynamics}

Fix a translation surface $X$ and a direction $\theta$. One can define a constant vector field on $X\setminus\Sigma$, whose value at any point is equal to to the unitary vector 
$$
e_\theta:=(\sin\theta,\cos\theta),
$$
then consider the integral flow of such field, that is denoted by $\phi_\theta$. Orbits of $\phi_\theta$ are parallel lines in direction $\theta$ which wind on $X$. They are defined for any $t\in\RR$, outside the set of $2g-2+r$ leaves starting or ending at singular points $p_1,\dots,p_r$, which we call $(X,\theta)$-\emph{singular leaves}, or simply \emph{singular leaves}, when there is no ambiguity on the surface $X$ and the direction $\theta$. A direction $\theta$ on a translation surface $X$ is \emph{completely periodic} if every $(X,\theta)$-singular leaf extends to a saddle connection. In this case, the set of saddle connections in direction $\theta$ separate the surface into a finite number of cylinders, any of which is foliated by periodic orbits of the linear flow $\phi_\theta$. A direction $\theta$ is a \emph{Keane direction} for the translation surface $X$ if there is no saddle connection $\gamma$ in direction $\theta$. Obviously, all but countably many directions are Keane. Moreover, if $\theta$ is a Keane direction on $X$, then the flow $\phi_\theta$ is minimal, that is any infinite orbit is dense, both in the past and in the future (for a proof see Corollary 5.4 in \cite{Yoccoz}). According to Veech (see \cite{Veech}), for Veech surfaces there is a sharp dynamical dichotomy between Keane directions and directions of saddle connections. More precisely, on a Veech surface $X$ the flow $\phi_\theta$ is uniquely ergodic whenever $\theta$ is Keane, otherwise $\theta$ is a completely periodic direction, moreover there are two constants $a=a(X)>0$ and $C=C(X)>0$ depending only on $X$ such that for any saddle connection $\gamma$ and closed geodesic $\sigma$ in direction $\theta$ we have 
$
C^{-1}|\sigma|\leq |\gamma|\leq C|\sigma|
$ 
and $\area(C_\sigma)>a$, where $C_\sigma$ is the cylinder around $\sigma$.

\subsubsection{Origamis}
\label{SectionIntroductionOrigamis}

\emph{Origamis}, which are also know as \emph{square-tiled surfaces}, form a special class of translation surfaces. An origami is a translation surface $X$ tiled by copies of the square $[0,1]^2$. It is a direct consequence of definitions that $X$ is an origami if and only if $\hol(X)\subset\ZZ^2$ and the last condition is also equivalent to the existence of a ramified covering $\rho:X\to\TT^2$ of the standard torus such that the following conditions are satisfied:
\begin{enumerate}
\item
The covering is ramified only over the origin $[0]\in\TT^2$, where $[0]$ denotes the coset of $0$ in $\RR^2/\ZZ^2$. 
\item
Local inverses of $\rho$, that is maps $\varphi:U\to X$ defined over simply connected open sets 
$
U\subset\TT^2\setminus\{[0]\}
$ 
such that $\rho\circ\varphi=\id_U$, are all translations.
\end{enumerate}
A third equivalence says that $X$ is an origami if and only if its Veech group $\slgroup(X)$ shares a common subgroup of finite index with $\sltwoz$ (see \cite{GutkinJudge}). In particular, origamis are all Veech surfaces.

\subsection{Statement of main results}
\label{SectionTheoremsHittingTime}

It can be seen that the set $\hol(X)\subset\RR^2$ of relative periods of a translation surface $X$ is a discrete subset, whose projectivization is dense in $\PP\RR^2$, thus it is meaningful to consider diophantine approximations of a given direction $\theta$ by directions of vectors in $\hol(X)$. Given a saddle connection $\gamma$ and a direction $\theta$ on the surface $X$, the components of $\gamma$ along the direction $\theta$ are the two real numbers 
$
\re(\gamma,\theta)\in\RR
$ 
and 
$
\im(\gamma,\theta)\in\RR
$ 
such that 
$$
\hol(\gamma,r_{-\theta}\cdot X)=
\big(
\re(\gamma,\theta),\im(\gamma,\theta)
\big).
$$
The \emph{diophantine type} $w(X,\theta)\geq1$ of a Keane direction $\theta$ on the surface $X$ is the supremum of those $w\geq1$ such that there exists infinitely many saddle connections $\gamma$ for the surface $X$ with 
\begin{equation}
\label{EquationDiophantineTypeSaddleConnections}
|\re(\gamma,\theta)|<\frac{1}{|\im(\gamma,\theta)|^w}.
\end{equation}

As in the classical case, which corresponds to Equation \eqref{EquationDiophantineTypeClassical}, we have $w(X,\theta)\geq1$ for any $X$ and $\theta$, indeed for $w=1$ Equation \eqref{EquationDiophantineTypeSaddleConnections} has always infinitely many solutions. This corresponds to a version of Dirichlet's theorem for translation surfaces, which was known to many authors and a proof of which is given in Proposition 4.1 in \cite{MarcheseTrevinoWeil}. Moreover, the analogy with classical diophantine conditions extends also to Jarn\'ik Theorem, indeed in Theorem 6.1 \cite{MarcheseTrevinoWeil} it is proved that
$$
\dim_H 
\Big( 
\Big\{\theta \in \Big[-\frac{\pi}{2}, \frac{\pi}{2} \Big) \mid w^{cyl}(X,\theta)=\eta \Big\} 
\Big)=
\frac{2}{1+\eta}.
$$

Let $\theta$ be a Keane direction on the translation surface $X$, so that the flow $\phi_\theta$ is minimal and the functions 
$
\whitsup(\phi_\theta,\cdot,\cdot)
$, 
$
\whitinf(\phi_\theta,\cdot,\cdot)
$, 
$
\wrecsup(\phi_\theta,\cdot)
$ 
and 
$
\wrecinf(\phi_\theta,\cdot)
$ 
are defined outside of singular leaves. According to Lemma \ref{LemmaInvarianceRecurrenceAndHitting} of this paper, these functions are also invariant under $\phi_\theta$. A first relation with $w(X,\theta)$ can be derived by an easy geometric argument (see Lemma 7.2 in \cite{MarcheseTrevinoWeil}) which gives that for any $p$ not on any $(X,\theta)$-singular leaf we have
\begin{equation}
\label{EquationLowerBoundRecurrenceTime}
\wrecinf(\phi_\theta,p)\geq \frac{1}{w(X,\theta)}.
\end{equation}
On the other hand, according to Proposition \ref{PropositionRecurrenceKeaneDirection} of this paper, we prove that for Lebesgue almost any point $p\in X$ we have
$$
\wrecsup(\phi_\theta,p)=1.
$$
Under the extra assumption that the Lebesgue measure is ergodic for $\phi_\theta$, Equation \eqref{EquationUpperBoundRecurrenceCylinders} below establishes an uniform upper bound for $\wrecsup(\phi_\theta,p)$ for the generic $p$. Moreover according to Proposition \ref{PropositionLiminfHittingUniquelyErgodic} of this paper, for Lebesgue almost any $p$ and $p'$ we have also
$$
\whitinf(\phi_\theta,p,p')=1.
$$
In general, if $\mu$ is a non-ergodic invariant probability measure along a Keane direction $\theta$ on $X$, the function $\whitinf(\phi_\theta,\cdot,\cdot)$ can exhibit a different behavior for the $\mu\times\mu$ generic pair $(p,p')$, as it can be deduced by results in \cite{BoshernitzanChaika}. Our first main result establishes an upper bound for the hitting time on translation surfaces in $\cH(2)$.

\begin{theorem}
\label{TheoremUpperBoundHittingTime}
Let $X$ be a surface in $\cH(2)$ and let $\theta$ be a Keane direction on $X$. 
Then for any pair of points $p,p'$ not on any singular leaf we have
$$
\whitsup(\phi_\theta,p,p')\leq w(X,\theta)^2.
$$
\end{theorem}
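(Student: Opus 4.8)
\emph{Proof strategy.}
Rotate the surface so that $\theta$ becomes the vertical direction: then $\phi_\theta$ is the vertical flow on $r_{-\theta}\cdot X$, and for a saddle connection $\gamma$ the numbers $\re(\gamma,\theta),\im(\gamma,\theta)$ are simply its horizontal and vertical holonomy. By Lemma \ref{LemmaInvarianceRecurrenceAndHitting} the function $\whitsup(\phi_\theta,\cdot,\cdot)$ is flow–invariant, so $p,p'$ may be moved along their orbits. Writing $w:=w(X,\theta)$, it suffices to prove that for every $\varepsilon>0$ there is $r_0>0$ with $\rt(\phi_\theta,p,p',r)\le r^{-(w^2+\varepsilon)}$ for all $0<r<r_0$; since this is to hold for \emph{all} $p,p'$, the argument is purely geometric (no ergodic theorem is needed here). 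The engine is the Teichm\"uller geodesic $t\mapsto g_t\cdot(r_{-\theta}\cdot X)$, together with the bijection between its excursions into the non-compact part of $\cH(2)$ and the saddle connections of $(X,\theta)$ with small deviation: an excursion reaching, at geodesic time $u$, a surface whose shortest saddle connection has length $\asymp e^{-u}$ corresponds to a saddle connection $\gamma$ of $(X,\theta)$ with $|\im(\gamma,\theta)|\asymp e^{u}$ and $|\re(\gamma,\theta)|\asymp e^{-u}$, so that the \emph{depth} of the excursions is exactly what $w$ measures. Concretely (via the translation–surface Dirichlet and Jarn\'ik theorems quoted above), one gets a sequence of best approximating saddle connections $\gamma_n$ with $h_n:=|\im(\gamma_n,\theta)|\uparrow\infty$, $\epsilon_n:=|\re(\gamma_n,\theta)|\downarrow 0$, $\epsilon_n\asymp h_{n+1}^{-1}$, and $w=\limsup_n\frac{\log(1/\epsilon_n)}{\log h_n}$; every small $r$ lies in a unique window $h_{n+1}^{-1}\lesssim r\lesssim h_n^{-1}$, and it is the geometry of $\gamma_n$ that we exploit at scale $r$.

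At each geodesic time $t_n$ where the geodesic is closest to the thick part just before the $n$-th deep excursion, I would read off the structure of $Y_n:=g_{t_n}\cdot(r_{-\theta}\cdot X)$. Here the hypothesis $X\in\cH(2)$ enters decisively: a surface in $\cH(2)$ close to the boundary has a distinguished almost–periodic direction near the vertical whose cylinder decomposition consists of \emph{at most two} cylinders — a long thin one (circumference $\asymp h_n$, height $\asymp h_n^{-1}$) and possibly one of bounded geometry — glued along three saddle connections, since the single conical point has angle $6\pi$. Transporting this back through $g_{t_n}^{-1}r_{\theta}$, the flow $\phi_\theta$ on $X$ is realised, up to time changes bounded independently of $n$, as a suspension over the first–return map $T_n$ to a transversal of this decomposition, where $T_n$ is an exchange of at most three intervals that is an $O(\epsilon_n)$–small perturbation of a periodic exchange, hence essentially a rotation with a controlled irrational part.

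The core estimate is then a two–scale bound, the guiding heuristic being ``a rotation inside a rotation''. To bring the orbit of $p$ within $r$ of $p'$ one must first drive the $T_n$–orbit of $p$ into the transversal sub-interval below $p'$, and then let the flow wind up the long thin cylinder to the correct transverse level. The first task is a hitting–time problem for a near–rotation; the second — because the flow enters and leaves the thin cylinder through its narrow ends and must wind around it many times to reach a prescribed transverse level — is again a hitting–time problem for an auxiliary rotation, at a scale magnified by the first stage. Each of the two rotations is a return map of $\phi_\theta$ to a cross-section of $X$, so each rotation number is a quotient of two relative periods of $X$; I claim each of their Diophantine types is $\le w$, and granting this, the classical rotation hitting–time estimate recalled in \eqref{EquationTheoremDongHanRotations} bounds each stage by (its relative scale)$^{-w-o(1)}$. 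Composing the two stages — and using that the $\cH(2)$ combinatorics permits exactly \emph{two} nested stages, unlike the torus which has only one — produces a total of (the relevant scale)$^{-w^2-o(1)}$; unwinding the normalisations and using $h_{n+1}\le h_n^{\,w+o(1)}$ together with $r\gtrsim h_{n+1}^{-1}$ gives $\rt(\phi_\theta,p,p',r)\le r^{-(w^2+\varepsilon)}$.

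The main obstacle is exactly this compounding step, and it has two delicate parts: (a) showing that the worst case for hitting a \emph{prescribed} point is the \emph{simultaneous} near–resonance of the two rotations, so that their contributions genuinely multiply rather than interfere; and (b) the uniform–in–$n$ control of the Diophantine type of the auxiliary rotation inside the thin cylinder by $w=w(X,\theta)$ — the subtle point being that the good approximation $\gamma_n$ spent to produce the thin cylinder must not be re-used to make the inner rotation artificially non-Diophantine. Secondary technical points are the bounded time changes between the flow parametrisation and the suspension parametrisation, the behaviour of orbits grazing the conical singularity while passing between the two cylinders, and the dependence on the positions of $p,p'$ inside the suspension towers, all of which affect the estimate only by multiplicative constants. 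The reason the exponent is exactly $2$, and not larger, is structural: in $\cH(2)$ a nearly–periodic direction yields at most two cylinders, so the nested hitting–time problem has depth two, whereas for a flat torus it has depth one — which is why there $\whitsup=w$.
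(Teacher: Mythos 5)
Your proposal correctly identifies the structural source of the exponent $2$, but as written it contains a genuine gap and follows a different route from the paper. The two steps you yourself flag at the end --- (a) that the two near-resonances genuinely multiply rather than interfere, and (b) that the ``inner'' rotation arising in the thin cylinder has Diophantine type $\le w(X,\theta)$ --- are the heart of the matter, and they are left as claims without a visible proof. For (b) it is not even clear how to state the assertion precisely: $w(X,\theta)$ quantifies approximation of the \emph{direction} $\theta$ by directions of saddle connections, whereas ``the Diophantine type of the rotation number of the return map to the core of the thin cylinder'' is a different quantity depending on the choice of transversal, and there is no a priori reason it is dominated by $w(X,\theta)$; you notice this yourself (``must not be re-used to make the inner rotation artificially non-Diophantine'') but give no mechanism to prevent it. You also invoke \eqref{EquationTheoremDongHanRotations} --- an almost-everywhere identity in $p,p'$ --- immediately after asserting that the argument must be ``purely geometric'' because the bound is required for \emph{all} $p,p'$; only the universal upper half of that identity is legitimately available, and you would need to say so and use it carefully.

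The paper's proof does not compound two rotation hitting-time estimates. It proves a quantitative contrapositive (Proposition~\ref{PropositionLimsupHittingTime}): if $\rt(\phi_\theta,p,p',r)\ge r^{-\omega}$, then there is a saddle connection $\gamma$ with $|\gamma|<r^{-\omega}$ and $|\re(\gamma,\theta)|\le C\,|\im(\gamma,\theta)|^{-\sqrt{\omega}}$, from which $\whitsup\le w(X,\theta)^2$ follows directly. The mechanism is a Veech zippered-rectangle representation of $(X,\theta)$ built at scale $r$ along the orbit of $p'$ (Lemma~\ref{LemmaGettingZipperedRectangles}), in which the long hitting time forces a rectangle of height $\ge r^{-\omega}$; one then runs a finite case analysis on the four-letter combinatorics of $\cH(2)$, using Lemma~\ref{LemmaClosedCylinderCondigurationZZXXX}, Lemmas~\ref{LemmaNoHighTopSingularites(1)}--\ref{LemmaNoHighTopSingularites(2)} and Corollary~\ref{CorollaryNoHighTopSingularites} to track where the conical points sit in the planar development, and in each case exhibits a closed geodesic or saddle connection with $|\re|\lesssim r^{\omega}$ and $|\im|\lesssim r^{-\sqrt{\omega}}$. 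The two scales in the paper are the thresholds $r^{\sqrt\omega}$ and $r^{\omega}$ applied to the side lengths $\lambda_\alpha$ and heights $h_\alpha$ of the rectangles, so the square arises as an analytic cut-off inside a single combinatorial argument, not as a second dynamical stage. This is what makes the bound uniform in $p,p'$ and sidesteps precisely the two obstacles (a) and (b) that your proposal leaves open.
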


The argument leading to Theorem \ref{TheoremUpperBoundHittingTime} can be considered as the first step in a more general procedure by induction on genus (see \S~\ref{SectionCombinatorialLemmasZipperedRectangles}, and in particular Corollary \ref{CorollaryNoHighTopSingularites}). In the general stratum the function $\whitsup(\phi_\theta,\cdot,\cdot)$ seems to be uniformly bounded by an expression which is a polynomial function of $w(X,\theta)$, whose degree is uniformly bounded for any genus. The situation in $\cH(2)$ is more interesting because in this stratum there are many surfaces and directions for which the bound is sharp, according to Theorem \ref{TheoremHittingSpectrumOrigami} below. 

\smallskip

Let $X$ be an origami and $\rho:X\to\TT^2$ be the corresponding ramified cover over the standard torus. 
Any saddle connection $\gamma$ on an origami $X$ gives rise to a closed geodesic 
$
t\mapsto \rho\circ\gamma(t)
$ 
on $\TT^2$, passing though the origin $[0]\in\TT^2$, whose direction $\theta_\gamma$ satisfies 
$
\tan(\theta_\gamma)=p/q\in\QQ
$. 
Moreover, if $m(\gamma)\in\NN^\ast$ denotes the degree of the map 
$
t\mapsto \rho\circ\gamma
$,  
then we have
$$
\hol(\gamma,X)=m(\gamma)\cdot(p,q)\in\ZZ^2.
$$
It follows that 
\begin{equation}
\label{EquationEquivalenceTypes}
|\re(\gamma,\theta)|<\frac{1}{|\im(\gamma,\theta)|^w}
\Leftrightarrow
\left|\tan\theta-\frac{p}{q}\right|
\leq
\frac{1}{m(\gamma)^{w+1}}\cdot\frac{1}{q^{1+w}}
\end{equation}
and observing that $1\leq m(\gamma)\leq N$ for any saddle connection $\gamma$, where $N$ is the number of squares of $X$, we get that for any $\theta$ we have 
$$
w(X,\theta)=w(\alpha)
\quad
\textrm{ where }
\quad
\alpha=\tan\theta.
$$

Since origamis are Veech surfaces, then the lower bound in Equation \eqref{EquationLimSupHittingVeech} below gives a simple counterpart to the upper bound in Theorem \ref{TheoremUpperBoundHittingTime}. In Theorem~\ref{TheoremHittingSpectrumOrigami} below we show that under specific topological assumption on $X$, both the upper and lower bound are sharp. An origami $X$ is said to be \emph{reduced} if 
$
\langle \hol(X)\rangle=\ZZ^2
$, 
that is the subgroup of $\RR^2$ generated by the set $\hol(X)$ of relative periods is the entire lattice $\ZZ^2$. Reduced origamis are relevant because they form a closed set for the subgroup action of $\sltwoz$ (see \S~\ref{SectionReducedOrigamisAndSL(2,Z)}), moreover the $g_t$-orbit of any origami $X$ contains a reduced origami $X_0$ (see \S~\ref{SectionReducedOrigamiAreEnough}). In \S~\ref{SectionVerticalSplittingPairs} we introduce a non-trivial topological property of a reduced origami $X_0$, which consists in the existence of a \emph{vertical splitting pair} $(\sigma_0,\gamma_0)$, where $\sigma_0$ and $\gamma_0$ are respectively a closed geodesic and a saddle connection in the vertical direction (satisfying a specific arithmetic condition), which splits the surface $X_0$ into a cylinder $C_0$, where $\phi_\theta$-orbits can be trapped for a long time, and a remaining non-empty open set. We say that an origami $X$ admits a \emph{splitting direction} $\theta_{split}$ if there exists some $G\in\gltwor$ such that $G\cdot X$ is a reduced origami with a vertical splitting pair. In \S~\ref{SectionOneCylinderVerticalDirections} we also consider a reduced origami $X_0$ whose vertical direction is a \emph{one cylinder direction} and we say that an origami $X$ admits an one cylinder direction $\theta_{one-cyl}$ if there exists some $G\in\gltwor$ such that $G\cdot X$ is a reduced origami whose vertical is a one cylinder direction. For any $\eta>1$ recall the function $f_\eta:[1,2]\to\RR_+$ defined by Equation~\eqref{EquationLowerBoundDimension}.

\begin{theorem}
\label{TheoremHittingSpectrumOrigami}
Let $X$ be any origami, so that in particular 
$
w(X,\theta)=w(\tan\theta)
$ 
for any direction $\theta$. Assume that $X$ admits both a splitting direction $\theta_{split}$ and a one cylinder direction $\theta_{one-cyl}$. Then for any $\eta>1$ and any $s$ with $1\leq s\leq 2$ there exists a set of directions $\EE(X,\eta,s)$ with 
$$
\dim_H\big(\EE(X,\eta,s)\big)
\geq
f_\eta(s)
$$
such that for any $\theta\in\EE(X,\eta,s)$ we have $w(\tan\theta)=\eta$, and moreover for almost any $p,p'$ in $X$ we have
$$
\whitsup(\phi_\theta,p,p')=w(\tan\theta)^s=\eta^s.
$$
\end{theorem}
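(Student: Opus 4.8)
\emph{Overall strategy and construction.} The plan is to realize $\EE(X,\eta,s)$ as a subset of a Cantor set of directions $\theta$, each obtained as the limit of an infinite sequence of completely periodic directions $\theta_0,\theta_1,\theta_2,\dots$ together with a prescription of how closely $\theta$ approximates each $\theta_n$, and to tune this data so that two competing trapping mechanisms balance out. By hypothesis there are $G_{\mathrm{split}},G_{\mathrm{one}}\in\gltwor$ such that $G_{\mathrm{split}}\cdot X$ is a reduced origami with a vertical splitting pair $(\sigma_0,\gamma_0)$ and $G_{\mathrm{one}}\cdot X$ is a reduced origami whose vertical is a one-cylinder direction; since a fixed element of $\gltwor$ distorts the flat metric by a bounded factor and reparametrises time by a bounded factor, it does not change $\whitsup$ (after the corresponding change of direction, using \eqref{EquationCovarianceHolonomy} and the flow-invariance of Lemma \ref{LemmaInvarianceRecurrenceAndHitting}), so one may work with the rational directions $G_{\mathrm{split}}^{-1}\cdot(\mathrm{vertical})$ and $G_{\mathrm{one}}^{-1}\cdot(\mathrm{vertical})$ and their Veech-group images, of which there are infinitely many, equidistributed among rational directions (the splitting-pair property and the one-cylinder property being Veech-group invariant). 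At stage $n$ the construction picks a saddle connection $\gamma_n$ of $X$ in one of these rational directions, sets $q_n:=|\im(\gamma_n,\theta)|$, and imposes $|\re(\gamma_n,\theta)|\asymp q_n^{-a_n}$, where $a_n$ follows a fixed pattern: the value $\eta$ at the one-cylinder stages and the value $\eta^{s/2}\le\eta$ at the splitting stages, each type occurring infinitely often with a frequency to be fixed below. A Dirichlet-type mechanism for saddle connections (Proposition 4.1 of \cite{MarcheseTrevinoWeil}) forces $q_{n+1}\asymp q_n^{a_n}$ and, moreover, precludes any unlisted saddle connection from approximating $\theta$ substantially better than the listed ones at any scale; hence for every $w<\eta$ infinitely many of the $\gamma_n$ satisfy \eqref{EquationDiophantineTypeSaddleConnections} while for $w>\eta$ only finitely many saddle connections do, giving $w(X,\theta)=w(\tan\theta)=\eta$.

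\emph{Computing the hitting exponent.} To evaluate $\whitsup(\phi_\theta,p,p')$ I would invoke the two-sided local estimates to be proved in \S\ref{SectionVerticalSplittingPairs} and \S\ref{SectionOneCylinderVerticalDirections}. Near a one-cylinder stage the orbit of a typical point is trapped inside the single cylinder, and the first passage to an $r$-ball placed transversally far from it behaves exactly as in the torus case behind \eqref{EquationTheoremDongHanRotations} --- the transverse return map to a cross-section is again a rotation with a self-similar continued fraction --- so the exponent contributed there is $a_n$; with $a_n=\eta$ this gives $\eta$, matching from below the general Veech lower bound \eqref{EquationLimSupHittingVeech}. Near a splitting stage the orbit is trapped inside the splitting cylinder $C_0$, and the arithmetic condition imposed on $(\sigma_0,\gamma_0)$ is exactly what makes the trap two-layered: the orbit must first wind across $C_0$ (a first factor $a_n$, as above), and only then, because the sole exit of $C_0$ into the complement is the narrow saddle connection $\gamma_0$, does the induced cross-section dynamics re-enter the same resonant regime and cost a second factor $a_n$ before a ball placed near that exit is reached --- yielding the exponent $a_n^2=(\eta^{s/2})^2=\eta^s$. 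Since one-cylinder stages contribute $\eta$ and splitting stages contribute $\eta^s\ge\eta$, while the Dirichlet mechanism together with the crude a priori bound of Theorem \ref{TheoremUpperBoundHittingTime} (or, in higher genus, its analogue sketched around Corollary \ref{CorollaryNoHighTopSingularites}) rules out any larger local contribution, one gets $\whitsup(\phi_\theta,p,p')=\max(\eta,\eta^s)=\eta^s$. The local upper bound holds for all $p,p'$; the matching local lower bound at a splitting stage holds for a positive-measure set of $(p,p')$ (those with $p$ in $C_0$ and $p'$ near the exit at that renormalisation level), and these events are essentially independent across stages, so a second Borel--Cantelli argument upgrades it to almost every $p,p'$.

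\emph{Hausdorff dimension, and the main obstacle.} Finally, inside the set of $\theta$ compatible with the prescribed sequence $(\gamma_n,a_n)$ one builds an explicit Cantor set by the standard nested-interval scheme: at stage $n$ each admissible interval is subdivided into a controlled family of disjoint admissible sub-intervals indexed by the choices of $\gamma_{n+1}$, the number of children and the contraction ratio being governed by the density of suitable rational directions near scale $q_{n+1}=q_n^{a_n}$, and a mass distribution principle bounds $\dim_H$ below by the lower limit of $\log(\#\{\text{stage-}n\text{ intervals}\})/(-\log(\text{stage-}n\text{ length}))$. One-cylinder stages are cheap and keep this ratio near the value of the classical Jarn\'ik set (up to the factor $\tfrac12$ visible in $f_\eta(1)$), whereas splitting stages are more expensive, both because $\eta^{s/2}-1$ is smaller and, heuristically, because the arithmetic constraint defining a vertical splitting pair thins out the admissible rational directions; choosing the frequency of splitting stages optimally --- subject to their occurring infinitely often (needed for $\whitsup\ge\eta^s$) while the one-cylinder stages remain the best approximants (needed for $w=\eta$) --- produces exactly $f_\eta(s)=(\eta^{s-1}-1)/(\eta^s-1)$ for $s>1$, and explains the jump at $s=1$ as well as the value $f_\eta(2)=1/(\eta+1)$. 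I expect the main obstacle to be the two-layer trapping estimate at a splitting stage --- showing that escape from $C_0$ genuinely contributes a full second power of the approximation exponent rather than a bounded multiple of it, which requires a careful analysis of the interval exchange induced on a transversal across $\gamma_0$ and a quantitative use of the arithmetic condition --- with the combinatorial bookkeeping that converts the stage frequencies into the precise constant $f_\eta(s)$ as the second delicate point.
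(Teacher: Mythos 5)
Your overall strategy is in the right spirit — a Cantor set of slopes built from blocks of continued-fraction entries that alternately visit, in the $\sltwoz$-orbit, a reduced origami whose vertical is a one-cylinder direction and one carrying a vertical splitting pair, followed by a mass-distribution estimate for the dimension; this is exactly the architecture of \S\ref{SectionConstructionCantorSlopes}--\S\ref{SectionDimensionEstimate}. But the central mechanism by which you claim to produce the exponent $\eta^s$ is wrong. You assert that at a splitting stage a single large partial quotient of exponent $\eta^{s/2}$ is \emph{squared} by a ``two-layer'' trap into $\eta^s$. Tracing the geometry behind Lemma~\ref{LemmaSplittingPair}: when the renormalised slope is $\alpha_{2n}$, the time spent trapped in the splitting cylinder $C_0$ is proportional to $1/\big(\alpha_{2n}\,G(\alpha_{2n})\big)\asymp a_{2n+1}\cdot a_{2n+2}$, the product of \emph{two consecutive} partial quotients. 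One pass across $C_0$ costs $\sim a_{2n+1}$; the number of times the orbit re-enters $C_0$ through $\gamma_0$ before escaping is $\sim 1/\big(W_0\,G(\alpha_{2n})\big)\sim a_{2n+2}$, i.e.\ the \emph{next} partial quotient, not a second copy of $a_{2n+1}$. The congruence $W_0\,a_{2n+1}\equiv\Delta_0\ (\mathrm{mod}\ |\sigma_0|)$ merely guarantees that the exit of each pass lands on $\gamma_0$ so that a re-entry occurs; it does not double the resonance. Hence with a single large $a_{2n+1}\sim q_{2n}^{\eta^{s/2}-1}$ and bounded $a_{2n+2}$, Proposition~\ref{PropositionSplittingPairs} only yields $R\gtrsim q_{2n}^{\eta^{s/2}}$ and your lower bound collapses to $\eta^{s/2}$.

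The paper instead imposes, at each splitting stage, \emph{two} consecutive large partial quotients of different sizes, decoupling the two roles: $a_{2n(k)+1}\sim q_{2n(k)}^{\eta-1}$ (Equation~\eqref{EquationSizeEntry2n+1}) carries the diophantine type $\eta$ and the congruence~\eqref{EquationCongruenceEntry2n+1}, while $a_{2n(k)+2}\sim q_{2n(k)+1}^{\nu-1}$ with $\nu:=\eta^{s-1}$ (Equation~\eqref{EquationSizeEntry2n+2}) carries the second factor in the trapping time, so the product of exponents is $\eta\cdot\nu=\eta^s$ while the dominant approximation remains $q_{2n(k)}^{-\eta}$. Your treatment of the one-cylinder stages is also off: there the partial quotients are \emph{bounded} (Equation~\eqref{EquationOneCylinderEntry2p+1}) and the local hitting-time exponent is $1$, not $\eta$ — Proposition~\ref{PropositionOneCylinderDirection} only pins $R(r_k)\lesssim q_{2p(k)}$ at scale $r_k\sim q_{2p(k)}^{-1}$, and the upper bound $\whitsup\le\eta^s$ comes from the gap estimate $q_{2p(k)}\lesssim q_{2p(k-1)}^{\eta\nu}$ (Equation~\eqref{EquationRelationEntries2p(k)And2p(k-1)}). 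Finally, the specific value $f_\eta(s)=(\eta^{s-1}-1)/(\eta^s-1)$ arises precisely from the asymmetry $\eta\neq\nu$ of the two splitting-stage exponents in the nested-interval count; a single symmetric exponent $\eta^{s/2}$ does not reproduce it.
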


Recall that, by Jarn\'ik Theorem, we have 
$
\dim_H\big(\EE(X,\eta,s)\big)\leq2/(1+\eta)
$ 
for any set as in Point (1) of Theorem \ref{TheoremHittingSpectrumOrigami} with respect to parameters $\eta\geq1$ and $s\in[1,2]$. Theorem~\ref{TheoremHittingSpectrumOrigami} is completed by the following Proposition.

\begin{proposition}
\label{PropositionHittingSpectrumOrigami}
Let $X$ be any origami.
\begin{enumerate}
\item
If $X$ admits a one cylinder direction $\theta_{one-cyl}$ then the result in Theorem~\ref{TheoremHittingSpectrumOrigami} holds for the parameter $s=1$. 
\item
If $X$ admits a splitting direction $\theta_{split}$, then for any $\eta>1$ there exists a set of directions $\EE(X,\eta,2)$ with 
$
\dim_H\big(\EE(X,\eta,2)\big)
\geq
f_\eta(2)=(1+\eta)^{-1}
$ 
such that for any $\theta\in\EE(X,\eta,2)$ we have $w(\tan\theta)=\eta$, and moreover for almost any $p,p'$ in $X$ we have
$$
\whitsup(\phi_\theta,p,p')\geq w(\tan\theta)^2=\eta^2.
$$
\end{enumerate}
\end{proposition}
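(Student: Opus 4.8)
The plan is to treat both parts with a common architecture: one builds a Cantor family $\EE(X,\eta,\cdot)$ of directions $\theta$ with $w(\tan\theta)=\eta$ and $\dim_H\EE(X,\eta,\cdot)\geq 1/(\eta+1)$, and then controls $\whitsup(\phi_\theta,p,p')$ for $\theta$ in this family and Lebesgue-a.e.\ $p,p'$. Since $X$ is an origami, hence a Veech surface, the lower bound \eqref{EquationLimSupHittingVeech} already gives $\whitsup(\phi_\theta,p,p')\geq w(X,\theta)=w(\tan\theta)=\eta$ for a.e.\ $p,p'$ and every Keane $\theta$; so the substance of part~(1) is the matching upper bound $\whitsup\leq\eta$, and the substance of part~(2) is the lower bound $\whitsup\geq\eta^2$. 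In both cases the dimension $1/(\eta+1)$ is a Jarn\'ik-type estimate: one prescribes a sequence of admissible approximating directions with ``denominators'' $c_n\to\infty$ and places $\theta$ in the nested intervals of radius $\asymp c_n^{-1-\eta}$ around them; letting $c_{n+1}$ grow super-geometrically in $c_n$ (with growth rate tending to infinity along $n$) forces $w(\tan\theta)=\eta$ and pushes the Hausdorff dimension of the resulting Cantor set up to $1/(\eta+1)=f_\eta(1)=f_\eta(2)$.

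\emph{Part (1).} I would first move $X$ by some $G\in\gltwor$ to a reduced origami $X_0=G\cdot X$ whose vertical direction is a one cylinder direction, and pass from $\phi_\theta$ to its first return map to a horizontal segment $\Gamma$ crossing the unique vertical cylinder once; since $\Gamma$ is a full transversal of a single cylinder its return time is bounded above and below, so $\whitsup(\phi_\theta,\cdot,\cdot)$ agrees with the analogous exponent of the return map. I would take $\EE(X,\eta,1)$ to be the directions $\theta$ whose continued fraction relative to the one cylinder renormalization of $X_0$ has super-geometrically growing partial quotients producing $w(\tan\theta)=\eta$, chosen so that every good saddle connection approximant of $\theta$ lies in the Veech-group orbit of the vertical saddle connections of $X_0$; then $\dim_H\EE(X,\eta,1)\geq 1/(\eta+1)$ by the estimate above. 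For such $\theta$ the Teichm\"uller geodesic $g_t r_{-\theta}X_0$ enters only one cylinder cusps, so the long thin cylinder that traps orbits at each renormalization stage is the whole surface and has no complementary region in which $p'$ can hide; one then checks that the orbit of $p$ becomes $\rho$-dense in time $O(\rho^{-\eta-\varepsilon})$, whence $\whitsup(\phi_\theta,p,p')\leq\eta$. Equivalently, the return map to $\Gamma$ is hitting-time equivalent to an irrational rotation $T_{\alpha}$ with $w(\alpha)=\eta$, and \eqref{EquationTheoremDongHanRotations} gives $\whitsup(\phi_\theta,p,p')=\eta$ for a.e.\ pair.

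\emph{Part (2).} Here I would move $X$ to a reduced origami $X_0=G\cdot X$ carrying a vertical splitting pair $(\sigma_0,\gamma_0)$, so that $X_0$ splits into a cylinder $C_0$ and a non-empty remainder, and take as approximating directions the images $\theta^{(n)}$ of the vertical direction under a suitable sequence in the Veech group, with circumferences $c_n=|\sigma_0^{(n)}|\to\infty$; placing $\theta$ between the $\theta^{(n)}$ at rate $|\theta-\theta^{(n)}|\asymp c_n^{-1-\eta}$ (with $c_{n+1}$ super-geometric in $c_n$) gives $w(\tan\theta)=\eta$ and $\dim_H\EE(X,\eta,2)\geq 1/(\eta+1)$. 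The key point is that the arithmetic condition built into the splitting pair forces $\phi_\theta$-orbits to re-enter the renormalized cylinder $C_n$ so persistently that they miss its positive-measure complement not merely for the naive excursion length of order $c_n^{\eta}$ but for a time of order $c_n^{\eta^2}$: the return-to-$C_n$ dynamics governed by $\gamma_0$ and the remainder is itself a rotation-type map whose own Diophantine exponent, for $\theta\in\EE(X,\eta,2)$, is again $\eta$, so that the two exponents compound. Granting this trapping estimate, at the scale $r_n\asymp 1/c_n$ the orbit of $p$ misses $B(p',r_n)$ up to a time of order $c_n^{\eta^2}=r_n^{-\eta^2}$ whenever $p'$ lies in the complement of $C_n$; since this has positive Lebesgue probability for each $n$, and $\whitsup(\phi_\theta,\cdot,\cdot)$ is invariant under $\phi_\theta$ while $\phi_\theta$ is uniquely ergodic (a Keane direction on a Veech surface), the inequality $\whitsup(\phi_\theta,p,p')\geq\eta^2$ propagates to a.e.\ pair.

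The step I expect to be the main obstacle is exactly this trapping estimate in part~(2): producing the exponent $\eta^2$ rather than $\eta$ demands an exact description of how $\phi_\theta$-orbits leave and re-enter $C_n$ — which is what the arithmetic condition in the splitting pair is tailored to govern — and this description must be made uniform over all renormalization stages $n$ while $\EE(X,\eta,2)$ is kept of full enough dimension; one must also verify that the long saddle connections recording these repeated re-entries do not themselves over-approximate $\theta$, so that $w(\tan\theta)=\eta$ is preserved. In part~(1) the corresponding difficulty — arranging $w(\tan\theta)=\eta$ with the whole renormalization trajectory confined to the one cylinder regime, so that the upper bound genuinely improves from $\eta^2$ (Theorem~\ref{TheoremUpperBoundHittingTime}) to $\eta$ — is more tractable but still relies on the structure of the Rauzy classes and the Veech group of $X$.
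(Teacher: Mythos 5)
Your strategy --- building Cantor families of slopes aligned to the $\sltwoz$-orbit structure, then using one-cylinder crossing for the upper bound in Part~(1) and splitting-pair trapping for the lower bound in Part~(2) --- is indeed the paper's strategy, but there are genuine gaps at the points that carry the proof. In Part~(1), the ``equivalently'' reduction to an irrational rotation is not correct: the first return map of $\phi_\theta$ to a transversal on a higher-genus origami is an interval exchange on $d\geq 4$ intervals, and there is no general hitting-time equivalence with a rotation. The argument the paper runs (Lemma~\ref{LemmaVerticalDirection} and Proposition~\ref{PropositionOneCylinderDirection}) is purely geometric: when the renormalization returns to a one-cylinder configuration, every flow orbit must traverse the unique cylinder and therefore crosses any transversal segment through $p'$ within the bounded crossing time, giving $R\lesssim q_{2p(k)}$ at scale $r_k\sim q_{2p(k)}^{-1}$; interpolating using the controlled ratio $q_{2p(k)}\lesssim q_{2p(k-1)}^{\eta}$ then yields $\whitsup\leq\eta$. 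A separate and more serious problem is the dimension estimate at $s=1$: your Jarn\'ik-type description treats $s=1$ and $s=2$ symmetrically because $f_\eta(1)=f_\eta(2)=1/(1+\eta)$, but the two-constraint Cantor construction that controls both $a_{2n(k)+1}\sim q_{2n(k)}^{\eta-1}$ and $a_{2n(k)+2}\sim q_{2n(k)+1}^{\nu-1}$ has dimension $(\nu-1)/(\eta\nu-1)$ with $\nu=\eta^{s-1}$, which tends to $0$ as $s\to 1^+$. That discontinuity of $f_\eta$ at $s=1$ forces a genuinely different, one-constraint construction there (the paper's Lemma~\ref{LemmaConstructionCantorSlopesBis}); your plan does not register the need for it.

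For Part~(2), the trapping estimate you flag as the main obstacle is carried by precise arithmetic that must be made explicit. The congruence $W_0\,a_{2n+1}\equiv\Delta_0 \ ({\rm mod}\ |\sigma_0|)$ in the definition of a splitting pair is exactly what makes a $\phi_\alpha$-orbit exiting $C_n$ through (the renormalized image of) $\gamma_0$ re-enter $C_n$ with vertical offset $W_0\,G(\alpha_{2n})$; the lower bound $a_{2n+2}\gtrsim q_{2n+1}^{\eta-1}$ makes that offset small enough that the re-entry repeats $\gtrsim a_{2n+1}a_{2n+2}$ times, and the dilation $q_{2n}$ from the renormalized picture back to $X$ lifts the trapping time to $\gtrsim a_{2n+1}a_{2n+2}q_{2n}\sim q_{2n}^{\eta^2}=r_n^{-\eta^2}$ (Lemma~\ref{LemmaSplittingPair} and Proposition~\ref{PropositionSplittingPairs}). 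Both $a_{2n+1}$ and $a_{2n+2}$ also need matching upper bounds, or else $w(\alpha)>\eta$ as you rightly worry. Without the congruence condition and the two-sided size constraints, your ``compounding of Diophantine exponents'' is a plausible picture but not a proof, and your ``super-geometric growth with growth rate tending to infinity'' phrasing would, taken literally, produce $w(\alpha)=\infty$ rather than $w(\alpha)=\eta$.
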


\smallskip

By Corollary A.2 in \cite{McMullen}, any origami $X\in\cH(2)$ admits a one cylinder direction $\theta_{one-cyl}$. Moreover, according to Lemma \ref{LemmaExistenceSplittingPairs} of this paper, which is proved in Appendix \ref{SectionProofExistenceSplittingPairsH(2)}, any origami $X\in\cH(2)$ also admits a splitting direction $\theta_{split}$. Therefore Theorem~\ref{TheoremHittingSpectrumOrigami} can be applied to any origami $X\in\cH(2)$, and combining it with the bounds given by Theorem~\ref{TheoremUpperBoundHittingTime} and Equation~\eqref{EquationLimSupHittingVeech} below, the next Corollary follows.

\begin{corollary}
Let $X$ be any origami in $\cH(2)$. Then for any $\eta\geq1$ and any $s$ with $1\leq s\leq 2$ there exists a set $\EE(X,\eta,s)$ as in Theorem \ref{TheoremHittingSpectrumOrigami}. In particular, the spectrum of almost sure values of 
$
\whitsup(\phi_\theta,\cdot,\cdot)
$ 
over the set of those $\theta$ with $w(\tan\theta)=\eta$ equals the interval $[\eta,\eta^2]$.
\end{corollary}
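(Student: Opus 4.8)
The plan is to deduce the corollary by checking, for \emph{every} origami $X\in\cH(2)$, the two topological hypotheses of Theorem~\ref{TheoremHittingSpectrumOrigami}, and then reading the spectrum off the a priori bounds $w(X,\theta)\le\whitsup(\phi_\theta,p,p')\le w(X,\theta)^2$. First I would recall that, $X$ being an origami, $w(X,\theta)=w(\tan\theta)$ for every direction, so everything can be phrased in terms of $w(\tan\theta)=\eta$. By Corollary~A.2 of \cite{McMullen} the vertical direction of a suitable image $G\cdot X$, $G\in\gltwor$, is a one cylinder direction, so $X$ admits a one cylinder direction $\theta_{one-cyl}$; by Lemma~\ref{LemmaExistenceSplittingPairs} (Appendix~\ref{SectionProofExistenceSplittingPairsH(2)}) $X$ likewise admits a splitting direction $\theta_{split}$. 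Hence Theorem~\ref{TheoremHittingSpectrumOrigami} applies to $X$ verbatim: for every $\eta>1$ and every $s\in[1,2]$ it produces a set $\EE(X,\eta,s)$ with $\dim_H(\EE(X,\eta,s))\ge f_\eta(s)>0$ on which $w(\tan\theta)=\eta$ and $\whitsup(\phi_\theta,p,p')=\eta^s$ for almost every $p,p'$. This settles the first assertion of the corollary for all $\eta>1$.

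Next I would handle the remaining value $\eta=1$ by a squeeze, since $f_1$ is not literally defined by \eqref{EquationLowerBoundDimension} for $1<s\le 2$. For Lebesgue-a.e.\ $\theta$ one has $w(\tan\theta)=1$, and on the full-measure set $\{\theta:w(\tan\theta)=1\}$ the lower bound of Equation~\eqref{EquationLimSupHittingVeech} (valid because origamis are Veech surfaces) gives $\whitsup(\phi_\theta,p,p')\ge w(X,\theta)=1$, while Theorem~\ref{TheoremUpperBoundHittingTime} gives $\whitsup(\phi_\theta,p,p')\le w(X,\theta)^2=1$; hence $\whitsup(\phi_\theta,p,p')=1=1^s$ for a.e.\ $p,p'$. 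A full-measure set of directions has Hausdorff dimension $1$, so it qualifies as $\EE(X,1,s)$ for every $s\in[1,2]$.

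For the ``in particular'' clause, fix $\eta\ge 1$ and let $S_\eta\subset\RR$ be the set of values $v$ for which there exists a Keane direction $\theta$ with $w(\tan\theta)=\eta$ such that $\whitsup(\phi_\theta,p,p')=v$ for a.e.\ $p,p'$. The inclusion $S_\eta\subset[\eta,\eta^2]$ is immediate: for any such $\theta$ one has $w(X,\theta)=\eta$, so Theorem~\ref{TheoremUpperBoundHittingTime} forces $v\le\eta^2$ and Equation~\eqref{EquationLimSupHittingVeech} forces $v\ge\eta$. For the reverse inclusion, note that $s\mapsto\eta^s$ maps $[1,2]$ continuously onto $[\eta,\eta^2]$ (with $[\eta,\eta^2]=\{1\}$ when $\eta=1$), and that by the first two paragraphs each value $\eta^s$ is attained, because $\EE(X,\eta,s)$ is nonempty. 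Hence $S_\eta=[\eta,\eta^2]$. The substantive work sits entirely in the results already invoked — Theorem~\ref{TheoremUpperBoundHittingTime}, Theorem~\ref{TheoremHittingSpectrumOrigami}, and Lemma~\ref{LemmaExistenceSplittingPairs} — so within this corollary the only delicate points are the \emph{universality} of the two topological hypotheses over the whole stratum $\cH(2)$ (exactly what \cite{McMullen} Corollary~A.2 and Lemma~\ref{LemmaExistenceSplittingPairs} provide), the $\gltwor$-invariance of $\whitsup$ under the almost-everywhere identification used when passing to $G\cdot X$ (Lemma~\ref{LemmaInvarianceRecurrenceAndHitting} together with the covariance \eqref{EquationCovarianceHolonomy}), and the degenerate endpoint $\eta=1$, which must be argued by the squeeze of the second paragraph rather than through Theorem~\ref{TheoremHittingSpectrumOrigami}.
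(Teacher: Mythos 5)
Your argument follows the paper's intended route exactly: verify the two topological hypotheses of Theorem~\ref{TheoremHittingSpectrumOrigami} for every origami in $\cH(2)$ via McMullen's Corollary~A.2 and Lemma~\ref{LemmaExistenceSplittingPairs}, then sandwich the almost-sure value between the lower bound of Equation~\eqref{EquationLimSupHittingVeech} and the upper bound of Theorem~\ref{TheoremUpperBoundHittingTime} to obtain the interval $[\eta,\eta^2]$. Your explicit treatment of $\eta=1$ is a worthwhile addition that the paper elides: Theorem~\ref{TheoremHittingSpectrumOrigami} is only stated for $\eta>1$ and $f_1(s)$ degenerates to $0/0$ for $s>1$, so the squeeze on the full-measure set $\{w(\tan\theta)=1\}$ is the right way to close that gap. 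One small citation slip in your closing commentary: Lemma~\ref{LemmaInvarianceRecurrenceAndHitting} records invariance of $\whitsup$ under the \emph{flow}, not under the $\gltwor$-action; the $\gltwor$-covariance of $\whitsup$ via an affine diffeomorphism is instead established in \S~\ref{SectionReducedOrigamiAreEnough}. This does not affect the proof, which does not actually need that step.
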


Theorem~\ref{TheoremHittingSpectrumOrigami} above implies that in higher genus there is no functional relation between the diophantine type $w(X,\theta)$ and the almost sure value of the function 
$
\whitsup(\phi_\theta,\cdot,\cdot)
$ 
(where we recall that such almost sure value does not even exist if $\theta$ in a non ergodic direction on $X$). In genus $3$, the stratum $\cH(1,1,1,1)$ contains an origami $X_{EW}$ which is known by the German name \emph{Eierlegende Wollmilchsau}, whose definition is given in \S~\ref{SectionEierlegendeWollmilchsau}, and which exhibits several singular behaviors (see \S~7 and \S~8 in \cite{ForniMatheus}). In our case, such origami is special because it is a higher genus surface where the functional relation in Equation~\eqref{EquationTheoremDongHanRotations} subsists.

\begin{proposition}
\label{PropositionEierlegendeWollmilchsau}
Let $X_{EW}$ be the \emph{Eierlegende Wollmilchsau} origami. Then for any $\theta$ and almost any $p,p'$ in $X$ we have 
$$
\whitsup(\phi_\theta,p,p')=w(\tan\theta).
$$
\end{proposition}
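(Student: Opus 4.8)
The plan is to show that on $X_{EW}$ the hitting-time upper bound matches the general lower bound, and since $X_{EW}$ is an origami we already know $w(X_{EW},\theta)=w(\tan\theta)$ for every $\theta$. The lower bound $\whitsup(\phi_\theta,p,p')\geq w(\tan\theta)$ for almost every $p,p'$ is precisely the content of Equation \eqref{EquationLimSupHittingVeech} (the counterpart to Theorem \ref{TheoremUpperBoundHittingTime} valid on any Veech surface), so the real work is the reverse inequality $\whitsup(\phi_\theta,p,p')\leq w(\tan\theta)$. The key structural fact I would exploit is the one that makes the \emph{Eierlegende Wollmilchsau} special: it is a regular (normal) cover of the torus $\TT^2$ whose deck group $G$ acts transitively on the squares \emph{and} whose Kontsevich--Zorich cocycle is as degenerate as possible — the only non-trivial Lyapunov exponent is the tautological $1$, the rest being $0$. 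Equivalently, in the affine description, $X_{EW}$ admits a basis of relative homology along which the affine monodromy of the Veech group acts through a finite group. Concretely, one should package this as: there is a constant $C=C(X_{EW})$ such that any saddle connection $\gamma$ on $X_{EW}$ in a Keane direction $\theta$ has planar development $\hol(\gamma,r_{-\theta}\cdot X_{EW})$ whose projection to $\TT^2$ is an integer multiple $m(\gamma)\,(p,q)$ of a primitive torus vector, with $1\le m(\gamma)\le N$ ($N$ the number of squares); this is exactly Equation \eqref{EquationEquivalenceTypes}, and it already pins $w(X_{EW},\theta)$ to $w(\tan\theta)$ in terms of approximation \emph{quality}, but not yet the hitting time.

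The heart of the argument, following the same philosophy as the proof of Theorem \ref{TheoremUpperBoundHittingTime} but now drastically simplified by the covering structure, is a Rauzy--Veech / zippered-rectangle renormalization along the direction $\theta$. Because $\rho:X_{EW}\to\TT^2$ is a normal cover ramified only over $[0]$, the linear flow $\phi_\theta$ on $X_{EW}$ is a $G$-extension of the linear flow on $\TT^2$, i.e. of the rotation $T_{\tan\theta}$; its first-return map to a suitable transversal is a $G$-tower over the classical continued-fraction dynamics of $\alpha=\tan\theta$. The step I would carry out first is to show that in this $G$-extension the ``Kac'' return times and the geometry of the tower are \emph{uniformly comparable} across sheets: there is a constant $C$ depending only on $X_{EW}$ so that, at every stage of the renormalization, the widths of all the rectangles over a given base interval differ by at most a factor $C$, and similarly for the heights. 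This uniform boundedness is exactly what the vanishing of the non-tautological Lyapunov exponents buys us — there is no polynomial-in-time deviation of Birkhoff sums of the relevant cocycle, only a bounded multiplicative distortion — and it is the analogue, for $X_{EW}$, of the delicate combinatorial control that in $\cH(2)$ only gives the squared bound. Second, using this uniform comparability, I would run the standard argument: to estimate $\rt(\phi_\theta,p,p',r)$ from above, choose the renormalization stage $n=n(r)$ at which the tower's rectangles have width $\asymp r$; the return time to a rectangle of width $\asymp r$ is $\asymp 1/r$ up to the bounded factor $C$ coming from the cover, and by minimality the orbit of $p$ enters the specific $r$-ball around $p'$ within $O(1)$ such returns, hence $\rt(\phi_\theta,p,p',r)=O(1/r)\cdot(\text{factor governed by the CF expansion of }\alpha)$. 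Taking $\limsup$ of $\log\rt/(-\log r)$ and tracking how the continued-fraction denominators $q_n$ of $\alpha$ enter (exactly as in the torus case leading to Equation \eqref{EquationTheoremDongHanRotations}) yields $\whitsup(\phi_\theta,p,p')\leq w(\alpha)=w(\tan\theta)$ for every $p,p'$ off the singular leaves, in particular for almost every pair.

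The main obstacle I anticipate is making the ``uniform comparability across sheets'' step both correct and clean: one must verify that for $X_{EW}$ the relevant piece of the Rauzy--Veech cocycle (the part recording how heights/widths of rectangles are distributed among the $G$-sheets) is genuinely \emph{bounded}, not merely sub-polynomial, and that no logarithmic loss creeps in when passing from the cocycle statement to the metric estimate on return times. A conceptually economical route, which I would try first, is to avoid cocycle estimates altogether: exploit directly that $G$ acts by isometries permuting the squares transitively, so that the first-return map of $\phi_\theta$ to (the preimage in $X_{EW}$ of) a short interval transverse to direction $\theta$ in $\TT^2$ is, sheet by sheet, an \emph{isometric} copy of the torus return map twisted by a $G$-valued cocycle; transitivity of $G$ then forces the hitting time between any two prescribed points of $X_{EW}$ to be, up to the bounded factor $|G|=N$, the hitting time between their projections on $\TT^2$, and the latter obeys Equation \eqref{EquationTheoremDongHanRotations}. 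If that direct homogeneity argument goes through it supersedes the renormalization discussion entirely and gives the Proposition in a few lines; if some non-uniformity in where the ramification lies spoils the naive isometry, one falls back on the renormalization estimate above, where the boundedness of the cocycle for $X_{EW}$ must then be invoked explicitly (it is classical, cf. the discussion of $X_{EW}$ in \cite{ForniMatheus}) and is the one external input carrying the weight of the proof.
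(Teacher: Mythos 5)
Your structure is right: the lower bound $\whitsup\geq w(\tan\theta)$ comes from Equation~\eqref{EquationLimSupHittingVeech} (valid on any Veech surface), $w(X_{EW},\theta)=w(\tan\theta)$ follows from Equation~\eqref{EquationEquivalenceTypes}, and the real task is the upper bound $\whitsup\leq w(\tan\theta)$, which must exploit the special geometry of $X_{EW}$. You also correctly flag where the difficulty lies. But both routes you propose for the upper bound have the same genuine gap, and neither fills it.

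The gap is the claim that ``transitivity of $G$ forces the hitting time on $X_{EW}$ to be at most $|G|$ times the torus hitting time.'' The first return of $\phi_\alpha$ to the preimage of a short transversal in $\TT^2$ is a skew product over the torus return map with a $G$-valued cocycle, and the element of $G$ accumulated between two consecutive returns to a small ball depends on which side of the ramification point the orbit passes. If the ball is small and the orbit always passes on the same side, that element is a \emph{fixed} $g\in G$, so over the relevant window of returns the visited sheets form a coset of the cyclic group $\langle g\rangle$, which can be a proper subset of $G$. Transitivity of the full deck group on the fiber does \emph{not} preclude the orbit from being trapped in such a sub-orbit for many torus returns; ruling this out is exactly the content that has to be proved, and it is not a formal consequence of normality of the cover. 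The paper resolves it by Lemmas~\ref{LemmaSmallSlopeWollmilchsauVertical} and~\ref{LemmaSmallSlopeWollmilchsauHorizontal}: for a segment $S$ crossing all eight squares (``strongly transversal''), any flow segment of bounded length in slope $\alpha\in(0,1)$ (resp. $(1,\infty)$) must intersect $S$. These are proved by a brute-force enumeration, using the quaternion multiplication table, of the possible cutting sequences; there is no softer argument available, and the analysis genuinely uses that the deck group is $\cQ$ and not, say, a cyclic group. (For a general normal cover the statement is false, which is why generic origamis in $\cH(2)$ have $\whitsup>w(\tan\theta)$.) Your renormalization variant has the same missing step in a different guise: vanishing of the non-tautological Lyapunov exponents (indeed finiteness of the monodromy on absolute homology) gives at best sub-exponential deviations in the Kontsevich--Zorich cocycle, and even granting the sharper bounded-monodromy fact, translating it into a uniform bound on the ratios of heights and widths of Rauzy--Veech rectangles, and then into a hitting-time bound, is not carried out and is not automatic.

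Two further points in the paper's proof that your outline does not touch. First, the argument needs an estimate at \emph{every} continued-fraction denominator $q_n$ of $\alpha$, not along a sparse subsequence; this uses $\cO(X_{EW})=\{X_{EW}\}$, so both parities of $n$ are available (even $n$ via the vertical cylinder decomposition, odd $n$ via the horizontal one). Without this, one only controls $\whitsup$ along a subsequence of scales and cannot pass to the limit. Second, unlike the one-cylinder Lemma~\ref{LemmaVerticalDirection}, the cutting-sequence Lemmas work for \emph{all} slopes in $(0,1)$, with no hypothesis $\alpha<|\sigma_0|^{-1}$; this removal of the small-slope restriction is precisely what makes the estimate hold at every $n$ rather than only at selected ones, and it is another place where the explicit quaternion combinatorics, not an abstract homogeneity principle, does the work.
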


\subsection{Other notions of diophantine type}
\label{SectionOtherNotionsDiophantineType}

One can consider also \emph{absolute periods} $\hol(\sigma)$ of closed geodesics $\sigma$ inside cylinders $C_\sigma$. For a Keane direction $\theta$ define
$
w^{cyl}(X,\theta)\geq1
$ 
as the supremum of those $w\geq1$ such that there exist infinitely many cylinders $C_\sigma$ with area 
$
\area(C_\sigma)\geq a
$, 
where $a=a(X)>0$ is a positive constant depending only on $X$, such that 
\begin{equation}
\label{EquationDiophantineTypeCylinders}
|\re(\sigma,\theta)|<\frac{1}{|\im(\sigma,\theta)|^w}.
\end{equation}

According to \S~\ref{SectionPhaseSpaceDynamics}, if $X$ is a Veech surface we have 
$
w^{cyl}(X,\theta)=w(X,\theta)
$ 
for any Keane direction $\theta$. When $X$ is a general translation surface, for any Keane direction $\theta$ on $X$ we always have 
$$
w^{cyl}(X,\theta)\leq w(X,\theta),
$$
indeed the boundary $\partial C_\sigma$ of any cylinder $C_\sigma$ around a closed geodesic $\sigma$ in $X$ is made of saddle connections $\gamma$ parallel to $\sigma$. Moreover when $w=1$, Equation \eqref{EquationDiophantineTypeCylinders} has always infinitely many solutions for any Keane direction $\theta$ on $X$ (replacing $1$ in the numerator by a bigger constant, see Proposition 4.1 in \cite{MarcheseTrevinoWeil}, which is derived from results in \cite{Vorobets}). Finally, according to Theorem 6.1 \cite{MarcheseTrevinoWeil} we also have
$$
\dim_H 
\Big( 
\Big\{\theta \in \Big[-\frac{\pi}{2}, \frac{\pi}{2} \Big) \mid w^{cyl}(X,\theta)=\eta \Big\} 
\Big)=
\frac{2}{1+\eta},
$$
and for these reasons $w^{cyl}(X,\theta)$ can also be considered as a natural notion of diophantine type. According to Lemma 7.3 in \cite{MarcheseTrevinoWeil}, which was developed after joint discussions related to this paper, if the Lebesgue measure is ergodic under the flow $\phi_\theta$ in direction $\theta$ on the surface $X$, then for almost any $p\in X$ we have 
\begin{equation}
\label{EquationUpperBoundRecurrenceCylinders}
\wrecinf(\phi_\theta,p)
\leq 
\frac{1}{w^{cyl}(X,\theta)}.
\end{equation}
Moreover, according to Proposition \ref{PropositionLimsupHittingUniquelyErgodic(cylinders)} is this paper, if $\theta$ in an ergodic direction on the surface $X$, then for almost any $p,p'$ in $X$ we have
$$
\whitsup(\phi_\theta,p,p')
\geq
w^{cyl}(X,\theta).
$$
In particular, let $\theta$ be a Keane direction on a Veech surface $X$, so that 
$
w^{cyl}(X,\theta)=w(X,\theta)
$. 
According to Equation \eqref{EquationLowerBoundRecurrenceTime} and Equation \eqref{EquationUpperBoundRecurrenceCylinders}, for almost any $p$ we have
\begin{equation}
\label{EquationLimInfRecurrenceVeech}
\wrecinf(\phi_\theta,p)
=
\frac{1}{w(X,\theta)}.
\end{equation}
Moreover, according to Proposition \ref{PropositionLimsupHittingUniquelyErgodic(cylinders)}, for almost any $p,p'$ we have
\begin{equation}
\label{EquationLimSupHittingVeech}
\whitsup(\phi_\theta,p,p')\geq w(X,\theta).
\end{equation}

The notion of diophantine type considered in this paper is also related to \emph{geodesic excursions} in moduli space. This is known to many authors, we refer to \S~6.3 in \cite{MarcheseTrevinoWeil} for sake of completeness. There are other definitions of Diophantine type given by the size of continued fraction matrices in the Rauzy-Vecch induction algorithm \cite{Kim}, \cite{KimMarmi2}. 
For example Roth type Diophantine condition form a full measure set \cite{MMY} and can be used for obtaining H\"older estimates for the solution of the cohomological equation \cite{MY}. 
See also \cite{HubertMarcheseUlcigrai} and \cite{KimMarmi} for more discussion on the size of the continued fraction matrices.

\subsection{Back to billiards}
\label{SectionBackToBilliards}

Consider the \emph{billiard flow} $\widetilde{\phi}$ in a \emph{rational polygon}, that is a polygon $Q\subset\RR^2$ whose angles are all rational multiples of $\pi$. Let $D$ be the finite group of linear isometries of $\RR^2$, which are the linear part of affine isometries generated by reflections at sides of $Q$. The group $D$ acts on directions $\theta\in S^1$, so that any $\theta$ has a finite orbit $[\theta]$ and all orbits have the same cardinality. Therefore the phase space $Q\times S^1$ is foliated into invariant surfaces of the form $Q\times [\theta]$, which are all mutually isometric, and the billiard flow acts as a linear flow on each of them. If $[\theta]$ is the $D$-orbit of $\theta$, then denoting by 
$
\widetilde{\phi}_{[\theta]}
$ 
the restriction to $Q\times[\theta]$ of the billiard flow $\widetilde{\phi}$ on $Q\times S^1$, we have the commutative diagram
\begin{equation}
\label{EquationBilliardTranslationFlows}
\begin{array}{ccc}
X & \quad\substack{\phi_\theta\\ \longrightarrow }\quad & X \\
\downarrow &                                                     & \downarrow \\
Q\times[\theta] & \quad\substack{\widetilde{\phi}_{[\theta]}\\ \longrightarrow }\quad & Q\times[\theta].
\end{array}
\end{equation}

Let $L=L(a,b,a',b')$ be the L-shaped polygon defined by real numbers $0<a'<a$ and $0<b'<b$ and let $D\sim(\ZZ/2\ZZ)^2$ be its reflection group. The surface $X$ related to $L$ in Equation \eqref{EquationBilliardTranslationFlows} is obtained pasting the opposite sides of the cross-like polygon 
$
P=\bigcup_{s\in D}s(L)
$, 
thus $X\in\cH(2)$. In particular, if $a,b,a',b'$ are rationally dependent, then modulo an homothety one can assume that $a,b,a',b'$ are all integers. In this case also the polygon $P$ has vertices with integer coordinates, thus $X$ is a square-tiled surface in $\cH(2)$. Theorem \ref{TheoremLShapedBilliard} follows from Theorem \ref{TheoremUpperBoundHittingTime} and Theorem \ref{TheoremHittingSpectrumOrigami}, via Equation \eqref{EquationBilliardTranslationFlows}. Similarly, Proposition \ref{PropositionRecurrenceKeaneDirection} implies 
$
\wrecsup(\widetilde{\phi}_{[\theta]},p)=1
$ 
for almost any $p\in L\times[\theta]$, and Proposition \ref{PropositionLiminfHittingUniquelyErgodic} implies 
$
\whitinf(\widetilde{\phi}_{[\theta]},p,p)=1
$ 
for Lebesgue almost any $p,p'$, when the Lebesgue measure over $L$ is ergodic for 
$
\widetilde{\phi}_{[\theta]}
$. 
Finally, since origamis are Veech surfaces, when $a,a',b,b'$ are rationally dependent Equation \eqref{EquationLimInfRecurrenceVeech} implies 
$
\wrecinf(\widetilde{\phi}_{[\theta]},p)=1/w(\tan\theta)
$ 
for almost any $p\in L\times[\theta]$.

\subsection{Summary of the contents}

The rest of the paper is organized as follows.

In \S~\ref{SectionZipperedRectanglesAndRauzyVeech} we recall the representation of a translation surface $X$ by \emph{zippered rectangles} pasted together along a Keane direction $\theta$. Zippered rectangles (defined in \S~\ref{SectionRepresentationByZipperedRectangles}) are used in \S~\ref{SectionUpperBoundHittingTime} to prove Theorem \ref{TheoremUpperBoundHittingTime}. In 
\S~\ref{SectionRauzyVeechOnZipperedRectangles} we give a qualitative description of the \emph{Rauzy-Veech induction} on zippered rectangles, which plays a role in \S~\ref{SectionBoundsErgodicDirections} in the proof of Proposition \ref{PropositionRecurrenceKeaneDirection} and of Proposition \ref{PropositionLiminfHittingUniquelyErgodic}.

In \S~\ref{SectionBoundsErgodicDirections} we first establish some general properties of the functions 
$
\wrecinf
$, 
$
\wrecsup
$, 
$
\whitinf
$, 
$
\whitsup
$. 
Then we prove Proposition \ref{PropositionRecurrenceKeaneDirection} and of Proposition \ref{PropositionLiminfHittingUniquelyErgodic}. We also prove Proposition \ref{PropositionLimsupHittingUniquelyErgodic(cylinders)}, by a geometric argument only based on the flat geometry of cylinders.

In \S~\ref{SectionUpperBoundHittingTime} we prove Theorem \ref{TheoremUpperBoundHittingTime}. In \S~\ref{SectionCombinatorialLemmasZipperedRectangles} we establish some general combinatorial Lemmas on zippered rectangles. In \S~\ref{SectionEndProofUpperBoundHitting} the general Lemmas are applied to the specific case of $\cH(2)$, completing the proof of Theorem \ref{TheoremUpperBoundHittingTime}. 

In \S~\ref{SectionGeometricConstructionsOrigamis} we explain the geometric constructions needed to prove  Theorem \ref{TheoremHittingSpectrumOrigami}. In \S~\ref{SectionReducedOrigamisAndSL(2,Z)} we recall the graph structure of orbits of reduced origamis under the action of $\slgroup(2,\ZZ)$. In \S~\ref{SectionOneCylinderVerticalDirections} we consider origamis whose vertical is a one cylinder direction, which are used to obtain an upper bound for hitting time. In \S~\ref{SectionVerticalSplittingPairs} we consider \emph{vertical splitting pairs} $(\sigma_0,\gamma_0)$ for reduced origamis, which are used to obtain a lower bound for hitting time. 

In \S~\ref{SectionProofTheoremOrigami} we prove Theorem \ref{TheoremHittingSpectrumOrigami} and Proposition~\ref{PropositionHittingSpectrumOrigami}. In \S~\ref{SectionConstructionCantorSlopes} we define a set of directions and in \S~\ref{SectionDimensionEstimate} we give a lower bound for its dimension. The proof of Theorem \ref{TheoremHittingSpectrumOrigami} is completed in \S~\ref{SectionProofTheoremHittingSpectrumOrigami}. The proof of Proposition~\ref{PropositionHittingSpectrumOrigami} is completed in \S~\ref{SectionProofPropositionHittingSpectrumOrigami}. 
 
In \S~\ref{SectionEierlegendeWollmilchsau} we prove Proposition~\ref{PropositionEierlegendeWollmilchsau}. The argument is an easy modification of the construction in \S~\ref{SectionOneCylinderVerticalDirections}.

In \S~\ref{SectionProofExistenceSplittingPairsH(2)} we prove Lemma \ref{LemmaExistenceSplittingPairs}, which ensures that any origami $X$ in $\cH(2)$ admits a splitting direction.

%%%%%%%%%%%%%%%%%%%%%%%%%%%%%
%%%%%%%%%%%%%%%%%%%%%%%%%%%%%
\section{Zippered rectangles and Rauzy-Veech induction}
\label{SectionZipperedRectanglesAndRauzyVeech}
%%%%%%%%%%%%%%%%%%%%%%%%%%%%%
%%%%%%%%%%%%%%%%%%%%%%%%%%%%%

Recall that a translation surface $X$ is defined as quotient space of the disjoint union of polygons $P_1,\dots,P_d$ in the plane under some identifications in their boundary. Here, for a Keane direction $\theta$ on $X$, we describe a representation of the surface where all the polygons are rectangles $R_1,\dots,R_d$, with sides alignes along the direction $\theta$. The construction is originally due to Veech, here we follow the presentation in \S~4.3 of \cite{Yoccoz}.

\smallskip

An alphabet is a finite set $\cA$ with $d\geq2$ letters. A \emph{combinatorial datum} $\pi$ is a pair of bijections $\pi_t,\pi_b:\cA\to\{1,\dots,d\}$. For us combinatorial data $\pi$ are assumed to be \emph{admissible} that is 
$
(\pi_t)^{-1}\{1,\dots,k\}\not=(\pi_b)^{-1}\{1,\dots,k\}
$ 
for any $1\leq k\leq d-1$. A \emph{length datum} is any vector $\lambda\in\RR_+^\cA$ with all entries positive. For any such pair of data $(\pi,\lambda)$ consider the interval 
$
I:=[0,\sum_{\chi\in\cA}\lambda_\chi)
$ 
and its two partitions 
$
I=\bigsqcup_{\alpha\in\cA}I^t_\alpha
$ 
and 
$
I=\bigsqcup_{\beta\in\cA}I^t_\beta
$, 
where for any $\alpha$ and $\beta$ we set
$$
I^t_\alpha:=
\Big[
\sum_{\pi_t(\chi)\leq\pi_t(\alpha)-1}\lambda_\chi,
\sum_{\pi_t(\chi)\leq\pi_t(\alpha)}\lambda_\chi
\Big)
\quad
\textrm{ and }
\quad
I^b_\beta:=
\Big[
\sum_{\pi_b(\chi)\leq\pi_b(\beta)-1}\lambda_\chi,
\sum_{\pi_b(\chi)\leq\pi_b(\beta)}\lambda_\chi
\Big).
$$ 
An IET, or extensively \emph{interval exchange transformation}, is the map $T:I\to I$ uniquely determined by the data $(\pi,\lambda)$ as the map that for any $\alpha$ sends $I^t_\alpha$ onto $I^b_\alpha$ via a translation.

\subsection{Veech's zippered rectangles construction}
\label{SectionRepresentationByZipperedRectangles}

A \emph{suspension datum} for the combinatorial datum $\pi$ is a vector $\tau\in\RR^\cA$ such that for any $\alpha$ and $\beta$ we have 
$$
\sum_{\pi_t(\chi)<\pi_t(\alpha)}\tau_\chi>0
\quad
\textrm{ and }
\quad
\sum_{\pi_b(\chi')<\pi_b(\alpha)}\tau_{\chi'}<0.
$$
Fix combinatorial-length-suspension data $(\pi,\lambda,\tau)$. The procedure below defines a translation surface 
$
X=X(\pi,\lambda,\tau)
$. 
Details on the construction can be find in \S~4.3 of \cite{Yoccoz}. A picture of a surface obtained by this construction can be seen in Figure \ref{FigureRemoveCilinder} of this paper. Let us first set 
$$
\tau_\ast:=\sum_{\chi\in\cA}\tau_\chi
$$
then let $h=h(\pi,\tau)\in\RR_+^\cA$ be the vector whose coordinates, for any $\alpha\in\cA$, are defined by
$$
h_\alpha:=
\sum_{\pi_t(\chi)<\pi_t(\alpha)}\tau_\chi
-
\sum_{\pi_b(\chi')<\pi_b(\alpha)}\tau_{\chi'}.
$$

For any $\alpha\in\cA$ define the rectangle 
$
R_\alpha^t:=I_\alpha^t\times[0,h_\alpha]\subset\RR^2
$. 
These rectangles are the polygons to be pasted in order to define the translation surface $X$. The identifications between their horizontal sides of the rectangles $R_\alpha^t$ for $\alpha\in\cA$ are given by 
$$
(x,h_\alpha)\sim (Tx,0)
\quad
\textrm{ iff }
\quad
x\in I^t_\alpha.
$$
In order to describe the identification between the vertical sides of the rectangles above, it is convenient to introduce a copy of them, setting 
$
R_\alpha^b:=I_\alpha^b\times[-h_\alpha,0]
$ 
for any $\alpha$. We stress that this is just a convenient way to describe identifications, while in the quotient surface one has $R_\alpha^t=R_\alpha^b$. For letters $\alpha$ and $\beta$ with respectively 
$
\pi_t(\alpha)\geq2
$ 
and 
$
\pi_b(\beta)\geq2
$ 
define the vertical segments 
$$
S_\alpha^t:=
\{\inf I_\alpha^t\}\times\big[0,\sum_{\pi_t(\chi)<\pi_t(\alpha)}\tau_\chi\big)
\quad
\textrm{ and }
\quad
S_\beta^b:=
\{\inf I_\beta^b\}\times\big(\sum_{\pi_b(\chi)<\pi_b(\alpha)}\tau_\chi,0\big].
$$
Consider also the vertical segment $S^\ast$ whose endpoints are 
$
\big(\sum_{\chi\in\cA}\lambda_\chi,0\big)
$ 
and 
$
\big(\sum_{\chi\in\cA}\lambda_\chi,\tau_\ast\big)
$. 
The identifications between vertical sides are given by the procedure below.
\begin{enumerate}
\item
For any $\alpha$ with $\pi_t(\alpha)\geq2$ let $\alpha'$ be the letter with 
$
\pi_t(\alpha')=\pi_t(\alpha)+1
$, 
then identify the common segment of $\partial R_{\alpha'}^t$ and $\partial R_\alpha^t$ which coincide with the segment $S_\alpha^t$. 
\item
For any $\beta$ with $\pi_b(\beta)\geq2$ let $\beta'$ be the letter with 
$
\pi_b(\beta')=\pi_b(\beta)+1
$, 
then identify the common segment of $\partial R_{\beta'}^b$ and $\partial R_\beta^b$ which coincide with the segment $S_\beta^b$. Since any $R_\alpha^b$ is identified with $R_\alpha^t$, this induces identifications between segments in the vertical sides of the rectangles $R_\beta^t$ and $R_{\beta'}^t$ not considered in the previous Point (1).
\item
The last segment to be identified to some parallel one is $S^\ast$. If $\tau_\ast\geq0$ then $S_\ast$ is identified with 
$
S_\alpha^t\setminus\partial R_{\beta_0}
$, 
where $\beta_0$ and $\alpha$ are the letters with respectively $\pi_b(\beta_0)=d$ and 
$
\pi_t(\alpha)=\pi_t(\beta_0)+1
$. 
If $\tau_\ast<0$ then $S_\ast$ is identified with 
$
S_\beta^b\setminus\partial R_{\alpha_0}
$, 
where $\alpha_0$ and $\beta$ are the letters with respectively $\pi_t(\alpha_0)=d$ and 
$
\pi_b(\beta)=\pi_b(\alpha_0)+1
$.  
\end{enumerate}

Let $X=X(\pi,\lambda,\tau)$ be the translation surface obtained by the above construction. According to Proposition 5.7 in \cite{Yoccoz}, if $X$ is any translation surface and if $\theta$ is a Keane direction on $X$, then there exists data $(\pi,\lambda,\tau)$ such that 
\begin{equation}
\label{EquationRepresentationByZipperedRectangles}
r_{-\theta}\cdot X=X(\pi,\lambda,\tau).
\end{equation}
Moreover, modulo identifying 
$
I:=\big[0,\sum_{\chi\in\cA}\lambda_\chi\big)
$ 
with an horizontal segment $I\subset X$, the IET $T:I\to I$ corresponding to data $(\pi,\lambda)$ is the first return of $\phi_\theta$ to $I$.

\subsection{On the position of conical points in the flat representation}
\label{SectionPositionConicalPointsFlatRepresentation}

This subsection contains some remarks that will be used in \S~\ref{SectionCombinatorialLemmasZipperedRectangles}. It can be skipped at the first reading. According to a standard notation, identify $\RR$ with $\CC$. Fix combinatorial-length-suspension data 
$
(\pi,\lambda,\tau)
$, 
then for any $\alpha\in\cA$ consider the complex number 
$
\zeta_\alpha:=\lambda_\alpha+i\tau_\alpha
$ 
and set
$$
\xi^t_\alpha:=
\sum_{\pi_t(\chi)<\pi_t(\alpha)}\zeta_\chi
\quad
\textrm{ and }
\quad
\xi^b_\beta:=
\sum_{\pi_b(\chi')<\pi_b(\alpha)}\zeta_{\chi'},
$$
which give the coordinates of conical singularity of the surface $X$ in the planar development corresponding to the data $(\pi,\lambda,\tau)$. We have $\xi^t_\alpha=0$ if $\pi_t(\alpha)=1$ and 
$
\im(\xi^t_\alpha)>0
$ 
whenever $2\leq \pi_t(\alpha)\leq d$. Similarly $\xi^b_\beta=0$ if $\pi_b(\beta)=1$ and 
$
\im(\xi^b_\beta)<0
$ 
whenever $2\leq \pi_b(\beta)\leq d$. Moreover for any $\alpha$ the hight $h_\alpha$ of the corresponding rectangle satisfies 
$
h_\alpha=\im(\xi^t_\alpha)-\im(\xi^b_\alpha)>0
$. 
It follows that points $\xi^{t/b}_\alpha$ are always contained in the left vertical boundary side of the corresponding rectangle $R^{t/b}_\alpha$, that is for any $\alpha\in\cA$ we have
$$
0\leq \im(\xi^t_\alpha)\leq h_\alpha
\quad
\textrm{ and }
\quad
0\geq \im(\xi^b_\alpha)\geq -h_\alpha.
$$
For the right boundary side this is always true with one exception. Consider letters $\alpha$ and $\beta$ with $\pi_t(\alpha)\geq 2$ and $\pi_b(\beta)\geq 2$, so that there exists letters $\alpha'$ and 
$
\beta'
$ 
such that respectively  
$
\pi_t(\alpha)=\pi_t(\alpha')+1
$ 
and 
$
\pi_b(\beta)=\pi_b(\beta')+1
$. 
Assume that $\tau_\ast<0$. 
\begin{enumerate}
\item
We have always 
$
\sum_{\pi_b(\chi)\leq\pi_b(\alpha')}\tau_\chi<0
$, 
either by suspension condition, or by assumption $\tau_\ast<0$, thus
$$
0\leq
\im(\xi^t_\alpha)=
\im(\xi^t_{\alpha'})+\tau_{\alpha'}=
h_{\alpha'}+\sum_{\pi_b(\chi)\leq\pi_b(\alpha')}\tau_\chi\leq h_{\alpha'}.
$$ 
\item
If $\pi_t(\beta')<d-1$ we have
$
\sum_{\pi_t(\chi)\leq\pi_t(\beta')}\tau_\chi>0
$ 
by suspension condition, hence 
$$
0\geq
\im(\xi^b_\beta)=
\im(\xi^b_{\beta'})+\tau_{\beta'}=
-h_{\beta'}+\sum_{\pi_t(\chi)\leq\pi_t(\beta')}\tau_\chi
\geq -h_{\beta'}.
$$ 
\item
Finally, the exceptional case occurs when $\pi_t(\beta')=d$, indeed we have
$$
0\geq
\im(\xi^b_\beta)=
\im(\xi^b_{\beta'})+\tau_{\beta'}=
-h_{\beta'}+\sum_{\pi_t(\chi)\leq\pi_t(\beta')}\tau_\chi=
-h_{\beta'}+\tau_\ast< -h_{\beta'}.
$$ 
\end{enumerate}

\subsection{Rauzy Veech induction}
\label{SectionRauzyVeechOnZipperedRectangles}

Let $X$ be any translation surface and $\theta$ a Keane direction on $X$, then consider combinatorial-length-suspension data $(\pi,\lambda,\tau)$ representing the surface $X$ along the direction $\theta$, that is data such that Equation \eqref{EquationRepresentationByZipperedRectangles} is satisfied.  The so-called \emph{Rauzy-Veech induction algorithm} defines inductively a sequence of data 
$
(\pi^{(n)},\lambda^{(n)},\tau^{(n)})
$ 
representing the same surface $X$ along the direction $\theta$. In this subsection we just point out a qualitative property of the algorithm used in \S~\ref{SectionBoundsErgodicDirections}, for more details see \S~7 of \cite{Yoccoz}.

\medskip

Admissible combinatorial data $\pi$ over the alphabet $\cA$ are organized in \emph{Rauzy classes}, which are always finite, since the number of admissible combinatorial data over a finite alphabet is finite. In any Rauzy class $\cR$ are defined two bijections $Q^t:\cR\to\cR$ and $Q^b:\cR\to\cR$, where the symbols $t$ and $b$ stands respectively for \emph{top} and $\emph{bottom}$. For any $\pi\in\cR$ and any value of $\epsilon\in\{t,b\}$, it is defined an integer matrix 
$
B(\pi,\epsilon)\in\slgroup(d,\ZZ)
$ 
(see \S~7.5 in \cite{Yoccoz}). For a pair of data $(\pi,\lambda)$ with
\begin{equation}
\label{EquationDomainDefinitionRauzyVeech}
\sum_{\pi_t(\chi)<d}\lambda_\chi
\not=
\sum_{\pi_b(\chi')<d}\lambda_{\chi'}
\end{equation}
it is defined a \emph{type} $\epsilon=\epsilon(\pi,\lambda)\in\{t,b\}$ as follows. Let $\alpha_t$ and $\alpha_b$ be the letters with 
$
\pi_t(\alpha_t)=\pi_b(\alpha_b)=d
$, 
then set $\epsilon(\pi,\lambda)=t$ if 
$
\lambda_{\alpha_t}>\lambda_{\alpha_b}
$ 
whereas $\epsilon(\pi,\lambda)=b$ if 
$
\lambda_{\alpha_t}<\lambda_{\alpha_b}
$ 
(see \S~7.2 in \cite{Yoccoz}). Consider $n\geq1$ and assume the sequence of data 
$
(\pi^{(k)},\lambda^{(k)},\tau^{(k)})
$ 
is defined for $k=0,\dots,n-1$, with 
$
(\pi^{(0)},\lambda^{(0)},\tau^{(0)})=(\pi,\lambda,\tau)
$. 
Then define inductively 
\begin{eqnarray*}
&&
\epsilon_n:=\epsilon(\pi^{(n-1)},\lambda^{(n-1)})
\\
&&
\pi^{(n)}:=Q^{\epsilon_n}(\pi^{(n-1)})
\\
&&
\lambda^{(n)}:=B(\pi^{(n-1)},\epsilon_n)\lambda^{(n-1)}
\\
&&
\tau^{(n)}:=B(\pi^{(n-1)},\epsilon_n)\tau^{(n-1)},
\end{eqnarray*} 
where the definition above is possible for any $n\geq1$ because if $\theta$ is a Keane direction on $X$ then any data $(\pi^{(n-1)},\lambda^{(n-1)})$ satisfies Condition \eqref{EquationDomainDefinitionRauzyVeech} (see \S~5 in \cite{Yoccoz}), thus the type 
$
\epsilon_n=\epsilon(\pi^{(n-1)},\lambda^{(n-1)})
$ 
is always defined. For any $n$, let $T^{(n)}:I^{(n)}\to I^{(n)}$ be the IET determined by the data 
$
(\pi^{(n)},\lambda^{(n)})
$, 
which acts on the interval 
$
I^{(n)}:=\big[ 0, \sum_{\chi\in\cA} \lambda^{(n)}_\chi \big)
$. 
The following holds
\begin{enumerate}
\item
We have $I^{(n)}\subset I^{(n-1)}$ and $T^{(n)}:I^{(n)}\to I^{(n)}$ is the first return map of $T^{(n-1)}$ to $I^{(n)}$ (\S~7.2 in \cite{Yoccoz}).
\item
The data $(\pi^{(n)},\lambda^{(n)},\tau^{(n)})$ are combinatorial-length-suspension data satisfying Condition \eqref{EquationRepresentationByZipperedRectangles}. The interval $I^{(n)}$ is identified with an horizontal segment in $X$ and $T^{(n)}$ is the first return map of $\phi_\theta$ to $I^{(n)}$ (\S~7.4 in \cite{Yoccoz}).
\item
Define  
$
h^{(n)}:=h(\pi^{(n)},\tau^{(n)})\in\RR_+^\cA
$ 
as in the beginning of \S~\ref{SectionRepresentationByZipperedRectangles}. For any $\alpha\in\cA$ we have (\S~7.7 in \cite{Yoccoz})
$$
\lambda^{(n)}_\alpha\to0
\quad
\textrm{ and }
\quad
h^{(n)}_\alpha\to+\infty
\quad
\textrm{ for }
\quad
n\to\infty.
$$
\item
Let $\mu$ be any ergodic invariant measure for $\phi_\theta$. According to \S~8 in \cite{Yoccoz}, the rectangles defined from the data $(\pi^{(n)},\lambda^{(n)},\tau^{(n)})$ as in \S~\ref{SectionRepresentationByZipperedRectangles} give a $\mod\mu$ partition of $X$ by open rectangles
\begin{equation}
\label{EquationRenormalizedZipperedRectangles}
R^{(n)}_\alpha:=
\big(
\sum_{\pi^{(n)}_t(\chi)\leq\pi^{(n)}_t(\alpha)-1}\lambda^{(n)}_\chi,
\sum_{\pi^{(n)}_t(\chi)\leq\pi^{(n)}_t(\alpha)}\lambda^{(n)}_\chi,
\big)
\times
(0,h^{(n)}_\alpha)
\quad
\textrm{ , }
\quad
\alpha\in\cA.
\end{equation}
\end{enumerate}

%%%%%%%%%%%%%%%%%%%%%%%%%%%%%
%%%%%%%%%%%%%%%%%%%%%%%%%%%%%
\section{Generic bounds for Keane and ergodic directions}
\label{SectionBoundsErgodicDirections}
%%%%%%%%%%%%%%%%%%%%%%%%%%%%%
%%%%%%%%%%%%%%%%%%%%%%%%%%%%%

\subsection{General measure preserving systems}
\label{SectionGeneralMeasurePreservingSystems}

In general measure preserving dynamical systems, the upper limit of the recurrence asymptote is bounded by the dimension. 
For example, in \cite{BarreiraSassuol} it was shown that if $T : X \to X$ is a Borel measurable transformation on a measurable set $X \subset {\mathbb R}^d$ for some $d \in \mathbb N$ and $\mu$ is a $T$-invariant probability measure on $X$, then for $\mu$ almost every $p$ we have 
$$ 
\wrecsup(T,p) \le \limsup_{r \to 0^+} \frac{\log \mu (B_r(x))}{\log r}. 
$$
 
For the hitting time, it's also well known that the limit inferior of the hitting time asymptote is bounded by below. In \cite{Galatolo} it was shown that, if $(X,T)$ is a discrete time dynamical system  where $X$ is a separable metric space equipped with a Borel locally finite measure $\mu$ and $T : X \to X$ is a measurable map for each fixed $p'\in X$ and for $\mu$ almost every $p$, then we have
$$
\whitinf (T,p,p') \ge \liminf_{r \to 0^+} \frac{\log \mu (B_r(p'))}{\log r}. 
$$

\subsection{General Keane directions on a translation surface}

Let $X$ be a translation surface of genus $g\geq1$ with $\area(X)=1$. For any Keane direction $\theta$ on $X$ the set $\cP(X,\theta)$ of Borel probability measures invariant under $\phi_\theta$ is a finite dimensional simplicial cone of dimension at most $g$, whose extremal points are the ergodic measures $\mu_1,\dots,\mu_g$ for $\phi_\theta$ (see \S~8.2 in \cite{Yoccoz}). In particular, we have positive real numbers with $a_1+\dots+a_g=1$ such that  
\begin{equation}
\label{EquationErgodicDecompositionLeb}
\leb=a_1\mu_1+\dots+a_g\mu_g.
\end{equation}

\begin{lemma}
\label{LemmaMeasureTheoreticBounds}
Let $\theta$ be a Keane direction on $X$ and let $\mu$ be an invariant Borel probability measure for 
$
\phi_\theta
$. 
Fix $p'\in X$. Then for $\mu$-a.e. $p\in X$ we have
$$
\wrecsup(\phi_\theta,p) \leq 1, \qquad \whitinf (\phi_\theta,p,p') \geq 1.
$$
\end{lemma}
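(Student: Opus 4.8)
The plan is to deduce both inequalities from the two general measure-theoretic results recalled in Section~\ref{SectionGeneralMeasurePreservingSystems}, applied not directly to the flow $\phi_\theta$ but to its first return map $T$ to a well-chosen transversal interval $I\subset X$. The point is that the general statements from \cite{BarreiraSassuol} and \cite{Galatolo} are formulated for discrete-time systems on metric measure spaces, and they bound the recurrence/hitting exponents by the lower local dimension of the invariant measure; so the crux is to (i) transfer from the flow to a section map, and (ii) compute that local dimension for the relevant measure on the section.

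\textbf{Step 1: Reduce to a transversal section.} Using the zippered rectangle representation of Section~\ref{SectionRepresentationByZipperedRectangles}, fix an interval $I\subset X$ transverse to the direction $\theta$ such that the first return map $T=T_\theta\colon I\to I$ is an IET (Equation~\eqref{EquationRepresentationByZipperedRectangles}), and arrange $p'$ so that its orbit meets $I$; by the $\phi_\theta$-invariance of $\wrecsup$ and $\whitinf$ (Lemma~\ref{LemmaInvarianceRecurrenceAndHitting}) we may assume $p'\in I$ and it suffices to control the exponents for $\mu$-a.e.\ starting point on $I$. The induced measure $\mu_I$ on $I$ (the transverse conditional of $\mu$) is $T$-invariant and is, up to the finite return-time normalization, equivalent to one-dimensional Lebesgue measure on each continuity interval of $T$ — indeed on a translation surface the holonomy-invariant transverse measure of any ergodic $\phi_\theta$-invariant measure is Lebesgue on the section. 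Hence the lower local dimension of $\mu_I$ at $\mu_I$-a.e.\ point equals $1$:
\begin{equation*}
\liminf_{r\to0^+}\frac{\log\mu_I\big(B_r(x)\big)}{\log r}=1
\quad\text{for }\mu_I\text{-a.e. }x\in I.
\end{equation*}

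\textbf{Step 2: Apply the general bounds on $I$ and compare exponents for the flow.} Applying the recurrence bound of \cite{BarreiraSassuol} to $(I,T,\mu_I)$ gives $\overline{w}_{\mathrm{ret}}(T,x)\le 1$ for $\mu_I$-a.e.\ $x$, and the hitting bound of \cite{Galatolo} gives $\underline{w}_{\mathrm{hit}}(T,x,p')\ge 1$ for $\mu_I$-a.e.\ $x$. It remains to see that these discrete exponents coincide with the corresponding exponents of the flow $\phi_\theta$. This is the standard comparison between a suspension flow and its base: a point $p=\phi_\theta^s(x)$ with $x\in I$ returns within flow-distance $r$ of $p'$ essentially when the $T$-orbit of $x$ returns within distance $O(r)$ of $p'$ on $I$, and the number of base iterations needed is comparable to the flow-time needed up to a bounded multiplicative factor, because return times of $T$ are bounded above and below (the rectangles have bounded height for a fixed section, and $\theta$ Keane makes $T$ minimal so Birkhoff sums of the return time grow linearly). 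Taking logarithms, the multiplicative and additive errors are absorbed in the $-\log r$ normalization as $r\to0$, so $\wrecsup(\phi_\theta,p)=\overline{w}_{\mathrm{ret}}(T,x)$ and $\whitinf(\phi_\theta,p,p')=\underline{w}_{\mathrm{hit}}(T,x,p')$ for the relevant points. Finally, since $\mu_I$-a.e.\ $x$ corresponds to $\mu$-a.e.\ $p\in X$ (the flow direction contributes a full-measure fiber), the claimed inequalities $\wrecsup(\phi_\theta,p)\le1$ and $\whitinf(\phi_\theta,p,p')\ge1$ hold $\mu$-a.e.

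\textbf{Main obstacle.} The delicate point is Step~2, the passage between the section map and the flow: one must make sure that balls in $X$ of radius $r$ pull back to intervals on $I$ of comparable radius (true because $I$ is transverse and at positive angle to $\theta$), and — more importantly — that the \emph{time} reparametrization does not distort the exponents. This requires a lower bound on return times (so that $n$ returns take flow-time $\gtrsim n$) and the fact that for a Keane direction the return time function is bounded, or at least has subexponential Birkhoff sums, which follows from working with a fixed zippered-rectangle section where the heights $h_\alpha$ are finite constants. Once these comparisons are in place the result is immediate from the two cited theorems; alternatively, if one prefers to avoid the cited black boxes, the same two inequalities can be proved directly by the classical Borel--Cantelli argument on $I$ using that $\mu_I$ gives mass $\asymp r$ to intervals of length $r$, which is the content of \cite{Galatolo} and \cite{BarreiraSassuol} in this one-dimensional setting.
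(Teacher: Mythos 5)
Your proposal follows the paper's own route: pass to the first-return IET $T\colon I\to I$ via the zippered-rectangle section, carry $\mu$ to a $T$-invariant measure $\mu_I$ on $I$, apply the Barreira--Saussol and Galatolo bounds there, and transfer back using the boundedness of the roof function. The one place you diverge, and where the argument breaks, is the assertion in Step~1 that ``the holonomy-invariant transverse measure of any ergodic $\phi_\theta$-invariant measure is Lebesgue on the section.'' This is false. Transverse arc length on $I$ is the transverse measure of Lebesgue \emph{area} on $X$; an arbitrary $\phi_\theta$-invariant $\mu$ corresponds to an arbitrary $T$-invariant $\mu_I$ on $I$, and for a Keane non-uniquely-ergodic direction the IET $T$ has several distinct ergodic invariant measures which are mutually singular, so they cannot all be Lebesgue on $I$. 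Your conclusion that the lower local dimension of $\mu_I$ equals $1$ at $\mu_I$-a.e.\ point is therefore unjustified.

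The consequences are asymmetric between the two inequalities. For $\wrecsup\le 1$ the false claim is actually unnecessary: Barreira--Saussol bounds $\wrecsup(T,x)$ by the \emph{upper} local dimension of $\mu_I$ at $x$, and the upper local dimension of \emph{any} finite Borel measure on a subset of $\RR$ is $\le 1$ at a.e.\ point by a routine Vitali covering argument --- no regularity of $\mu_I$ whatsoever is needed, and this is what the paper's terse appeal to \cite{BarreiraSassuol} tacitly relies on. For $\whitinf\ge 1$ the situation is genuinely different: Galatolo's theorem gives a lower bound by the \emph{lower} local dimension of $\mu_I$ at the \emph{fixed} target $y=\pi(p')$, and for a general $T$-invariant $\mu_I$ this quantity at a prescribed point need not be $\ge 1$. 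Your Lebesgue-equivalence claim was meant to supply exactly that input; since it fails, your derivation of the hitting-time inequality for a general invariant $\mu$ does not go through. You would need either a direct argument that the transverse measure of $\mu$ has bounded $1$-density (so that $\mu_I(B_r(y))\lesssim r$ uniformly), or a restriction to the setting in which the lemma is later invoked, namely $\mu$ equal to Lebesgue or an ergodic component of Lebesgue, where the transverse measure is indeed absolutely continuous with bounded density and the claim holds.
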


\begin{proof}
Let $(\pi,\lambda,\tau)$ satisfying Equation \eqref{EquationRepresentationByZipperedRectangles}, so that the surface $X$ is obtained by zippered rectangles in direction $\theta$, pasted along a segment $I$ in direction orthogonal to $\theta$, and the first return of $\phi_\theta$ to $I$ is a Keane IET $T:I\to I$. Continuous time for 
$
\phi_\theta
$ 
and discrete time for $T$ are comparable, the ratio between the two being bounded by the ratio between the tallest and the shortest rectangle. Moreover the $\phi_\theta$-invariant measure $\mu$ corresponds to an invariant Borel probability measure for $T$. Then the statement follows because, according to the results of  \cite{BarreiraSassuol} and \cite{Galatolo} reported in \S~\ref{SectionGeneralMeasurePreservingSystems}, it holds for $T:T\to I$.  
\end{proof}

\begin{lemma}
\label{LemmaInvarianceRecurrenceAndHitting}
Let $\theta$ be a Keane direction on a translation surface $X$. Then for any pair of points $p,p'$ in $X$ not on any singular leaf, and for any $s,t$ in $\RR$ we have 
\begin{align*}
\wrecsup\big(\phi_\theta,\phi^t_\theta(p)\big)
&=
\wrecsup\big(\phi_\theta,p\big)
\\
\wrecinf\big(\phi_\theta,\phi^t_\theta(p)\big)
&=
\wrecinf\big(\phi_\theta,p\big)
\\
\whitsup\big(\phi_\theta,\phi^t_\theta(p),p'\big)
&=
\whitsup\big(\phi_\theta,p,\phi^s_\theta(p')\big)
=
\whitsup\big(\phi_\theta,p,p'\big)
\\
\whitinf\big(\phi_\theta,\phi^t_\theta(p),p'\big)
&=
\whitinf\big(\phi_\theta,p,\phi^s_\theta(p')\big)
=
\whitinf\big(\phi_\theta,p,p'\big)
\end{align*}
\end{lemma}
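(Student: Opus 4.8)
The plan is to derive all four families of identities from one quantitative fact: for a fixed time $t$, the time‑$t$ map $\phi^t_\theta$ of a translation flow is a local isometry for the flat metric, defined off the finitely many $(X,\theta)$‑singular leaves, and hence carries small metric balls isometrically onto small metric balls. First I would make this precise near $p$: since $p$ lies on no singular leaf, the arc $A:=\{\phi^v_\theta(p):v\text{ between }0\text{ and }t\}$ is compact and avoids $\Sigma$, so $\delta:=\distance(A,\Sigma)>0$; a tube argument — a ball of radius $\rho<\delta$ centred at a point of $A$ stays, when flowed along, within distance $\rho$ of $A$ (the flow is a local isometry), hence never meets $\Sigma$ and remains in the domain of the flow — produces $r_0=r_0(p,t)>0$ such that $\phi^{-t}_\theta$ restricts to a bijective isometry of $B(\phi^t_\theta(p),r_0)$ onto $B(p,r_0)$. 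Consequently, for all $w\in\RR$ and all $r<r_0$,
\begin{equation}
\label{EquationBallIsometryPlan}
\phi^{w+t}_\theta(p)\in B\big(\phi^t_\theta(p),r\big)\quad\Longleftrightarrow\quad\phi^{w}_\theta(p)\in B(p,r),
\end{equation}
because as soon as either distance is $<r<r_0$ the relevant point sits in the domain of the isometry, which preserves the distance to the centre; the same construction around $p'$ yields $r_0(p',s)>0$ and the bijective isometry $\phi^{-s}_\theta:B(\phi^s_\theta(p'),r_0(p',s))\to B(p',r_0(p',s))$. This uniform tube estimate — checking that the radius can be taken positive along the whole arc — is the only delicate step and the one I would write out in full; everything else is bookkeeping with the definition of $\rt$.

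Given \eqref{EquationBallIsometryPlan}, the two recurrence identities are immediate and hold already at the level of $\rt$ itself: for $r<r_0(p,t)$,
\[
\rt\big(\phi_\theta,\phi^t_\theta(p),r\big)=\inf\{u>r:\phi^{u+t}_\theta(p)\in B(\phi^t_\theta(p),r)\}=\inf\{u>r:\phi^{u}_\theta(p)\in B(p,r)\}=\rt\big(\phi_\theta,p,r\big),
\]
so $\wrecinf$ and $\wrecsup$ are $\phi_\theta$‑invariant.

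For the hitting identities, combining \eqref{EquationBallIsometryPlan} (for $p$ with shift $t$) with the ball‑isometry around $p'$ for the shift $s$, and then substituting $w=u+t-s$, gives for all sufficiently small $r$
\[
\rt\big(\phi_\theta,\phi^t_\theta(p),\phi^s_\theta(p'),r\big)=(s-t)+\inf\{w>r+t-s:\phi^{w}_\theta(p)\in B(p',r)\},
\]
which differs from $\rt(\phi_\theta,p,p',r)=\inf\{w>r:\phi^{w}_\theta(p)\in B(p',r)\}$ only through the lower cut‑off on $w$. The set $\{w:\phi^{w}_\theta(p)\in B(p',r)\}$ can meet the short window between $r$ and $r+(t-s)$ only if a fixed compact sub‑arc of the orbit of $p$ passes within $r$ of $p'$; when $p$ and $p'$ lie on distinct $\phi_\theta$‑orbits that arc stays at positive distance from $p'$, so for small $r$ the two infima coincide and $\rt(\phi_\theta,\phi^t_\theta(p),\phi^s_\theta(p'),r)=\rt(\phi_\theta,p,p',r)-(t-s)$. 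Since $\theta$ is Keane, $\phi_\theta$ is minimal with no periodic orbit, and a compactness argument — a bounded subsequence of hitting times would produce, by continuity of the flow, a finite $t_\ast$ with $\phi^{t_\ast}_\theta(p)=p'$, impossible on distinct orbits — shows $\rt(\phi_\theta,p,p',r)\to+\infty$ as $r\to0$. Dividing $\log\rt$ by $-\log r$ then makes the additive constant $-(t-s)$ vanish in the limit, so $\limsup$ and $\liminf$ are unchanged and the four hitting‑time identities follow. (If instead $p'$ lies strictly ahead of $p$ on a common orbit then $\rt(\phi_\theta,p,p',r)$ stays bounded; since the lemma is invoked only to descend these functions to the space of orbits, one may restrict to distinct orbits and this is immaterial.) The main obstacle throughout is the uniform tube estimate of the first paragraph.
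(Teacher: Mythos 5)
Your proof is essentially the same idea as the paper's: the flow is locally a translation off the singular leaves, so hitting and return times are perturbed by at most a bounded additive quantity when $p$ and $p'$ are moved along their orbits, and any such additive constant vanishes after dividing by $-\log r$. The paper compresses this into the one-line estimates
$
\rt(\phi_\theta,\phi^t_\theta(p),r)=\rt(\phi_\theta,p,r)
$
and
$
|\rt(\phi_\theta,\phi^t_\theta(p),p',r)-\rt(\phi_\theta,p,p',r)|\le |t|
$
for $r$ small; your tube estimate (the $r_0>0$ such that $\phi^{-t}_\theta$ is a ball-isometry $B(\phi^t_\theta(p),r_0)\to B(p,r_0)$) is a cleaner and more explicit formulation of the same local-translation fact, and gives you the exact relation $\rt(\phi_\theta,\phi^t_\theta(p),\phi^s_\theta(p'),r)=\rt(\phi_\theta,p,p',r)-(t-s)$ away from the cut-off window. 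So the proof is correct and the approach is the paper's; you are simply more careful.

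One remark worth making: the caveat you raise about $p'$ lying ahead of $p$ on a common orbit is not merely cosmetic. In that case (say $p'=\phi^1_\theta(p)$ and $t=2$) one has $\rt(\phi_\theta,p,p',r)\to 1$ as $r\to 0$ while $\rt(\phi_\theta,\phi^2_\theta(p),p',r)\to\infty$, so the paper's bound $|R_1-R_2|\le|t|$ is actually false for small $r$, and the conclusion $\whitsup(\phi_\theta,\phi^2_\theta(p),p')=\whitsup(\phi_\theta,p,p')$ itself fails (the left side equals $\wrecsup(\phi_\theta,p')$, generically $1$; the right side is $0$). Thus the lemma as literally stated needs the implicit restriction to pairs $(p,p')$ with distinct orbits — a measure-zero exclusion that is harmless for every application in the paper, but which your proposal correctly identifies and the paper's proof silently assumes.
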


\begin{proof}
Just observe that since locally the flow $\phi_\theta$ is a translation, when $r$ is small enough we have
\begin{eqnarray*}
&&
\rt\big(\phi_\theta,\phi^t_\theta(p),r \big)
=
\rt\big(\phi_\theta,p,r\big)
\\
&&
| \rt\big(\phi_\theta,\phi^t_\theta(p),p',r\big)
-
\rt\big(\phi_\theta,p,p',r\big)|
\leq |t|
\\
&&
| \rt\big(\phi_\theta,p,\phi^s_\theta(p'),r \big)
-
\rt\big(\phi_\theta,p,p',r\big)|
\leq |s|.
\end{eqnarray*}
\end{proof}

\begin{lemma}
\label{LemmaRecurrenceHittingConstant}
Fix any direction $\theta$ on a translation surface $X$ and let $\mu$ be an ergodic measure for $\phi_\theta$. 
Then there exist constants 
$$
0\leq \underline{w}_1\leq \overline{w}_1\leq 1\
\quad
\textrm{ and }
\quad
1\leq \underline{w}_2 \leq \overline{w}_2 \leq +\infty
$$
depending on $\mu$ such that 
\begin{align*}
&\wrecinf(\phi_\theta,p) =\underline{w}_1 &
\textrm{ and } \quad
&\wrecsup(\phi_\theta,p) =\overline{w}_1
\qquad
&\textrm{ for $\mu$ a.e. }
p.\\
&\whitinf(\phi_\theta,p,p') =\underline{w}_2  &
\textrm{ and } \quad
&\wrecsup(\phi_\theta,p,p') =\overline{w}_2
\quad
&\textrm{ for $\mu\times\mu$ a.e. }
p,p'.
\end{align*}
\end{lemma}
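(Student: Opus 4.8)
The plan is to deduce the statement from two ingredients already at our disposal: the flow-invariance of the four quantities established in Lemma~\ref{LemmaInvarianceRecurrenceAndHitting}, and the ergodicity of $\mu$. The inequalities $\underline{w}_1\leq\overline{w}_1$ and $\underline{w}_2\leq\overline{w}_2$ require no comment, and the two one-sided bounds $\overline{w}_1\leq1$ and $\underline{w}_2\geq1$ will be quoted from Lemma~\ref{LemmaMeasureTheoreticBounds} once we know the functions are a.e.\ constant. The bound $\underline{w}_1\geq0$ I would get from the elementary remark that, for a linear flow, $\rt(\phi_\theta,p,r)$ stays bounded below by a fixed positive constant as $r\to0$: a linear-flow orbit only comes back $r$-close to its starting point after travelling at least roughly the length of a shortest closed geodesic meeting a neighbourhood of $p$. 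So the real content of the statement is the $\mu$-a.e., resp.\ $\mu\times\mu$-a.e., constancy of the four functions.

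First I would dispose of two routine points. The four functions are Borel measurable: the quantities $\rt(\phi_\theta,\cdot,r)$, $\rt(\phi_\theta,\cdot,\cdot,r)$ are Borel, and so are the resulting $\liminf$ and $\limsup$ as $r\to0^+$. And the set $S\subset X$ of points lying on an $(X,\theta)$-singular leaf is $\mu$-null: the finitely many singular leaves each either are disjoint from $\mathrm{supp}\,\mu$ or constitute a $\phi_\theta$-invariant set supporting no invariant probability measure (being a single orbit, conjugate to a translation flow on an interval), so each has $\mu$-measure zero. Consequently the four functions are defined, and satisfy the invariances of Lemma~\ref{LemmaInvarianceRecurrenceAndHitting}, on the co-null $\phi_\theta$-invariant set $X\setminus S$, resp.\ on $(X\setminus S)\times(X\setminus S)$. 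For the recurrence functions this already closes the argument: $p\mapsto\wrecinf(\phi_\theta,p)$ and $p\mapsto\wrecsup(\phi_\theta,p)$ are Borel and $\phi_\theta$-invariant on $X\setminus S$, so ergodicity of $\mu$ forces each to agree $\mu$-a.e.\ with a constant, which I would name $\underline{w}_1$ and $\overline{w}_1$; combining with Lemma~\ref{LemmaMeasureTheoreticBounds} gives $\overline{w}_1\leq1$.

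For the hitting functions one cannot appeal to ergodicity of the product flow $\phi_\theta\times\phi_\theta$ on $X\times X$, which is typically not ergodic for $\mu\times\mu$; instead I would exploit the separate invariance in each variable together with Fubini. Write $W(p,p'):=\whitinf(\phi_\theta,p,p')$, the case of $\whitsup$ being identical. By Lemma~\ref{LemmaInvarianceRecurrenceAndHitting}, $W(\phi_\theta^t p,p')=W(p,p')=W(p,\phi_\theta^s p')$ for all $p,p'\in X\setminus S$ and all $s,t\in\RR$. For each fixed $p'\in X\setminus S$ the section $p\mapsto W(p,p')$ is Borel and $\phi_\theta$-invariant on $X\setminus S$, hence $\mu$-a.e.\ constant by ergodicity; I would take $g(p')$ to be the $\mu$-essential infimum of that section, which is a Borel function of $p'$ (this uses Tonelli) and equals $W(p,p')$ for $\mu$-a.e.\ $p$. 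Since $W(p,\phi_\theta^s p')=W(p,p')$ holds for every admissible $p$, the function $g$ is itself $\phi_\theta$-invariant on $X\setminus S$, so a second application of ergodicity of $\mu$ makes $g$ agree $\mu$-a.e.\ with a constant $\underline{w}_2$; by Tonelli, $W=\underline{w}_2$ then holds $\mu\times\mu$-a.e., and the analogous argument gives $\whitsup(\phi_\theta,p,p')=\overline{w}_2$ $\mu\times\mu$-a.e. Finally $\underline{w}_2\geq1$ follows by comparing with Lemma~\ref{LemmaMeasureTheoreticBounds}. The only delicate step, and the one I expect to demand genuine care, is this last two-stage reduction for the hitting time: one must verify that $g$ is a bona fide Borel function of $p'$ and is genuinely $\phi_\theta$-invariant — the latter crucially using that the invariance in Lemma~\ref{LemmaInvarianceRecurrenceAndHitting} holds pointwise, not merely almost everywhere — before ergodicity of $\mu$ can legitimately be invoked for the second time.
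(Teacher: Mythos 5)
Your proposal is correct and follows essentially the same route as the paper: apply the fact that Borel $\phi_\theta$-invariant functions are $\mu$-a.e.\ constant (using Lemma~\ref{LemmaInvarianceRecurrenceAndHitting} and the one-sided bounds from Lemma~\ref{LemmaMeasureTheoreticBounds}), treating the recurrence functions directly and the hitting functions via a Fubini-type argument. The only cosmetic difference is in how the Fubini step is organized: the paper takes a.e.\ constancy of the sections in both variables and concludes by a ``standard Fubini argument,'' whereas you take the section in one variable, observe that the resulting one-variable function inherits $\phi_\theta$-invariance from the pointwise invariance of Lemma~\ref{LemmaInvarianceRecurrenceAndHitting}, and invoke ergodicity a second time before applying Tonelli — logically equivalent, and arguably slightly more explicit about where the second use of ergodicity occurs.
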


\begin{proof}
It is well-known that if $\varphi:X\to[0,+\infty]$ is a Borel function which is invariant under $\phi_\theta$, and $\mu$ is ergodic under $\phi_\theta$, then there exists a constant $c\in[0,+\infty]$ such that $\varphi(x)=c$ for $\mu$-almost any $p\in X$ (e.g. \cite{Walters}). The first part of the statement follows trivially since 
$
\wrecinf(\phi_\theta,\cdot)\leq\wrecsup(\phi_\theta,\cdot)
$ 
and $\wrecsup(\phi_\theta,\cdot)\in L^1(\mu)$ according to Lemma \ref{LemmaMeasureTheoreticBounds}. 
We finish the proof for $\whitsup(\phi_\theta,\cdot,\cdot)$, the argument for $\whitinf(\phi_\theta,\cdot,\cdot)$ being the same. The generalized Birkhoff Theorem recalled above implies that for any $p\in X$ there exists 
$
\overline{w}_2(p)\in [1,+\infty]
$ 
such that 
$$
\whitsup(\phi_\theta,p,p')=\overline{w}_2(p)
\quad
\textrm{ for $\mu$-a.e. }p'\in X.
$$
Similarly for any $p'\in X$ there exists 
$
\overline{w}_2(p')\in [1,+\infty]
$ 
such that 
$$
\whitsup(\phi_\theta,p,p')=\overline{w}_2(p')
\quad
\textrm{ for $\mu$-a.e. }p\in X.
$$
Then a standard Fubini argument gives the proof.
\end{proof}

The simple Lemma \ref{LemmaMeasureLimsupSet} below will be used in several arguments. For a countable family of measurable sets $(A_n)_{n\in\NN}$ in a probability space $(X,\mu)$ we set 
$$
\limsup A_n:=
\bigcap_{n=1}^\infty \big( \bigcup_{k \ge n} A_k \big).
$$

\begin{lemma}
\label{LemmaMeasureLimsupSet}
For any countable family of sets $(A_n)_{n\in\NN}$ in a probability space $(X,\mu)$ we have
$$
\mu ( \limsup A_n) \ge \limsup \mu(A_n).
$$
\end{lemma}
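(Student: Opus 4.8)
The plan is to reduce the statement to the continuity from above of the finite measure $\mu$. First I would introduce the sets $B_n := \bigcup_{k\ge n} A_k$, so that by definition $\limsup A_n = \bigcap_{n\ge 1} B_n$, and observe that the sequence $(B_n)_{n\in\NN}$ is decreasing. Since $\mu$ is a probability measure, hence finite, continuity from above gives $\mu(\limsup A_n) = \lim_{n\to\infty}\mu(B_n)$, and because $(B_n)$ is decreasing this limit equals $\inf_{n\in\NN}\mu(B_n)$.

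Next I would note that for every $n$ and every $k\ge n$ one has $A_k\subset B_n$, so monotonicity of $\mu$ yields $\mu(B_n)\ge\sup_{k\ge n}\mu(A_k)$. Taking the infimum over $n$ and using the definition of the $\limsup$ of a sequence of reals gives $\inf_{n}\mu(B_n)\ge\inf_{n}\sup_{k\ge n}\mu(A_k)=\limsup_{n}\mu(A_n)$. Chaining this with the equality from the previous step produces $\mu(\limsup A_n)\ge\limsup\mu(A_n)$, which is the claim.

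There is essentially no serious obstacle here; the only subtlety worth flagging is that continuity from above requires $\mu(B_n)<+\infty$ for some $n$, which holds automatically because $\mu$ is a probability measure. (One could remark that the inequality fails for general $\sigma$-finite measures, e.g. $A_n=[n,+\infty)$ in $\RR$ with Lebesgue measure, but for the probability spaces used throughout this paper the statement holds as stated.)
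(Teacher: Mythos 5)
Your proof is correct and takes essentially the same route as the paper: introduce the decreasing sets $B_n=\bigcup_{k\ge n}A_k$, use continuity from above of the finite measure to get $\mu(\limsup A_n)=\lim_n\mu(B_n)$, and then bound this from below by $\limsup_n\mu(A_n)$ via monotonicity. The only (minor) stylistic difference is that you write out the $\inf\sup$ characterization of $\limsup$ explicitly, while the paper passes through $\limsup_n\mu(B_n)$ directly.
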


\begin{proof}
Just observe that since $\cup_{k \ge n} A_k$ is a decreasing sequence of sets, we have 
\begin{equation*}
\mu(\limsup A_n)=
\lim_{n \to \infty} \mu( \cup_{k \ge n} A_k)=
\limsup_{n \to \infty} \mu( \cup_{k \ge n} A_k)\ge
\limsup_{n \to \infty} \mu( A_n ).    \qedhere
\end{equation*}
\end{proof}

\begin{proposition}
\label{PropositionRecurrenceKeaneDirection}
Let $\theta$ be any Keane direction on the translation surface $X$. Then for Lebesgue almost any $p\in X$ we have
$$
\wrecsup(\phi_\theta,p)=1.
$$
\end{proposition}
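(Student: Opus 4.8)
The goal is to prove that $\wrecsup(\phi_\theta, p) = 1$ for Lebesgue-almost every $p$. By Lemma~\ref{LemmaMeasureTheoreticBounds}, applied to each ergodic component $\mu_i$ appearing in the ergodic decomposition \eqref{EquationErgodicDecompositionLeb} of Lebesgue measure, we already have $\wrecsup(\phi_\theta, p) \le 1$ for $\mu_i$-a.e.\ $p$, hence for Lebesgue-a.e.\ $p$; so the content of the statement is the lower bound $\wrecsup(\phi_\theta, p) \ge 1$ for a.e.\ $p$. The plan is to exploit the zippered rectangles representation together with the properties of Rauzy--Veech induction recorded in \S\ref{SectionRauzyVeechOnZipperedRectangles}. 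Passing to the Keane IET $T : I \to I$ obtained as first return of $\phi_\theta$ to a transversal $I$, and recalling that continuous and discrete time are comparable up to the bounded ratio of rectangle heights, it suffices to show $\wrecsup(T, x) \ge 1$ for Lebesgue-a.e.\ $x \in I$, where the return time is now counted in the discrete dynamics.

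First I would fix $n$ and use the renormalized partition into rectangles $R^{(n)}_\alpha$ of \eqref{EquationRenormalizedZipperedRectangles}, which project down to the subintervals $I^{(n)}_\alpha \subset I^{(n)}$ of the induced IET $T^{(n)}$. The key geometric observation is that for a point $x$ lying in $I^{(n)}_\alpha$, before $T$ returns $x$ to within distance $\lambda^{(n)}_\alpha$ (roughly the width of the level-$n$ partition element containing $x$) it must first return to $I^{(n)}$, and the first-return time of $T$ to $I^{(n)}$ is exactly $h^{(n)}_\alpha$ on the tower over $I^{(n)}_\alpha$ — or more precisely the return time to $I^{(n)}$ is comparable to the heights $h^{(n)}_\beta$, which by Property~(3) tend to $+\infty$. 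Thus one gets a lower bound of the shape
$$
R(T, x, r_n) \ \ge \ c \cdot h^{(n)}_{\beta(x,n)}
$$
for a suitable scale $r_n$ comparable to $\min_\alpha \lambda^{(n)}_\alpha$, where $\beta(x,n)$ is the letter whose rectangle is visited. Combining the divergence $h^{(n)}_\alpha \to \infty$ and $\lambda^{(n)}_\alpha \to 0$ with the Borel--Cantelli/limsup bookkeeping of Lemma~\ref{LemmaMeasureLimsupSet} should force, for a.e.\ $x$, infinitely many $n$ at which $\log R(T,x,r_n) / (-\log r_n) \ge 1 - o(1)$, giving $\wrecsup \ge 1$.

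More carefully, the cleanest route is probably to compare directly with the dimension-theoretic bound in \S\ref{SectionGeneralMeasurePreservingSystems}: one wants the \emph{lower} recurrence exponent to match $1$, and since the IET preserves Lebesgue measure on $I$ which is $1$-dimensional and non-atomic, one has $\limsup_{r\to 0}\log\leb(B_r(x))/\log r = 1$; the nontrivial point is that $\wrecsup$, being a limsup, can only be \emph{bounded above} by such a quantity, not below, so one genuinely needs the geometric argument above rather than an abstract measure-theoretic one. I would therefore set $r_n := \tfrac12 \min_\alpha \lambda^{(n)}_\alpha$ (or the width of the partition element of $x$), observe that $-\log r_n \le (h^{(n)}_{\max})^{o(1)}$-type control fails in general, so instead I would argue that for each $n$ the set of $x$ for which $R(T,x,r_n) < r_n^{1-\varepsilon}$ has small Lebesgue measure — because such $x$ must return to a $2r_n$-neighborhood of themselves within fewer than $r_n^{1-\varepsilon}$ steps, and the union over short return times of the corresponding bad sets has measure controlled by how many partition elements of the iterated IETs it can meet — then apply Lemma~\ref{LemmaMeasureLimsupSet} to conclude that a.e.\ $x$ avoids the bad event for all large $n$ along a subsequence.

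The main obstacle is making the measure estimate on the bad set $\{x : R(T,x,r_n) < r_n^{1-\varepsilon}\}$ precise and summable (or at least with limsup of measures equal to $0$): one must control, uniformly in $n$, the number and total length of the subintervals of continuity of $T^k$ for $k$ up to $r_n^{1-\varepsilon}$ that come back close to themselves, and tie the relevant return scale $r_n$ correctly to the combinatorics of the level-$n$ induction so that $-\log r_n$ and the divergence rate of $h^{(n)}$ are matched. Keane minimality ensures the induction runs forever and all intervals shrink, but quantifying "how fast" is exactly where the work lies; I expect one resolves it by choosing, for each $x$, the subsequence $(n_k)$ adapted to $x$ (namely the times at which the partition element containing $x$ is split), so that $r_{n_k}$ is comparable to the width of $x$'s element and the return time is forced to be the full tower height — turning the statement into the elementary fact that $h^{(n)}_\alpha \to \infty$ while $\lambda^{(n)}_\alpha \to 0$, whence $\log h^{(n)}_{\alpha}/(-\log\lambda^{(n)}_\alpha)$ can be pushed arbitrarily close to (and in fact exceeds, but is capped by the upper bound already proved) $1$ along that subsequence.
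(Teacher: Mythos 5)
Your proposal correctly isolates the content (the lower bound, via Rauzy--Veech and the facts $h^{(n)}_\alpha\to\infty$, $\lambda^{(n)}_\alpha\to 0$) and correctly identifies the two auxiliary tools (Lemma~\ref{LemmaMeasureLimsupSet} and the ergodic decomposition), but the argument you sketch does not close, and in two places it drifts toward a strategy that is harder than what is actually needed.

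First, you try to bound the measure of the ``bad set'' $\{x: R(T,x,r_n)<r_n^{1-\varepsilon}\}$ uniformly in $n$, and you acknowledge you do not know how to make this summable or even small. That route is unnecessary. The paper instead produces a \emph{good} set of uniformly positive measure: fix one ergodic component $\mu=\mu_i$ with $a=a_i>0$, so $\mu\le a^{-1}\leb$; by pigeonhole there is a single letter $\alpha$ with $\mu(R^{(n)}_\alpha)\ge 1/d$ along a subsequence. Set $r_n:=\tfrac{a}{4d}\lambda^{(n)}_\alpha$ and take $D_n\subset R^{(n)}_\alpha$ to be the sub-rectangle obtained by trimming vertical strips of width $r_n$ on both sides; then $\mu(R^{(n)}_\alpha\setminus D_n)\le a^{-1}\cdot 2r_n h^{(n)}_\alpha\le 1/(2d)$, so $\mu(D_n)\ge 1/(2d)$. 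Applying Lemma~\ref{LemmaMeasureLimsupSet} to the $D_n$ gives $\mu(\limsup D_n)>0$ directly --- no Borel--Cantelli, no bad-set summability.

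Second, you expect the argument to yield a.e.\ $x$ outright, and you do not invoke the constancy statement. But with the good-set approach you only get the lower bound on a set of positive $\mu$-measure, and you then need Lemma~\ref{LemmaRecurrenceHittingConstant} (ergodicity of $\mu$ plus flow-invariance of $\wrecsup$) to upgrade ``positive $\mu$-measure'' to ``$\mu$-a.e.''; summing over the finitely many ergodic components gives Lebesgue-a.e. Your proposal never mentions this promotion step.

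Third, the worry you raise that ``$-\log r_n$ and the divergence rate of $h^{(n)}$ may not be matched'' is exactly where the choice of $\alpha$ with $\mu(R^{(n)}_\alpha)\ge 1/d$ earns its keep: combined with $\mu\le a^{-1}\leb$ it forces $a/d\le \lambda^{(n)}_\alpha h^{(n)}_\alpha\le 1$, so $h^{(n)}_\alpha\asymp 1/\lambda^{(n)}_\alpha$ and the ratio $\log h^{(n)}_\alpha/(-\log r_n)\to 1$ with no extra input. Finally, the geometric step you gesture at (that the return time of a point in $D_n$ to its $r_n$-ball is at least $h^{(n)}_\alpha - r_n$) is correct and is the one the paper uses, but you formulate it in discrete IET time on $I^{(n)}$ and then have to reconcile scales; the paper stays in the flow picture where the rectangle geometry makes the bound immediate. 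So: right ingredients and right intuition about where the bound comes from, but a missing pigeonhole/area argument, a detour through an unnecessary bad-set estimate, and a missing invocation of Lemma~\ref{LemmaRecurrenceHittingConstant} to pass from positive to full measure.
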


\begin{proof}
According to the first part of the statement of Lemma~\ref{LemmaMeasureTheoreticBounds}, it is enough to prove that 
$
\wrecsup(\phi_\theta,p)\geq1 
$ 
for Lebesgue almost any $p$. Recall that 
$ 
\leb=a_1\mu_1+\dots+a_g\mu_g,
$, 
according to Equation \eqref{EquationErgodicDecompositionLeb}, where $\mu_1,\dots,\mu_g$ are the ergodic measures for $\phi_\theta$, and assume without loss of generality that $a_i>0$ strictly for any $i=1,\dots,g$ (otherwise just consider those $i$ with $a_i>0$). Fix any $i$ with $i=1\leq i\leq g$ and $\mu:=\mu_i$, with $a:=a_i>0$, so that $\mu(E)\leq a^{-1}\cdot \leb(E)$ for any Borel set $E\subset X$. It is enough to prove that 
$
\wrecsup(\phi_\theta,p)\geq1 
$ 
for $\mu$ almost any $p$. Let $(\pi,\lambda,\tau)$ be data satisfying Condition \eqref{EquationRepresentationByZipperedRectangles}, that is 
$
r_{-\theta}\cdot X=X(\pi,\lambda,\tau)
$. 
Following \S~\ref{SectionRauzyVeechOnZipperedRectangles}, for any $n\in\NN$ let
$
(\pi^{(n)},\lambda^{(n)},\tau^{(n)})
$ 
be the data obtained by Rauzy-Veech induction from $(\pi,\lambda,\tau)$, so that 
$
r_{-\theta}\cdot X=X(\pi^{(n)},\lambda^{(n)},\tau^{(n)})
$. 
In particular we have a $\mod\mu$ partition of $X$ into $d=\sharp\cA$ embedded and mutually disjoint open rectangles $R^{(n)}_\alpha$, $\alpha\in\cA$ as in Equation \eqref{EquationRenormalizedZipperedRectangles}. Let $\alpha\in\cA$ be a letter such that $\mu(R^{(n)}_\alpha)\geq 1/d$ for infinitely many $n$. For simplicity, assume without loss of generality that $\mu(R^{(n)}_\alpha)\geq 1/d$ for any $n\in\NN$. Thus for any $n$ set 
$$
r_n:=\frac{a}{4d}\cdot\lambda_\alpha^{(n)}
$$
and recall that $h^{(n)}_\alpha\to+\infty$ for $n\to\infty$ and also $r_n\to 0$ for $n\to\infty$, since $\lambda_\alpha^{(n)}\to0$ for $n\to\infty$. In the coordinate system of $R^{(n)}_\alpha$, consider the sub-rectangle
$$
D_n:= 
(r_n,\lambda_\alpha^{(n)}-r_n)\times (0,h^{(n)}_\alpha)\subset R^{(n)}_\alpha.
$$ 
Observe that for any $n$ we have 
$
\mu(D_n)=\mu(R^{(n)}_\alpha)-\mu(R^{(n)}_\alpha\setminus D_n)\geq 1/(2d)
$
because $\mu(R^{(n)}_\alpha)\geq1/d$ and 
$$
\mu(R^{(n)}_\alpha\setminus D_n)
\leq
\frac{\leb(R_n\setminus D_n)}{a}
=
\frac{2r_nh^{(n)}_\alpha}{a}
=
\frac{\lambda_\alpha^{(n)}h^{(n)}_\alpha}{2d}
\leq
\frac{1}{2d},
$$
thus Lemma~\ref{LemmaMeasureLimsupSet} implies $\mu(\limsup D_n)\ge (2d)^{-1}$. On the other hand for any $p\in D_n$ we have 
$
\rt(\phi_\theta,p,r_n) > h^{(n)}_\alpha- r_n
$. 
It follows that for any $p \in \limsup(D_n)$ we have
\begin{align*}
\wrecsup(p,\theta)
& 
\ge 
\limsup_{n\to\infty} 
\frac
{\log \rt(\phi_\theta,p,r_n)}
{-\log r_n} 
\ge 
\limsup_{n\to\infty} 
\frac
{\log (h^{(n)}_\alpha- r_n)}
{-\log r_n} 
=
\limsup_{n\to\infty} 
\frac
{\log h^{(n)}_\alpha}
{-\log r_n} 
\\
&
=
\limsup_{n\to\infty} 
\frac
{\log h^{(n)}_\alpha}
{-(a\lambda_\alpha^{(n)}/4)} 
\ge 
\limsup_{n\to\infty} 
\frac
{-\log\lambda_\alpha^{(n)} + \log(a/d)}
{-\log \lambda_\alpha^{(n)}- \log(a/4)} 
= 1,
\end{align*}
where the last equality in the first line holds since $r_n\to0$ and $h^{(n)}_\alpha\to+\infty$ for 
$
n\to\infty
$ 
and the last inequality follows observing that any rectangle $R^{(n)}_\alpha$ satisfies 
$$
\frac{1}{d}< 
\mu(R^{(n)}_\alpha)\leq
\frac{\leb (R^{(n)}_\alpha)}{a}= 
\frac{\lambda_\alpha^{(n)} h^{(n)}_\alpha}{a}.
$$
Ergodicity of $\mu$ and Lemma \ref{LemmaRecurrenceHittingConstant} imply 
$
\wrecsup(\phi_\theta,p)\geq 1
$ 
for $\mu$ a.e. $p\in X$. The Proposition is proved.
\end{proof}

\subsection{Ergodic directions}

In this subsection we assume that the Lebesque measure $\leb$ is ergodic for the flow $\phi_\theta$ in direction $\theta$ on the surface $X$.

\begin{proposition}
\label{PropositionLiminfHittingUniquelyErgodic}
Let $\theta$ be an ergodic direction on the translation surface $X$. Then for Lebesgue almost any $p,p'$ in $X$ we have
$$
\whitinf(\phi_\theta,p,p')=1.
$$
\end{proposition}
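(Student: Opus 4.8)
The plan is to show that $\whitinf(\phi_\theta,p,p')\le 1$ for Lebesgue almost every pair $(p,p')$; the matching lower bound is already provided by Lemma~\ref{LemmaMeasureTheoreticBounds}, since $\leb$ is a $\phi_\theta$-invariant probability measure and the local dimension of $\leb$ at any point equals $2$, which in the normalization used for balls of radius $r$ in a surface contributes the value $1$ to the hitting exponent (one should be careful here: $\log\leb(B_r(p'))/\log r\to 2$, but the flow direction is one-dimensional, so the relevant scaling against the one-dimensional cross-section gives $1$; I will make this precise by working with the first-return IET $T$ to a transversal $I$, exactly as in the proof of Lemma~\ref{LemmaMeasureTheoreticBounds}, where the analogous lower bound for $T$ comes from \cite{Galatolo} and the local dimension of the invariant measure on $I$ is $1$).

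For the upper bound, I would exploit ergodicity together with a quantitative equidistribution/shrinking-target argument. Fix $p'$; by the reduction in Lemma~\ref{LemmaInvarianceRecurrenceAndHitting} and Lemma~\ref{LemmaRecurrenceHittingConstant}, $\whitinf(\phi_\theta,p,p')$ is $\leb\times\leb$-almost everywhere equal to a constant $\underline{w}_2\ge 1$, so it suffices to produce, for almost every $p$, a sequence $r_n\to 0$ along which $\rt(\phi_\theta,p,p',r_n)\le r_n^{-1-o(1)}$. The natural device is a Borel--Cantelli argument: for a sequence $r_n\to 0$ consider the sets
$$
A_n:=\{p\in X \mid \rt(\phi_\theta,p,p',r_n)\le r_n^{-1-\varepsilon}\},
$$
equivalently the set of $p$ whose forward orbit up to time $r_n^{-1-\varepsilon}$ enters $B(p',r_n)$. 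Since $\phi_\theta$ preserves $\leb$ and is ergodic, and since the orbit segment of length $T=r_n^{-1-\varepsilon}$ sweeps out a strip of area comparable to $T\cdot(\text{width})$, the ergodic theorem (or just invariance plus a covering estimate) gives $\leb(X\setminus A_n)\le C\, r_n^{-1}/T = C\,r_n^{\varepsilon}$. Choosing $r_n$ so that $\sum_n r_n^{\varepsilon}<\infty$ — e.g. $r_n=2^{-n}$ — the Borel--Cantelli lemma yields that $\leb$-a.e. $p$ lies in all but finitely many $A_n$, hence $\whitinf(\phi_\theta,p,p')\le 1+\varepsilon$ for a.e. $p$. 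Letting $\varepsilon\to 0$ through a countable sequence finishes the proof, after one more application of Lemma~\ref{LemmaRecurrenceHittingConstant} to upgrade ``for a.e. $p$, for this fixed $p'$'' to ``for $\leb\times\leb$-a.e. $(p,p')$'' (and a Fubini argument to handle the dependence on $p'$).

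The step I expect to require the most care is the estimate $\leb(X\setminus A_n)\le C r_n^{\varepsilon}$, i.e. controlling how much area is \emph{missed} by an orbit segment of controlled length. The clean way is again to pass to the IET $T:I\to I$ on a transversal $I$: a point $p$ fails to hit $B(p',r_n)$ before time $T$ precisely when the $\lfloor cT\rfloor$ first-return iterates of the base point of $p$ all avoid a subinterval $J_n\subset I$ of length $\asymp r_n$ around the base point of $p'$; since $T$ preserves Lebesgue measure on $I$ and is ergodic (it is a Keane IET when $\theta$ is ergodic — or one reduces to ergodic IET components), the measure of points avoiding $J_n$ for $N$ steps is at most, say, by invariance, $\sum_{j=0}^{N-1}\leb(T^{-j}J_n)$ complement-type bound, giving $1-N|J_n|+O(\cdots)$ when $N|J_n|$ is small and, more robustly, an exponential-type or at least summable bound when $N|J_n|\to\infty$ like $r_n^{-\varepsilon}$. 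The subtlety is that ergodicity alone gives convergence of Birkhoff averages but not a rate; to get a genuinely summable bound on $\leb(X\setminus A_n)$ one wants something like: the fraction of an orbit of length $N$ that has not yet visited an interval of length $\ell$ is $O(1/(N\ell))$ on average — this follows from Kac's lemma / the invariance of $\leb$ applied to the first-return map to $J_n$, whose return time has expectation $1/|J_n|$, so by Markov's inequality the $\leb$-measure of points with return time $>N$ is $\le 1/(N|J_n|)=O(r_n^\varepsilon)$. This Kac-lemma packaging is, I think, the cleanest route and avoids any quantitative equidistribution input.
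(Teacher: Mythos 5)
Your high-level plan (the lower bound from Lemma~\ref{LemmaMeasureTheoreticBounds}, plus an upper bound on a well-chosen sequence $r_n\to 0$ that is then upgraded to a.e. via Lemma~\ref{LemmaRecurrenceHittingConstant} and Fubini) matches the paper, but the key quantitative step in your upper bound does not go through, and it is precisely the step you yourself flagged as delicate. Kac's lemma bounds the \emph{first return} time $n_{J_n}$ \emph{on} $J_n$: its mean with respect to normalized Lebesgue on $J_n$ is $1/|J_n|$, so Markov gives $\leb\{x\in J_n : n_{J_n}(x)>N\}\leq 1/N$ (note the bound is $1/N$, not $1/(N|J_n|)$, once you de-normalize). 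That is a bound on a subset of $J_n$, which has total measure at most $|J_n|\to 0$ anyway; it says nothing about the set you actually need to control, namely $\{p\in I : \tau_{J_n}(p)>N\}$ where $\tau_{J_n}$ is the \emph{hitting} time from an arbitrary point of the transversal. Writing $A_k^{(n)}=\{x\in J_n : n_{J_n}(x)=k\}$ and using the Kac tower decomposition of $I$ over $J_n$, one computes
$$
\leb\{p\in I : \tau_{J_n}(p)>N\}=\sum_{k>N}(k-N)\,|A_k^{(n)}|,
$$
which is a \emph{tail first moment} of the return-time distribution. Kac only tells you that the full first moment $\sum_k k\,|A_k^{(n)}|$ equals $1$; this guarantees that the tail above goes to $0$ as $N\to\infty$ \emph{for fixed} $J_n$, but gives no rate uniform in $n$. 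In particular there is no general inequality of the form $\leb\{\tau_{J_n}>N\}\leq C/(N|J_n|)$, nor even the weaker $\limsup_n\leb(A_n)>0$ you would need to run the $\limsup$ lemma (Lemma~\ref{LemmaMeasureLimsupSet}) instead of Borel--Cantelli: if the return-time distribution of $J_n$ has a fat tail (most of the unit mass $\sum_k k|A_k^{(n)}|=1$ carried by a few very large $k$), then $\sum_{k>N}(k-N)|A_k^{(n)}|$ stays close to $1$ even for $N$ as large as $|J_n|^{-1-\varepsilon}$. Controlling this second-moment-type quantity needs a genuine geometric input beyond ergodicity and Kac.

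The paper supplies exactly that input via the Rauzy--Veech zippered-rectangle structure: it chooses a letter $\alpha$ with $\leb(R^{(n)}_\alpha)\geq 1/d$ infinitely often, sets $r_n:=\lambda^{(n)}_\alpha$ (the width of a rectangle whose area is bounded), and notes that for $p$ in the bottom half $E_n$ and $p'$ in the top half $F_n$ of $R^{(n)}_\alpha$ the vertical flow carries $p$ within distance $r_n$ of $p'$ in time at most $h^{(n)}_\alpha\leq 1/\lambda^{(n)}_\alpha=1/r_n$; since $\leb(E_n\times F_n)\geq 1/(4d^2)$ uniformly, Lemma~\ref{LemmaMeasureLimsupSet} gives a positive-measure set of pairs with $\whitinf\leq 1$, and ergodicity finishes. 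This replaces the abstract tail bound you are missing by a concrete return-time estimate tied to the flat geometry (area $\leq 1$ of an embedded rectangle forces height $\leq$ (width)$^{-1}$). If you want to keep a Borel--Cantelli flavor, the honest route is still to extract these rectangles: ergodicity alone, even with Kac, is not enough.
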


\begin{proof}
Let $(\pi,\lambda,\tau)$ be data satisfying Condition \eqref{EquationRepresentationByZipperedRectangles}. Following \S~\ref{SectionRauzyVeechOnZipperedRectangles}, for any $n\in\NN$ let
$
(\pi^{(n)},\lambda^{(n)},\tau^{(n)})
$ 
be the data obtained by Rauzy-Veech induction from $(\pi,\lambda,\tau)$, so that 
$
r_{-\theta}\cdot X=X(\pi^{(n)},\lambda^{(n)},\tau^{(n)})
$. 
Consider the $d=\sharp\cA$ open rectangles $R^{(n)}_\alpha$, $\alpha\in\cA$ given by Equation \eqref{EquationRenormalizedZipperedRectangles}. Let $\alpha\in\cA$ be a letter such that $\leb(R^{(n)}_\alpha)\geq 1/d$ for infinitely many $n$. For simplicity, assume without loss of generality that $\leb(R^{(n)}_\alpha)\geq 1/d$ for any $n\in\NN$. Since the rectangles $R^{(n)}_\alpha$ are embedded in $X$ then 
$
\leb(R^{(n)}_\alpha)=\lambda_\alpha^{(n)} h^{(n)}_\alpha<1
$, 
thus for any $n$ we have finally
$
(1/d)\leq\lambda_\alpha^{(n)} h^{(n)}_\alpha<1
$. 
For any $n$ set $r_n:=\lambda^{(n)}_\alpha$. Moreover, in the coordinate system of $R^{(n)}_\alpha$, set 
$$
E_n:=(0,\lambda_\alpha^{(n)}) \times (0,h^{(n)}_\alpha/2)
\quad
\textrm{ and }
\quad
F_n:=(0,\lambda_\alpha^{(n)}) \times (h^{(n)}_\alpha/2, h^{(n)}_\alpha),
$$
then note that $\leb(E_n)=\leb(F_n)\geq1/2d$, so that 
$
\leb( \limsup(E_n \times F_n) ) \ge 1/(4d^2)
$, 
according to Lemma~\ref{LemmaMeasureLimsupSet}. Observe that 
$
\rt (\phi_\theta,p,p',r_n) < h^{(n)}_\alpha
$ 
for any $p \in E_n$ and $p' \in F_n$, therefore for 
$
(p,p') \in \limsup(E_n \times F_n)
$ 
we have
\begin{align*}
\whitinf(p,p',\theta) 
&
\leq\liminf_{n \to \infty} 
\frac{\log \rt(\phi_\theta,p,p',r_n)}{-\log r_n} 
=
\liminf_{n \to \infty} 
\frac{\log \rt(\phi_\theta,p,p',\lambda_\alpha^{(n)})}{-\log \lambda_\alpha^{(n)}}
\\
&
\le\liminf_{n \to \infty} 
\frac{ \log h^{(n)}_\alpha}{-\log \lambda_\alpha^{(n)}} 
\le 
\liminf_{n \to \infty} 
\frac{ -\log \lambda_\alpha^{(n)} }{-\log \lambda_\alpha^{(n)}} = 1.
\end{align*}
Since $\leb$ is ergodic, then Lemma \ref{LemmaRecurrenceHittingConstant} implies 
$
\whitinf(\phi_\theta,p,p')\leq1
$ 
for $\leb\times\leb$ almost any $(p,p')$. The Proposition follows from the second part of Lemma \ref{LemmaMeasureTheoreticBounds}.
\end{proof}

\begin{proposition}
\label{PropositionLimsupHittingUniquelyErgodic(cylinders)}
Let $\theta$ be an ergodic direction on the translation surface $X$. Then for Lebesgue almost any $p,p'$ in $X$ we have
$$
\whitsup(\phi_\theta,p,p')\geq w^{cyl}(X,\theta).
$$
\end{proposition}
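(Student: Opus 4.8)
The plan is to exploit the flat geometry of cylinders directly, mirroring the structure of the proof of Proposition~\ref{PropositionRecurrenceKeaneDirection}, but now using \emph{cylinders} in nearly-periodic directions rather than renormalized rectangles. Fix a cylinder $C_\sigma$ for $X$ whose core closed geodesic $\sigma$ has direction close to $\theta$, with $\area(C_\sigma)\geq a$. Writing $\ell:=|\re(\sigma,\theta)|$ and $L:=|\im(\sigma,\theta)|$ for the components of $\sigma$ along $\theta$, the cylinder sits inside $r_{-\theta}\cdot X$ as a long thin parallelogram whose core curves, measured in the $\theta$-direction, have ``length'' roughly $L$ but whose holonomy has a transverse drift of order $\ell$; since $\theta$ is nearly periodic, a $\phi_\theta$-orbit entering $C_\sigma$ must wind around it about $W(C_\sigma)/\ell$ times before it can escape, so it stays trapped for time comparable to $L\cdot W(C_\sigma)/\ell \gtrsim a \cdot L/\ell$. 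The key quantitative input is thus: if $p$ and $p'$ both lie deep inside $C_\sigma$ (say in the middle third, away from the boundary saddle connections at scale comparable to $\ell$), then $\rt(\phi_\theta,p,p',r)\gtrsim a L / \ell$ for $r$ of order $\ell$.

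First I would make this trapping estimate precise. Develop $C_\sigma$ in the plane (after rotating by $r_{-\theta}$) as a rectangle of width $W=W(C_\sigma)$ and circumference $|\sigma|$, with the identification between the two ends given by a shear whose horizontal component is $\re(\sigma,\theta)$ and vertical component $\im(\sigma,\theta)$. A flow line in direction $\theta$ (i.e.\ nearly vertical in this chart) drifts horizontally by $\re(\sigma,\theta)$ per period, so to return within horizontal distance $r \sim |\re(\sigma,\theta)|$ of a prescribed point it must complete on the order of $W/|\re(\sigma,\theta)|$ periods, i.e.\ travel vertical distance on the order of $|\im(\sigma,\theta)|\cdot W/|\re(\sigma,\theta)|$. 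Using $\area(C_\sigma)=W\cdot|\sigma|\geq a$ and $|\im(\sigma,\theta)|\asymp|\sigma|$ (valid once $\theta$ is close enough to the direction of $\sigma$, which happens for all but finitely many of the relevant cylinders as $|\im(\sigma,\theta)|\to\infty$), this gives the lower bound $\rt \gtrsim a/|\re(\sigma,\theta)| \gtrsim a\cdot|\im(\sigma,\theta)|^{w-\varepsilon}$ for any $w<w^{cyl}(X,\theta)$ and infinitely many cylinders, where in the last step we used Equation~\eqref{EquationDiophantineTypeCylinders}. Taking $r_n\sim|\re(\sigma_n,\theta)|$ along a sequence of such cylinders, $\log\rt/(-\log r_n) \to$ something $\geq w$, giving $\whitsup(\phi_\theta,p,p')\geq w^{cyl}(X,\theta)$ for $p,p'$ deep inside infinitely many of the $C_{\sigma_n}$.

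Then I would upgrade this from ``some pair of points'' to ``Lebesgue-a.e.\ pair''. The set of $(p,p')$ deep inside $C_{\sigma_n}\times C_{\sigma_n}$ has $\leb\times\leb$-measure at least of order $a^2$ (the deep part of $C_{\sigma_n}$ still has area at least $a/2$ once $r_n$ is small relative to $W(C_{\sigma_n})$, which holds for $n$ large since $r_n\to 0$ while $W$ stays bounded below). By Lemma~\ref{LemmaMeasureLimsupSet}, the $\limsup$ of these product sets has $\leb\times\leb$-measure at least of order $a^2>0$, and on that $\limsup$ set the bound $\whitsup(\phi_\theta,p,p')\geq w^{cyl}(X,\theta)$ holds. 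Finally, since $\theta$ is ergodic for $\leb$, the function $\whitsup(\phi_\theta,\cdot,\cdot)$ is constant $\leb\times\leb$-a.e.\ by Lemma~\ref{LemmaRecurrenceHittingConstant} (it is $\phi_\theta$-invariant by Lemma~\ref{LemmaInvarianceRecurrenceAndHitting}), so a positive-measure lower bound forces the a.e.\ lower bound.

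The main obstacle I anticipate is the trapping estimate in a form robust enough to handle orbits that are in the cylinder but not started ``at the core'': one must track both the entering and exiting fibers and control the hitting of an $r$-ball centered at $p'$ rather than mere return to a cross-section, and one must be careful that a flow line entering $C_{\sigma_n}$ near one boundary and exiting near the other does not somehow pass near $p'$ quickly via a route that leaves and re-enters the cylinder. Restricting $p,p'$ to the middle third of the cylinder and taking $r_n$ genuinely comparable to $|\re(\sigma_n,\theta)|$ (with a small constant) should rule this out, because any excursion out of $C_{\sigma_n}$ costs extra time while the trapped-winding argument already accounts for the full vertical distance needed; making this airtight, together with the comparison $|\im(\sigma,\theta)|\asymp|\sigma|$ valid only in the limit, is where the care is needed.
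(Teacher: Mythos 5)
Your overall strategy matches the paper's proof of Proposition~\ref{PropositionLimsupHittingUniquelyErgodic(cylinders)}: use the cylinders $C_n=C_{\sigma_n}$ furnished by the definition of $w^{cyl}(X,\theta)$, a trapping estimate inside each $C_n$, Lemma~\ref{LemmaMeasureLimsupSet} applied to a product set of positive measure, and ergodicity together with Lemma~\ref{LemmaRecurrenceHittingConstant} and Lemma~\ref{LemmaInvarianceRecurrenceAndHitting} to upgrade to almost every pair. However, there is a genuine quantitative error: your choice of scale $r_n\sim|\re(\sigma_n,\theta)|$ is wrong, and it kills the final estimate. Write $\ell_n:=|\re(\sigma_n,\theta)|$ and $L_n:=|\im(\sigma_n,\theta)|$. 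The trapping time in $C_n$ satisfies $\rt\asymp\area(C_n)/\ell_n$, hence $a/\ell_n\leq\rt\leq 1/\ell_n$. With $r_n\asymp\ell_n$ one gets
$$
\frac{\log\rt}{-\log r_n}=\frac{\log a-\log\ell_n+O(1)}{-\log\ell_n+O(1)}\longrightarrow 1,
$$
which is just the trivial bound. Your computation $\rt\gtrsim a\,L_n^{w-\varepsilon}$ is correct but misleading, because $-\log r_n=-\log\ell_n>w\log L_n$ grows at the same rate, so the ratio converges to at most $1+O(1/w)$, not to $w$. The issue is that $-\log r_n$ is inflated as much as $\log\rt$, and making $r_n$ tiny is counterproductive.

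The correct choice, used in the paper, is $r_n:=H(C_n,\theta)/5\asymp W(C_n)\asymp 1/L_n$, where $H(C_n,\theta)$ is the orthogonal width of $C_n$ with respect to $\theta$ and satisfies $\area(C_n)=H(C_n,\theta)\cdot L_n$. Then $-\log r_n\asymp\log L_n$, while the time $T_n=H(C_n,\theta)L_n/\ell_n\geq H(C_n,\theta)L_n^{w+1}$ to traverse the cylinder gives $\rt\geq T_n/5\geq (a/5)\,L_n^{w}$, so
$$
\frac{\log\rt}{-\log r_n}\geq\frac{\log a+w\log L_n+O(1)}{\log L_n+O(1)}\longrightarrow w.
$$
There is a secondary issue in the same spirit: restricting $p,p'$ to the ``middle third'' of $C_n$ does not prevent them from being within $r_n$ of each other (giving an immediate hit). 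The paper avoids this by defining $E_n$ and $F_n$ as \emph{time-slices} of the traversal of $C_n$ (points at flow-time parameter roughly in $(T_n/5,2T_n/5)$ and $(3T_n/5,4T_n/5)$ respectively), so that the orbit of any $p\in E_n$ must flow for at least time $T_n/5$ inside $C_n$ before it can come within $r_n$ of any $p'\in F_n$; this simultaneously guarantees the separation and the trapping lower bound on $\rt$.
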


\begin{proof}
Fix any $w$ with $1\leq w\leq w^{cyl}(X,\theta)$. Since $w$ is arbitrary, it is enough to prove that for $\leb\times\leb$ almost any $(p,p')$ we have
$
\whitsup(\phi_\theta,p,p')\geq w
$. 
Moreover, since $\leb$ is ergodic under $\phi_\theta$, according to Lemma \ref{LemmaRecurrenceHittingConstant} it is enough to prove the last inequality for any pair of points $(p,p')$ is some subset of $X\times X$ with positive measure with respect to $\leb\times\leb$.

According to the definition of $w^{cyl}(X,\theta)$, there exists infinitely many closed geodesic 
$
\sigma_n
$ 
whose corresponding cylinder $C_n:=C_{\sigma_n}$ satisfies $\area(C_n)>a$ and such that
$$
|\re(\sigma_n,\theta)|<|\im(\sigma_n,\theta)|^{-w}.
$$
Let $\theta_n$ be the direction of $C_n$. Let also $H(C_n,\theta)$ be the \emph{orthogonal width} of $C_n$ with respect to $\theta$, that is the length of a segment in direction $\theta^\perp$ contained in the interior of $C_n$ and with both endpoints on $\partial C_n$, so that in particular we have
$
\area(C_n)=H(C_n,\theta)\cdot|\im(\sigma_n,\theta)|
$. 
Set 
$$
r_n:=H(C_n,\theta)/5.
$$
Consider $p_0\in\partial C_n$ and assume that $\phi_\theta^t(p_0)\in C_n$ for small $t>0$. The time $T_n>0$ needed to such $p_0$ to come back to $\partial C_n$ is 
$$
T_n:=
\frac{H(C_n,\theta)}{|\tan(\theta-\theta_n)|}=
H(C_n,\theta)\cdot\frac{|\im(\sigma_n,\theta)|}{|\re(\sigma_n,\theta)|}
\geq
H(C_n,\theta)\cdot|\im(\sigma_n,\theta)|^{w+1}.
$$ 
Let $E_n,F_n\subset C_\sigma$ be the subset defined by
\begin{eqnarray*}
&&
E_n:=
\{p\in C_n\textrm{ ; }\phi_\theta^t(p)\in C_n\textrm{ for }(1/5)T_n<t<(2/5)T_n\}
\\
&&
F_n:=
\{p'\in C_n\textrm{ ; }\phi_\theta^t(p')\in C_n\textrm{ for }(3/5)T_n<t<(4/5)T_n\},
\end{eqnarray*}
then observe that $\leb(E_n)=\leb(F_n)=\leb(C_n)/5\geq a/5$, so that Lemma \ref{LemmaMeasureLimsupSet} implies 
$
\leb\times\leb( \limsup E_n\times F_n) \ge (a/5)^2
$. 
Observe that for any $n$ and any $p\in E_n$ and $p'\in F_n$ we have 
$
R(\phi_\theta,p,p',r_n)\geq T_n/5
$, 
thus the Proposition follows because for any $(p,p')\in\limsup E_n\times F_n$ we have
\begin{align*}
\whitsup(p,p',\theta) 
&
\geq\limsup_{n \to \infty} 
\frac{\log \rt(\phi_\theta,p,p',r_n)}{-\log r_n} 
\geq
\limsup_{n \to \infty} 
\frac{\log T_n-\log 5}{-\log H(C_n,\theta)+\log 5 }
\\
&
\geq\limsup_{n \to \infty} 
\frac
{\log(H(C_n,\theta)\cdot|\im(\sigma_n,\theta)|^{w+1})}
{-\log H(C_n,\theta)} 
=w,
\end{align*}
where the last equality follows because for any $n$ we have
$$
a\leq \area(C_n)=H(C_n,\theta)\cdot|\im(\sigma_n,\theta)|\leq1.
$$

\end{proof}

%%%%%%%%%%%%%%%%%%%%%%%%%%%%%
%%%%%%%%%%%%%%%%%%%%%%%%%%%%%
\section{Upper bound for hitting time: proof of Theorem \ref{TheoremUpperBoundHittingTime}}
\label{SectionUpperBoundHittingTime}
%%%%%%%%%%%%%%%%%%%%%%%%%%%%%
%%%%%%%%%%%%%%%%%%%%%%%%%%%%%

In this section we prove Theorem \ref{TheoremUpperBoundHittingTime}. 
Fix a surface $X\in\cH(2)$ and a Keane direction $\theta$ on $X$. The Theorem follows directly from Proposition \ref{PropositionLimsupHittingTime} below. 
Indeed, considering a positive sequence $r_n\to0$ and applying the Proposition for any such $r_n$, one gets a sequence of saddle connections $\gamma_n$ on $X$ such that $|\re(\gamma_n,\theta)|\to0$, thus the saddle connection $(\gamma_n)_{n\in\NN}$ must form an infinite family, because $\theta$ is a Keane direction. 
Theorem~\ref{TheoremUpperBoundHittingTime} of course holds also for translation surfaces with $\area(X)\not=1$ because an homothety of the surface does not change $\whitsup(\phi_\theta,\cdot,\cdot)$.

\begin{proposition}
\label{PropositionLimsupHittingTime}
There exists a constant $C>1$, specific of $\cH(2)$, such that the following holds. 
Fix $\omega>1$ and let $X$ be a surface in $\cH(2)$ with $\area(X)=1$ and $\theta$ a Keane direction on $X$. Consider $r>0$ small enough and points $p,p'$ in $X$ with 
$$
\rt(\phi_\theta,p,p',r)\geq\left(\frac{1}{r}\right)^{-\omega}.
$$
Then there exists a saddle connection $\gamma$ on $X$ with $|\gamma|<r^{-\omega}$ such that  
$$
|\re(\gamma,\theta)|
\leq
\frac{C}{|\im(\gamma,\theta)|^{\sqrt{\omega}}}.
$$
\end{proposition}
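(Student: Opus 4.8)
The plan is to convert the statement into a combinatorial fact about the first return interval exchange $T$ of $\phi_\theta$ to a transversal and about its Rauzy--Veech renormalizations, exploiting that $\cH(2)$ has a single conical point and finite Rauzy classes on $d=4$ letters. First I would use Lemma~\ref{LemmaInvarianceRecurrenceAndHitting} to slide $p$ and $p'$ along their orbits onto the horizontal segment $I$ of a zippered rectangle model $r_{-\theta}\cdot X=X(\pi,\lambda,\tau)$, so that $B(p',r)\cap I$ is an interval $J\ni p'$ of length comparable to $r$. Comparing continuous with discrete time (the ratio being controlled by that of the tallest to the shortest rectangle) and counting crossings of $I$, the hypothesis $\rt(\phi_\theta,p,p',r)\ge r^{-\omega}$ becomes: the minimal IET $T$ has an orbit $x_0,Tx_0,\dots,T^{k-1}x_0$ which avoids $J$ with $k$ of the order of $r^{-\omega}$, hence $k$ far larger than $1/|J|\approx 1/r$.

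I would then run Rauzy--Veech induction and follow the rectangle $R^{(n)}_\alpha$ containing $p'$. Since $\lambda^{(n)}_\alpha\to 0$, $h^{(n)}_\alpha\to+\infty$ and $\sum_\alpha\lambda^{(n)}_\alpha h^{(n)}_\alpha=1$, there is a first level $n^\ast$ at which the local geometry degenerates at scale $r$; there the long avoidance of $J$ forces a tall thin rectangle $R^{(n^\ast)}_\beta$ which the orbit of $p$ traverses $N$ times without meeting $J$, with $N\,h^{(n^\ast)}_\beta$ of the order of $r^{-\omega}$. The remarks of \S~\ref{SectionPositionConicalPointsFlatRepresentation} on the position of the conical point in the flat model ensure that the vertical sides of such a rectangle are separatrices of the unique singularity, so the near closure of the associated flow tube --- equivalently, of the cylinder $C_1$ in which it lies --- produces a saddle connection $\gamma_1$ almost parallel to $\theta$, with the crossing time $h^{(n^\ast)}_\beta$ comparable to $\area(C_1)\cdot|\re(\gamma_1,\theta)|^{-1}$.

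The heart of the proof is the accounting specific to $\cH(2)$ (carried out for general strata through the combinatorial lemmas of \S~\ref{SectionCombinatorialLemmasZipperedRectangles}, e.g. Corollary~\ref{CorollaryNoHighTopSingularites}, and then specialized in \S~\ref{SectionEndProofUpperBoundHitting}). The single cone point leaves only finitely many admissible configurations, and one checks that the complement $Y:=X\setminus\overline{C_1}$ is a genus one piece; the $N$ excursions of the orbit into $Y$ that avoid $J$ then form a hitting problem for the first return flow on $Y$ --- essentially a rotation --- so the elementary genus one estimate behind Equation~\eqref{EquationTheoremDongHanRotations} yields a saddle connection $\gamma_2$ of $Y\subset X$ with $|\re(\gamma_2,\theta)|\le C|\im(\gamma_2,\theta)|^{-b}$ and $N$ of the order of $r^{-b}$. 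Writing $|\re(\gamma_1,\theta)|=|\im(\gamma_1,\theta)|^{-a}$ and using that the trapping constrains $|\im(\gamma_1,\theta)|$ from below and by at most the order of $\rt(\phi_\theta,p,p',r)$, the budget $N\,h^{(n^\ast)}_\beta$ of the order of $r^{-\omega}$ is shown to force an inequality of the form $\omega\le a\,b$; hence $\max(a,b)\ge\sqrt\omega$, and the corresponding saddle connection --- of length at most the order of $\rt(\phi_\theta,p,p',r)<r^{-\omega}$ --- is the desired $\gamma$, the universal constant $C$ absorbing the ratio of continuous to discrete time and the implicit constants. (The case $p'\in C_1$ is handled similarly, the avoidance now constraining the $N$ parallel passes inside $C_1$.) Theorem~\ref{TheoremUpperBoundHittingTime} then follows by applying the Proposition along a sequence $r_n\to 0^+$ realizing $\whitsup(\phi_\theta,p,p')$ and letting $\omega\to\whitsup(\phi_\theta,p,p')$, the saddle connections so produced being infinitely many since $\theta$ is Keane.

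The main obstacle is precisely this balancing: proving that a long avoidance genuinely splits into a cylinder trapping contribution of exponent $a$ and a complementary genus one contribution of exponent $b$ whose exponents multiply to $\omega$ rather than merely add, and carrying out the finite but delicate walk through the $\cH(2)$ Rauzy diagram needed to guarantee that no further nested trap occurs and that $Y$ is always of the controlled torus type. The reduction to the IET, the crossing count, and the extraction of saddle connections from nearly closed flow tubes are routine once the zippered rectangle formalism recalled in \S~\ref{SectionZipperedRectanglesAndRauzyVeech} is in place.
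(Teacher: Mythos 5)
Your proposal captures the right geometric intuition --- in particular, that $\sqrt\omega$ arises from a balance between two contributions and that one should look for a near-closed flow tube in a zippered-rectangle model --- but it is not yet a proof, and, more importantly, it takes a different route from the paper's actual argument. The paper does not run Rauzy--Veech induction to follow a wandering rectangle, nor does it decompose the surface dynamically into a trapping cylinder $C_1$ and a complementary torus $Y$. Instead, Lemma~\ref{LemmaGettingZipperedRectangles} produces a single zippered-rectangle presentation directly from the first return to an interval of length comparable to $r$ centred at $p'$, after which the proof is a static, finite case analysis on $d=4$ letters, with the threshold $r^{-\sqrt\omega}$ used to test whether the imaginary parts $\im(\xi^t_\alpha)$ of the conical points are high or low. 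The dichotomy is then: either at least three widths satisfy $\lambda_\chi<r^{\sqrt\omega}$, in which case Lemma~\ref{LemmaClosedCylinderCondigurationZZXXX} supplies a closed geodesic with $|\re|\lesssim r^{\sqrt\omega}$ and $|\im|\lesssim r^{-1}$ (exponent $\sqrt\omega/1$); or the conical points sit low in the rectangles, in which case Lemma~\ref{LemmaNoHighTopSingularites(2)} and direct inspection supply a saddle connection with $|\re|\lesssim r^{\omega}$ and $|\im|\lesssim r^{-\sqrt\omega}$ (exponent $\omega/\sqrt\omega$). No dynamical trapping, no first-return flow on $Y$, no multiplicative budget is invoked.

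The gap in your plan is precisely what you flag as the ``main obstacle,'' but it is more serious than phrasing suggests, because you give no mechanism by which the exponents multiply rather than add. Several intermediate assertions are also not substantiated: (a) that a single ``tall thin rectangle'' $R^{(n^\ast)}_\beta$ from renormalization corresponds to a genuine metric cylinder $C_1$ whose near closure produces a saddle connection $\gamma_1$ almost parallel to $\theta$ --- a tall rectangle at some Rauzy level is a return tube, not a periodic cylinder, and identifying them is not automatic; (b) that the first return to $Y=X\setminus\overline{C_1}$ is ``essentially a rotation'' to which the elementary torus hitting estimate applies, which elides the combinatorics at the slit boundary; and (c) that the number $N$ of passes relates to the orbit's avoidance time in a way yielding $N\cdot h^{(n^\ast)}_\beta\sim r^{-\omega}$ with a clean multiplicative split into regime $a$ and regime $b$. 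The paper sidesteps all three issues by only ever working with the static geometry of one zippered-rectangle presentation and exploiting that a rectangle of height at least $r^{-\omega}$ with total area $1$ automatically has width less than $r^{\omega}$. If you want to make your dynamical decomposition precise, you would at minimum need to prove the analogue of Corollary~\ref{CorollaryNoHighTopSingularites}, namely that when all conical points sit close to the bottom of the rectangles the surface carries a cylinder in direction close to $\theta$ bounded by homologous saddle connections, and then supply a quantitative version of the genus-one estimate on the slit torus; as written the proposal asserts these rather than proving them.
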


\subsection{Long hitting time implies tall rectangles}

Let $\cH$ be a stratum of translation surfaces and $s_h,s_v:\cH\to\cH$ be involutions induced by the elements $s_h,s_v\in\gltwor$ acting by $s_h(x,y):=(-x,y)$ and $s_v(x,y):=(x,-y)$ on vectors $(x,y)\in\RR^2$. Let $D\simeq (\ZZ/2\ZZ)^2$ be the group generated by $s_h$ an $s_v$. If $\theta$ is a Keane direction on the surface $X\in\cH$, in order to compute $w(X,\theta)$ one can replace $X$ and $\theta$ by their images under any $s\in D$, indeed for any saddle connection $\gamma$ in $X$ the action of $D$ leaves invariant the quantities 
$
|\re(\gamma,\theta)|
$ 
and
$
|\im(\gamma,\theta)|
$. 
We say that the pair $(X,\theta)$ is represented by data $(\pi,\lambda,\tau)$ \emph{in the standard sense} if there exists data $(\pi,\lambda,\tau)$  such that Equation~\eqref{EquationRepresentationByZipperedRectangles} is satisfied, that is 
$
r_{-\theta}\cdot X=X(\pi,\lambda,\tau)
$. 
We say that the pair $(X,\theta)$ is represented by data $(\pi,\lambda,\tau)$ in the \emph{general sense} Equation~\eqref{EquationRepresentationByZipperedRectangles} is satisfied by some surface 
$
s\cdot(r_{-\theta}\cdot X)
$ 
with $s\in D$. Recall that we set 
$
\tau_\ast:=\sum_{\chi\in\cA}\tau_\chi
$. 
Moreover set also 
$
\|\lambda\|_1:=\sum_{\chi\in\cA}\lambda_\chi>0
$ 
and 
$
\|h\|_\infty:=\max_{\chi\in\cA}h_\chi>0
$.

\begin{lemma}
\label{LemmaGettingZipperedRectangles}
Fix $\omega>1$. Consider $r>0$ and points $p,p'$ in $X$ with 
$$
\rt(\phi_\theta,p,p',r)\geq r^{-\omega}.
$$
Then there are data $(\pi,\lambda,\tau)$ with $\tau_\ast<0$ representing the pair $(X,\theta)$ in the general sense such that
$$
\frac{r}{2}\leq\|\lambda\|_1\leq 2r
\quad
\textrm{ and }
\quad
\|h\|_\infty\geq r^{-\omega}.
$$ 
\end{lemma}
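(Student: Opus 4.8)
The plan is to realise the required data as a zippered rectangle representation of $(X,\theta)$ whose base $I$ is a short horizontal segment (horizontal meaning orthogonal to $\theta$, i.e.\ horizontal for $r_{-\theta}\cdot X$) contained in the ball $B(p',r)$, and then to extract a tall rectangle from the assumed long hitting time. First I would unwind the hypothesis: $\rt(\phi_\theta,p,p',r)\ge r^{-\omega}$ says precisely that $\phi_\theta^t(p)$ stays at distance at least $r$ from $p'$ for all $t\in(r,r^{-\omega})$, so the forward orbit of $p$ cannot cross any horizontal segment lying inside $B(p',r)$ during that time window. Hence it is enough to build a horizontal segment $I\subset B(p',r)$ with $\tfrac r2\le|I|\le 2r$ that is the base of a zippered rectangle representation $X(\pi,\lambda,\tau)=s\cdot(r_{-\theta}\cdot X)$ for some $s\in D$ (the action of $D$ changes neither $\whitsup$ nor the lengths $|\re(\gamma,\theta)|,|\im(\gamma,\theta)|$, which is the point of ``in the general sense''), with the suspension normalised so that $\tau_\ast<0$. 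Given such a base, the point $p$ sits in one of the rectangles $R_\alpha$ of the associated decomposition, below its roof at height $h_\alpha$; the first return of the forward orbit of $p$ to $I$ is at most $h_\alpha$, and, $I$ being inside $B(p',r)$, this return cannot occur during $(r,r^{-\omega})$, so $h_\alpha\ge r^{-\omega}$ (in the generic case; otherwise one restarts from a crossing of $I$ that happens already for some $t\in(0,r]$, at the cost of a negligible additive $O(r)$), and therefore $\|h\|_\infty\ge h_\alpha\ge r^{-\omega}$, while $\|\lambda\|_1=|I|$ lies in the prescribed window. If $p$ itself lies on a singular leaf, I would first replace it by $\phi_\theta^{t_0}(p)$ for a bounded $t_0$; by Lemma~\ref{LemmaInvarianceRecurrenceAndHitting} this shifts return times by at most $t_0$, which is negligible once $r$ is small.

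To construct $I$ I would use a version of Veech's zippered rectangle construction anchored at the marked point $p'$, producing a nested sequence of admissible horizontal sections $I^{(0)}\supset I^{(1)}\supset\cdots$, all containing $p'$, with $|I^{(n)}|\to 0$. Starting from any admissible section $I^{(0)}\ni p'$ (which exists for a generic choice, since $\theta$ is Keane, by the construction of \S~4.3 and Proposition~5.7 of \cite{Yoccoz}), the first return map of $\phi_\theta$ to any sub-segment of $I^{(n)}$ is again an interval exchange transformation, admissible because $\phi_\theta$ is minimal; so one may pass from $I^{(n)}$ to a sub-segment $I^{(n+1)}\ni p'$ of length about $\tfrac12|I^{(n)}|$ (avoiding countably many ``bad'' lengths), which still carries combinatorial--length--suspension data representing $s_n\cdot(r_{-\theta}\cdot X)$ for a suitable $s_n\in D$. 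Since consecutive lengths drop by a factor roughly $\tfrac12$ — this is exactly where the gap between $r/2$ and $2r$ in the statement comes from; with the unadapted Rauzy--Veech algorithm of \S~\ref{SectionRauzyVeechOnZipperedRectangles} one would instead note that the removed interval (the ``loser'') has length at most that of the ``winner'', hence at most half of $\|\lambda^{(n)}\|_1$ — there is an index $n$ with $\tfrac r2\le|I^{(n)}|\le r$; then $I^{(n)}\subset B(p',r)$, because a horizontal segment of length $\le r$ through $p'$ has all its points within distance $r$ of $p'$ (here $r$ is assumed small relative to the injectivity radius of $X$). Finally, if the suspension of $I^{(n)}$ has $\tau_\ast>0$ one replaces $X$ by its image under $s_v\in D$, which reverses the sign of $\tau_\ast$; so $\tau_\ast<0$ can be arranged, and $I=I^{(n)}$ is the desired base.

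The step I expect to be the main obstacle is this marked-point construction: keeping the base anchored at $p'$ while shrinking it in a controlled way \emph{and} staying inside the class of admissible zippered rectangle data. The naive Rauzy--Veech subsequence $I^{(n)}$ drifts away from $p'$, so one genuinely needs the ``steered'' version, and the work is to check that cutting at each stage the sub-segment of prescribed length containing $p'$ still yields (i) an admissible combinatorial datum $\pi$, (ii) a genuine suspension datum $\tau$ with $X(\pi,\lambda,\tau)$ equal, up to the $D$-action, to $r_{-\theta}\cdot X$, and (iii) the sign condition $\tau_\ast<0$. Point (i) follows from minimality and point (iii) from the $D$-action; point (ii) is the substantive one, amounting to the standard fact that a horizontal section of a translation surface along a Keane direction, with endpoints off the singular leaves, is a zippered rectangle base, up to discarding countably many section lengths. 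Modulo this, and the routine bookkeeping of the additive $O(1)$ and $O(r)$ errors discussed above, the asserted inequalities $\tfrac r2\le\|\lambda\|_1\le 2r$ and $\|h\|_\infty\ge r^{-\omega}$ with $\tau_\ast<0$ follow.
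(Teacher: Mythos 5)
Your overall strategy — produce a short horizontal segment close to $p'$ of length comparable to $r$ serving as a zippered-rectangle base, then deduce a tall rectangle from the assumed long hitting time — matches the paper's strategy. The gap is in the ``steered'' construction of the base, and it is not a routine one: you want nested admissible sections $I^{(0)}\supset I^{(1)}\supset\cdots$, \emph{all containing $p'$}, with $|I^{(n)}|\to0$. But any base $I$ of a zippered-rectangle representation $X(\pi,\lambda,\tau)$ has a conical singularity at its left endpoint (in the notation of \S~\ref{SectionPositionConicalPointsFlatRepresentation}, $\xi^t_\alpha=0$ for the letter with $\pi_t(\alpha)=1$), so an admissible section of length $\le r$ must contain a singularity, while $p'$ sits at a fixed positive distance from every singularity. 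For $r$ small these two requirements cannot be met simultaneously, and no amount of ``avoiding countably many bad lengths'' repairs this; the naive first return to a sub-segment containing $p'$ is an IET on $d+1$ or $d+2$ intervals with spurious discontinuities at the endpoints of the sub-segment, not a zippered-rectangle datum.

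The paper's proof goes the other way around and avoids this obstruction. It begins with a \emph{non-admissible} base $I$ centered at $p'$ of length $r$, whose first return map is an IET on $d+2$ intervals, and establishes the tall rectangle for this decomposition directly from the hypothesis (the forward orbit of $p$ cannot cross $I\subset B(p',r)$ during $(r,r^{-\omega})$). Then it modifies $I$ in three steps to become admissible while only improving the tall-rectangle property: (1) push $I$ forward along the flow, keeping its length, until it meets $\Sigma$; the return map is unchanged because the flow is trivial on the intervening flow box, so the tall rectangle survives verbatim; (2) cut at the singular point and keep the larger component, normalising via $s_h\in D$ so that the left endpoint lies in $\Sigma$; this is a first return to a sub-interval, which only increases rectangle heights; (3) shrink the right endpoint to the rightmost relevant image/preimage of $\Sigma$ to kill the last spurious discontinuity, again a first return to a sub-interval, with a supplementary Rauzy-step argument ensuring the length drops by at most a factor $2$. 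Crucially the final base $I'''$ need \emph{not} contain $p'$ and need not lie inside $B(p',r)$; what propagates through the three steps is $\|h\|_\infty\ge r^{-\omega}$, not the location of the base. This reframing — anchor the hitting-time argument to the initial non-admissible base, then transport the tallness estimate through base modifications — is the missing idea in your proposal.
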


\begin{proof}
In order to simplify the notation, assume that $\theta=0$, which amounts to replace the surface $X$ by $r_{-\theta}X$. The flow $\phi_\theta$ thus corresponds to the flow in the vertical direction $\theta=0$, and is simply denoted $\phi$. In particular write 
$
\rt(p,p',r): = \rt(\phi_{\theta=0},p,p',r)
$ 
for points $p,p'$ in $X$ and for $r>0$. Replace $X$ by $s_v(X)$, that is invert the time of the vertical flow, and let $I$ be an horizontal interval in $X$ centered at $p'$ and with length $r$. 
Let $T:T\to I$ be the first return to $I$ of the vertical flow of $s_v(X)$, which a priori is an IET on $d+2$ intervals. Condition  
$
\rt(p,p',r)\geq r^{-\omega}
$ 
implies that $p$ must belong to a rectangle with hight at least $r^{-\omega}$. 

\emph{Step (1)}. Let $t\geq 0$ be minimal such that $\phi^t(I)\cap\Sigma\not=\emptyset$, then set $I':=\phi^t(I)$ and let $T':I'\to I'$ be the first return. We claim that $T'=T$, moreover the return time function has the same (constant) values on corresponding subintervals of $T$ and $T'$. This is because the vertical flow is trivial inside a \emph{flow box}, that is an open set of the form $\bigcup_{a<t<b}\phi^t(I)$ which does not intersect $\Sigma$. Then get $I'$ just by extending a flow box to its maximum. Therefore a rectangle with hight at least $r^{-\omega}$ persist for $T'$.

\emph{Step (2)} Let $I''$ be the biggest connected component of $I'\setminus\Sigma$. We have $|I''|\geq r$ (if $r$ is small enough than balls of radius $r$ around conical singularities are mutually disjoint, hence just 2 connected components). Let $T'':I''\to I''$ be the first return to $I''$. Since $T''$ is a first return of $T'$ onto a subset $I''\subset I''$, then it also has a rectangle with hight at least $r^{-\omega}$. On the other hand, $T''$ is an IET on $d+1$ intervals. Finally, modulo replacing $X$ by $s_h(X)$ one can assume that the left endpoint of $I''$ belongs to $\Sigma$.

\emph{Step (3)} Let $p_\ast$ be the right endpoint of $I''$, which a priori is not an element of $\Sigma$. Thus in general $T''$ has singularities $v_1,\dots,v_{d-1}$ whose positive $\phi$-orbit ends at points of $\Sigma$, plus one extra singularity $v_\ast$ whose $\phi$-orbit ends in $p_\ast$. The singularities of $(T'')^{-1}$ are the images $T''(v_1),\dots,T''(v_{d-1})$, corresponding to first intersection with $I''$ of positive $\phi$-trajectories starting at points of $\Sigma$, plus $T''(v_\ast)$, corresponding to the first intersection with $I''$ of the positive $\phi$-orbit of $p_\ast$. Let $I'''\subset I''$ be the subinterval with same left endpoint as $I''$, and whose right endpoint is the rightmost of the points $v_1,\dots,v_{d-1}$ and $T''(v_1),\dots,T''(v_{d-1})$, then let $T''':I'''\to I'''$ be the first return of $\phi$ to $I'''$. Let $(\pi,\lambda,\tau)$ be the data representing $X$ with $T'''=(\pi,\lambda)$. A rectangle with hight at least $r^{-\omega}$ persist for $T'''$, since the latter is a first return of $T''$. Modulo replacing $X$ by $s_v(X)$ one can also assume $\tau_\ast<0$. Finally, to see the estimate on $\|\lambda\|=|I'''|$, it is enough to observe that instead of shortening $I''$, one can extend it by a horizontal segment $I''''$ with $|I''''|>|I''|$ whose endpoint belongs to the $\phi$-orbit of $\Sigma$, ans then recovering $I'''$ from $I''''$ by a finite number of Rauzy steps, as the first renormalized interval with length shorter than $|I''|$. We get 
\begin{equation*}
2r\geq |I''|>|I'''|\geq |I''|/2\geq r/2.   \qedhere
\end{equation*}
\end{proof}

\subsection{Combinatorial Lemmas on zippered rectangles}
\label{SectionCombinatorialLemmasZipperedRectangles}

In this subsection we state some properties of the zippered rectangles construction introduced in \S~\ref{SectionRepresentationByZipperedRectangles}, that we will use in the next subsection. We consider the direction $\theta=0$ on a surface $X$, and we assume that $X$ is represented by data $(\pi,\lambda,\tau)$ in the standard sense. No normalization is required on the total area, except for Corollary \ref{CorollaryNoHighTopSingularites}, where $\sum_\chi\lambda_\chi h_\chi=1$. The first is an easy Lemma, whose proof is left to the reader.

\begin{lemma}
\label{LemmaClosedCylinderCondigurationZZXXX}
Let $X$ be a surface represented by data $(\pi,\lambda,\tau)$. Let $\alpha$ be the letter with 
$
\pi_t(\alpha)=d
$ 
and assume that 
$$
\sum_{\pi_b(\chi)\geq \pi_b(\alpha)+1}\lambda_\chi<\lambda_\alpha.
$$ 
Then the straight segment $\sigma$ connecting the center of the rectangles $R^b_D$ and $R^t_D$ corresponds to a closed geodesic on the surface $X$ with 
$$
|\re(\sigma)|=\sum_{\pi_b(\chi)\geq \pi_b(\alpha)+1}\lambda_\chi
\quad
\textrm{ and }
\quad
|\im(\sigma)|=h_\alpha.
$$
\end{lemma}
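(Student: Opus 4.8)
The plan is to produce $\sigma$ explicitly, as a single straight segment inside the rectangle $R^t_\alpha$ --- which in the quotient $X$ is the same object as $R^b_\alpha$, the pair called $R^t_D,R^b_D$ in the statement --- whose two endpoints, one on the bottom horizontal edge and one on the top horizontal edge, are identified with each other by the gluing $(x,h_\alpha)\sim(Tx,0)$ of \S\ref{SectionRepresentationByZipperedRectangles}. First I would fix coordinates. Since $\pi_t(\alpha)=d$, the base of $R^t_\alpha$ is the rightmost atom of the top partition, namely $I^t_\alpha=[\|\lambda\|_1-\lambda_\alpha,\|\lambda\|_1)$, and $R^t_\alpha=I^t_\alpha\times[0,h_\alpha]$. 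Write $\rho:=\sum_{\pi_b(\chi)\geq\pi_b(\alpha)+1}\lambda_\chi$. Admissibility of $\pi$, used with $k=d-1$, forbids $\pi_b(\alpha)=d$, so $\rho>0$; moreover $\sup I^b_\alpha=\|\lambda\|_1-\rho$, and since the branch of $T$ on $I^t_\alpha$ is the translation carrying $I^t_\alpha$ onto $I^b_\alpha$, it is exactly $x\mapsto x-\rho$ there.

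Now the hypothesis $\rho<\lambda_\alpha$ is precisely what makes the interval $(\|\lambda\|_1-\lambda_\alpha,\|\lambda\|_1-\rho)$ nonempty. I would pick $x_0$ in it and let $\sigma$ be the straight segment in $R^t_\alpha$ from $(x_0,0)$ to $(x_0+\rho,h_\alpha)$. Both $x_0$ and $x_0+\rho$ then lie strictly inside $I^t_\alpha$, so the whole segment stays in $R^t_\alpha$ with $x$-coordinate interior to $I^t_\alpha$, touching $\partial R^t_\alpha$ only at its two endpoints; in particular it stays away from the vertical zippering seams $S^t_\bullet,S^b_\bullet,S^\ast$, which all lie on vertical edges. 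The gluing sends the upper endpoint $(x_0+\rho,h_\alpha)$ to $(T(x_0+\rho),0)=(x_0+\rho-\rho,0)=(x_0,0)$, the lower endpoint; since the gluing is a translation it preserves direction, so $\sigma$ has no corner at this common point and is a closed geodesic of $X$. Choosing $x_0=\|\lambda\|_1-\tfrac{1}{2}(\lambda_\alpha+\rho)$ makes $\sigma$ pass through the center of $R^t_\alpha=R^b_\alpha$ (at height $h_\alpha/2$ its $x$-coordinate equals $\inf I^t_\alpha+\lambda_\alpha/2$), which is the point called the center of $R^b_D$ and $R^t_D$ in the statement. Finally the planar development is read off directly in the plane from the displacement of the segment: $\hol(\sigma,X)=(\rho,h_\alpha)$ up to sign, so $|\re(\sigma)|=\rho=\sum_{\pi_b(\chi)\geq\pi_b(\alpha)+1}\lambda_\chi$ and $|\im(\sigma)|=h_\alpha$, as claimed.

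There is no real obstacle here; the argument is bookkeeping with the definitions of \S\ref{SectionRepresentationByZipperedRectangles}, and the two points deserving care are minor. First, $\sigma$ must avoid the conical points of $X$; but $\sigma$ lies in the interior of $R^t_\alpha$ except at its two identified endpoints on the horizontal edges, and the only conical points on those edges are the finitely many division points of the top and bottom partitions of $I$, so $\sigma$ is a genuine closed geodesic for every $x_0$ avoiding that finite set, and if the center value of $x_0$ happens to be bad one perturbs $x_0$ inside the open interval $(\|\lambda\|_1-\lambda_\alpha,\|\lambda\|_1-\rho)$, which changes neither $\re(\sigma)$ nor $\im(\sigma)$. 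Second, the structural content of the hypothesis $\rho<\lambda_\alpha$ is exactly that the horizontal shift produced by one upward passage through $R^t_\alpha$ is small enough for a segment of slope $h_\alpha/\rho$ to fit inside that rectangle and close up after a single period; this is what pins down the holonomy as $(\rho,h_\alpha)$ rather than some longer return word.
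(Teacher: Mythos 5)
Your proof is correct and matches the approach the authors intend; the paper explicitly labels this as "an easy Lemma, whose proof is left to the reader," and your argument is exactly the bookkeeping they have in mind. You correctly identify that the translation branch of $T$ on $I^t_\alpha$ is $x\mapsto x-\rho$ (where $\rho$ is the sum over letters following $\alpha$ in the bottom row), that admissibility with $k=d-1$ forces $\pi_b(\alpha)<d$ and hence $\rho>0$, that the hypothesis $\rho<\lambda_\alpha$ is exactly what lets the segment of holonomy $(\rho,h_\alpha)$ fit inside $R^t_\alpha$ between the bottom and top horizontal edges, and that the gluing $(x,h_\alpha)\sim(Tx,0)$ being a translation closes the segment into a genuine geodesic with the claimed real and imaginary components. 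The remark that a perturbation of $x_0$ leaves the holonomy unchanged is the right way to dispose of the (generically vacuous) concern about the closing point being a cone point, and is exactly what is needed for the downstream applications of the lemma in the proof of Proposition \ref{PropositionLimsupHittingTime}.
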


The next two Lemmas concern data $(\pi,\lambda,\tau)$ with $\tau_\ast<0$ where all singularities touch the rectangles in the top line very close to their lower horizontal side. We recall that when 
$
\tau_\ast<0
$ 
the discussion in \S~\ref{SectionPositionConicalPointsFlatRepresentation} applies for the position of points $\xi^{t/b}\alpha$ relatively to the rectangles $R^{t/b}_\alpha$.

Quantitative relations are stated in term of a parameter $M=M(d)$, depending only on the number of intervals, with $M(d)>>d$. For surfaces in $\cH(2)$, this situation leads to cylinder bounded by two homologous saddle connection, as in Figure \ref{FigureRemoveCilinder}. In general, Corollary \ref{CorollaryNoHighTopSingularites} holds, which represents a starting point for a proof by induction on genus of a general result extending Proposition \ref{PropositionLimsupHittingTime} to any stratum. Set 
$$
H=H(\pi,\tau):=\max_{\chi\in\cA}h_\chi=\|h\|_\infty.
$$

\begin{lemma}
\label{LemmaNoHighTopSingularites(1)}
Let $(\pi,\lambda,\tau)$ be a triple of data with $\tau_\ast<0$. Fix an integer $M\geq d+1$ and assume that 
$$
\max_{\chi\in\cA} \left \{ \im(\xi^t_\chi)\right \}
<
\frac{H}{M}.
$$
Let $A$ be an integer with $1\leq A\leq M-d$ and $\alpha\in\cA$ be a letter with $\pi^b(\alpha)\geq 2$ such that 
$$
\im(\xi^b_\alpha)<-\frac{M-A}{M}\cdot H.
$$
Then for any letter $\chi$ with 
$
\pi_b(\alpha)\leq\pi_b(\chi)\leq d
$ 
we have
$$
\im(\xi^b_\chi)<-\frac{M+2-A-d}{M}\cdot H.
$$
Moreover we also have
$$
\tau_\ast\leq -\frac{M+1-A-d}{M}\cdot H.
$$
\end{lemma}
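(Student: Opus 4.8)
The plan is to reduce everything to a single pointwise estimate: under the two hypotheses $\tau_\ast<0$ and $\max_\chi\im(\xi^t_\chi)<H/M$, one has $\tau_\chi<H/M$ for \emph{every} letter $\chi\in\cA$. Once this is available, both conclusions follow by telescoping partial sums, using the identities $\im(\xi^b_\chi)=\sum_{\pi_b(\chi')<\pi_b(\chi)}\tau_{\chi'}$ and $\tau_\ast=\sum_{\chi\in\cA}\tau_\chi$.

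To prove the pointwise bound I would split according to the top order. If $\pi_t(\chi)<d$, let $\chi^+$ be the letter with $\pi_t(\chi^+)=\pi_t(\chi)+1$; then $\tau_\chi=\im(\xi^t_{\chi^+})-\im(\xi^t_\chi)<H/M-0=H/M$, since $\im(\xi^t_{\chi^+})<H/M$ by hypothesis while $\im(\xi^t_\chi)\geq 0$ (the $\xi^t$-points lie in the closed upper half plane, see \S~\ref{SectionPositionConicalPointsFlatRepresentation}). If $\pi_t(\chi)=d$, i.e.\ $\chi=\alpha_t$ with $\pi_t(\alpha_t)=d$, then $\tau_{\alpha_t}=\tau_\ast-\im(\xi^t_{\alpha_t})<0<H/M$, using exactly the assumption $\tau_\ast<0$ together with $\im(\xi^t_{\alpha_t})\geq 0$. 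This last case is the only place the sign of $\tau_\ast$ is used, and it is the conceptual heart of the lemma.

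For the first conclusion, fix $\chi$ with $\pi_b(\alpha)\leq\pi_b(\chi)\leq d$, set $k:=\pi_b(\chi)-\pi_b(\alpha)$, and let $\beta_0=\alpha,\beta_1,\dots,\beta_{k-1}$ be the letters with $\pi_b(\beta_j)=\pi_b(\alpha)+j$; since $\pi_b(\alpha)\geq 2$ we have $k\leq d-2$. Then $\im(\xi^b_\chi)=\im(\xi^b_\alpha)+\sum_{j=0}^{k-1}\tau_{\beta_j}<\im(\xi^b_\alpha)+(d-2)\tfrac{H}{M}<-\tfrac{M-A}{M}H+\tfrac{d-2}{M}H=-\tfrac{M+2-A-d}{M}H$ (when $k=0$ the sum is empty and one only needs $d\geq 2$, together with strictness of the hypothesis on $\im(\xi^b_\alpha)$). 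For the ``moreover'' part, apply this with $\chi$ equal to the letter $\alpha_b$ with $\pi_b(\alpha_b)=d$, obtaining $\im(\xi^b_{\alpha_b})<-\tfrac{M+2-A-d}{M}H$, and then $\tau_\ast=\im(\xi^b_{\alpha_b})+\tau_{\alpha_b}<-\tfrac{M+2-A-d}{M}H+\tfrac{H}{M}=-\tfrac{M+1-A-d}{M}H$.

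I do not expect a genuine obstacle: the whole estimate is deliberately lossy, the slack $\tfrac{d-2}{M}H$ in the first conclusion (resp.\ $\tfrac{1}{M}H$ in the second) comfortably absorbing the crude per-letter bound $\tau_\chi<H/M$. The only things to handle with care are the bookkeeping between the top and bottom orderings and the degenerate cases $k=0$ and $\chi=\alpha_t$; the real content is simply the observation that $\tau_\ast<0$ forces the final top datum $\tau_{\alpha_t}$ to be negative, so that controlling all the $\im(\xi^t_\chi)$ from above automatically controls all the $\tau_\chi$ from above.
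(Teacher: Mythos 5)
Your proof is correct and takes essentially the same route as the paper's. The per-letter bound $\tau_\chi<H/M$ that you isolate in Step~1 (with the assumption $\tau_\ast<0$ used only to control the letter $\alpha_t$ with $\pi_t(\alpha_t)=d$) is precisely the estimate the paper establishes at each step of its induction on $\pi_b(\chi)-\pi_b(\alpha)$, via the same case distinction between $\pi_t(\cdot)=d$ and $\pi_t(\cdot)<d$; extracting it as a standalone claim and then telescoping is a tidier packaging of the same argument.
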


\begin{proof}
We first prove the first part of the statement. If $\pi_b(\alpha)=d$ then the required property holds trivially. Otherwise, let $\alpha'$ be the letter with 
$
\pi_b(\alpha')=\pi_b(\alpha)+1
$ 
and observe that we have
$$
\im(\xi^b_{\alpha'})
=
\sum_{\pi_b(\chi)<\pi_b(\alpha')}\tau_\chi
=
\sum_{\pi_b(\chi)\leq\pi_b(\alpha)}\tau_\chi
=
-h_\alpha+\sum_{\pi_t(\chi)\leq\pi_t(\alpha)}\tau_\chi.
$$
Consider two cases.
\begin{enumerate}
\item
If $\pi_t(\alpha)=d$ then the assumption $\tau_\ast<0$ implies
$$
\im(\xi^b_{\alpha'})=
-h_\alpha+\tau_\ast
<
-h_\alpha
<
\im(\xi^b_\alpha)
\leq
-\frac{M-A}{M}H.
$$
\item
Otherwise, calling $\alpha''$ the letter such that $\pi_t(\alpha'')=\pi_t(\alpha)+1$, we have
$$
\im(\xi^b_{\alpha'})
=
-h_\alpha+\im(\xi^t_{\alpha''})
\leq 
-h_\alpha+\frac{H}{M}
\leq
\im(\xi^b_\alpha)+\frac{H}{M}
\leq 
-\frac{M-A-1}{M}H.
$$
\end{enumerate}
Replacing $\alpha$ by $\alpha'$ and repeating the argument recursively on $i$ with $1\leq i\leq d-2$, we get that if $\alpha_i$ is the letter with 
$
\pi_b(\alpha_i)=\pi_b(\alpha)+i
$ 
then we have
$$
\im(\xi^b_{\alpha_i})
\leq 
-\frac{M-A-i}{M}H.
$$
The first part of the statement is proved. In order to prove the second part, observe that if $\beta$ is the letter with $\pi_b(\beta)=d$ then we have $\pi_t(\beta)\leq d-1$ and thus
$
\im(\xi^t_\beta)+\tau_\beta<M^{-1}\cdot H
$, 
hence
\begin{eqnarray*}
&&
\tau_\ast=
\im(\xi^b_\beta)+\tau_\beta=
\im(\xi^t_\beta)-h_\beta+\tau_\beta=
-h_\beta+\frac{H}{M}\leq
\\
&&
\im(\xi^b_\beta)+\frac{H}{M}\leq
-\frac{M+2-A-d}{M}H+\frac{H}{M}\leq
\frac{M+1-A-d}{M}H.
\end{eqnarray*}
\end{proof}

\begin{lemma}
\label{LemmaNoHighTopSingularites(2)}
Let $(\pi,\lambda,\tau)$ be a triple of data with $\tau_\ast<0$. Fix an integer $M\geq 4d$ and assume that 
$$
\max_{\chi\in\cA}\left\{ \im(\xi^t_\chi) \right \}
<
\frac{H}{M}.
$$
Then we have
$$
\tau_\ast\leq -\frac{M-d}{M} H.
$$
Moreover, if $\alpha\in\cA$ is the letter with $\pi_t(\alpha)=d$ the following holds
\begin{eqnarray*}
&&
\im(\xi^b_\chi)<-\frac{M+2-2d}{M} H
\quad
\textrm{ for all letters with }
\pi_b(\alpha)+1\leq\pi_b(\chi)\leq d.
\\
&&
0\geq \im(\xi^b_\chi)\geq -\frac{2d-1}{M} H
\quad
\textrm{ for all letters with }
1\leq \pi_b(\chi)\leq \pi_b(\alpha).
\end{eqnarray*}
\end{lemma}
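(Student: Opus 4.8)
The plan is to reduce all three assertions to elementary arithmetic on the partial sums $\im(\xi^b_\chi)=\sum_{\pi_b(\chi')<\pi_b(\chi)}\tau_{\chi'}$, using two standing facts. First, since $\tau_\ast<0$, the discussion of \S~\ref{SectionPositionConicalPointsFlatRepresentation} gives $0\le\im(\xi^t_\chi)$ and $\im(\xi^b_\chi)\le 0$ for every $\chi$, together with $h_\chi=\im(\xi^t_\chi)-\im(\xi^b_\chi)$. Second, for every letter $\chi'$ with $\pi_t(\chi')\le d-1$, if $\chi''$ denotes the letter with $\pi_t(\chi'')=\pi_t(\chi')+1$ then $\tau_{\chi'}=\im(\xi^t_{\chi''})-\im(\xi^t_{\chi'})$, so the hypothesis forces $|\tau_{\chi'}|<H/M$. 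The unique letter escaping this control is the letter $\alpha$ with $\pi_t(\alpha)=d$, and it is precisely $\alpha$ that produces the dichotomy in the statement.

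The first step is the bound on $\tau_\ast$. I would choose a letter $\beta_0$ with $h_{\beta_0}=H$; then by the first fact $\im(\xi^b_{\beta_0})=\im(\xi^t_{\beta_0})-H<\tfrac{H}{M}-H=-\tfrac{M-1}{M}H<0$, so in particular $\pi_b(\beta_0)\ge 2$. Applying Lemma~\ref{LemmaNoHighTopSingularites(1)} with $A=1$ and with $\beta_0$ playing the role of the letter called $\alpha$ there (legitimate since $M\ge 4d\ge d+1$ gives $1\le A\le M-d$), its final conclusion reads $\tau_\ast\le-\tfrac{M+1-A-d}{M}H=-\tfrac{M-d}{M}H$, which is the first assertion. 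One can also bypass Lemma~\ref{LemmaNoHighTopSingularites(1)}: if $\pi_b(\alpha)\ge\pi_b(\beta_0)$ then every summand of $\im(\xi^b_{\beta_0})$ obeys the second fact, giving $|\im(\xi^b_{\beta_0})|<\tfrac{d-1}{M}H$, which contradicts the estimate above because $M\ge 4d$; hence $\pi_b(\alpha)<\pi_b(\beta_0)$, and isolating the single term $\tau_\alpha$ in $\im(\xi^b_{\beta_0})$ together with $\tau_\ast=\tau_\alpha+\im(\xi^t_\alpha)$ again yields the bound.

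For the two remaining assertions I split the bottom positions at $\pi_b(\alpha)$. If $1\le\pi_b(\chi)\le\pi_b(\alpha)$, then every $\chi'$ with $\pi_b(\chi')<\pi_b(\chi)$ has $\pi_b(\chi')<\pi_b(\alpha)$, hence $\chi'\ne\alpha$ and $\pi_t(\chi')\le d-1$; so $\im(\xi^b_\chi)$ is a sum of at most $d-1\le 2d-1$ terms of absolute value $<H/M$, which with $\im(\xi^b_\chi)\le 0$ gives $0\ge\im(\xi^b_\chi)>-\tfrac{2d-1}{M}H$. If instead $\pi_b(\alpha)+1\le\pi_b(\chi)\le d$, then $\pi_b(\alpha)\in\{1,\dots,\pi_b(\chi)-1\}$, so $\im(\xi^b_\chi)=\tau_\alpha+\sum_{\pi_b(\chi')<\pi_b(\chi),\ \chi'\ne\alpha}\tau_{\chi'}$, where the remaining at most $d-2$ summands again obey the second fact; since $\tau_\alpha=\tau_\ast-\im(\xi^t_\alpha)\le\tau_\ast\le-\tfrac{M-d}{M}H$ by the first step, we obtain $\im(\xi^b_\chi)<-\tfrac{M-d}{M}H+\tfrac{d-2}{M}H=-\tfrac{M+2-2d}{M}H$, as claimed.

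The only genuinely non-routine ingredient is the $\tau_\ast$ estimate, which rests on the propagation mechanism already packaged in Lemma~\ref{LemmaNoHighTopSingularites(1)} (or on the short direct argument above); everything else is bookkeeping, whose single subtlety is to keep track of the exceptional letter $\alpha$ with $\pi_t(\alpha)=d$ — the unique summand of the $\im(\xi^b_\cdot)$ not pinned to size $<H/M$ — which is exactly what separates the two families of bottom positions.
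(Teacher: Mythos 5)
Your proof is correct, and it takes a genuinely different and somewhat more streamlined route than the paper's. The paper establishes the $\tau_\ast$ bound exactly as you do (Lemma~\ref{LemmaNoHighTopSingularites(1)} with $A=1$), but then derives both of the remaining inequalities by further applications of Lemma~\ref{LemmaNoHighTopSingularites(1)}: it computes $\im(\xi^b_{\alpha''})$ for the letter $\alpha''$ with $\pi_b(\alpha'')=\pi_b(\alpha)+1$ and feeds it back into the first part of Lemma~\ref{LemmaNoHighTopSingularites(1)} with $A=d$ to cover the range $\pi_b(\alpha)+1\le\pi_b(\chi)\le d$, and then proves the bound on the range $1\le\pi_b(\chi)\le\pi_b(\alpha)$ by contradiction, applying Lemma~\ref{LemmaNoHighTopSingularites(1)} a third time with $A=M-2d+1$. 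You instead isolate the one observation that drives everything --- namely that $\tau_{\chi'}$ is the difference of two consecutive $\im(\xi^t)$-values whenever $\pi_t(\chi')\le d-1$, hence has absolute value $<H/M$, with $\alpha$ (the letter with $\pi_t(\alpha)=d$) as the single exception --- and then bound $\im(\xi^b_\chi)$ directly as a partial sum of $\tau$'s, splitting at whether $\alpha$ contributes. This avoids the propagation machinery of Lemma~\ref{LemmaNoHighTopSingularites(1)} for the last two claims, makes the role of $\alpha$ completely explicit, and in fact produces a slightly stronger bound $\im(\xi^b_\chi)>-\tfrac{d-1}{M}H$ for $\pi_b(\chi)\le\pi_b(\alpha)$. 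The alternative direct derivation of the $\tau_\ast$ bound you sketch is also correct; the only care needed (which you took) is to observe that $\pi_b(\beta_0)\ge 2$ and that $\pi_b(\alpha)<\pi_b(\beta_0)$ is forced by $M\ge 4d$.
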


\begin{proof}
Since 
$
h_\chi=\im(\xi^t_\chi)-\im(\xi^b_\chi)
$ 
for any $\chi$, then the assumption implies that there exists $\alpha'$ with 
$
2\leq \pi_b(\alpha')\leq d
$ 
and
$$
\im(\xi^b_{\alpha'})\leq -\frac{M-1}{M}H.
$$  
The second part of Lemma \ref{LemmaNoHighTopSingularites(1)} with $A:=1$ implies 
$$
\tau_\ast<-\frac{M-d}{M} H.
$$
Now let $\alpha''$ be the letter with $\pi_b(\alpha'')=\pi_b(\alpha)+1$. We have
$$
\im(\xi^b_{\alpha''})=
\im(\xi^b_{\alpha})+\tau_\alpha=
-h_\alpha+\tau_\alpha+\sum_{\pi_t(\chi)<\pi_t(\alpha)}\tau_\chi=
-h_\alpha+\tau_\ast\leq
-\frac{M-d}{M} H.
$$
The first part of Lemma \ref{LemmaNoHighTopSingularites(1)} applied to the letter $\alpha''$ with $A:=d$ implies that for all letters $\chi$ with 
$
\pi_b(\alpha)+1\leq\pi_b(\chi)\leq d
$ 
we have
$$
\im(\xi^b_\chi)<-\frac{M+2-2d}{M}H.
$$
Finally, assume by absurd that there exists some $\beta$ with 
$
1\leq\pi_b(\beta)\leq\pi_b(\alpha)
$ 
and 
$$
\im(\xi^b_\beta)\leq -\frac{2d-1}{M}H.
$$ 
The first part of Lemma \ref{LemmaNoHighTopSingularites(1)} applied to the letter $\beta$ with $A:=M-2d+1$ implies
$$
\im(\xi^b_\alpha)\leq 
-\frac{M+2-(M-2d+1)-d}{M}H=
-\frac{d+1}{M}H.
$$
Then for the letter $\alpha''$ with $\pi_b(\alpha'')=\pi_b(\alpha)+1$ it follows
$$
-h_{\alpha''}\leq
\im(\xi^b_{\alpha''})=
\im(\xi^b_{\alpha})+\tau_\ast\leq
-\frac{M+1}{M}H
$$
which is absurd. The Lemma is proved.
\end{proof}

\begin{corollary}
\label{CorollaryNoHighTopSingularites}
Let $(\pi,\lambda,\tau)$ be a triple of data with $\tau_\ast<0$ and $\sum_\chi\lambda_\chi h_\chi=1$. Fix an integer $M\geq 4d$ and assume that 
$$
\max_{\chi\in\cA}\left \{ \im(\xi^t_\chi) \right \} 
<
\frac{H}{M}.
$$
Then on the surface $X$ corresponding to data $(\pi,\lambda,\tau)$ there exists a closed geodesic $\sigma$ with
$$
|\re(\sigma)|<\frac{(d-1)M}{M+2-2d}\frac{1}{H}
\quad
\textrm{ and }
\quad
|\im(\sigma)|<\frac{2d-1}{M}H.
$$
\end{corollary}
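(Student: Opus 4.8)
The plan is to exhibit the closed geodesic $\sigma$ directly inside the zippered rectangle picture of $(\pi,\lambda,\tau)$, feeding the structural information of Lemma~\ref{LemmaNoHighTopSingularites(2)} into the explicit cylinder of Lemma~\ref{LemmaClosedCylinderCondigurationZZXXX}. First I would apply Lemma~\ref{LemmaNoHighTopSingularites(2)} to $(\pi,\lambda,\tau)$ with the given integer $M\geq 4d$, and set $\alpha$ to be the letter with $\pi_t(\alpha)=d$. This divides the bottom rectangles into a \emph{tall} family $\{\chi:\pi_b(\chi)\geq\pi_b(\alpha)+1\}$, each satisfying $\im(\xi^b_\chi)<-\tfrac{M+2-2d}{M}H$ and hence $h_\chi\geq-\im(\xi^b_\chi)>\tfrac{M+2-2d}{M}H$, and a \emph{short} family $\{\chi:\pi_b(\chi)\leq\pi_b(\alpha)\}$, each with $0\geq\im(\xi^b_\chi)\geq-\tfrac{2d-1}{M}H$. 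Note that $\alpha$ lies in the short family, and that admissibility (with $\pi_t(\alpha)=d$) forces $\pi_b(\alpha)\leq d-1$, so the tall family is non-empty and contains at most $d-1$ letters.

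Granting for the moment that the hypothesis $\sum_{\pi_b(\chi)\geq\pi_b(\alpha)+1}\lambda_\chi<\lambda_\alpha$ of Lemma~\ref{LemmaClosedCylinderCondigurationZZXXX} holds for this $\alpha$, that lemma produces on $X$ a closed geodesic $\sigma$ with $|\im(\sigma)|=h_\alpha$ and $|\re(\sigma)|=\sum_{\pi_b(\chi)\geq\pi_b(\alpha)+1}\lambda_\chi$, and both estimates follow at once. For the vertical part, $h_\alpha=\im(\xi^t_\alpha)-\im(\xi^b_\alpha)$, where $\im(\xi^t_\alpha)<H/M$ by hypothesis and $-\im(\xi^b_\alpha)\leq\tfrac{2d-1}{M}H$ by Lemma~\ref{LemmaNoHighTopSingularites(2)} applied to the short letter $\alpha$, so that $|\im(\sigma)|=h_\alpha<\tfrac{2d-1}{M}H$ (up to the harmless $H/M$ slack in the bookkeeping). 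For the horizontal part, each tall letter $\chi$ has $h_\chi>\tfrac{M+2-2d}{M}H$, while the area normalization $\sum_\chi\lambda_\chi h_\chi=1$ gives $\lambda_\chi h_\chi\leq1$; hence $\lambda_\chi<\tfrac{M}{(M+2-2d)H}$, and summing over the at most $d-1$ tall letters yields $|\re(\sigma)|<\tfrac{(d-1)M}{M+2-2d}\cdot\tfrac1H$.

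The step I expect to be the real obstacle is precisely verifying the hypothesis of Lemma~\ref{LemmaClosedCylinderCondigurationZZXXX}, i.e.\ $\sum_{\pi_b(\chi)\geq\pi_b(\alpha)+1}\lambda_\chi<\lambda_\alpha$, equivalently that the intervals $I^t_\alpha$ and $I^b_\alpha$ overlap so that a nearly vertical leaf can close up through the rectangle $R_\alpha$ (the situation pictured in Figure~\ref{FigureRemoveCilinder}). The crude area argument above bounds the left-hand side by $\tfrac{(d-1)M}{(M+2-2d)H}$ but provides no lower bound for $\lambda_\alpha$, so this inequality is not formal. I would deal with it by a reduction: if it fails, perform finitely many Rauzy--Veech steps moving the letter with $\pi_t=d$ toward the end of the bottom ordering (a bottom step sends it to the final bottom position, whereupon the sum becomes $0<\lambda_\alpha$), all the while checking that the hypothesis $\max_\chi\im(\xi^t_\chi)<H/M$ of Lemma~\ref{LemmaNoHighTopSingularites(2)} survives for the updated data --- so that the tall/short dichotomy, and with it the two estimates above, persists. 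Since Rauzy--Veech steps do not change the surface $X$, the geodesic $\sigma$ obtained from the reduced data is a closed geodesic on $X$ with the claimed bounds. Controlling this reduction --- keeping the top conical points uniformly low while the interval shrinks --- is where I expect the genuine work to be.
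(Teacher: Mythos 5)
Your reconstruction matches the paper's own (very terse) argument step-for-step: apply Lemma~\ref{LemmaNoHighTopSingularites(2)} to split the letters into a tall family ($h_\chi>\tfrac{M+2-2d}{M}H$, forcing $\lambda_\chi<h_\chi^{-1}$ by the area normalization) and a short family containing $\alpha$, read off the two estimates, and feed everything into Lemma~\ref{LemmaClosedCylinderCondigurationZZXXX}. You also correctly flag the harmless slack in the imaginary-part bound (the straightforward estimate gives $h_\alpha<\tfrac{2d}{M}H$, not $\tfrac{2d-1}{M}H$).

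The obstacle you isolate is genuine, and the paper's own proof does not resolve it either. The inequality $\sum_{\pi_b(\chi)>\pi_b(\alpha)}\lambda_\chi<\lambda_\alpha$ is precisely the condition that $I^t_\alpha$ and $I^b_\alpha$ overlap, which is what makes the transverse segment in Lemma~\ref{LemmaClosedCylinderCondigurationZZXXX} close up. The corollary's hypotheses impose no lower bound on $\lambda_\alpha$, so it can fail: for instance take $\pi_b(\alpha)=1$ and $\lambda_\alpha$ tiny while the $d-1$ tall rectangles carry essentially all the area. The paper simply computes the area bounds and declares that ``the Corollary follows directly from Lemma~\ref{LemmaNoHighTopSingularites(2)}'', without ever invoking the hypothesis of Lemma~\ref{LemmaClosedCylinderCondigurationZZXXX}. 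Where this material is actually put to use in $\cH(2)$ (Case~(4) of Proposition~\ref{PropositionLimsupHittingTime}) the overlap is supplied ad hoc by the lower bound $\|\lambda\|_1\geq r/2$ coming from Lemma~\ref{LemmaGettingZipperedRectangles}, which is \emph{not} available at the generality of the corollary. So your diagnosis is right, but your proposed repair via Rauzy--Veech induction is only a plan: you would need to show that after finitely many steps the overlap condition holds while the tall/short dichotomy and the scale $H$ --- in terms of which the corollary's bounds are phrased --- remain under control, and neither your proposal nor the paper carries this out.
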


\emph{Note:} see the picture in Figure \ref{FigureRemoveCilinder} for the special case $\pi^b(\alpha)=d-1$.

\begin{proof}
Let $\alpha$ be the letter with $\pi_t(\alpha)=d$ and consider those $\chi$ with 
$
\pi_b(\alpha)+1\leq \pi_b(\chi)\leq d
$. 
For any such letter $\chi$ we have 
$
h_\chi\geq -\im(\xi^b_\chi)\geq 0
$, 
since the total area is one, and $\lambda_\chi<h_\chi^{-1}$. The Corollary follows directly from Lemma \ref{LemmaNoHighTopSingularites(2)}.
\end{proof}

\subsection{End of the proof of Proposition \ref{PropositionLimsupHittingTime}}
\label{SectionEndProofUpperBoundHitting}

In order to simplify the notation, assume that $\theta=0$, as in the proof of Lemma \ref{LemmaGettingZipperedRectangles}. This amounts to replace the surface $X$ by $r_{-\theta}X$. The flow $\phi_\theta$ thus corresponds to the flow in the vertical direction $\theta=0$. In particular, relative (or absolute) periods 
$
\hol(\gamma,\theta)=\big(\re(\gamma,\theta),\im(\gamma,\theta)\big)
$ 
take the form 
$
\hol  (\gamma,\theta=0)=\big(\re(\gamma),\im(\gamma)\big)
$. 
Moreover, for points $p,p'$ in $X$ and for $r>0$ write 
$
\rt(p,p',r):=\rt(\phi_{\theta=0},p,p',r)
$. 

\medskip

Consider $r>0$ small enough and points $p,p'$ in $X$ with $\rt(p,p',r)\geq r^{-\omega}$.
Let $(\pi,\lambda,\tau)$ be data representing the surface $X$ in the general sense as in Lemma~\ref{LemmaGettingZipperedRectangles}. 
Assume without loss of generality that $(\pi,\lambda,\tau)$ represent $X$ in the standards sense. Moreover recall that we have
$$
\sum_\chi\lambda_\chi h_\chi=\area(X)=1.
$$ 

Let $\cA=\{A,B,C,D\}$ be the alphabet for the Rauzy class $\cR$ of $\pi$. Let also fix the names of the letters in the first row of $\pi$, that is set 
$$
\pi_t=(A,B,C,D).
$$
The proof of Proposition \ref{PropositionLimsupHittingTime} follows by separate analysis of the cases listed below. 

\medskip

\emph{Case (1)} Suppose that there is a subset $\cA^{(1)}\subset\{A,B,C,D\}$ with $\sharp\cA^{(1)}=3$ such that $\lambda_\chi<r^{\sqrt{\omega}}$ for any $\chi\in\cA^{(1)}$. In this case the Proposition follows from an straightforward generalization of Lemma \ref{LemmaClosedCylinderCondigurationZZXXX}, which gives a closed geodesic $\sigma$ with 
$
|\re(\sigma)|<3r^{\sqrt{\omega}}
$ 
and 
$
|\im(\sigma)|<(r/2-3r^{\sqrt{\omega}})^{-1}<3r^{-1}
$.

\medskip

\emph{Case (2)} Suppose that there is a subset $\cA^{(2)}\subset\{A,B,C,D\}$ with $\sharp\cA^{(2)}=2$ such that 
$
\im(\xi^t_\chi)\geq r^{-\sqrt{\omega}}
$ 
for any $\chi\in\cA^{(2)}$. Then condition $\tau_\ast<0$ implies that there exists 
$
\cA^{(1)}\subset\{A,B,C,D\}
$ 
with $\sharp\cA^{(1)}=3$ such that for any $\chi\in\cA^{(1)}$ we have 
$
h_\chi\geq r^{-\sqrt{\omega}}
$ 
and thus 
$
\lambda_\chi<r^{\sqrt{\omega}}
$. 
Thus the proof follows as in Case (1).

\medskip

\emph{Case (3)} Suppose that there exists an unique $\alpha\in\{A,B,C,D\}$ with 
$
\im(\xi^t_\alpha)\geq r^{-\sqrt{\omega}}
$. 
Moreover assume also that such letter $\alpha$ satisfies
$$
\im(\xi^t_\alpha)\geq \frac{1}{100 \cdot r^{\omega}}.
$$

If $\alpha=B$ then condition $\tau_\ast<0$ implies 
$
h_\chi\geq (100 \cdot r^\omega)^{-1}
$ 
and thus $\lambda_\chi<100\cdot r^\omega$ for $\chi=A,B$. Moreover 
$
0<\im(\xi^t_C)<r^{-\sqrt{\omega}}<<\im(\xi^t_B)
$, 
therefore 
$
\gamma:=\xi^t_C-\xi^t_A=\zeta_A+\zeta_B
$ 
corresponds to a saddle connection on the surface $X$. Moreover such $\gamma$ satisfies 
$$
|\re(\gamma)|\leq 200 r^\omega
\quad
\textrm{ and }
\quad
|\im(\gamma)|\leq r^{-\sqrt{\omega}},
$$
thus the Proposition is proved. The case $\alpha=C$ is similar and $\gamma:=\xi^t_D-\xi^t_B$ corresponds to a saddle connection with the required properties. The last sub-case to consider is $\alpha=D$. In this case, observe first that arguing as above one has 
$
\lambda_\chi<100\cdot r^\omega
$ 
for $\chi=C,D$. Then let $\beta$ be the letter with $\pi_b(\beta)=\pi_b(D)+1$ and observe since 
$
\lambda_\beta\cdot h_\beta<1
$ 
then we also have 
$
\lambda_\beta\leq 100\cdot r^\omega
$, 
indeed condition $\tau_\ast<0$ implies
$$
-h_\beta<\im(\xi^b_\beta)=-h_D+\tau_\ast<-h_D<-(100\cdot r^\omega)^{-1}.
$$
If $\beta\not=C$ then the Proposition follows as in Case (1). Otherwise $\beta=C$, thus there is a letter $\beta'\not=C,D$ with $\pi_b(\beta')=4$. If 
$
\lambda_{\beta'}<r^{\sqrt{\omega}}
$ 
again the Proposition follows as in Case (1). Otherwise condition 
$
\lambda_{\beta'}\geq r^{\sqrt{\omega}}>>200 r^\omega>\lambda_C+\lambda_D
$ 
implies that 
$
\gamma:=\xi_\ast-\xi^t_C=\zeta_C+\zeta_D
$ 
is a saddle connection for the surface $X$, and moreover we have
$
\tau_\ast>-h_{\beta'}>r^{-\sqrt{\omega}}
$ 
thus such $\gamma$ satisfies 
$$
|\re(\gamma)|\leq 200 r^\omega
\quad
\textrm{ and }
\quad
|\im(\gamma)|\leq \im(\xi^t_C)+|\tau_\ast|<2\cdot r^{-\sqrt{\omega}}.
$$

\medskip

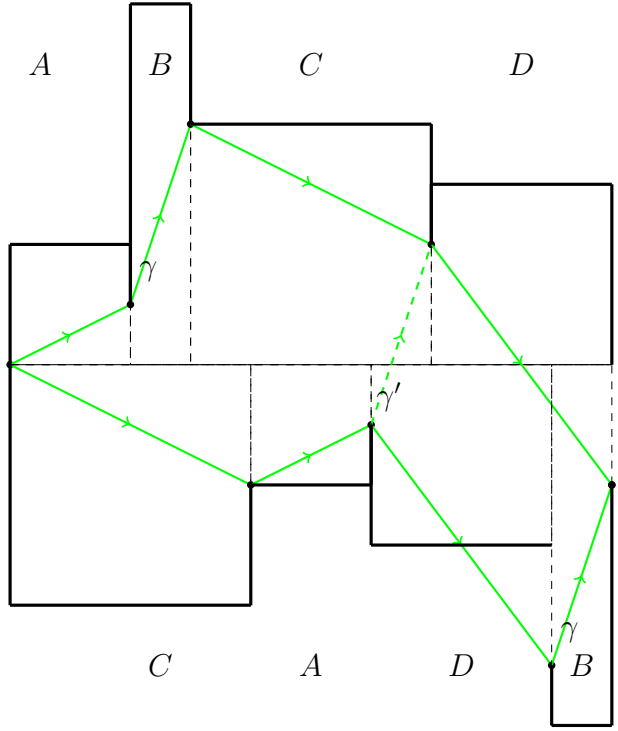
\begin{figure}
\begin{center} 
\begin{tikzpicture}[scale=0.8]

\tikzset
{->-/.style={decoration={markings,mark=at position .5 with {\arrow{>}}},postaction={decorate}}}

%combinatorics (A,B,C,D)/(C,A,D,B) 

%length and suspension data z_A=(2,1) z_B=(1,3) z_C=(4,-2) z_D=(3,-4)

\draw[-] (0,0) -- (10,0);
\node [circle,fill,inner sep=1pt] at (0,0) {};

% arrows in top line 

\draw[->-,thick,green] (0,0) -- (2,1);
\node [circle,fill,inner sep=1pt] at (2,1) {};

\draw[->-,thick,green] (2,1) -- (3,4) node[pos=0.3,below,black] {$\gamma$};
\node [circle,fill,inner sep=1pt] at (3,4) {};

\draw[->-,thick,green] (3,4) -- (7,2);
\node [circle,fill,inner sep=1pt] at (7,2) {};

\draw[->-,thick,green] (7,2) -- (10,-2);
\node [circle,fill,inner sep=1pt] at (10,-2) {};

% arrows in bottom line

\draw[->-,thick,green] (0,0) -- (4,-2);
\node [circle,fill,inner sep=1pt] at (4,-2) {};

\draw[->-,thick,green] (4,-2) -- (6,-1);
\node [circle,fill,inner sep=1pt] at (6,-1) {};

\draw[->-,thick,green] (6,-1) -- (9,-5);
\node [circle,fill,inner sep=1pt] at (9,-5) {};

\draw[->-,thick,green] (9,-5) -- (10,-2) node[pos=0.3,below,black] {$\gamma$};
\node [circle,fill,inner sep=1pt] at (10,-2) {};

%homologous saddle connection 

\draw[->-,thick,green,dashed] (6,-1) -- (7,2) node[pos=0.3,below,black] {$\gamma'$};

% rectangles in top line

\draw[-,very thin,dashed] (0,0) rectangle (2,2);
\node[] at (0.5,5) {$A$};
\draw[-,very thick] (0,0) -- (0,2);
\draw[-,very thick] (0,2) -- (2,2);
\draw[-,very thick] (2,1) -- (2,2);

\draw[-,very thin,dashed] (2,0) rectangle (3,6);
\node[] at (2.5,5) {$B$};
\draw[-,very thick] (2,1) -- (2,6);
\draw[-,very thick] (2,6) -- (3,6);
\draw[-,very thick] (3,4) -- (3,6);

\draw[-,very thin,dashed] (3,0) rectangle (7,4);
\node[] at (5,5) {$C$};
\draw[-,very thick] (3,4) -- (7,4);
\draw[-,very thick] (7,2) -- (7,4);

\draw[-,very thin,dashed] (7,0) rectangle (10,3);
\node[] at (8.5,5) {$D$};
\draw[-,very thick] (7,2) -- (7,3);
\draw[-,very thick] (7,3) -- (10,3);
\draw[-,very thick] (10,0) -- (10,3);

% rectangles in bottom line

\draw[-,very thin,dashed] (0,-4) rectangle (4,0);
\node[] at (2.5,-5) {$C$};
\draw[-,very thick] (0,-4) -- (0,0);
\draw[-,very thick] (0,-4) -- (4,-4);
\draw[-,very thick] (4,-4) -- (4,-2);

\draw[-,very thin,dashed] (4,-2) rectangle (6,0);
\node[] at (5,-5) {$A$};
\draw[-,very thick] (4,-2) -- (6,-2);
\draw[-,very thick] (6,-2) -- (6,-1);

\draw[-,very thin,dashed] (6,-3) rectangle (9,0);
\node[] at (7.5,-5) {$D$};
\draw[-,very thick] (6,-3) -- (6,-1);
\draw[-,very thick] (6,-3) -- (9,-3);

\draw[-,very thin,dashed] (9,-6) rectangle (10,0);
\node[] at (9.5,-5) {$B$};
\draw[-,very thick] (9,-6) -- (9,-5);
\draw[-,very thick] (9,-6) -- (10,-6);
\draw[-,very thick] (10,-6) -- (10,-2);
\end{tikzpicture}

\end{center}
\caption{The homologous saddle connection $\gamma$ and $\gamma'$, appearing in Case (4) in the proof of Proposition \ref{PropositionLimsupHittingTime}, when $\pi^b(D)=3$.}
\label{FigureRemoveCilinder}
\end{figure}

\emph{Case (4)} The last case to consider is when for any $\alpha$ with $\pi^t(\alpha)\geq 2$ we have
$$
\im(\xi^t_\alpha)\leq \frac{1}{100\cdot r^\omega}.
$$
According to Lemma \ref{LemmaNoHighTopSingularites(2)}, where $M=100$ and $H=r^{-\omega}$ we have 
$
\lambda_\chi<100\cdot r^\omega
$ 
for any $\chi$ with $\pi_b(\chi)\geq \pi_b(D)+1$. If $\pi_b(D)=1$ then the Proposition follows immediately as in Case (1). If $\pi_b(D)=2$ and $h_D>r^{-\sqrt{\omega}}$ then 
$
\lambda_D<r^{\sqrt{\omega}}
$, 
thus again we have 3 short intervals and we conclude as in Case (1). Finally, if $\pi_b(D)=2$ and 
$
h_D\leq r^{-\sqrt{\omega}}
$ 
then the Proposition follows because Lemma \ref{LemmaClosedCylinderCondigurationZZXXX} provides a closed geodesic $\sigma$ on the surface $X$ with 
$$
|\re(\sigma)|=\sum_{\pi_b(\chi)\geq \pi_b(D)+1}\lambda_\chi<200\cdot r^\omega
\quad
\textrm{ and }
\quad
|\im(\sigma)|=h_D\leq r^{-\sqrt{\omega}}.
$$
The last sub-case to consider is $\pi_b(D)=3$. In this case, let $\beta$ be the letter with 
$
\pi_b(\beta)=\pi_b(D)+1=4
$ 
and observe that
$
\zeta_\beta=\xi^t_D-\xi^b_D
$, 
so that $\im(\zeta_\beta)=h_D>0$ and 
$$
\im(\xi^b_\beta)=-h_D+\tau_\ast<-h_D,
$$
whereas the remaining singularities $\xi^b_\chi$ with $2\leq \pi_b(\chi)\leq 3$ lie on the vertical boundary of both rectangles at their left and at their right. Moreover $1\leq\pi_t(\beta)\leq 2$, since $\pi$ is admissible, thus let $\alpha\not=D,\beta$ be the letter with $\pi_t(\alpha)=3$. Let also 
$
\alpha'\not=D,\beta
$ 
be the letter with $\pi_b(\alpha')=2$, where we may have $\alpha=\alpha'$. Finally, observe also that $\gamma:=\zeta_\beta$ and $\gamma':=\xi^t_D-\xi^b_D$ corresponds to two homologous saddle connections on the surface $X$, since they bound a cylinder (see Figure \ref{FigureRemoveCilinder}). We have 
$
|\re(\gamma)|=|\re(\gamma')|=\lambda_\beta<100\cdot r^\omega
$, 
according to Lemma \ref{LemmaNoHighTopSingularites(2)}. If 
$
|\im(\zeta_\beta)|<r^{-\sqrt{\omega}}
$ 
then the Proposition follows since the saddle connection $\gamma$ (and $\gamma'$ too) satisfies
$$
|\re(\gamma)|<100\cdot r^\omega
\quad
\textrm{ and }
\quad
|\im(\gamma)|<r^{-\sqrt{\omega}}.
$$
Otherwise, since 
$
\xi^t_D=\xi^b_D+\zeta_\beta
$ 
we have either 
$
\im(\xi^t_D)>(1/2)r^{-\sqrt{\omega}}
$ 
and thus 
$$
h_\alpha,h_D\geq \im(\xi^t_D)>(1/2)r^{-\sqrt{\omega}},
\quad
\textrm{ so that }
\quad
\lambda_\alpha,\lambda_D<2r^{\sqrt{\omega}},
$$
or
$
\im(\xi^b_D)<-(1/2)r^{-\sqrt{\omega}}
$ 
and thus 
$$
h_{\alpha'},h_D\geq -\im(\xi^b_D)>(1/2)r^{-\sqrt{\omega}},
\quad
\textrm{ so that }
\quad
\lambda_{\alpha'},\lambda_D<2r^{\sqrt{\omega}}.
$$
Since $\lambda_\beta<100\cdot r^\omega$ then in both cases the Proposition follows as in Case (1). The analysis of cases is complete and Proposition \ref{PropositionLimsupHittingTime} is proved. $\qed$

%%%%%%%%%%%%%%%%%%%%%%%%%%%%%
%%%%%%%%%%%%%%%%%%%%%%%%%%%%%
\section{Geometric constructions on reduced origamis}
\label{SectionGeometricConstructionsOrigamis}
%%%%%%%%%%%%%%%%%%%%%%%%%%%%%
%%%%%%%%%%%%%%%%%%%%%%%%%%%%%

It is convenient to consider the slope $\alpha=\tan\theta$ of a direction $\theta$ on an origami $X$, rather than the direction $\theta$ itself. That is for any $\alpha\in\RR\cup\{\pm\infty\}$ we consider the linear flow $\phi_\alpha:X\to X$, as the integral flow of the constant unitary vector field $e_\alpha$ on $X\setminus\Sigma$ defined by 
\begin{equation}
\label{EquationArctan}
e_\alpha:=(\sin\theta,\cos\theta)
\quad
\textrm{ where }
\quad
\theta:=\arctan{\alpha}\in(-\pi/2,\pi/2].
\end{equation}
Modulo the same change of variable, we will also refer to $(X,\alpha)$-singular leaves as trajectories of $\phi_\alpha$ starting or ending at singular points. In the following we will establish relations between flows $\phi_\alpha:X\to X$ and $\phi_{\alpha'}:X'\to X'$ in different slopes and on different surfaces. In order to avoid ambiguities, when considering $\phi_\alpha:X\to X$, for $r>0$ and for points $p,p'$ in $X$ we introduce the extended notation
$$
R(X,\alpha,p,p',r):=R(\phi_\alpha,p,p',r).
$$

\subsection{Reduced origamis and their orbits}
\label{SectionReducedOrigamisAndSL(2,Z)}

Recall from \S~\ref{SectionIntroductionOrigamis} that a translation surface $X$ is an origami if and only if $\hol(X)\subset\ZZ^2$, where $\hol(X)$ is the set of relative periods of $X$, and this is equivalent to say that the Veech group $\slgroup(X)$ of $X$ and $\sltwor$ share a finite index subgroup. We say that an origami $X$ is \emph{reduced} if 
$
\langle\hol(X)\rangle=\ZZ^2
$, 
that is the subgroup of $\RR^2$ generated by the set $\hol(X)$ is the entire lattice $\ZZ^2$. In this case Equation~\eqref{EquationCovarianceHolonomy} implies $A(\ZZ^2)\subset\ZZ^2$ for any $A\in\slgroup(X)$, that is $\slgroup(X)$ is a subgroup of $\sltwoz$ with finite index $[\sltwoz:\slgroup(X)]$. According to Equation~\eqref{EquationCovarianceHolonomy}, the action of $\sltwor$ on translation surfaces induces an action of $\sltwoz$ on origamis. Moreover, it is also clear that the action of $\sltwoz$ preserves the set of reduced origamis (see also Lemma 2.4 in \cite{HubertLelievre}). If $X$ is a reduced origami, denote by $\cO(X)$ its orbit under $\sltwoz$, that is
$$
\cO(X):=\{Y=A\cdot X \mid A\in\sltwoz\}.
$$
There is a natural identification 
$
\cO(X)=\sltwoz/\slgroup(X)
$, 
thus $\cO(X)$ is a finite set with cardinality 
$
N:=[\sltwoz:\slgroup(X)]<+\infty
$. 
The action of $\sltwoz$ passes to the quotient $\cO(X)$ and can be represented by a finite oriented graph, whose vertices are the elements $Y\in\cO(X)$ and whose oriented edges correspond to the operations $Y\mapsto T\cdot Y$ and $Y\mapsto V\cdot Y$ for $Y\in \cO(X)$, where we introduce the two generators 
$$
T:=
\begin{pmatrix}
1 & 1\\
0 & 1
\end{pmatrix}
\quad
\textrm{ and }
\quad
V:=
\begin{pmatrix}
1 & 0\\
1 & 1
\end{pmatrix}
$$
of $\sltwoz$. Fix $n\in\NN$ and consider positive integers $a_1,\dots,a_n$. Define the element $g(a_1,\dots,a_n)$ of $\sltwoz$ by
\begin{equation}
\label{EquationContinuedFractionSL(2,Z)}
g(a_1,\dots,a_n):=
\left\{
\begin{array}{c}
V^{a_1}\circ\dots\circ
V^{a_{n-1}}\circ T^{a_n}
\quad
\textrm{ for even }
n;
\\
V^{a_1}\circ\dots\circ
T^{a_{n-1}}\circ V^{a_n}
\quad
\textrm{ for odd }
n.
\end{array}
\right.
\end{equation}

\begin{lemma}
\label{LemmaConnectionCusps}
Let $\cO(X)$ be an orbit with $N\geq2$ elements. For any two elements $Y_2$ and $Y_1$ in $\cO(X)$ there exists a word $(a_1,\dots,a_{2m})$ with even length $2m$ with $0\leq 2m\leq 2N-2$ and $a_i\leq N$ for any $i=0,\dots,2m$, such that 
$$
Y_2=g(a_1,\dots,a_{2m})\cdot Y_1.
$$
\end{lemma}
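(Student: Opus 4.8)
The plan is to translate the statement into a reachability question on the finite set $\cO(X)=\sltwoz/\slgroup(X)$, on which $T$ and $V$ act as permutations of an $N$-element set, and then to bound the number of alternations between ``$T$-moves'' and ``$V$-moves'' by a breadth-first argument. I would first record the two standard facts used throughout: $\sltwoz$ acts transitively on $\cO(X)$ (it is a single orbit, by the very definition $\cO(X)=\{A\cdot X\mid A\in\sltwoz\}$), and $T,V$ generate $\sltwoz$. Since $T$ acts as a permutation of the $N$-element set $\cO(X)$, for every $Z\in\cO(X)$ the $T$-cycle $\cT(Z):=\{T^{j}\cdot Z\mid j\ge 0\}$ has at most $N$ elements, so each of them is $T^{a}\cdot Z$ for some $0\le a\le N-1$; likewise for the $V$-cycle $\cV(Z):=\{V^{j}\cdot Z\mid j\ge 0\}$. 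This is exactly where the bound $a_i\le N-1\le N$ will come from.

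Next I would set up the nested sets that encode the words $g(a_1,\dots,a_{2m})$. Recalling that in $g(a_1,\dots,a_{2m})=V^{a_1}\circ T^{a_2}\circ\cdots\circ V^{a_{2m-1}}\circ T^{a_{2m}}$ the rightmost factor $T^{a_{2m}}$ is the first one applied to $Y_1$, define $S_0:=\{Y_1\}$ and
\[
S_k:=\bigcup_{Z\in S_{k-1}}\cT(Z)\quad(k\text{ odd}),\qquad S_k:=\bigcup_{Z\in S_{k-1}}\cV(Z)\quad(k\text{ even}).
\]
Unwinding these definitions gives the equivalence I want: for every even $2m\ge 0$ one has $Y_2\in S_{2m}$ if and only if there are integers $a_1,\dots,a_{2m}$ with $0\le a_i\le N-1$ and $Y_2=g(a_1,\dots,a_{2m})\cdot Y_1$ (when $Y_2=Y_1$ one takes $m=0$ and the empty word). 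Moreover $Z\in\cT(Z)\cap\cV(Z)$, so the chain $S_0\subseteq S_1\subseteq S_2\subseteq\cdots$ is non-decreasing.

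The heart of the proof is the claim that if $S_k=S_{k+1}$ for some $k\ge1$, then $S_k=\cO(X)$. Indeed, by construction $S_k$ is a union of $T$-cycles (if $k$ is odd) or of $V$-cycles (if $k$ is even); the equality $S_k=S_{k+1}$ says that $S_k$ is also closed under the other of $T,V$, hence under the whole group $\langle T,V\rangle=\sltwoz$, and since $\cO(X)$ is a single orbit and $S_k\neq\emptyset$ we conclude $S_k=\cO(X)$. Consequently the cardinalities strictly increase until they reach $N$: from $|S_1|\ge1$ one deduces $|S_k|\ge\min(k,N)$ for all $k\ge1$, so $S_N=\cO(X)$, and in particular $Y_2\in S_N$. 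To finish I would take $2m$ to be the least even integer with $2m\ge N$, so that $Y_2\in S_N\subseteq S_{2m}$; this $2m$ equals $N$ when $N$ is even and $N+1$ when $N$ is odd, and in either case $2m\le 2N-2$ because $N\ge2$ (if $N$ is even then $N\le 2N-2$; if $N$ is odd then $N\ge3$, so $N+1\le 2N-2$). The equivalence above then yields the required word $(a_1,\dots,a_{2m})$ with $a_i\le N$.

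The only genuinely non-bookkeeping step is the ``closed under both $T$ and $V$ implies everything'' observation; the one point to be careful about is the alternation convention for $g$, which fixes whether the breadth-first search starts with a $T$-move or a $V$-move and forces the length to be even, the parity being absorbed when necessary by letting a terminal exponent $a_{2m}$ (or $a_1$) vanish. The monotonicity of the $S_k$, the cardinality count, and the inequality $2m\le 2N-2$ are all routine.
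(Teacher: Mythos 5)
Your proof is correct and takes a genuinely different route from the paper's. The paper's argument is path-based: it observes that the Schreier graph of $\cO(X)$ with generators $T,V$ is connected, takes a path of length $\leq N-1$ from $Y_1$ to $Y_2$, contracts maximal runs of the same generator, and repairs parity by appending a cusp-width identity $T^{h(Y_1)}\cdot Y_1=Y_1$. Yours is a breadth-first saturation: the nested sets $S_k$, alternately saturated under $T$-cycles and $V$-cycles, must strictly increase from $S_1$ onward until they fill $\cO(X)$, since a non-empty subset closed under both $T$ and $V$ is invariant under all of $\sltwoz$ and transitivity forces it to be everything; counting then gives $S_N=\cO(X)$, hence $Y_2\in S_{2m}$ for the least even $2m\geq N$, which indeed satisfies $2m\leq 2N-2$ for $N\geq2$. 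Both routes are elementary and give the stated bound; yours makes the role of transitivity transparent without building an explicit path, while the paper's keeps explicit contact with the cusp widths $h(Y),v(Y)$ that reappear elsewhere.

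There is one loose end to close. Your equivalence between $Y_2\in S_{2m}$ and $Y_2=g(a_1,\dots,a_{2m})\cdot Y_1$ produces exponents $a_i\in\{0,\dots,N-1\}$, but $g$ in Equation~\eqref{EquationContinuedFractionSL(2,Z)} is defined for \emph{positive} integers, and in \S~\ref{SectionConstructionCantorSlopes} the $a_i$ from this Lemma are used as partial quotients of continued fractions, so they must be $\geq 1$. Your closing remark about letting a terminal exponent ``vanish'' does not repair this --- it only makes the $g$-word degenerate. The fix is cheap and is exactly where the cusp widths re-enter: whenever your construction yields $a_i=0$, replace it by the length of the $T$- or $V$-cycle through the point it is applied to, an integer in $\{1,\dots,N\}$ which acts as the identity there. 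After this replacement your bound $a_i\leq N-1$ becomes $a_i\leq N$, matching the statement, and the word is a bona fide $g(a_1,\dots,a_{2m})$ with positive entries.
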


\begin{proof}
Rational slopes $p/q$ are partitioned into \emph{cusps}, the latter being identified also with $T$-orbits over $\cO(X)$, see for example \S~2.6 in \cite{HubertLelievre}. For any $Y\in\cO(X)$ there exist two positive integers $h=h(Y)$ and $v=v(Y)$ with $h,v\leq N$, called \emph{width of the cusp} respectively of the horizontal $p/q=\infty$ and vertical $p/q=0$ slope of $Y$, such that $T^h\cdot Y=Y$ and $T^v\cdot Y=Y$. Since $\cO(X)$ is connected and has $N$ elements, there is a path in the letters $T$ and $V$ with length at most $N-1$ connecting $Y_1$ to $Y_2$. Contract subpaths which are the product of $a$ terms 
$
T\cdot\ldots\cdot T
$ 
into letters of the form $T^a$, and do the same for the generator $V$. This produces a word $(a_1,\dots,a_p)$ with length $0\leq p\leq N-1$ such that 
$
Y_2=g(a_1,\dots,a_{p})\cdot Y_1
$, 
moreover 
$
a_i\leq \max_{Y\in\cO(X)}\max\{h(Y),v(Y)\}\leq N
$ 
for any $i=0,\dots,p$. If $p$ is not even, recalling that $T^h\cdot Y_1=Y_1$ for $h=h(Y_1)$, get a word of even length 
$
2m=p+1
$ 
of the form $(a_1,\dots,a_{p},h(Y_1))$, where $2m\leq N$. Finally observe that $2N-2\geq N$ for $N\geq 2$. 
\end{proof}

\subsection{Continued fraction}

For any real number $\alpha$ let 
$
[\alpha]:=\max\{k\in\ZZ,k\leq \alpha\}
$ 
be its \emph{integer part} and $\{\alpha\}:=\alpha-[\alpha]$ be its \emph{fractional part}, where 
$
0\leq\{\alpha\}<1
$ 
by construction. The so-called \emph{Gauss map} is the application $G:(0,1)\to(0,1)$ defined for $\alpha\in(0,1)$ by 
$$
G(\alpha):=\{\alpha^{-1}\}.
$$

Fix $n$ positive integers $a_1,\dots,a_n$ and consider the rational number $[a_1,\dots,a_n]\in(0,1)$ defined by 
$$
[a_1,\dots,a_n]:=
\cfrac{1}{a_1+
\cfrac{1}{a_2+\dots+\cfrac{1}{a_n}.}}
$$
Any 
$
\alpha\in(0,1)\setminus\QQ
$ 
admits an unique \emph{continued fraction expansion} 
$
\alpha=[a_1,a_2,\dots]
$, 
where for any $n\geq1$ the entry $a_n=a_n(\alpha)\in\NN^\ast$ is known as the $n$-th \emph{partial quotient} of $\alpha$. 
Setting $\alpha_n:=G^n(\alpha_{n-1})$ for any $n \ge 1$ and $\alpha_0 = \alpha$, the entries $a_n$ are given by 
$$
a_n:=\left[\frac{1}{\alpha_{n-1}}\right]
\quad
\textrm{ that is }
\quad
\frac{1}{\alpha_{n-1}}=a_n+\alpha_n.
$$

For a finite word $(a_1,\dots,a_n)$ of positive integers, let 
$
I(a_1,\dots,a_n)\subset(0,1)
$ 
be the interval of those $\alpha\in(0,1)$ such that $G^{n}(\alpha)$ is defined with $a_i(\alpha)=a_i$ for any $i=1,\dots,n$. Set also 
$
p_n/q_n:=[a_1,\dots,a_n]
$, 
where $p_n=p_n(a_1,\dots,a_n)$ and $q_n=q_n(a_1,\dots,a_k)$. It is well know (see for example \S~12 in \cite{Khinchin}) that for any word $(a_1,\dots,a_n)$ we have
\begin{equation}
\label{EquationSizeCylinderContinuedFraction}
|I(a_1,\dots,a_n)|=\frac{1}{q_n(q_n+q_{n-1})},
\end{equation}
and moreover, for any pair of positive integers $1\leq A\leq B$ we have
\begin{equation}
\label{EquationBoundedDistortion}
\frac{1}{3}
\left(\frac{1}{A}-\frac{1}{B+1}\right)
\leq
\frac{\big|
\bigcup_{A\leq a_{n+1}\leq B}
I(a_1,\dots,a_n,a_{n+1})
\big|}
{|I(a_1,\dots,a_n)|}
\leq
2\left(\frac{1}{A}-\frac{1}{B+1}\right).
\end{equation}

Finally, let $g(a_1,\dots,a_n)$ be the element in $\sltwoz$ defined in Equation~\eqref{EquationContinuedFractionSL(2,Z)}. The group $\sltwoz$ acts on $\RR$ by homographies, that is the maps acting on $\alpha\in\RR$ by
$$
\alpha\mapsto
\begin{pmatrix}
a & b\\
c & d
\end{pmatrix}
\cdot
\alpha:=
\frac{a\alpha+b}{c\alpha+d}.
$$

\begin{lemma}
\label{LemmaActionSL(2,Z)Slopes}
The sequence of approximations $p_n/q_n$ of $\alpha=[a_1,a_2,\dots]\in(0,1)$ is given by
$$
p_n/q_n=
\left\{
\begin{array}{c}
g(a_1,\dots,a_n)\cdot 0
\quad
\textrm{ for even }
n\\
g(a_1,\dots,a_n)\cdot \infty
\quad
\textrm{ for odd }
n.
\end{array}
\right.
$$
Moreover, for any $k\in\NN$ we have
\begin{equation}
\label{EquationActionSL(2,Z)SlopesIrrational}
\alpha=
g(a_1,\dots,a_{2k})\cdot \alpha_{2k}=
g(a_1,\dots,a_{2k},a_{2k+1})\cdot \frac{1}{\alpha_{2k+1}}.
\end{equation}
\end{lemma}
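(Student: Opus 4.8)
The plan is to deduce both displays from a single computation involving the auxiliary matrix $S:=\begin{pmatrix}0&1\\1&0\end{pmatrix}$, which acts on $\RR\cup\{\infty\}$ by the homography $\alpha\mapsto 1/\alpha$; in particular $S\cdot 0=\infty$. A direct matrix multiplication gives $S^2=\mathrm{Id}$ and, for every $a\in\NN^\ast$,
$$
S\,T^a\,S=V^a,
\qquad\text{equivalently}\qquad
S\,T^a=V^a\,S ,
$$
and these are the only algebraic identities that will be needed.

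First I would establish a normal form for the words $W_n:=(S\,T^{a_1})(S\,T^{a_2})\cdots(S\,T^{a_n})$, namely that $W_n=g(a_1,\dots,a_n)$ when $n$ is even and $W_n=g(a_1,\dots,a_n)\,S$ when $n$ is odd. This follows by induction on $n$, with base case $W_1=S\,T^{a_1}=V^{a_1}\,S=g(a_1)\,S$. In the inductive step, when $n-1$ is odd the trailing factor $S$ of $W_{n-1}$ cancels the leading $S$ of $S\,T^{a_n}$, so $W_n=W_{n-1}\,S\,T^{a_n}$ ends in the block $T^{a_n}$ with no trailing $S$, matching the even-length form of $g$ in Equation~\eqref{EquationContinuedFractionSL(2,Z)}; when $n-1$ is even, $W_n=W_{n-1}\,S\,T^{a_n}=W_{n-1}\,V^{a_n}\,S$ appends the block $V^{a_n}$ and a trailing $S$, matching the odd-length form of $g$.

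Next I would evaluate $W_n$ at a point. Since $S\,T^a\cdot x=1/(a+x)$ for every $x\in\RR\cup\{\infty\}$, peeling the factors of $W_n$ off from the right gives, for an arbitrary $\beta$,
$$
W_n\cdot\beta=\cfrac{1}{a_1+\cfrac{1}{a_2+\dotsb+\cfrac{1}{a_n+\beta}}} .
$$
Taking $\beta=0$ gives $W_n\cdot 0=[a_1,\dots,a_n]=p_n/q_n$; combined with the normal form and $S\cdot 0=\infty$, this yields the first display — directly for even $n$, and for odd $n$ because $W_n\cdot 0=g(a_1,\dots,a_n)\cdot(S\cdot 0)=g(a_1,\dots,a_n)\cdot\infty$. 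For the second display I would instead take $\beta=\alpha_n=G^n(\alpha)$: the relations $1/\alpha_{m-1}=a_m+\alpha_m$ make the finite continued fraction above telescope to $\alpha_0=\alpha$, so $\alpha=W_n\cdot\alpha_n$ for every $n$. Specializing $n=2k$ gives $\alpha=g(a_1,\dots,a_{2k})\cdot\alpha_{2k}$, and specializing $n=2k+1$ gives $\alpha=g(a_1,\dots,a_{2k+1})\,S\cdot\alpha_{2k+1}=g(a_1,\dots,a_{2k+1})\cdot(1/\alpha_{2k+1})$, once more because $S$ acts by inversion.

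There is no genuine obstacle here; this is essentially a bookkeeping lemma. The only points demanding care are keeping the parity of $n$ synchronized with the presence or absence of the trailing $S$ in the normal form, and applying the homography conventions for $T$, $V$ and $S$ — in particular their values at $0$ and $\infty$ — consistently throughout.
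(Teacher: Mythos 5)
Your proof is correct. It is close in spirit to the paper's, but you organize the argument around a single normal-form lemma that the paper does not make explicit: writing $W_n=(S\,T^{a_1})\cdots(S\,T^{a_n})$, proving by $S$-cancellation (via $S\,T^{a}S=V^{a}$, $S^2=\mathrm{Id}$) that $W_n$ equals $g(a_1,\dots,a_n)$ or $g(a_1,\dots,a_n)\,S$ according to parity, and then reading off \emph{both} displays by evaluating $W_n$ at $\beta=0$ and at $\beta=\alpha_n$. The paper instead treats the two halves separately: the first display is obtained from the standard two-term recursion for the convergents $(p_n,q_n)$, encoded as right multiplication of the matrix $\begin{pmatrix}p_{n-1}&p_n\\q_{n-1}&q_n\end{pmatrix}$ by $T^{a_{2k}}$ or $V^{a_{2k+1}}$, while the second display is obtained by iterating $\alpha_m=\begin{pmatrix}0&1\\1&a_{m+1}\end{pmatrix}\cdot\alpha_{m+1}$ and inserting $S^2=\mathrm{Id}$ to regroup consecutive factors into $V^{a}T^{b}$ blocks. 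The underlying algebraic identity is the same in both routes; your version buys a modest unification (one computation yields both statements) at the cost of introducing and tracking the auxiliary matrices $W_n$, whereas the paper's first half is a direct appeal to well-known facts about convergents and requires no $S$ at all.
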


\begin{proof}
Just recall that setting $(p_{-1},q_{-1})=(1,0)$ and $(p_{0},q_{0})=(0,1)$, the numerator and the denominator in $p_n/q_n$ satisfy
$
p_n=a_np_{n-1}+p_{n-2}
$ 
and 
$
q_n=a_nq_{n-1}+q_{n-2}
$ 
for any $n\geq1$. For $n=2k-1$ and $n=2k$ the same recursive relations take the form 
$$
\begin{pmatrix} p_{2k-1} & p_{2k}\\
q_{2k-1} & q_{2k}
\end{pmatrix}=
\begin{pmatrix}
p_{2k-1} & p_{2k-2}\\
q_{2k-1} & q_{2k-2}
\end{pmatrix}
\circ T^{a_{2k}}
\textrm{ and }
\begin{pmatrix}
p_{2k+1} & p_{2k}\\
q_{2k+1} & q_{2k}
\end{pmatrix}=
\begin{pmatrix}
p_{2k-1} & p_{2k}\\
q_{2k-1} & q_{2k}
\end{pmatrix}
\circ V^{a_{2k+1}}.
$$
The first part of the statement is proved. In order to prove second part consider three consecutive iterates
$
\alpha_n
$, 
$
\alpha_{n+1}
$ 
and
$
\alpha_{n+2}
$ 
of the Gauss map $G$ and observe that
\begin{align*}
\alpha_n &=
\frac{1}{a_{n+1}+\alpha_{n+1}}=
\begin{pmatrix}
0 & 1 \\
1 & a_{n+1}
\end{pmatrix}
\cdot
\alpha_{n+1}=
\begin{pmatrix}
0 & 1 \\
1 & a_{n+1}
\end{pmatrix}
\begin{pmatrix}
0 & 1 \\
1 & a_{n+2}
\end{pmatrix}
\cdot
\alpha_{n+2} \\
&=
\begin{pmatrix}
0 & 1 \\
1 & a_{n+1}
\end{pmatrix}
\begin{pmatrix}
0 & 1 \\
1 & 0
\end{pmatrix}
\begin{pmatrix}
0 & 1 \\
1 & 0
\end{pmatrix}
\begin{pmatrix}
0 & 1 \\
1 & a_{n+2}
\end{pmatrix}
\cdot
\alpha_{n+2}
=
\begin{pmatrix}
1       & 0 \\
a_{n+1} & 1
\end{pmatrix}
\begin{pmatrix}
1 & a_{n+2} \\
0 & 1
\end{pmatrix}
\cdot
\alpha_{n+2}.
\end{align*}
The relation between $\alpha$ and $\alpha_{2k}$ follows applying $k$ times the identity above to $\alpha_0=\alpha$. Then the relation with $\alpha_{2k+1}$ follows because
$
\alpha_{2k}=(a_{2k+1}+\alpha_{2k+1})^{-1}=V^{a_{2k+1}}\cdot \alpha_{2k+1}^{-1}
$. 
\end{proof}

\subsection{Flow segments in a cylinder}
\label{SectionFlowSegmentsVerticalCylinder}

Fix a reduced origami $X_0$ and let $\sigma_0$ be a closed geodesic with slope $p/q=0$, corresponding to the vertical direction. Let $C_0$ be the vertical cylinder corresponding to $\sigma_0$ and let $W_0$ be the transversal width of $C_0$, so that 
$
\area(C_0)=|\sigma_0|\cdot W_0
$. 
We have an isometric embedding  
$
(0,W_0)\times \RR/(|\sigma_0|\cdot\ZZ)\to C_0\subset X_0
$, 
and in order to establish coordinates $(x,y)$ with $x=x(p)\in(0,W_0)$ and 
$
y=y(p)\in\RR/(|\sigma_0|\cdot\ZZ)
$ 
for points $p\in C_0$ we consider the inverse
\begin{equation}
\label{EquationCoordinatesCylinder}
C_0\to (0,W_0)\times \RR/(|\sigma_0|\cdot\ZZ),
\qquad
p\mapsto\big(x(p),y(p)\big).
\end{equation}
Let $\partial C_0$ be the boundary of $C_0$ with respect to its intrinsic metric, and extend $x(p)$ and $y(p)$ continuously to points $p\in\partial C_0$. We have a disjoint union 
$
\partial C_0=\partial^{(L)}C_0\sqcup\partial^{(R)}C_0
$, 
where $\partial^{(L)}C_0$ and $\partial^{(R)}C_0$ are the components of $\partial C_0$ of those points with $x(p)=0$ and $x(p)=W_0$ respectively (on the other hand, the boundary of $C_0$ with respect to the metric of $X_0$ may have just one connected component). For points $p,p'$ in $C_0\cup\partial C_0$ set 
$
\delta_H(p,p'):=x(p)-x(p')
$ 
and 
$
\delta_V(p,p'):=y(p)-y(p')
$. 
Fix a slope $\alpha$ with 
\begin{equation}
\label{EquationSlopeVerticalEnough}
0<\alpha<|\sigma_0|^{-1},
\end{equation}
then set 
$$
T_0:=|\sigma_0|\cdot\sqrt{1+\alpha^2}
\quad
\textrm{ and }
\quad
T_1:=W_0\cdot\frac{\sqrt{1+\alpha^2}}{\alpha}.
$$ 
Inside $C_0$, trajectories of $\phi_\alpha$ travel in direction close to the vertical, and Equation \eqref{EquationSlopeVerticalEnough} implies that there are points $p$ such that $\phi_\alpha(p)\in C_0$ for any 
$
0\leq t\leq T_0
$. 
For $t=T_0$ the orbit $\phi_\alpha^t(p)$ comes back to the horizontal segment passing through $p$ and we have 
\begin{equation}
\label{EquationHorizontalTranslationVerticalCylinder}
\delta_H\big(p,\phi_\alpha^{T_0}(p)\big)=|\sigma_0|\cdot\alpha
\quad
\textrm{ and }
\quad
\delta_V\big(p,\phi_\alpha^{T_0}(p)\big)=0.
\end{equation}
For any $p_0\in\partial^{(L)}C_0$ we have
\begin{equation}
\label{EquationTimeInsideVerticalCylinder}
\phi_\alpha^t(p_0)\in C_0
\quad
\textrm{ for }
\quad
0<t<T_1.
\end{equation}
Moreover, for $t=T_1$, the orbit $\phi_\alpha^t(p_0)$ comes back to $\partial C_0$ with horizontal and vertical translation given by
\begin{equation}
\label{EquationVerticalTranslationVerticalCylinder}
\delta_H\big(p,\phi_\alpha^{T_1}(p)\big)=W_0
\quad
\textrm{ and }
\quad
\delta_V\big(\phi_\alpha^{T_1}(p_0),p_0\big)=
\frac{W_0}{\alpha}\mod |\sigma_0|\cdot\ZZ.
\end{equation}

Now fix a reduced origami $X$ and assume that $\cO(X)=\cO(X_0)$, that is $X$ and $X_0$ belongs to the same $\sltwoz$-orbit. Consider a slope 
$
\alpha=[a_1,a_2,\dots]\in(0,1)
$ 
on $X$ and for $n\in\NN$ let $\alpha_n=G^n(\alpha)$ be its $n$-th image under the Gauss map. Fix $n\in\NN$ and consider the first $2n$ entries $a_1,\dots,a_{2n}$ of the continued fraction of $\alpha$. Set 
$
A:=g(a_1,\dots,a_{2n})\in\sltwoz
$, 
which is defined by Equation~\eqref{EquationContinuedFractionSL(2,Z)}, and assume that 
$$
g(a_1,\dots,a_{2n})\cdot X_0=X.
$$ 
Assume also that the renormalized slope $\alpha_{2n}=G^{2n}(\alpha)$ satisfies Equation~\eqref{EquationSlopeVerticalEnough}, that is 
$$
\alpha_{2n}<|\sigma_0|^{-1}.
$$
The action $X_0\mapsto A\cdot X_0=X$ of $A\in\sltwoz$ induces an affine diffeomorphism 
\footnote{Actually, such $f_A$ is not unique, indeed there exist reduced origamis $X_0$ admitting non-trivial automorphisms, that is affine diffeomorphisms $f:X_0\to X_0$ with $Df=\id$. Nevertheless, non-unicity of $f_A$ does not affects the arguments in this paper.} $f_A:X_0\to X$, that is a diffeomorphism between the surfaces $X_0$ and $X$ whose derivative is constant with value  
$
Df_A=g(a_1,\dots,a_{2n})
$. 
Under such affine diffeomorphism, the flow $\phi_{\alpha_{2n}}$ with slope $\alpha_{2n}$ on the surface $X_0$ corresponds to the flow $\phi_\alpha$ with slope $\alpha$ on the surface $X$, where the two slopes are related under the homographic action of $A$ by Equation~\eqref{EquationActionSL(2,Z)SlopesIrrational}.

Observe that if $u,v\in\RR^2$ are linearly independent and 
$
\cV:=\{\lambda u+\mu u;\lambda,\mu\geq0\}\subset\RR^2
$ 
is such that $A\cV\subset \cV$ then for any $t$ with $0\leq t\leq 1$ we have
$$
\min
\left\{\frac{\|Au\|}{\|u\|},\frac{\|Av\|}{\|v\|}\right\}
\leq
\frac{\|A(u+t(v-u))\|}{\|u+t(v-u)\|}
\leq
\|A\|.
$$

Let $e_{\alpha}$ and $e_{\alpha_{2n}}$ be the unitary vectors with slopes $\alpha$ and $\alpha_{2n}$, which are defined in Equation~\eqref{EquationArctan}, so that in particular $Df_A(e_{\alpha_{2n}})$ is parallel to $e_\alpha$. Let also $e_0$ and $e_1$ be the unitary vectors with slope $\alpha=0$ and $\alpha=1$ respectively. Since for $0<\alpha<1$ we have $0<p_k/q_k<1$ for any $k$, then the estimate above can be applied to the cone $\cV$ spanned by $e_0$ and $e_1$. Observe that 
$
\|A\|\leq p_{2n}+q_{2n}+p_{2n-1}+q_{2n-1}<4q_{2n}
$ 
and that we have 
$$
Df_A(e_0)=(p_{2n},q_{2n})
\quad
\textrm{ and }
\quad
Df_A(e_1)=(\sqrt{2})^{-1}\cdot(p_{2n}+p_{2n-1},q_{2n}+q_{2n-1}).
$$
Therefore, if $I$ is a segment of trajectory of $\phi_{\alpha_{2n}}$, its image under $f_A$ has length $|f_A(I)|$ such that 
\begin{equation}
\label{EquationMaximalDilatation}
\frac{q_{2n}}{\sqrt{2}}\cdot |I|
\leq 
|f_A(I)|
\leq 
4\cdot q_{2n}\cdot |I|.
\end{equation}

\subsection{One cylinder directions}
\label{SectionOneCylinderVerticalDirections}

Fix a reduced origami $X_0$ and let $C_0\subset X_0$ be a cylinder in the vertical slope $p/q=0$. Let $S$ be a segment of straight line contained in the interior of $C_0$ and abusing the notation let $S:(0,1)\to C_0$ be a parametrization of it with constant speed $dS(t)/dt=(u_1,u_2)\in\RR^2$. The slope of such segment is $\alpha(S):=u_1/u_2$. We say that such segment $S$ is \emph{transversal to} $C_0$ if $S(0)\in\partial C_0$, 
$
S(1)\in\partial C_0
$ 
and the slope is negative, that is
$$
-\infty\leq \alpha(S)<-1.
$$
Observe that if $S$ is horizontal then $\alpha(S)=+\infty=-\infty$, thus it is simply transversal to $C_0$. Recall that  in the notation of \S~\ref{SectionFlowSegmentsVerticalCylinder} we set 
$
T_0:=|\sigma_0|\cdot\sqrt{1+\alpha_{2n}^2}
$.

\begin{lemma}
\label{LemmaVerticalDirection}
Fix $C_0$ and $\sigma_0$ as above and a slope  
$
\alpha=[a_1,a_2,\dots]\in(0,1)
$ 
satisfying Equation \eqref{EquationSlopeVerticalEnough}. Let $p_0\in\partial C_0$ be a point not on any $(X_0,\alpha)$-singular leaf and such that 
$
\phi_\alpha^t(p_0)\in C_0
$ 
for $0\leq t\leq T_0$. Then for any segment $S$ transversal to $C_0$ there exists $t\in\RR$ such that   
$$
\phi_{\alpha}^t(p_0)\in S
\quad
\textrm{ and }
\quad
0\leq t\leq T_0.
$$
\end{lemma}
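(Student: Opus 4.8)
The plan is to read everything inside the flat cylinder $C_0$, in the coordinates \eqref{EquationCoordinatesCylinder}, and reduce the statement to a one-dimensional intermediate value argument on the core circle $\RR/(|\sigma_0|\ZZ)$. First I would realise both $\Gamma:=\{\phi_\alpha^t(p_0)\ ;\ 0\le t\le T_0\}$ and $S$ as graphs over the transversal coordinate $x$. Since $\alpha>0$, inside $C_0$ the flow $\phi_\alpha$ is straight-line motion moving $x$ strictly monotonically in a fixed direction, so $p_0$ lies on the component of $\partial C_0$ from which that motion enters $C_0$, and along $\Gamma$ the coordinate $x$ sweeps (strictly monotonically) an interval $J_\Gamma$ having $x(p_0)$ as one endpoint; by the horizontal displacement in \eqref{EquationHorizontalTranslationVerticalCylinder} the length of $J_\Gamma$ is exactly $|\sigma_0|\,\alpha$, and $J_\Gamma\subset[0,W_0]$ because $\Gamma\subset\overline{C_0}$ by hypothesis (equivalently, \eqref{EquationSlopeVerticalEnough} gives $|\sigma_0|\,\alpha<1\le W_0$). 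As $C_0$ is flat and the chart \eqref{EquationCoordinatesCylinder} is an isometry, on $J_\Gamma$ the orbit is the graph $x\mapsto y_\Gamma(x)\in\RR/(|\sigma_0|\ZZ)$ of an affine function of slope $\pm 1/\alpha$. Likewise $S$ joins the two components of $\partial C_0$ --- it cannot have both endpoints on the same one, since $x$ is affine and non-constant along $S$ (its slope $\alpha(S)$ is not $0$) --- so along $S$ the coordinate $x$ is strictly monotone and covers all of $[0,W_0]$, and $S$ is the graph $x\mapsto y_S(x)$ of an affine function of slope $\mp 1/\alpha(S)$, with the convention $1/\alpha(S)=0$ when $S$ is horizontal.

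The key step is to compare the two graphs over the common domain $J_\Gamma$. A point of $\Gamma$ with abscissa $x$ lies on $S$ exactly when $y_\Gamma(x)=y_S(x)$ in $\RR/(|\sigma_0|\ZZ)$, i.e.\ when the affine function $x\mapsto y_\Gamma(x)-y_S(x)$ vanishes; this function has slope whose absolute value is $\frac1\alpha-\frac1{\alpha(S)}\ge\frac1\alpha$, where the transversality hypothesis $\alpha(S)\in[-\infty,-1)$ enters only through $-1/\alpha(S)\ge0$. Hence, as $x$ runs over $J_\Gamma$, of length $|\sigma_0|\,\alpha$, this function sweeps an interval of length at least $|\sigma_0|\,\alpha\cdot\frac1\alpha=|\sigma_0|$, so its image meets every coset of $|\sigma_0|\ZZ$; in particular there is $x^\ast\in J_\Gamma$ with $y_\Gamma(x^\ast)=y_S(x^\ast)$, and the point of $\Gamma$ over $x^\ast$ is the desired $\phi_\alpha^{t^\ast}(p_0)\in S$. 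Since $x$ moves at rate $\sin\theta$ along $\Gamma$ (with $\theta=\arctan\alpha$, $\sin\theta=\alpha/\sqrt{1+\alpha^2}$) we have $t^\ast=|x^\ast-x(p_0)|/\sin\theta\le|\sigma_0|\,\alpha/\sin\theta=|\sigma_0|\sqrt{1+\alpha^2}=T_0$, and $t^\ast\ge0$, which is the claim; the degenerate case $x^\ast=x(p_0)$ simply means $p_0\in S$ and gives $t^\ast=0$.

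The computation is short, and the only delicate point is the bookkeeping on the core circle, for which the two hypotheses are exactly calibrated: \eqref{EquationHorizontalTranslationVerticalCylinder} forces the abscissa of $\Gamma$ to sweep a segment of length exactly $|\sigma_0|\,\alpha$ (the horizontal drift after one turn around $C_0$), while \eqref{EquationSlopeVerticalEnough} forces that segment to fit inside the width $W_0$, so that the product $|\sigma_0|\,\alpha\cdot(1/\alpha)$ equals precisely $|\sigma_0|$, the borderline value still guaranteeing one full wrap of $y_\Gamma-y_S$ around $\RR/(|\sigma_0|\ZZ)$. This is the step I expect to require the most care, though once the two graphs are in place it is still elementary.
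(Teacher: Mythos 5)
Your proof is correct and fills in, faithfully and carefully, the argument the paper compresses into one sentence ("it is enough to prove the Lemma when $S$ is horizontal, and in this case the statement follows directly applying Equation~\eqref{EquationHorizontalTranslationVerticalCylinder}"). Where the paper reduces to horizontal $S$ and invokes the once-around wrap of the $y$-coordinate, you handle general transversal $S$ directly by observing that the affine difference $y_\Gamma-y_S$ has slope at least $1/\alpha$ over $J_\Gamma$, so it sweeps an interval of length at least $|\sigma_0|\alpha\cdot(1/\alpha)=|\sigma_0|$ and therefore meets every coset of $|\sigma_0|\ZZ$; this is exactly the point at which the transversality hypothesis $\alpha(S)\in[-\infty,-1)$ is used, and horizontal $S$ is the borderline case, so the two arguments are the same in substance.
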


\begin{proof}
It is enough to prove the Lemma when $S$ is horizontal, and in this case the statement follows directly applying Equation~\eqref{EquationHorizontalTranslationVerticalCylinder} to the point $p_0$
\end{proof}

Assume now that the vertical is a \emph{one cylinder direction} on the reduced origami $X_0$, that is there is only one cylinder $C_0$ in slope $p/q=0$. Let $\sigma_0$ be the corresponding vertical closed geodesic and observe that since $X_0$ is reduced, then the cylinder $C_0$ has transversal width $W_0=1$. The boundary $\partial C_0$ (with respect to the intrinsic metric of $C_0$) is composed by saddle connections parallel to $\sigma_0$, which appear in pairs, and the identification between paired saddle connection gives the surface $X_0$. Now let $X$ be a reduced origami and assume that $\sltwoz\cdot X$ contains $X_0$ as above.

\begin{proposition}
\label{PropositionOneCylinderDirection}
Fix a slope  
$
\alpha=[a_1,a_2,\dots]
$ 
and assume that for $n\in\NN$ we have 
\begin{eqnarray*}
&&
a_{2n+1}\geq|\sigma_0|
\\
&&
g(a_1,\dots,a_{2n})\cdot X_0=X.
\end{eqnarray*}
Then for any pair of points $p,p'$ in $X$, where $p$ does not belong to any $(X,\alpha)$-singular leaf, we have
$$
R(X,\phi_\alpha,p,p',r_n)\leq 16\cdot|\sigma_0|\cdot q_{2n}
\quad
\textrm{ where }
\quad
r_n:=\frac{2}{q_{2n}}
$$
\end{proposition}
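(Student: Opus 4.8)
\emph{Plan of proof.} The point of the scale $r_n=2/q_{2n}$ is that, seen through the affine diffeomorphism $f_A\colon X_0\to X$ attached to $A:=g(a_1,\dots,a_{2n})$ (so $Df_A=A$, and $f_A$ conjugates $\phi_{\alpha_{2n}}$ on $X_0$ to $\phi_\alpha$ on $X$ by Equation~\eqref{EquationActionSL(2,Z)SlopesIrrational}), the one--cylinder $C_0\subset X_0$ becomes a \emph{very thin} cylinder $\cC:=f_A(C_0)\subset X$ almost parallel to the direction $\alpha$. Indeed $\cC$ has core $f_A(\sigma_0)$ of length $|\sigma_0|\sqrt{p_{2n}^2+q_{2n}^2}$ and $\area(\cC)=\area(C_0)=|\sigma_0|$, so its transversal width is $W':=|\sigma_0|/|f_A(\sigma_0)|=1/\sqrt{p_{2n}^2+q_{2n}^2}\le 1/q_{2n}<r_n$. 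Since the vertical is a one cylinder direction of $X_0$, the set $C_0$ is dense in $X_0$, hence $\cC$ is dense in $X$; therefore for \emph{every} $p'\in X$ the ball $B(p',r_n)$ contains a full cross-section $\Sigma'$ of $\cC$ perpendicular to its core, of length $W'<r_n$: namely the cross-section through $p'$ if $p'\in\cC$, and a cross-section having $p'$ as one endpoint if $p'\in X\setminus\cC$; in either case $\Sigma'\subseteq B(p',r_n)$ because every point of $\Sigma'$ lies within $W'<r_n$ of $p'$. Pulling back, $S:=f_A^{-1}(\Sigma')$ is a straight segment crossing $C_0$ from one boundary component to the other, and since $A=\bigl(\begin{smallmatrix}p_{2n-1}&p_{2n}\\ q_{2n-1}&q_{2n}\end{smallmatrix}\bigr)$ one computes that the slope of $S$ equals $-(p_{2n}^2+q_{2n}^2)/(p_{2n-1}p_{2n}+q_{2n-1}q_{2n})<-1$, so $S$ is transversal to $C_0$ in the sense of this section.

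Next I would exploit the cylinder estimates of \S~\ref{SectionFlowSegmentsVerticalCylinder}. The hypothesis $a_{2n+1}\ge|\sigma_0|$ gives $\alpha_{2n}=1/(a_{2n+1}+\alpha_{2n+1})<|\sigma_0|^{-1}$, so Equation~\eqref{EquationSlopeVerticalEnough} holds for $\alpha_{2n}$ on $X_0$; in particular (using $W_0=1$) the times $T_0=|\sigma_0|\sqrt{1+\alpha_{2n}^2}<\sqrt 2\,|\sigma_0|$ and $T_1=\sqrt{1+\alpha_{2n}^2}/\alpha_{2n}$ satisfy $|\sigma_0|\alpha_{2n}T_1=T_0$. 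Put $q:=f_A^{-1}(p)$, which lies on no $(X_0,\alpha_{2n})$-singular leaf, and let $x(q)\in[0,1]$ be its transversal coordinate in $C_0$. If $x(q)+|\sigma_0|\alpha_{2n}\le 1$ then $\phi_{\alpha_{2n}}^t(q)\in C_0$ for $0\le t\le T_0$ by Equation~\eqref{EquationHorizontalTranslationVerticalCylinder}; otherwise the orbit of $q$ crosses $\partial C_0$ transversally (it meets no conical point) at time $s_1=(1-x(q))T_1<|\sigma_0|\alpha_{2n}T_1=T_0$, re-entering $C_0$ through $\partial^{(L)}C_0$, so that $p_0:=\phi_{\alpha_{2n}}^{s_1}(q)$ satisfies $\phi_{\alpha_{2n}}^t(p_0)\in C_0$ for $0\le t\le T_0$ by Equation~\eqref{EquationTimeInsideVerticalCylinder}. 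In either case we have produced $p_0$ on the forward orbit of $q$, reached at a time $s_1\in[0,T_0)$, whose forward orbit stays in $C_0$ for time $T_0$. The proof of Lemma~\ref{LemmaVerticalDirection} uses only Equation~\eqref{EquationHorizontalTranslationVerticalCylinder} and hence applies to any such $p_0$ (whether or not it lies on $\partial C_0$), yielding a time $t'\in[0,T_0]$ with $\phi_{\alpha_{2n}}^{t'}(p_0)\in S$. Thus $\phi_{\alpha_{2n}}^{\,s_1+t'}(q)\in S$ with $s_1+t'<2T_0<2\sqrt 2\,|\sigma_0|$.

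Finally I would transport the conclusion back to $X$ via the dilation estimate~\eqref{EquationMaximalDilatation}: the $\phi_{\alpha_{2n}}$-trajectory from $q$ to $\phi_{\alpha_{2n}}^{\,s_1+t'}(q)$ has length $s_1+t'<2\sqrt 2\,|\sigma_0|$, so its $f_A$-image, which is the $\phi_\alpha$-trajectory joining $p$ to the corresponding point of $\Sigma'\subseteq B(p',r_n)$, has length at most $4q_{2n}(s_1+t')<8\sqrt 2\,|\sigma_0|\,q_{2n}<16\,|\sigma_0|\,q_{2n}$. Hence $R(X,\phi_\alpha,p,p',r_n)\le 16\,|\sigma_0|\,q_{2n}$, as claimed.

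The delicate step is the first paragraph: one must check that a ball of radius $r_n$ about an \emph{arbitrary} point $p'$ necessarily swallows an entire cross-section of the thin cylinder $f_A(C_0)$, and that such a cross-section pulls back to a segment of $C_0$ that is transversal in the precise sense required by Lemma~\ref{LemmaVerticalDirection}. The rest is bookkeeping of elapsed times, together with the elementary remark that Lemma~\ref{LemmaVerticalDirection} does not require its starting point to lie on $\partial C_0$.
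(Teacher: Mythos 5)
Your argument is correct and follows the same route as the paper's proof: both pass to $X_0$ via the affine diffeomorphism $f_A$, locate a point $p_0$ on the forward orbit of $f_A^{-1}(p)$ whose orbit stays in $C_0$ for time $T_0$, pull back a perpendicular cross-section of $f_A(C_0)$ near $p'$ to a transversal segment of slope $<-1$, invoke Lemma~\ref{LemmaVerticalDirection}, and convert the time bound via Equation~\eqref{EquationMaximalDilatation}. The small refinements you add (handling $p'\notin f_A(C_0)$ explicitly, and noting that Lemma~\ref{LemmaVerticalDirection} does not actually require $p_0\in\partial C_0$) are sound but do not change the method.
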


\begin{proof} 
Following \S~\ref{SectionFlowSegmentsVerticalCylinder}, set 
$
A:=g(a_1,\dots,a_{2n})
$ 
and consider the corresponding affine diffeomorphism $f_A:X_0\to X$. Under $f_A$ the vertical cylinder $C_0\subset X_0$ corresponds to a cylinder $C_n$ with slope $p_{2n}/q_{2n}=A\cdot 0$ in the surface $X$. Let $\alpha_{2n}$ be the slope related to $\alpha$ by Equation~\eqref{EquationActionSL(2,Z)SlopesIrrational}, that is 
$
\alpha=A\cdot \alpha_{2n}
$. 
Since 
$
\alpha_{2n}<(a_{2n+1})^{-1}\leq|\sigma_0|^{-1}
$, 
then Equation \eqref{EquationSlopeVerticalEnough} is satisfied by the slope $\alpha_{2n}$ on the surface $X_0$. Let $p$ be a point as in the statement and observe that $p'_0:=f_A^{-1}(p)$ does not belong to any $(X_0,\alpha_{2n})$-singular leaf. Either there exists some $t(0)$ with $0\leq t(0)\leq T_0$ such that 
$
\phi_{\alpha_{2n}}^{t(0)}(p'_0)\in\partial C_0
$, 
and in this case we set 
$
p_0:=\phi_{\alpha_{2n}}^{t(0)}(p'_0)\in\partial C_0
$, 
or 
$
\phi_{\alpha_{2n}}^{t}(p'_0)
$ 
belongs to the interior of $C_0$ for $0\leq t\leq T_0$, and in this case we set $t(0):=0$ and $p_0:=p'_0$. In both cases, since $C_0$ is the only vertical cylinder in $X_0$, then the point $p_0$ satisfies the assumption of Lemma~\ref{LemmaVerticalDirection}, that is 
$
\phi_{\alpha_{2n}}^{t}(p_0)\in C_0
$ 
for any $t$ with $0\leq t\leq T_0$. Finally let $S^\perp\subset C_n$ be the segment in the surface $X$ passing through $p'$ with slope 
$
\alpha(S^\perp)=-q_{2n}/p_{2n}
$, 
that is orthogonal to the direction of the cylinder $C_n$, and with both endpoints on $\partial C_n$. The segment $S^\perp$ has length 
$
|S^\perp|=1/\sqrt{q_{2n}^2+p_{2n}^2}
$, 
indeed we have
$$
|\sigma_0|=\area(C_0)=|\sigma_0|\cdot\sqrt{q_{2n}^2+p_{2n}^2}\cdot|S^\perp|.
$$
The segment $S:=f_A^{-1}(S^\perp)$ in the surface $X_0$ has slope 
\begin{equation}
\label{EquationPropositionOneCylinderDirection}
\alpha(S)=
A^{-1}\cdot\frac{-q_{2n}}{p_{2n}}=
\begin{pmatrix}
q_{2n} & -p_{2n} \\
-q_{2n-1} & p_{2n-1}
\end{pmatrix}
\cdot\frac{-q_{2n}}{p_{2n}}=
\frac{-(q_{2n}^2+p_{2n}^2)}{q_{2n}q_{2n-1}+p_{2n}p_{2n-1}}
<-a_{2n}<-1,
\end{equation}
thus is transversal to $C_0$. According to Lemma~\ref{LemmaVerticalDirection} there exists some $t(1)\in\RR$ with $0\leq t(1)\leq T_0$ and such that  
$
\phi_{\alpha_{2n}}^{t(1)}(p_0)\in S
$, 
that is 
$$
\phi_{\alpha_{2n}}^{t(0)+t(1)}\big(f_A^{-1}(p)\big)\in S.
$$
Consider $T>0$ such that 
$
f_A\circ \phi_{\alpha_{2n}}^{t(0)+t(1)}=\phi_\alpha^T\circ f_A
$ 
and observe that 
$$
\phi_\alpha^T(p)=
\phi_\alpha^T\circ f_A \big(f_A^{-1}(p)\big)=
f_A\circ \phi_{\alpha_{2n}}^{t(0)+t(1)}\big(f_A^{-1}(p)\big)
\in f_A(S)=S^\perp.
$$
The Proposition follows because Equation~\eqref{EquationMaximalDilatation} implies 
$$
0\leq T\leq 4\cdot q_{2n}\cdot\big(t(0)+t(1)\big)
\leq
8\cdot q_{2n}\cdot |\sigma_0|\cdot\sqrt{1+\alpha_{2n}^2}
\leq 
16\cdot |\sigma_0|\cdot q_{2n},
$$
and on the other hand, since both $p'$ and $\phi_\alpha^T(p)$ belong to $S^\perp$, then 
$$
\left|
\phi_{\alpha}^{T}(p)-p'
\right|
\leq
|S^\perp|
\leq
\frac{1}{\sqrt{q_{2n}^2+p_{2n}^2}}
\leq
\frac{2}{q_{2n}}=r_n.
$$
\end{proof}

\subsection{Vertical splitting pairs}
\label{SectionVerticalSplittingPairs}

Let $X_0$ be a reduced origami. The vertical slope $p/q=0$ is completely periodic and $X_0$ is decomposed into vertical cylinders pasted together along their boundary. Let $\sigma_0$ and $\gamma_0$ be respectively a vertical closed geodesic and saddle connection on the surface $X_0$. Let $C_0$ be the vertical cylinder corresponding to $\sigma_0$ and $W_0$ be its transversal width.  We say that $(\sigma_0,\gamma_0)$ is a \emph{vertical splitting pair} for the surface $X_0$ if the following holds.
\begin{enumerate}
\item
The saddle connection $\gamma_0$ belongs to both the two components of the boundary 
$
\partial C_0
$ 
of the cylinder $C_0$ (with respect to its intrinsic metric). In other words $\gamma_0$ touches $C_0$ both from the left and the right side. 
\item
The closure of $C_0$ does not fill the entire surface $X_0$, that is there exists an other vertical cylinder $C_0'\not=C_0$.
\item
We have 
$
|\sigma_0|\wedge W_0=1
$, 
that is the positive integers $|\sigma|$ and $W(C_\sigma)$ are co-prime.
\end{enumerate}

Figure \ref{FigureVerticalSplittingPair} gives an example of vertical splitting pair. In \S~\ref{SectionProofExistenceSplittingPairsH(2)} we prove the Lemma \ref{LemmaExistenceSplittingPairs} below. The Lemma does not hold in all strata. For example in the stratum $\cH(1,1,1,1)$, a counterexample is given by an origami known by the German name \emph{Eierlegende Wollmilchsau}, a picture of which appears in Figure~\ref{FigureEierlegendeWollmilchsau} (see also \S~8 of \cite{ForniMatheus}).

\begin{lemma}
\label{LemmaExistenceSplittingPairs}
For any reduced origami $X$ in $\cH(2)$ there exists $X_0\in\cO(X)$ which admits a vertical splitting pair $(\sigma_0,\gamma_0)$. More precisely, we can always find a splitting pair $(\sigma_0,\gamma_0)$ with either $W_0=1$ or $W_0=2$ and $|\sigma_0|$ even.
\end{lemma}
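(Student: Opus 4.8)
The plan is to reduce to the combinatorial classification of periodic directions on surfaces in $\cH(2)$ and then to move inside the finite orbit $\cO(X)$ to a surface whose \emph{vertical} direction realises the configuration needed for a splitting pair. I would first recall that on any surface in $\cH(2)$ a completely periodic direction has exactly three saddle connections: these form a graph $\Gamma$ with one vertex (the single zero) and three loops, and since each cylinder is an annulus, gluing the cylinders back along their two boundary circles does not change the Euler characteristic, so $\chi(X)=\chi(\Gamma)=1-3=-2=2-2g$. Inspecting how three saddle connections can be distributed over the four boundary circles of the one or two cylinders, and discarding the distributions that cannot occur on a connected translation surface, one obtains a short dichotomy for two cylinder directions: either no cylinder meets a saddle connection along both of its boundary circles, or exactly one cylinder, say $C_0$, meets a single saddle connection $\gamma_0$ along both components of $\partial C_0$, the remaining two saddle connections gluing the other side of $C_0$ to the second cylinder $C_0'$. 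In the latter case the core curve $\sigma_0$ of $C_0$ together with $\gamma_0$ automatically satisfies conditions (1) and (2) of the definition of a vertical splitting pair, condition (2) because $C_0'\neq C_0$ is nonempty. So the task reduces to producing $X_0\in\cO(X)$ whose vertical direction is a two cylinder direction of this ``splitting'' type, with $C_0$ also meeting the arithmetic conditions of the lemma.

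For the existence of such an $X_0$ I would appeal to the classification of $\sltwoz$--orbits of reduced origamis in $\cH(2)$ (Hubert--Leli\`evre, McMullen): for $N$ squares there is one orbit when $N$ is odd and two when $N$ is even, the two being separated by a spin invariant, and each orbit carries an explicit square tiled normal form --- in particular the $N$--square ``$L$'' built from a row of $N-1$ unit squares with one more square stacked on an end. For this $L$ one checks directly from the flat picture that its larger vertical cylinder is self--glued along a single saddle connection (the horizontal piece of the step, of length $N-2$), so its vertical direction is of the splitting type above, with $\sigma_0$ the core of that cylinder, $|\sigma_0|=N-1$ and $W_0=1$; the one anomalous orbit occurring for even $N$ is treated by the same kind of inspection of its normal form and yields instead the alternative pair $(W_0,|\sigma_0|)$ allowed by the statement of the lemma.

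The arithmetic conditions are then nearly automatic: in the $W_0=1$ case condition (3), $|\sigma_0|\wedge W_0=1$, is trivial, and one only has to verify, in the $W_0=2$ case, the parity of $|\sigma_0|$ stated in the lemma; this follows from the structure of the anomalous even--$N$ orbit, reducedness $\langle\hol(X_0)\rangle=\ZZ^2$ pinning down the relevant residues. So this last part is a short finite check.

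The hard part is the control of the configuration in the middle step. A one cylinder direction always exists --- Corollary A.2 in \cite{McMullen} --- but it violates condition (2), while a two cylinder direction of the first type of the dichotomy has no self--glued cylinder and supports no splitting pair at all; so one genuinely has to guarantee the splitting type in \emph{every} orbit, which is where the explicit classification, and the special geometry of $\cH(2)$, is unavoidable. That this cannot be made purely formal is witnessed by $\cH(1,1,1,1)$, where the analogous statement fails: the Eierlegende Wollmilchsau origami is a counterexample (see \S~8 of \cite{ForniMatheus}).
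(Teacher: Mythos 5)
Your overall plan --- analyse the separatrix diagram of a periodic direction, observe that in $\cH(2)$ the two-cylinder diagram forces one cylinder to be self-glued along a single saddle connection, and then invoke the Hubert--Leli\`evre/McMullen classification to hand-pick a representative $X_0$ in each orbit --- is the approach the paper takes. But there are two concrete defects in the execution.

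First, you misquote the classification with the parity reversed. For reduced $N$-square origamis in $\cH(2)$ the count is: one orbit for $N=3$, one orbit for every \emph{even} $N\geq 4$, and \emph{two} orbits (distinguished by $\mathrm{IWP}=1$ vs.\ $\mathrm{IWP}=3$) for every \emph{odd} $N\geq 5$. Your statement ``one orbit when $N$ is odd and two when $N$ is even'' is the opposite, and the ``anomalous even-$N$ orbit'' you plan to handle separately does not exist; the extra orbit is $\cB_N$ for odd $N\geq 5$. Since the whole point of the finite check is to produce a splitting pair in \emph{every} orbit of every cardinality, getting the case structure wrong is not a cosmetic slip: you would end up checking a nonexistent case and never visiting $\cB_N$, which is precisely where $W_0=1$ fails and the alternative $W_0=2$ must be used (and where the paper even has to subdivide $N\equiv 1$ vs.\ $N\equiv 3\pmod 4$, providing a different flat model for each).

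Second, the single model ``a row of $N-1$ unit squares with one more square stacked on an end'' is one origami per $N$, hence lies in one orbit; you cannot use it to represent every orbit of that cardinality. The paper gives four distinct families of L-shaped representatives, one for each combination of parity of $N$ and value of $\mathrm{IWP}$, and reads off $(|\sigma_0|,W_0)$ from the flat picture of each; your argument needs a comparable list. (The numerics $|\sigma_0|=N-1$, $W_0=1$ you quote also correspond to a \emph{horizontal} cylinder of the L, not a vertical one, though that is harmless once one applies the $90^\circ$ rotation trick, which the paper uses explicitly.) As a smaller point, your ``first type'' in the dichotomy is vacuous in $\cH(2)$: the unique two-cylinder separatrix diagram in this stratum always has a cylinder bounded by the same saddle connection on both sides, so conditions (1) and (2) of a splitting pair are automatic for any two-cylinder direction; the genuine content is entirely in arranging the coprimality condition (3), which is the part that needs the case-by-case list you would have to build correctly.
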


Let $(\sigma_0,\gamma_0)$ be a vertical splitting pair for the surface $X_0$, consider the cylinder $C_0$ around $\sigma_0$ and its boundary $\partial C_0$ with respect to its intrinsic metric. Let $\gamma^{(L)}_0$ and $\gamma^{(R)}_0$ be the two representative of $\gamma_0$ in $\partial C_0$, which are two vertical segments whose horizontal distance is $W_0$. Consider the coordinates $x=x(p)$ and $y=y(p)$ given by Equation~\eqref{EquationCoordinatesCylinder}, and recall that 
$
y\in\RR/|\sigma_0|\cdot\ZZ
$. 
Then define $\Delta_0\in\NN$ as the unique integer with 
$
0\leq\Delta_0\leq|\sigma_0|-1
$ 
which gives the vertical distance between $\gamma^{(L)}_0$ and $\gamma^{(R)}_0$, that is 
\begin{equation}
\label{EquationDistancesBoundaryCylinder}
\delta_H(\gamma^{(L)}_0,\gamma^{(R)}_0)=W_0
\quad
\textrm{ and }
\quad
\delta_V(\gamma^{(L)}_0,\gamma^{(R)}_0)=\Delta_0.
\end{equation}
It is obvious that 
$
1\leq |\gamma_0|\leq |\sigma_0|-1
$ 
and 
$
0\leq \Delta_0+|\gamma_0|\leq |\sigma_0|
$. 
Given a vertical splitting pair $(\sigma_0,\gamma_0)$, the next Lemma gives a condition on the slope $\alpha$ for the existence of a subset 
$
E(\sigma_0,\gamma_0,\alpha)\subset C_0
$ 
with big measure of points whose $\phi_\alpha$-orbit remains in $C_0$ for a long time.  It is practical to observe also that if $W_0\in\NN^\ast$ and $\alpha\in(0,1)$ satisfy 
$
G(\alpha)<W^{-1}_0
$, 
since 
$
W_0\cdot\alpha^{-1}=W_0\cdot[\alpha^{-1}]+W_0\cdot\{\alpha^{-1}\}
$ 
and since of course $W_0\cdot[\alpha^{-1}]$ is an integer, we get
\begin{equation}
\label{EquationSmallG(Alpha)}
\left[\frac{W_0}{\alpha}\right]
=
W\cdot\left[\frac{1}{\alpha}\right]
\quad
\textrm{ and }
\quad
\left\{\frac{W_0}{\alpha}\right\}
=
W_0\cdot\left\{\frac{1}{\alpha}\right\}
=
W_0\cdot G(\alpha).
\end{equation}

\begin{lemma}
\label{LemmaSplittingPair}
Let $(\sigma_0,\gamma_0)$ be a vertical splitting pair on the surface $X_0$ and let $C_0$ be the corresponding cylinder. Fix a slope $\alpha\in(0,1)$ and consider the flow $\phi_\alpha$ on $X_0$. Assume that the following condition are satisfied 
\begin{eqnarray*}
&&
0<\alpha<|\sigma_0|^{-1}
\\
&&
0<G(\alpha)<W_0^{-1}
\\
&&
\left[\frac{W_0}{\alpha}\right]=
\Delta_0 \mod |\sigma_0|\cdot \ZZ.
\end{eqnarray*}
Then there is a subset 
$
E(\sigma_0,\gamma_0,\alpha)\subset C_0
$ 
with measure 
$
|E(\sigma_0,\gamma_0,\alpha)|=W_0\cdot|\gamma_0|/2
$ 
such that for any $p\in E(\sigma_0,\gamma_0,\alpha)$ we have
$$
\phi_\alpha^t(p)\in C_0
\quad
\textrm{ for any }
\quad
0\leq t\leq 
\frac{|\gamma_0|}{2}\cdot\frac{1}{\alpha\cdot G(\alpha)}.
$$
\end{lemma}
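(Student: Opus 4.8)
The plan is to produce $E(\sigma_0,\gamma_0,\alpha)$ explicitly as a ``tube'' of orbit arcs inside $C_0$ that are forced to stay there for a long time, the confinement coming from the splitting pair together with the three arithmetic hypotheses. I work in the intrinsic coordinates $(x,y)$ on $C_0$ of \eqref{EquationCoordinatesCylinder}, with $x\in(0,W_0)$ and $y\in\RR/|\sigma_0|\ZZ$. The geometric fact to record first is the role of the splitting pair: by condition (1) in its definition, crossing $\gamma_0$ out of $C_0$ leads back into $C_0$, so an orbit of $\phi_\alpha$ (with $\alpha\in(0,1)$, hence $x$ increasing along the orbit) that reaches $\partial^{(R)}C_0$ at an interior point of $\gamma^{(R)}_0$ re-enters $C_0$ through $\gamma^{(L)}_0$; in the coordinates $(x,y)$ this re-entry carries $(W_0,y)$ to $(0,y-\Delta_0)$ in $\RR/|\sigma_0|\ZZ$, the sign being read off from \eqref{EquationDistancesBoundaryCylinder}.

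Next I would analyse the induced return picture. From $(0,y)\in\partial^{(L)}C_0$, Equations \eqref{EquationTimeInsideVerticalCylinder} and \eqref{EquationVerticalTranslationVerticalCylinder} say that the orbit remains in $C_0$ for time $T_1=W_0\sqrt{1+\alpha^2}/\alpha$ and then meets $\partial^{(R)}C_0$ at height $y+W_0/\alpha$. By the hypothesis $\lfloor W_0/\alpha\rfloor\equiv\Delta_0\pmod{|\sigma_0|}$, and by $G(\alpha)<W_0^{-1}$ together with \eqref{EquationSmallG(Alpha)} (so that $\{W_0/\alpha\}=W_0G(\alpha)<1$), composing this with the re-entry rule $(W_0,y)\mapsto(0,y-\Delta_0)$ sends $(0,y)$ to $(0,y+W_0G(\alpha))$ in time $T_1$, as long as the intermediate point on $\partial^{(R)}C_0$ belongs to $\gamma^{(R)}_0$. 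Tracking also the partial first crossing, I would set $u(p):=y(p)-x(p)/\alpha-c_0\in\RR/|\sigma_0|\ZZ$ for $p=(x,y)\in C_0$, where $c_0$ is the lower endpoint of the arc spanned by $\gamma^{(L)}_0$. A short induction then shows that $\phi_\alpha^t(p)$ reaches $\partial^{(R)}C_0$ for the $n$-th time at a point of $\gamma^{(R)}_0$ exactly when $u(p)+nW_0G(\alpha)\in[0,|\gamma_0|]\pmod{|\sigma_0|}$; so if this holds for $n=1,\dots,k$, the orbit stays in $\overline{C_0}$ and re-enters $C_0$ at each of its first $k$ returns to $\partial^{(R)}C_0$.

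To conclude, I would take $k:=\lceil|\gamma_0|/(2W_0G(\alpha))\rceil$, so that $(k-1)W_0G(\alpha)<|\gamma_0|/2$; since $W_0G(\alpha)<1\le|\gamma_0|\le|\sigma_0|-1$, the $k$ conditions $u+nW_0G(\alpha)\in[0,|\gamma_0|]$, $n=1,\dots,k$, hold simultaneously for every $u$ in a common arc of $\RR/|\sigma_0|\ZZ$ of length $|\gamma_0|-(k-1)W_0G(\alpha)>|\gamma_0|/2$. I fix inside that arc a subarc $I_0$ of length exactly $|\gamma_0|/2$ and set $E(\sigma_0,\gamma_0,\alpha):=\{p\in C_0:u(p)\in I_0\}$. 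For each fixed $x\in(0,W_0)$ the admissible values of $y$ form an arc of length $|I_0|=|\gamma_0|/2<|\sigma_0|$, so Fubini in $(x,y)$ gives $|E(\sigma_0,\gamma_0,\alpha)|=W_0\cdot|\gamma_0|/2$. Finally, for $p\in E(\sigma_0,\gamma_0,\alpha)$ the orbit $\phi_\alpha^t(p)$ stays in $\overline{C_0}$ at least until its $(k+1)$-st return to $\partial^{(R)}C_0$, hence for time at least $kT_1\ge kW_0/\alpha\ge|\gamma_0|/(2\alpha G(\alpha))$ by the choice of $k$; discarding the measure-zero set of $p$ whose orbit hits an endpoint of $\gamma_0$ (a singularity), this is exactly the conclusion of the Lemma.

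The step I expect to be the real nuisance is the bookkeeping in the second paragraph: fixing once and for all the sign of $\Delta_0$ dictated by \eqref{EquationDistancesBoundaryCylinder} so that the hypothesis $\lfloor W_0/\alpha\rfloor\equiv\Delta_0\pmod{|\sigma_0|}$ is consumed correctly in the identity ``net vertical shift per crossing $=W_0G(\alpha)$'', and checking that every reduction modulo $|\sigma_0|$ stays within a single fundamental domain --- which is precisely where $W_0G(\alpha)<1$, $|\gamma_0|\le|\sigma_0|-1$ and $0<\alpha<|\sigma_0|^{-1}$ enter. The one genuinely geometric ingredient is the observation of the first paragraph, that crossing $\gamma_0$ out of $C_0$ returns to $C_0$, which is immediate from condition (1) in the definition of a vertical splitting pair.
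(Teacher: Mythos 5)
Your proof is correct and follows essentially the same argument as the paper's: both exploit the net vertical shift of $W_0\, G(\alpha)$ per crossing (after absorbing $\Delta_0$ via the re-identification through $\gamma_0$) and confine the starting height so that the orbit re-enters $C_0$ for $\gtrsim |\gamma_0|/(2W_0G(\alpha))$ consecutive crossings. The only cosmetic difference is that you parametrize $E$ by the invariant $u(p)=y(p)-x(p)/\alpha-c_0$ and compute its measure by Fubini, whereas the paper takes $E=\bigcup_{0<t<T_1}\phi_\alpha^t(\gamma^{(-)}_0)$, the forward flow tube over the bottom half of $\gamma_0$; the two descriptions coincide up to the choice of the length-$|\gamma_0|/2$ arc of admissible $u$-values.
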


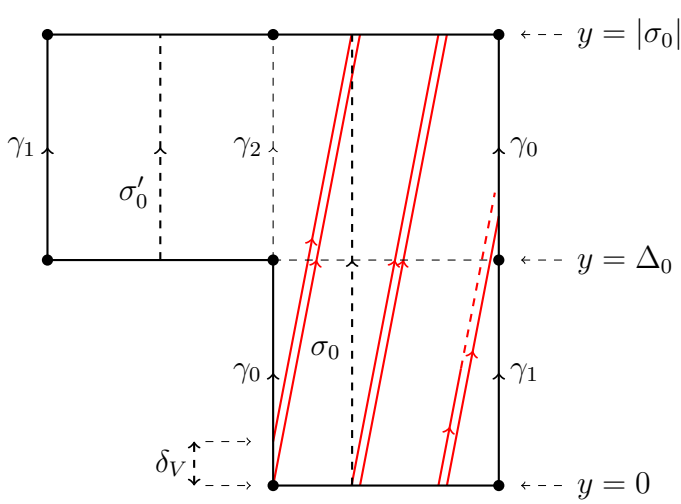
\begin{figure}[ht]
\begin{minipage}[c]{0.50\textwidth}
%\begin{center}  
\begin{tikzpicture}[scale=0.3]

\tikzset
{->-/.style={decoration={markings,mark=at position .5 with {\arrow{>}}},postaction={decorate}}}

%geodesic orbit

\draw[->-,thick,red] (0,0) -- (3.85,20);
\draw[->-,thick,red] (3.85,0) -- (7.70,20);
\draw[->-,thick,red] (7.70,0) -- (10,11.95);
\draw[->-,thick,red] (0,1.95) -- (3.475,20);
\draw[->-,thick,red] (3.475,0) -- (7.325,20);
\draw[->-,thick,red] (7.325,0) -- (8.325,5.195);
\draw[-,thick,red,dashed] (8.325,5.195) -- (9.825,12.99);

%saddle connections left boundary long cylinder

\node [circle,fill,inner sep=1.5pt] at (0,0) {};
\draw[->-,thick] (0,0) -- (0,10) node[pos=0.5,left] {$\gamma_0$};
\node [circle,fill,inner sep=1.5pt] at (0,10) {};
\draw[->-,thin,dashed] (0,10) -- (0,20) node[pos=0.5,left] {$\gamma_2$};
\node [circle,fill,inner sep=1.5pt] at (0,20) {};

%saddle connections right boundary long cylinder

\node [circle,fill,inner sep=1.5pt] at (10,0) {};
\draw[->-,thick] (10,0) -- (10,10) node[pos=0.5,right] {$\gamma_1$};
\node [circle,fill,inner sep=1.5pt] at (10,10) {};
\draw[->-,thick] (10,10) -- (10,20) node[pos=0.5,right] {$\gamma_0$};
\node [circle,fill,inner sep=1.5pt] at (10,20) {};

%saddle connections right boundary short cylinder

\node [circle,fill,inner sep=1.5pt] at (-10,10) {};
\draw[->-,thick] (-10,10) -- (-10,20) node[pos=0.5,left] {$\gamma_1$};
\node [circle,fill,inner sep=1.5pt] at (-10,20) {};

%closed geodesic

\draw[->-,thick,dashed] (3.5,0) -- (3.5,20) node[pos=0.3,left] {$\sigma_0$};

\draw[->-,thick,dashed] (-5,10) -- (-5,20) node[pos=0.3,left] {$\sigma_0'$};

%horizontal lines

\draw[-,thick] (0,0) -- (10,0);
\draw[-,thick] (-10,10) -- (0,10);
\draw[-,thick] (-10,20) -- (10,20);
\draw[-,thin,dashed] (0,10) -- (10,10);

%values of parameters

\draw[<-,very thin,dashed] (11,0) -- (13,0) node[right] {$y=0$};

\draw[<-,very thin,dashed] (11,10) -- (13,10) node[right] {$y=\Delta_0$};

\draw[<-,very thin,dashed] (11,20) -- (13,20) node[right] {$y=|\sigma_0|$};

\draw[->,very thin,dashed] (-3,0) -- (-1,0);

\draw[->,very thin,dashed] (-3,1.95) -- (-1,1.95);

\draw[<->,thick,dashed] (-3.5,0) -- (-3.5,1.95);

\node at (-4.5,1) {$\delta_V$};

\end{tikzpicture}
%\end{center}
\end{minipage}\hfill
\begin{minipage}[c]{0.50\textwidth}
\caption{A vertical splitting pair $(\sigma_0,\gamma_0)$, with two cylinders $C_0$ and $C_0'$ around $\sigma_0$ and $\sigma_0'$ respectively, where $W_0=\Delta_0=1$ and $|\sigma_0|=2$. For $\alpha\sim 0,1925$ we have $\alpha^{-1}\sim5,1948$, so that 
$
[W_0/\alpha]=5=1\mod 2\cdot\ZZ
$. 
A trajectory with slope $\alpha$ travels in $C_0$, then it crosses $\gamma_0$ and repeats the path inside $C_0$, modulo a vertical translation by 
$
\delta_V=G(\alpha)\cdot W_0\sim0,1948
$. 
As long as it re-enters inside $C_0$ such trajectory does not visit the cylinder $C_0'$.}
\label{FigureVerticalSplittingPair}
\end{minipage}
\end{figure}

\begin{proof}
Recall the notation in \S~\ref{SectionFlowSegmentsVerticalCylinder}, and in particular the time 
$
T_1:=W_0\cdot\alpha^{-1}\cdot\sqrt{1+\alpha^2}
$ 
needed by $\phi_\alpha$-orbits to travel from one component of $\partial C_0$ to the other. Let $\gamma^{(+)}_0$ and $\gamma^{(-)}_0$ be the two vertical segments such that 
$
\gamma_0=\gamma^{(-)}_0\cup \gamma^{(+)}_0
$ 
and 
$
|\gamma^{(-)}_0|=|\gamma^{(+)}_0|=|\gamma_0|/2
$, 
where $\gamma^{(+)}_0$ is above $\gamma^{(-)}_0$. Consider the set of points 
$$
E=E(\sigma_0,\gamma_0,\alpha):=
\bigcup_{0<t<T_1}\phi_\alpha^t(\gamma^{(-)}_0)
\subset C_0.
$$
We have obviously 
$
|E|=W_0\cdot|\gamma_0|/2
$. 
According to the last assumption on $\alpha$, there exists some $k\in\NN$ such that  
$$
\frac{W_0}{\alpha}=
k\cdot |\sigma_0|+\Delta_0+
\left\{\frac{W_0}{\alpha}\right\}.
$$
The last equation above, together with Equation~\eqref{EquationVerticalTranslationVerticalCylinder}, imply that for any $p_0\in\gamma^{(-)}_0$ we have 
$$
\delta_V\big(\phi_\alpha^{T_1}(p_0),p_0\big)
=
\frac{W_0}{\alpha}\mod |\sigma_0|\cdot\ZZ
=
\Delta_0
+\left\{\frac{W_0}{\alpha}\right\},
$$
and in particular 
$
\phi_\alpha^{T_1}(p_0)\in\gamma_0
$, 
according to Equation~\eqref{EquationDistancesBoundaryCylinder}. Since any $p\in E$ belongs to the orbit of some $p_0\in\gamma^{(-)}_0$ then for any $p\in E$ we have 
$
\phi_\alpha^{T_1}(p)\in E
$ 
and in particular 
$
\delta_H\big(\phi_\alpha^{T_1}(p),p\big)=0
$. 
Moreover, the equation above and Equation~\eqref{EquationSmallG(Alpha)} imply 
$$
\delta_V\big(\phi_\alpha^{T_1}(p),p\big)=
\left\{\frac{W_0}{\alpha}\right\}=
W_0\cdot G(\alpha).
$$
If $n\in\NN$ is such that  
$
n\cdot W_0\cdot G(\alpha)\leq |\gamma^{(+)}_0|
$ 
we can repeat the argument above $n$ times and we get that for any $p\in E$ we have 
$
\phi_\alpha^{nT_1}(p)\in E
$ 
and 
$$
\delta_V\big(\phi_\alpha^{nT_1}(p),p\big)=
n\cdot\left\{\frac{W_0}{\alpha}\right\}=
n\cdot W_0\cdot G(\alpha).
$$ 
Therefore any $p\in E$ stays in $C_0$ for time at least 
$$
T_1\cdot n_{max}\geq
W_0\cdot\frac{1}{\alpha}
\cdot
\frac{|\gamma^{(+)}_0|}{W_0\cdot G(\alpha)}.
$$
\end{proof}

\begin{proposition}
\label{PropositionSplittingPairs}
Let $X$ be a reduced origami and $X_0\in\cO(X)$ be an element in its orbit which admits a vertical splitting pair $(\sigma_0,\gamma_0)$. Let $\alpha=[a_1,a_2,\dots]$ be a slope on $X$ and assume that there exists $n\in\NN$ with
\begin{eqnarray*}
&&
a_{2n+1}\geq |\sigma_0|,
\\
&&
a_{2n+2}\geq W_0,
\\
&&
W_0\cdot a_{2n+1}=\Delta_0\mod |\sigma_0|,
\\
&&
g(a_1,\dots,a_{2n})\cdot X_0=X.
\end{eqnarray*} 
Then there exist two subsets $F_n,E_n\subset X$ with area $|F_n|\geq 1/3$ and $|E_n|\geq 1/2$ such that for any $p\in E_n$ and $p'\in F_n$ we have
$$
R(X,\alpha,p,p',r_n)
\geq 
\frac{a_{2n+2}\cdot a_{2n+1}\cdot q_{2n}}{\sqrt{8}}
\quad
\textrm{ where }
\quad
r_n:=\frac{1}{12\cdot q_{2n}}.
$$
\end{proposition}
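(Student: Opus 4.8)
The plan is to pull the situation back to the reference surface $X_0$ via the affine renormalization attached to $A:=g(a_1,\dots,a_{2n})$, apply Lemma~\ref{LemmaSplittingPair} there, and push the conclusion forward to $X$. Put $A:=g(a_1,\dots,a_{2n})\in\sltwoz$, so that $A\cdot X_0=X$ by hypothesis, and let $f_A:X_0\to X$ be a corresponding affine diffeomorphism with $Df_A=A$ as in \S\ref{SectionFlowSegmentsVerticalCylinder} (its non-uniqueness being harmless). Let $\alpha_{2n}:=G^{2n}(\alpha)=[a_{2n+1},a_{2n+2},\dots]$ be the renormalized slope; then $\alpha=A\cdot\alpha_{2n}$ by Equation~\eqref{EquationActionSL(2,Z)SlopesIrrational} and $f_A$ carries orbits of $\phi_{\alpha_{2n}}$ on $X_0$ to orbits of $\phi_\alpha$ on $X$.

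First I would verify that $(\sigma_0,\gamma_0)$ together with the slope $\alpha_{2n}$ satisfy the three hypotheses of Lemma~\ref{LemmaSplittingPair} on $X_0$. Since $\alpha_{2n}=1/(a_{2n+1}+\alpha_{2n+1})<1/a_{2n+1}\le 1/|\sigma_0|$ by the first assumption, the condition $0<\alpha_{2n}<|\sigma_0|^{-1}$ holds; since $G(\alpha_{2n})=\alpha_{2n+1}<1/a_{2n+2}\le 1/W_0$ by the second assumption, the condition $0<G(\alpha_{2n})<W_0^{-1}$ holds; and since $0<W_0\alpha_{2n+1}<W_0/a_{2n+2}\le 1$ we get $\lfloor W_0/\alpha_{2n}\rfloor=W_0a_{2n+1}\equiv\Delta_0\pmod{|\sigma_0|}$ by the third assumption. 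Lemma~\ref{LemmaSplittingPair} then yields a set $E:=E(\sigma_0,\gamma_0,\alpha_{2n})\subset C_0$ with $|E|=W_0|\gamma_0|/2$ such that $\phi_{\alpha_{2n}}^t(q)\in C_0$ for every $q\in E$ and every $0\le t\le T$, where, using $\alpha_{2n}G(\alpha_{2n})=\alpha_{2n}\alpha_{2n+1}<(a_{2n+1}a_{2n+2})^{-1}$ and $|\gamma_0|\ge 1$,
$$
T=\frac{|\gamma_0|}{2\,\alpha_{2n}\,G(\alpha_{2n})}\ \ge\ \frac{|\gamma_0|}{2}\,a_{2n+1}a_{2n+2}\ \ge\ \frac12\,a_{2n+1}a_{2n+2}.
$$

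Next I would push everything forward. Set $E_n:=f_A(E)$, $C_n:=f_A(C_0)$ and $C_n':=f_A(C_0')$, where $C_0'\neq C_0$ is the further vertical cylinder provided by condition~(2) in the definition of a vertical splitting pair. As $f_A$ preserves area and $W_0,|\gamma_0|\ge 1$ we get $|E_n|=W_0|\gamma_0|/2\ge 1/2$. Given $p\in E_n$ and $q:=f_A^{-1}(p)$, the orbit segment $\{\phi_{\alpha_{2n}}^t(q):0\le t\le T\}\subset C_0$ has length $T$, so its $f_A$-image is an orbit segment of $\phi_\alpha$ contained in $C_n$ of length at least $q_{2n}T/\sqrt2$ by Equation~\eqref{EquationMaximalDilatation}; hence $\phi_\alpha^\tau(p)\in C_n$ for $0\le\tau\le T':=q_{2n}T/\sqrt2\ge a_{2n+1}a_{2n+2}q_{2n}/\sqrt8$.

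Finally, I would take $F_n:=\{p'\in C_n':\ \mathrm{dist}_X(p',\partial C_n')>r_n\}$. For $p'\in F_n$ any path of length $\le r_n$ from $p'$ cannot reach $\partial C_n'$, so $B(p',r_n)\subset\overline{C_n'}$, which is disjoint from the open cylinder $C_n$ (an open cylinder meets no saddle connection parallel to its core), hence disjoint from the orbit segment above; therefore $R(X,\alpha,p,p',r_n)\ge T'\ge a_{2n+1}a_{2n+2}q_{2n}/\sqrt8$ for all $p\in E_n$ and $p'\in F_n$, as claimed. It remains to check $|F_n|\ge 1/3$: here $\area(C_n')=\area(C_0')\ge 1$ is a fixed positive number (a nonempty union of unit squares, with $W_0'\ge 1$), while $C_n'$ has core length $|\sigma_n'|=|\sigma_0'|\sqrt{p_{2n}^2+q_{2n}^2}<\sqrt2\,|\sigma_0'|q_{2n}$ and width $W_0'/\sqrt{p_{2n}^2+q_{2n}^2}>2r_n$ (because $r_nq_{2n}=1/12$), so the $r_n$-collar of $\partial C_n'$ inside $C_n'$ has area at most $2|\sigma_n'|r_n+O(r_n^2)<|\sigma_0'|/(3\sqrt2)+O(r_n^2)$; thus $|F_n|>\area(C_0')-|\sigma_0'|/(3\sqrt2)-O(r_n^2)\ge (1-1/(3\sqrt2))-O(r_n^2)>1/3$. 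The two delicate points are the invocation of the (already proved, but subtle) Lemma~\ref{LemmaSplittingPair}, and controlling the lower-order $O(r_n^2)$ corner contributions in the collar estimate; the specific choice $r_n=1/(12q_{2n})$ is exactly what keeps that collar — whose area scales like $|\sigma_0'|$, not like $q_{2n}$ — small enough for the bound $|F_n|\ge 1/3$ to hold, while playing no role in the hitting-time lower bound itself.
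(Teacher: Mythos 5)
Your proof is correct and takes essentially the same route as the paper: verify the hypotheses of Lemma~\ref{LemmaSplittingPair} for the renormalized slope $\alpha_{2n}$ on $X_0$, transport the trapping set $E$ forward by $f_A$ with the length dilatation bound from Equation~\eqref{EquationMaximalDilatation}, and place the target set in the other vertical cylinder at distance $\geq r_n$ from its boundary. The one cosmetic divergence is the construction of $F_n$: the paper removes a fixed margin $W(C_0')/3$ on $X_0$ and pushes forward, using $\|A\|\leq 4q_{2n}$ to see that the pushed-forward margin is $\geq r_n$; you instead define $F_n$ directly on $X$ by an $r_n$-collar and estimate its area using $|\sigma_n'|<\sqrt{2}\,|\sigma_0'|\,q_{2n}$. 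Both give $|F_n|\geq 1/3$. (Incidentally, the $O(r_n^2)$ term you flag as a delicate point is not actually there: the intrinsic $r$-collar of a flat cylinder of core length $L$ has area exactly $2Lr$ when $2r$ is less than the width, and for $p'\in C_n'$ the $X$-distance to $\partial C_n'$ coincides with the intrinsic one, so no corner corrections arise.)
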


\begin{proof} 
Following \S~\ref{SectionFlowSegmentsVerticalCylinder}, set 
$
A:=g(a_1,\dots,a_{2n})
$ 
and consider the corresponding affine diffeomorphism $f_A:X_0\to X$. Observe that according to the first two assumptions on $a_{2n+1}$ and $a_{2n+2}$ we have 
$
\alpha_{2n}<(a_{2n+1})^{-1}\leq|\sigma_0|^{-1}
$ 
and 
$
G(\alpha_{2n})=\alpha_{2n+1}<(a_{2n+2})^{-1}\leq W_0^{-1}
$, 
thus the first two assumptions in Lemma~\ref{LemmaSplittingPair} are satisfied by the slope $\alpha_{2n}$ on the surface $X_0$. Moreover, since $G(\alpha_{2n})\leq W_0^{-1}$, then we have 
$
[W_0\cdot\alpha_{2n}^{-1}]=W_0\cdot a_{2n+1}
$ 
according to Equation~\eqref{EquationSmallG(Alpha)}, thus the third condition on $a_{2n+1}$ in the statement implies
$$
\left[\frac{W_0}{\alpha_{2n}}\right]
=
W_0\cdot\left[\frac{1}{\alpha_{2n}}\right]
=
W_0\cdot a_{2n+1}
=\Delta_0\mod |\sigma_0|,
$$
and the third assumption in Lemma~\ref{LemmaSplittingPair} is also satisfied. Apply Lemma \ref{LemmaSplittingPair} to the slope $\alpha_{2n}$ on the surface $X_0$, with respect to the vertical splitting pair $(\sigma_0,\gamma_0)$. Let 
$
E=E(\sigma_0,\gamma_0,\alpha_{2n})\subset X_0
$ 
be the subset provided by the Lemma, whose area satisfies $|E|>1/2$, since $|\gamma_0|\geq 1$ and $W_0\geq 1$. Recalling that $(\sigma_0,\gamma_0)$ is a splitting pair, let $C_0'\subset X_0$ be a vertical cylinder in $X_0$ disjoint from $C_0$ and let $F\subset C_0'$ be the open set obtained from $C_0'$ removing the $r'$-neighborhood of its boundary $\partial C_0'$, where 
$
r':=W(C_0')/3
$. 
The set $F$ is foliated by vertical closed geodesics $\sigma_0'$ with length 
$
1\leq|\sigma_0'|\leq N-1
$ 
and has transversal with 
$
r':=W(C_0')/3
$, 
thus its area satisfies $|F|\geq 1/3$. The two required sets $E_n,F_n\subset X$ are 
$$
E_n:=f_A\big(E(\sigma_0,\gamma_0,\alpha_{2n})\big)
\quad
\textrm{ and }
\quad
F_n:=f_A\big(F\big).
$$
The lower bounds for the areas $|E_n|$ and $|F_n|$ follow trivially because the action of $\sltwoz$ preserve the areas. According to Lemma \ref{LemmaSplittingPair}, the time spent inside $C_0$ by orbits 
$
\phi^t_{\alpha_{2n}}(p)
$ 
of points 
$
p\in E(\sigma_0,\gamma_0,\theta_{2n})
$ 
is at least
$$
\Delta T_0=
\frac{1/2}{\alpha_{2n} \cdot G(\alpha_{2n})}\geq 
\frac{a_{2n+1}\cdot a_{2n+2}}{2}.
$$
The cylinders $C_0$ and $C_0'$ in direction $0$ on the surface $X_0$ correspond respectively to cylinder 
$
C_n:=f_A(C_0)
$ 
and 
$
C'_n:=f_A(C'_0)
$ 
in direction 
$
p_{2n}/q_{2n}=g(a_1,\dots,a_{2n})\cdot 0
$ 
on the surface $X$. According to Equation \eqref{EquationMaximalDilatation}, the time spent inside $C_n$ by orbits $\phi^t_{\alpha}(p)$ of points $p\in E_n$ is at least
$$
\Delta T_n\geq \frac{q_{2n}}{\sqrt{2}}\cdot \Delta T_0
\geq 
\frac{a_{2n+1}\cdot a_{2n+2}\cdot q_{2n}}{\sqrt{8}}.
$$
On the other hand the set $F\subset X_0$ has distance $w':=W(C'_0)/3$ from $\partial C_0'$. Therefore the set $F_n\subset X$ has distance $w_n$ from the boundary $\partial C'_n$ of $C'_n$ which satisfies
$$
w_n\geq
\frac{W(C_0')}{3}\cdot\frac{1}{\|A\|}
\geq
\frac{W(C_0')}{3}\cdot\frac{1}{4\cdot q_{2n}}
\geq
\frac{1}{12\cdot q_{2n}}=r_n.
$$
The Proposition follows observing that any $p\in E_n$ has distance at least $r_n$ from any point in $F_n$, thus for any pair of point $p'\in F_n$ and $p\in E_n$ we have 
$$
R(X,\alpha,p,p',r_n)
\geq
T_n\geq
(1/2)\cdot a_{2n+1}\cdot a_{2n+2}\cdot q_{2n}.
$$
\end{proof}

%%%%%%%%%%%%%%%%%%%%%%%%%%%%%
%%%%%%%%%%%%%%%%%%%%%%%%%%%%%
\section{A set of slopes with prescribed long hitting time}
\label{SectionProofTheoremOrigami}
%%%%%%%%%%%%%%%%%%%%%%%%%%%%%
%%%%%%%%%%%%%%%%%%%%%%%%%%%%%

Let $X$ be a reduced origami and let 
$
\cO(X)=\sltwoz\cdot X
$ 
be its orbit. Let $N$ be the cardinality of $\cO(X)$. Assume that the orbit $\cO(X)$ contains both an element $X_0$ with a vertical splitting pair and an element $X_0^{(\ast)}$ whose vertical is a one cylinder direction, so that in particular $N\geq2$. Denote $\sigma_0^{(\ast)}$ the vertical closed geodesic in $X_0^{(\ast)}$, and recall that since $X_0^{(\ast)}$ is reduced then the corresponding cylinder has transversal width 
$
W(C_{\sigma_0^{(\ast)}})=1
$. 
Let $(\sigma_0,\gamma_0)$ be the vertical splitting pair in $X_0$ and let $W_0$ and $\Delta_0$ be the integers defined in \S~\ref{SectionVerticalSplittingPairs}. We will use the following easy Lemmas.

\begin{lemma}
\label{LemmaSolvingCongruence}
There exists $m_0\in\{0,1,\dots,|\sigma_0|-1\}$ such that for any $a\in\NN$ we have
$$
W_0\cdot a=\Delta_0\mod |\sigma_0|
\quad
\Leftrightarrow
\quad
a=m_0\mod |\sigma_0|.
$$
\end{lemma}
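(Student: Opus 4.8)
The plan is to exploit condition (3) in the definition of a vertical splitting pair, namely that $|\sigma_0|$ and $W_0$ are coprime, so that multiplication by $W_0$ is invertible modulo $|\sigma_0|$. Concretely, I would first recall that since $\gcd(W_0,|\sigma_0|)=1$, B\'ezout's identity gives integers $u,v$ with $uW_0+v|\sigma_0|=1$; reducing modulo $|\sigma_0|$ shows that $u$ is a multiplicative inverse of $W_0$ in $\ZZ/|\sigma_0|\ZZ$. Equivalently, and perhaps cleaner to state, the map $x\mapsto W_0 x$ on the finite set $\ZZ/|\sigma_0|\ZZ$ is injective (if $W_0 x\equiv W_0 x'$ then $|\sigma_0|$ divides $W_0(x-x')$, hence divides $x-x'$ by coprimality), hence bijective.

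Next I would define $m_0$ as the unique representative in $\{0,1,\dots,|\sigma_0|-1\}$ of the residue class $u\Delta_0 \bmod |\sigma_0|$, where $u$ is the inverse of $W_0$ produced above. Then for any $a\in\NN$ one has the chain of equivalences $W_0 a\equiv \Delta_0 \pmod{|\sigma_0|}$ iff $a\equiv uW_0 a\equiv u\Delta_0\equiv m_0\pmod{|\sigma_0|}$, where the first step multiplies by $u$ (legitimate since $u$ is invertible) and the last step is the definition of $m_0$. This gives both implications at once.

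There is essentially no obstacle here: the statement is the classical fact that a linear congruence $W_0 a\equiv \Delta_0\pmod{|\sigma_0|}$ with $\gcd(W_0,|\sigma_0|)=1$ has a unique solution modulo $|\sigma_0|$. The only thing worth being careful about is bookkeeping the range $\{0,\dots,|\sigma_0|-1\}$ for $m_0$ (so that it is literally an element of that set rather than merely a residue class), which is handled by taking the canonical representative, and noting that the equivalence in the statement is phrased purely in terms of congruences so it is insensitive to this choice.

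\begin{proof}
By condition (3) in the definition of a vertical splitting pair, the integers $W_0$ and $|\sigma_0|$ are coprime, so there exist $u,v\in\ZZ$ with $uW_0+v|\sigma_0|=1$; in particular $uW_0\equiv 1\pmod{|\sigma_0|}$. Let $m_0\in\{0,1,\dots,|\sigma_0|-1\}$ be the unique integer with $m_0\equiv u\Delta_0\pmod{|\sigma_0|}$. If $a\in\NN$ satisfies $W_0 a\equiv \Delta_0\pmod{|\sigma_0|}$, then multiplying by $u$ gives $a\equiv uW_0 a\equiv u\Delta_0\equiv m_0\pmod{|\sigma_0|}$. Conversely, if $a\equiv m_0\pmod{|\sigma_0|}$ then $W_0 a\equiv W_0 m_0\equiv W_0 u\Delta_0\equiv \Delta_0\pmod{|\sigma_0|}$. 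This proves both implications.
\end{proof}
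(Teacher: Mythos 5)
Your proof is correct and takes essentially the same approach as the paper: the paper's proof is the one-line remark that $|\sigma_0|$ and $W_0$ are coprime, and your argument simply spells out the standard consequence (B\'ezout inverse, unique solution of the linear congruence) that this remark is meant to invoke.
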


\begin{proof}
Just recall that $|\sigma_0|$ and $W_0$ are co-prime.
\end{proof}

\begin{lemma}
\label{LemmaSizeEntry2p+1}
Fix a word $a_1,\dots,a_{2m}$ with $a_i\leq N$ for any $i=1,\dots,2m$ and a pair of elements $Y_1,Y_2\in\cO(X)$ such that 
$
g(a_1,a_2,\dots,a_{2m})\cdot Y_1=Y_2
$. 
Then there exists $a'_1$ with 
$$
|\sigma_0^{(\ast)}|\leq a'_1\leq |\sigma_0^{(\ast)}|+2N
$$
such that 
$
g(a'_1,a_2,\dots,a_{2m})\cdot Y_1=Y_2
$.
\end{lemma}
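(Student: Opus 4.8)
The plan is to exploit the normalization in Equation~\eqref{EquationContinuedFractionSL(2,Z)}: for a word of \emph{even} length the first letter carries the generator $V$, and $V$ is precisely the generator of $\sltwoz$ fixing the vertical slope $p/q=0$. Hence replacing $a_1$ by another positive integer $a_1'$ changes $g(a_1,a_2,\dots,a_{2m})$ only by left multiplication by a power of $V$, and such a power fixes $Y_2$ exactly when its exponent is a multiple of the width of the vertical cusp at $Y_2$, which is bounded by $N$ by (the proof of) Lemma~\ref{LemmaConnectionCusps}. A pigeonhole count in the interval $[\,|\sigma_0^{(\ast)}|,\,|\sigma_0^{(\ast)}|+2N\,]$ then finishes the job.

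Concretely, the first step is to record that, since $2m$ is even,
$$
g(a_1,a_2,\dots,a_{2m})=V^{a_1}\circ T^{a_2}\circ V^{a_3}\circ\cdots\circ V^{a_{2m-1}}\circ T^{a_{2m}},
$$
so that for every positive integer $a_1'$ one has $g(a_1',a_2,\dots,a_{2m})=V^{a_1'-a_1}\circ g(a_1,a_2,\dots,a_{2m})$, and therefore, acting on $Y_1$, $g(a_1',a_2,\dots,a_{2m})\cdot Y_1=V^{a_1'-a_1}\cdot Y_2$. Consequently the desired identity $g(a_1',a_2,\dots,a_{2m})\cdot Y_1=Y_2$ is equivalent to $V^{a_1'-a_1}\cdot Y_2=Y_2$.

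The second step is to invoke a positive integer $v=v(Y_2)\le N$, the width of the vertical cusp at $Y_2$, with $V^{v}\cdot Y_2=Y_2$ (cf.\ the proof of Lemma~\ref{LemmaConnectionCusps}); hence $V^{k}\cdot Y_2=Y_2$ whenever $k$ is an integer multiple of $v$. It thus suffices to choose $a_1'$ with $|\sigma_0^{(\ast)}|\le a_1'\le|\sigma_0^{(\ast)}|+2N$ and $a_1'\equiv a_1\pmod{v}$. Since $|\sigma_0^{(\ast)}|,|\sigma_0^{(\ast)}|+1,\dots,|\sigma_0^{(\ast)}|+2N$ is a block of $2N+1$ consecutive integers and $2N+1>N\ge v$, this block meets every residue class modulo $v$, so such an $a_1'$ exists; moreover $a_1'\ge|\sigma_0^{(\ast)}|\ge1$, so $g(a_1',a_2,\dots,a_{2m})$ is a well-defined element of $\sltwoz$, which completes the argument.

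I do not expect a genuine obstacle here: the proof is a one-line reduction followed by a pigeonhole count. The one point deserving care is the bookkeeping of conventions — checking that in Equation~\eqref{EquationContinuedFractionSL(2,Z)} an even-length word really begins with a power of $V$ (rather than of $T$), so that the modification of the first letter lives in the stabilizer of the vertical slope $p/q=0$ and the relevant period is the vertical cusp width controlled by Lemma~\ref{LemmaConnectionCusps}, and not the horizontal one.
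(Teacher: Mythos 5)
Your proof is correct and uses essentially the same idea as the paper: replace $a_1$ by a value congruent to it modulo the vertical cusp width $v(Y_2)\le N$, so that the correction factor $V^{a'_1-a_1}$ lies in the stabilizer of $Y_2$. The paper realizes the same strategy with an explicit Euclidean-division formula for $a'_1$ (writing $a_1=kv+r$ and $|\sigma_0^{(\ast)}|=lv+p$ and setting $a'_1:=(l+\max\{1,k\})v+r$), whereas your pigeonhole count is slightly cleaner and, incidentally, does not even need the hypothesis $a_1\le N$.
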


\begin{proof}
Recall the definition of $g(a_1,\dots,a_{2m})$ in Equation~\eqref{EquationContinuedFractionSL(2,Z)}, and that there exists $v\in\NN$ with $1\leq v\leq N$ such that $V^v\cdot Y_2=Y_2$. Write $a_1=k\cdot v+r$ and 
$
|\sigma_0^{(\ast)}|=l\cdot v+p
$ 
for $k,l,r,p$ in $\NN$, then set
$
a'_1:=(l+\max\{1,k\})\cdot v+r
$ 
and observe that for $j:=(l-k)+\max\{1,k\}$ we have 
$$
g(a'_1,a_2,\dots,a_{2m})\cdot Y_1
=
V^{j\cdot v}\cdot g(a_1,a_2,\dots,a_{2m})\cdot Y_1
=
V^{j\cdot v}\cdot Y_1=Y_2.
$$
\end{proof}

\subsection{Construction of a set of slopes}
\label{SectionConstructionCantorSlopes} 

A \emph{Cantor-like set} $\EE\subset(0,1)$ is a closed set of the form  
$
\EE:=\bigcap_{k\in\NN}\EE_k
$, 
where $\EE_0:=(0,1)$ and where for any $k\geq1$ the set $\EE_k\subset(0,1)$ is a finite union of mutually disjoint closed intervals with $\EE_{k}\subset\EE_{k-1}$. Fix $\eta\geq1$ and $s$ with $1\leq s\leq2$. 
%Then let $\nu$ be such that $\eta^s=\nu\cdot\eta$, that is 
Denote $$
\nu:=\eta^{s-1}.
$$
Proposition~\ref{PropositionConstructionCantorSlopes} below defines a Cantor-like set $\EE$ by specifying conditions on the entries $a_1,a_2,\dots$ of the continued fraction of elements 
$
\alpha\in\EE
$. 

\begin{proposition}
\label{PropositionConstructionCantorSlopes}
There exists a Cantor-like set $\EE=\EE(X,\eta,s)$ such that for any $\alpha\in\EE$ we have
$$
w(\alpha)=\eta.
$$
Moreover for any $\alpha\in\EE$ there exist integers 
$
\big(p(k)\big)_{k\in\NN}
$ 
and 
$
\big(n(k)\big)_{k\in\NN}
$ 
with $p(0)=n(0)=0$ such that for any $k\geq1$ we have $n(k-1)<p(k)<n(k)$ and 
\begin{align}
&
\label{EquationOneCylinderEntry2p}
g(a_1,\dots,a_{2p(k)})\cdot X_0^{(\ast)}=X
\\
&
\label{EquationOneCylinderEntry2p+1}
|\sigma_0^{(\ast)}|\leq a_{2p(k)+1} \leq |\sigma_0^{(\ast)}|+2N
\\
&
\label{EquationSplittingPairEntry2n}
g(a_1,\dots,a_{2n(k)})\cdot X_0=X
\\
&
\label{EquationSizeEntry2n+1}
q_{2n(k)}^{\eta-1}\leq 
a_{2n(k)+1}\leq 
(|\sigma_0|+1)\cdot q_{2n(k)}^{\eta-1}-1
\\
&
\label{EquationCongruenceEntry2n+1}
W_0\cdot a_{2n(k)+1}=\Delta_0\mod |\sigma_0|
\\
&
\label{EquationSizeEntry2n+2}
q_{2n(k)+1}^{\nu-1}\leq 
a_{2n(k)+2}\leq 
2\cdot q_{2n(k)+1}^{\nu-1}-1.
\end{align}
Finally, there exists a constant $C>0$, depending only on $N$, $|\sigma_0|$, $\eta$ and $\nu$ such that  
\begin{equation}
\label{EquationRelationEntries2p(k)And2p(k-1)}
q_{2p(k)}
\leq
C \cdot q_{2p(k-1)}^{\nu\cdot\eta}.
\end{equation}
\end{proposition}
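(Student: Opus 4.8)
The plan is to prescribe, for every $\alpha\in\EE$, the continued fraction $\alpha=[a_1,a_2,\dots]$ block by block, and to take $\EE$ to be the set of $\alpha$ whose digits obey this block structure. It is convenient to track the orbit element $Z_j:=g(a_1,\dots,a_{2j})^{-1}\cdot X\in\cO(X)$, with $Z_0=X$. Since $g(a_1,\dots,a_{2j+2m})=g(a_1,\dots,a_{2j})\circ g(a_{2j+1},\dots,a_{2j+2m})$ for even-length initial segments, placing a word of even length $2m$ at positions $2j+1,\dots,2j+2m$ produces $Z_{j+m}=g(a_{2j+1},\dots,a_{2j+2m})^{-1}\cdot Z_j$, so it steers $Z_j$ to any prescribed $Y\in\cO(X)$ as soon as $g(\text{word})\cdot Y=Z_j$; by Lemma~\ref{LemmaConnectionCusps} such a word can be chosen of even length at most $2N-2$ with all entries at most $N$. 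In these terms, \eqref{EquationOneCylinderEntry2p} is the requirement $Z_{p(k)}=X_0^{(\ast)}$ and \eqref{EquationSplittingPairEntry2n} the requirement $Z_{n(k)}=X_0$.

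The $k$-th block would be built inductively, having already fixed the digits through position $2n(k-1)+2$ (for $k=1$, having fixed nothing, with $Z_0=X$ and $p(0)=n(0)=0$). First I steer the current orbit element to $X_0^{(\ast)}$ by a navigation word as above, reaching a position $2p(k)>2n(k-1)$, which yields \eqref{EquationOneCylinderEntry2p}. Next I steer $X_0^{(\ast)}$ to $X_0$ by a navigation word; since $X_0$ and $X_0^{(\ast)}$ carry a different number of vertical cylinders they are distinct in $\cO(X)$, so this word is non-empty and Lemma~\ref{LemmaSizeEntry2p+1} applies, letting me enlarge its first entry --- which is exactly $a_{2p(k)+1}$ --- to lie in $[\,|\sigma_0^{(\ast)}|,|\sigma_0^{(\ast)}|+2N\,]$ without moving its endpoints; we reach a position $2n(k)>2p(k)$, giving \eqref{EquationSplittingPairEntry2n} and \eqref{EquationOneCylinderEntry2p+1}. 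Finally I choose $a_{2n(k)+1}$ inside $[\,q_{2n(k)}^{\eta-1},(|\sigma_0|+1)q_{2n(k)}^{\eta-1}-1\,]$: this interval has length at least $|\sigma_0|$ (using $q_{2n(k)}\ge1$), hence contains an integer in the class $m_0\bmod|\sigma_0|$ supplied by Lemma~\ref{LemmaSolvingCongruence}, giving \eqref{EquationSizeEntry2n+1} and \eqref{EquationCongruenceEntry2n+1}; and I choose $a_{2n(k)+2}$ in the non-empty interval $[\,q_{2n(k)+1}^{\nu-1},2q_{2n(k)+1}^{\nu-1}-1\,]$, giving \eqref{EquationSizeEntry2n+2}. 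Letting $\EE_k$ be the union, over all admissible choices of the free pairs $(a_{2n(j)+1},a_{2n(j)+2})$ with $j\le k$, of the closed cylinders $\overline{I(a_1,\dots,a_{2n(k)+2})}$ (pairwise disjoint, after a harmless discarding of neighbouring digit values, and contained in $\EE_{k-1}$), and $\EE:=\bigcap_k\EE_k$, produces a Cantor-like set, and every $\alpha\in\EE$ has a continued fraction satisfying \eqref{EquationOneCylinderEntry2p}--\eqref{EquationSizeEntry2n+2} with $p(0)=n(0)=0$ and $n(k-1)<p(k)<n(k)$. (In the single exceptional case $k=1$ with $X=X_0^{(\ast)}$, where the first navigation word is empty, I would prepend the two cusp widths of $X$ as a trivial loop to restore $0=n(0)<p(1)$.)

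For the Diophantine type I would use $w(\alpha)=\limsup_j \log q_{j+1}/\log q_j$. Every digit of $\alpha$ not of the form $a_{2n(k)+1}$ or $a_{2n(k)+2}$ is bounded by $|\sigma_0^{(\ast)}|+2N$, so at those positions $q_{j+1}\le(|\sigma_0^{(\ast)}|+2N+1)\,q_j$ and $\log q_{j+1}/\log q_j\to1$. At position $2n(k)$ one has $q_{2n(k)}^{\eta}\le q_{2n(k)+1}=a_{2n(k)+1}q_{2n(k)}+q_{2n(k)-1}\le(|\sigma_0|+1)q_{2n(k)}^{\eta}$, so the ratio tends to $\eta$; at position $2n(k)+1$ one likewise gets $q_{2n(k)+1}^{\nu}\le q_{2n(k)+2}\le2\,q_{2n(k)+1}^{\nu}$, so the ratio tends to $\nu=\eta^{s-1}\le\eta$ (since $1\le s\le2$). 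Hence $\limsup_j\log q_{j+1}/\log q_j=\eta$, i.e. $w(\alpha)=\eta$.

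Finally, \eqref{EquationRelationEntries2p(k)And2p(k-1)} follows by telescoping $q_{j+1}\le(a_{j+1}+1)q_j$ from position $2p(k-1)$ to position $2p(k)$. The digits in this range other than the two large ones $a_{2n(k-1)+1},a_{2n(k-1)+2}$ are at most $4N$ in number and each at most $|\sigma_0^{(\ast)}|+2N$, so they contribute a bounded factor $C_1$: thus $q_{2n(k-1)}\le C_1 q_{2p(k-1)}$ and $q_{2p(k)}\le C_1 q_{2n(k-1)+2}$, while the two estimates of the previous paragraph give $q_{2n(k-1)+2}\le2\,q_{2n(k-1)+1}^{\nu}\le2(|\sigma_0|+1)^{\nu}q_{2n(k-1)}^{\nu\eta}$. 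Combining, $q_{2p(k)}\le C\,q_{2p(k-1)}^{\nu\eta}$ with $C:=2(|\sigma_0|+1)^{\nu}C_1^{\nu\eta+1}$ depending only on $N$, $|\sigma_0|$, $\eta$, $\nu$. I expect the main obstacle to be the inductive navigation of the orbit graph: arranging that each steering step is realized by a continued-fraction digit word of controlled even length, invoking Lemma~\ref{LemmaSizeEntry2p+1} precisely where \eqref{EquationOneCylinderEntry2p+1} is needed, and keeping every free-digit range non-empty --- the congruence \eqref{EquationCongruenceEntry2n+1} being the delicate point; the two size estimates and the computation of $w(\alpha)$ are then routine continued-fraction bookkeeping.
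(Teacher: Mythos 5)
Your construction is essentially the same as the paper's (\S\ref{SectionConstructionCantorSlopes}): fix the continued fraction block by block, using Lemma~\ref{LemmaConnectionCusps} to steer the orbit element to $X_0^{(\ast)}$ and then to $X_0$ (with Lemma~\ref{LemmaSizeEntry2p+1} to enlarge $a_{2p(k)+1}$ into the window $[|\sigma_0^{(\ast)}|,|\sigma_0^{(\ast)}|+2N]$), then insert the free digits $a_{2n(k)+1}$ and $a_{2n(k)+2}$ with the prescribed size and congruence constraints, and conclude via the convergent-ratio characterization of $w(\alpha)$ together with a telescoping estimate for $q_{2p(k)}$; tracking $Z_j:=g(a_1,\dots,a_{2j})^{-1}\cdot X$ is a clean way to organize precisely this bookkeeping. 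The only cosmetic differences are that the paper also lets the navigation words vary over maximal disjoint families (which fattens $\EE$ for the later dimension estimate but is not needed for this statement), and your congruence step should treat $Q:=q_{2n(k)}^{\eta-1}=1$ separately --- the interval then has length $|\sigma_0|-1$ rather than $\geq|\sigma_0|$, but it is exactly $[1,|\sigma_0|]$ and so still meets every residue class mod $|\sigma_0|$.
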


The \emph{levels} $\EE_k$ for $k\geq1$ of $\EE$ are defined inductively according to the procedure described in \S~\ref{SectionFirstLevelCantor}, \S~\ref{SectionSecondLevelCantor}, \S~\ref{SectionInductiveAssumptionCantor}, \S~\ref{SectionOddLevelCantor}, and \S~\ref{SectionEvenLevelCantor} below. Then the proof of Proposition~\ref{PropositionConstructionCantorSlopes} is completed in \S~\ref{SectionProofPropositionCantor}. In particular, we will give two different definitions for \emph{odd levels} $\EE_{2k-1}$ and \emph{even levels} $\EE_{2k}$. We will use the following notion: a finite family $\cE$ of finite words $(a_1,\dots,a_{2m})$ of even length $2m\geq2$ is said \emph{disjoint} if there are not two words $(a_1,\dots,a_{2m})$ and $(a'_1,\dots,a'_{2m'})$ in $\cE$ such that $m'\geq m$ and $a'_i=a_i$ for $1\leq i\leq 2m$. Moreover, in virtue of Lemma~\ref{LemmaConnectionCusps}, for $k\geq1$, we will consider blocks 
$
(a_{2n(k-1)+3},\dots,a_{2p(k)})
$ 
with
\begin{equation}
\label{EquationConnectionToOneCylinder}
4\leq 2p(k)-2n(k-1)\leq 2N
\quad
\textrm{ ; } 
\quad
a_i\leq N
\quad
\forall
\quad
i=2n(k-1)+3,\dots,2p(k)
\end{equation}
and blocks 
$
(a_{2p(k)+1},\dots,a_{2n(k)})
$ 
with
$$
2\leq 2n(k)-2p(k)\leq 2N-2
\quad
\textrm{ ; } 
\quad
a_i\leq N
\quad
\forall
\quad
i=2p(k)+1,\dots,2n(k),
$$
which, according to Lemma~\ref{LemmaSizeEntry2p+1}, are then modified into blocks 
$
(a_{2p(k)+1},\dots,a_{2n(k)})
$ 
with
\begin{equation}
\label{EquationConnectionToSplittingPair}
\left\{
\begin{array}{l}
2\leq 2n(k)-2p(k)\leq 2N-2
\\
|\sigma_0^{(\ast)}|\leq a_{2p(k)+1}\leq |\sigma_0^{(\ast)}|+2N
\quad
\textrm{ ; } 
\quad
a_i\leq N
\quad
\forall
\quad
i=2p(k)+2,\dots,2n(k)
\end{array}
\right.
\end{equation}

\subsubsection{Level $(1)$.} 
\label{SectionFirstLevelCantor}

Let $\cP_1$ be the maximal disjoint family of finite words 
$
(a_1,\dots,a_{2p(1)})
$ 
which satisfy Equation~\eqref{EquationOneCylinderEntry2p}, where the first two entries are defined by $a_1:=1$ and $a_2:=1$ and where the block 
$
(a_3,\dots,a_{2p(1)})
$ 
satisfies Equation~\eqref{EquationConnectionToOneCylinder}. This is possible applying Lemma~\ref{LemmaConnectionCusps} with $Y_1:=X_0^{(\ast)}$ and $Y_2:=g(a_1,a_2)^{-1}\cdot X$, indeed we have
$$
g(a_1,\dots,a_{2p(1)})\cdot X_0^\ast
=
g(a_1,a_2)\cdot g(a_3,\dots,a_{2p(1)})\cdot X_0^\ast
=
g(a_1,a_2)\cdot g(a_1,a_2)^{-1}\cdot X=X.
$$
For any $(a_1,\dots,a_{2p(1)})\in\cP_1$, apply again Lemma~\ref{LemmaConnectionCusps} with 
$
Y_2=X_0^{(\ast)}
$ 
and $Y_1=X_0$, and then Lemma~\ref{LemmaSizeEntry2p+1}, and let 
$
\cN_1(a_1,\dots,a_{2p(1)})
$ 
be the maximal disjoint family of words $(a_{2p(1)+1},\dots,a_{2n(1)})$ such that 
$$
g(a_{2p(1)+1},\dots,a_{2n(1)})\cdot X_0=X_0^{(\ast)}
$$
and which satisfy Equation~\eqref{EquationConnectionToSplittingPair}. Let $\cE_1$ be the family of concatenated words 
\begin{eqnarray*}
&&
(a_1,\dots,a_{2n(1)}):=
(a_1,\dots,a_{2p(1)},a_{2p(1)+1},\dots,a_{2n(1)})
\\
&&
\textrm{ with }\quad
(a_1,\dots,a_{2p(1)})\in\cP_1
\\
&&
\textrm{ and }\quad 
(a_{2p(1)+1},\dots,a_{2n(1)})\in\cN_1(a_1,\dots,a_{2p(1)}).
\end{eqnarray*}
Any such word satisfies Equation~\eqref{EquationSplittingPairEntry2n}, that is 
\begin{eqnarray*}
&&
g(a_1,\dots,a_{2n(1)})\cdot X_0=
\\
&&
g(a_1,\dots,a_{2p(1)})\cdot g(a_{2p(1)+1},\dots,a_{2n(1)})\cdot X_0=
g(a_1,\dots,a_{2p(1)})\cdot X_0^{(\ast)}=X.
\end{eqnarray*}
For any $(a_1,\dots,a_{2n(1)})\in\cE_1$ let $\cF_1(a_1,\dots,a_{2n(1)})$ be the set of integers $a_{2n(1)+1}\in\NN$ which satisfy Equation~\eqref{EquationSizeEntry2n+1} and Equation~\eqref{EquationCongruenceEntry2n+1}. Let $\EE_1$ be the union, over $(a_1,\dots,a_{2n(1)})\in\cE_1$, of the intervals $E_1$ defined by
$$
E_1=
E(a_1,\dots,a_{2n(1)}):=
\bigcup_{a\in\cF_1(a_1,\dots,a_{2n(1)})}I(a_1,\dots,a_{2n(1)},a).
$$

\subsubsection{Level $(2)$.} 
\label{SectionSecondLevelCantor}

Let $\cE_2$ be the family of words of the form 
$
(a_1,\dots,a_{2n(1)},a_{2n(1)+1})
$, 
where 
$
(a_1,\dots,a_{2n(1)})\in\cE_1
$ 
and where the last entry satisfies  
$
a_{2n(1)+1}\in\cF_1(a_1,\dots,a_{2n(1)})
$. 
For any 
$
(a_1,\dots,a_{2n(1)+1})\in\cE_2
$ 
let
$
\cG_2(a_1,\dots,a_{2n(1)+1})\subset\NN^\ast
$ 
be the set of those $a_{2n(1)+2}\geq1$ which satisfy Equation~\eqref{EquationSizeEntry2n+2}. Finally let $\EE_2$ be the union, over
$
(a_1,\dots,a_{2n(1)+1})\in\cE_2
$, 
of the intervals  
$$
E(a_1,\dots,a_{2n(1)+1}):=
\bigcup_{a\in\cG_2(a_1,\dots,a_{2n(1)+1})}
I(a_1,\dots,a_{2n(1)+1},a).
$$

\subsubsection{Inductive assumption.} 
\label{SectionInductiveAssumptionCantor}

Fix $k\geq2$ and assume that the families $\cE_1,\dots,\cE_{2k-2}$ are defined, where in particular $\cE_{2k-2}$ is a family of words $(a_1,\dots,a_{2n(k-1)+1})$ of odd length, such that removing the last entry Equation~\eqref{EquationSplittingPairEntry2n} is satisfied, that is
$$
X=g(a_1,\dots,a_{2n(k-1)})\cdot X_0.
$$
Assume also that the closed sets 
$
\EE_1\supset\dots\supset\EE_{2k-2}
$ 
are defined, where any $\EE_i$ is a finite union of mutually disjoint closed intervals. Assume in particular that the intervals $E_{2k-2}$ in $\EE_{2k-2}$ are labeled by words 
$
(a_1,\dots,a_{2n(k-1)})\in\cE_{2k-2}
$. 
Finally, assume that for any interval 
$
E_{2k-2}=E(a_1,\dots,a_{2n(k-1)+1})
$ 
in the level $\EE_{2k-2}$, where
$
(a_1,\dots,a_{2n(k-1)+1})\in\cE_{2k-2}
$ 
is the corresponding word, there is a finite subset 
$
\cG_{2k-2}(a_1,\dots,a_{2n(k-1)+1})\subset\NN^\ast
$ 
such that 
$$
E(a_1,\dots,a_{2n(k-1)+1})=
\bigcup_{a\in\cG_{2k-2}(a_1,\dots,a_{2n(k-1)+1})}
I(a_1,\dots,a_{2n(k-1)+1},a).
$$

\subsubsection{Level $(2k-1)$.} 
\label{SectionOddLevelCantor}

Fix 
$$
(a_1,\dots,a_{2n(k-1)+1})\in\cE_{2k-2}
\quad
\textrm{ and }
\quad
a_{2n(k-1)+2}\in\cG_{2k-2}(a_1,\dots,a_{2n(k-1)+1})
$$
then let  
$
\cP_{2k-1}(a_1,\dots,a_{2n(k-1)+1}|a_{2n(k-1)+2})
$ 
be the maximal disjoint family of those words 
$
(a_{2n(k-1)+3},\dots,a_{2p(k)})
$ 
which satisfy Equation~\eqref{EquationConnectionToOneCylinder} and the condition
$$
g(a_{2n(k-1)+3},\dots,a_{2p(k)})\cdot X_0^{(\ast)}=
g(a_{2n(k-1)+1},a)^{-1}\cdot X_0.
$$
This is possible applying Lemma~\ref{LemmaConnectionCusps} with 
$
Y_2:=g(a_{2n(k-1)+1},a_{2n(k-1)+2})^{-1}\cdot X_0
$ 
and $Y_1:=X_0^{(\ast)}$. Then let 
$
\cP_{2k-1}(a_1,\dots,a_{2n(k-1)+1})
$ 
be the family of concatenated words
$$
(a_1,\dots,a_{2p(k)}):=
(a_1,\dots,a_{2n(k-1)+1},a_{2n(k-1)+2},a_{2n(k-1)+3},\dots,a_{2p(k)}),
$$ 
where 
$
a_{2n(k-1)+2}\in\cG_{2k-2}(a_1,\dots,a_{2n(k-1)+1})
$ 
and where the last block satisfies 
$$
(a_{2n(k-1)+3}\dots,a_{2p(k)})\in\cP_{2k-1}(a_1,\dots,a_{2n(k-1)+1}|a_{2n(k-1)+2}).
$$
Any such concatenated word satisfies Equation~\eqref{EquationOneCylinderEntry2p}, indeed, according to the inductive assumption, we have 
\begin{eqnarray*}
&&
g(a_1,\dots,a_{2p(k)})\cdot X_0^{(\ast)}=
\\
&&
g(a_1,\dots,a_{2n(k-1)+2})\cdot g(a_{2n(k-1)+3},\dots,a_{2p(k)})\cdot X_0^{(\ast)}=
\\
&&
g(a_1,\dots,a_{2n(k-1)+2})\cdot g(a_{2n(k-1)+1},a_{2n(k-1)+2})^{-1}\cdot X_0=
g(a_1,\dots,a_{2n(k-1)})\cdot X_0=X.
\end{eqnarray*}

Fix 
$
(a_1,\dots,a_{2p(k)})\in\cP_{2k-1}(a_1,\dots,a_{2n(k-1)+1})
$ 
and apply first Lemma~\ref{LemmaConnectionCusps} with 
$
Y_2=X_0^{(\ast)}
$ 
and $Y_1=X_0$, and then Lemma~\ref{LemmaSizeEntry2p+1}, and define
$
\cN_{2k-1}(a_1,\dots,a_{2p(k)})
$ 
as the maximal disjoint family of words  
$
(a_{2p(k)+1},\dots,a_{2n(k)})
$ 
which satisfy Equation~\eqref{EquationConnectionToSplittingPair} and are such that 
$$
g(a_{2p(k)+1},\dots,a_{2n(k)})\cdot X_0=X_0^{(\ast)}.
$$
Let $\cE_{2k-1}(a_1,\dots,a_{2n(k-1)+1})$ be the family of concatenated words 
\begin{eqnarray*}
&&
(a_1,\dots,a_{2n(k)}):=
(a_1,\dots,a_{2p(k)},a_{2p(k)+1},\dots,a_{2n(k)})
\\
&&
\textrm{ where }
\quad
(a_1,\dots,a_{2p(k)})\in\cP_{2k-1}(a_1,\dots,a_{2n(k-1)+1})
\\
&&
\textrm{ and }
\quad
(a_{2p(k)+1},\dots,a_{2n(k)})\in\cN_{2k-1}(a_1,\dots,a_{2p(k)}).
\end{eqnarray*}
Any such concatenation satisfies Equation~\eqref{EquationSplittingPairEntry2n}, indeed we have
\begin{eqnarray*}
&&
g(a_1,\dots,a_{2n(k)})\cdot X_0=
\\
&&
g(a_1,\dots,a_{2p(k)})\cdot g(a_{2p(k)+1},\dots,a_{2n(k)})\cdot X_0=
g(a_1,\dots,a_{p(k)})\cdot X_0^{(\ast)}=X.
\end{eqnarray*}

Define the family $\cE_{2k-1}$ as the union, over 
$
(a_1,\dots,a_{2n(k-1)+1})\in\cE_{2k-2}
$, 
of the families 
$
\cE_{2k-1}(a_1,\dots,a_{2n(k-1)+1})
$. 
For any $(a_1,\dots,a_{2n(k)})\in\cE_{2k-1}$ let $\cF_{2k-1}(a_1,\dots,a_{2n(k)})$ be the set of integers $a_{2n(k)+1}\in\NN$ which satisfy Equation~\eqref{EquationSizeEntry2n+1} and Equation~\eqref{EquationCongruenceEntry2n+1}. Finally, let $\EE_{2k-1}$ be the union, over $(a_1,\dots,a_{2n(k)})\in\cE_{2k-1}$, of the intervals $E_{2k-1}$ defined by
$$
E_{2k-1}=
E(a_1,\dots,a_{2n(k)}):=
\bigcup_{a\in\cF_{2k-1}(a_1,\dots,a_{2n(k)})}I(a_1,\dots,a_{2n(k)},a).
$$

\subsubsection{Level $(2k)$.} 
\label{SectionEvenLevelCantor}

Let $\cE_{2k}$ be the family of words of the form 
$
(a_1,\dots,a_{2n(k)},a_{2n(k)+1})
$, 
where 
$
(a_1,\dots,a_{2n(k)})\in\cE_{2k-1}
$ 
and where the last entry satisfies  
$
a_{2n(k)+1}\in\cF_{2k-1}(a_1,\dots,a_{2n(k)})
$. 
For any such word 
$
(a_1,\dots,a_{2n(k)+1})\in\cE_{2k}
$ 
define 
$
\cG_{2k}(a_1,\dots,a_{2n(k)+1})\subset\NN^\ast
$ 
as the set of those $a_{2n(k)+2}\geq1$ which satisfy Equation~\eqref{EquationSizeEntry2n+2}. Finally let $\EE_{2k}$ be the union, over
$
(a_1,\dots,a_{2n(k)+1})\in\cE_{2k}
$, 
of the intervals  
$$
E_{2k}=
E(a_1,\dots,a_{2n(k)+1}):=
\bigcup_{a\in\cG_{2k}(a_1,\dots,a_{2n(k)+1})}
I(a_1,\dots,a_{2n(k)+1},a).
$$

\subsection{Growth of denominators}
\label{SectionGrowthDenominators}

Fix $k\geq2$ and recall the notation in \S~\ref{SectionConstructionCantorSlopes}. Consider any
$$
(a_1,\dots,a_{2n(k-1)+1})\in\cE_{2k-2}
\quad
\textrm{ and }
\quad
(a_1,\dots,a_{2p(k)})\in\cP_{2k-1}(a_1,\dots,a_{2n(k-1)+1}).
$$ 
Since  
$
a_{2n(k-1)+2}\in\cG_{2k-2}(a_1,\dots,a_{2n(k-1)+1})
$ 
satisfies Equation \eqref{EquationSizeEntry2n+2}, we have  
\begin{eqnarray*}
&&
q_{2n(k-1)+1}^\nu
\leq
a_{2n(k-1)+2}\cdot q_{2n(k-1)+1}
\leq
q_{2n(k-1)+2}=
\\
&&
a_{2n(k-1)+2}\cdot q_{2n(k-1)+1}+ q_{2n(k-1)}
\leq
(a_{2n(k-1)+2}+1)\cdot q_{2n(k-1)+1}
\leq
2q_{2n(k-1)+1}^\nu,
\end{eqnarray*}
so that in particular
\begin{equation}
\label{EquationDenominatorsCantor(1)}
q_{2n(k-1)+1}^\nu\leq q_{2n(k-1)+2}\leq 2q_{2n(k-1)+1}^\nu.
\end{equation}

Observe that the partial quotients relating $q_{2n(k-1)+2}$ and $q_{2p(k)}$ are the entries of the block  
$
(a_{2n(k-1)+3},\dots,a_{2p(k)})
$ 
in the family 
$
\cP_{2k-1}(a_1,\dots,a_{2n(k-1)+1}|a_{2n(k-1)+2})
$, 
so that 
$$
q_{2n(k-1)+2}
\leq
q_{2p(k)}
\leq
(N+1)^{2N-2}\cdot q_{2n(k-1)+2}.
$$
Therefore, setting 
$
C_1:=2(N+1)^{2N-2}
$, 
Equation~\eqref{EquationDenominatorsCantor(1)} implies
\begin{equation}
\label{EquationDenominatorsCantor(2)}
q_{2n(k-1)+1}^\nu\leq 
q_{2p(k)}\leq 
C_1\cdot q_{2n(k-1)+1}^\nu.
\end{equation}

Moreover, consider any  
$
(a_1,\dots,a_{2n(k)+1})\in\cE_{2k}(a_1,\dots,a_{2p(k)})
$. 
The partial quotients relating the denominator $q_{2n(k)}$ and $q_{2p(k)}$ are the entries of the block 
$
(a_{2p(k)+1},\dots,a_{2n(k)})
$ 
in the family 
$
\cN_{2k-1}(a_1,\dots,a_{2p(k)})
$, 
so that, setting 
$
C_2:=(2N+|\sigma_0^{(\ast)}|+1)\cdot(N+1)^{2N-3}
$ 
we have 
\begin{equation}
\label{EquationDenominatorsCantor(3)}
q_{2p(k)}\leq q_{2n(k)}\leq C_2\cdot q_{2p(k)}.
\end{equation}

Therefore, since 
$
a_{2n(k)+1}\in\cF_{2k-1}(a_1,\dots,a_{2n(k)})
$, 
observing that Equation~\eqref{EquationSizeEntry2n+1} implies
\begin{eqnarray*}
&&
q_{2n(k)}^\eta
\leq
a_{2n(k)+1}\cdot q_{2n(k)}
\leq
q_{2n(k)+1}=
\\
&&
a_{2n(k)+1}\cdot q_{2n(k)}+q_{2n(k)-1}
\leq
(a_{2n(k)+1}+1)\cdot q_{2n(k)}
\leq
(|\sigma_0|+1)q_{2n(k)}^\eta,
\end{eqnarray*}
recalling Equation~\eqref{EquationDenominatorsCantor(3)} we get
\begin{equation}
\label{EquationDenominatorsCantor(4)}
q_{2p(k)}^\eta\leq 
q_{2n(k)+1}\leq 
(|\sigma_0|+1)\cdot C_2^\eta\cdot q_{2p(k)}^\eta.
\end{equation}

\subsection{Proof of Proposition~\ref{PropositionConstructionCantorSlopes}} 
\label{SectionProofPropositionCantor}

The inductive definition of the levels $(\EE_k)_{k\in\NN}$ of the set $\EE$ is given in \S~\ref{SectionConstructionCantorSlopes}. The diophantine type $w(\alpha)$ of $\alpha=[a_1,a_2,\dots]$, which is defined in Equation~\eqref{EquationDiophantineTypeClassical}, is also the supremum of those $w\geq 1$ such that 
$
a_n\geq (q_{n-1})^{w-1}
$ 
for infinitely many $n\in\NN$. Therefore $w(\alpha)=\eta$ for any $\alpha\in\EE$, because for any $k\in\NN$ the entries $a_{2n(k)+1}$ and $a_{2n(k)+2}$ of $\alpha$ satisfy Equation~\eqref{EquationSizeEntry2n+1} and Equation~\eqref{EquationSizeEntry2n+2} respectively, while all other $a_i$ are uniformly bounded, according to Equation~\eqref{EquationConnectionToOneCylinder} and Equation~\eqref{EquationConnectionToSplittingPair}. It only remains to prove Equation~\eqref{EquationRelationEntries2p(k)And2p(k-1)}. To do so, set 
$
C_3:=C_1\cdot C_2^{\nu\cdot\eta}\cdot(|\sigma_0|+1)^\nu
$, 
in terms of the constants $C_1$ and $C_2$ defined in \S~\ref{SectionGrowthDenominators}, and observe that 
$$
q_{2p(k)}
\leq
C_1\cdot q_{2n(k-1)+1}^\nu
\leq
C_1\cdot C_3\cdot q_{2p(k-1)}^{\nu\cdot\eta},
$$
where the first equality follows from Equation~\eqref{EquationDenominatorsCantor(2)} and the second from  Equation~\eqref{EquationDenominatorsCantor(4)}. Proposition~\ref{PropositionConstructionCantorSlopes} is proved. $\qed$

\subsection{Dimension estimate}
\label{SectionDimensionEstimate}

Consider a Cantor-like set  
$
\EE:=\bigcap_{k\in\NN}\EE_k
$, 
where $\EE_0:=(0,1)$ and for any $k\geq1$ the set $\EE_k\subset(0,1)$ is a finite union of mutually disjoint closed intervals with $\EE_{k}\subset\EE_{k-1}$. For any $k\geq1$ and any interval $E_{k-1}$ in the level $\EE_{k-1}$ the \emph{spacing} is the quantity defined by
$$
\epsilon_k(E_{k-1}):=\min\distance(E_k,E_k')
$$ 
where the minimum is taken over all pair of distinct $E_k,E'_k$ in $E_{k-1}\cap\EE_k$ and where for any pair of closed intervals $E,F\subset(0,1)$ with $E\cap F$ we set 
$
\distance(E,F):=\min\{|e-f|,e\in E,f\in F\}
$. 
Set also 
$$
m_k(E_{k-1}):=\sharp(E_{k-1}\cap\EE_k),
$$
that is the number of intervals of level $\EE_k$ which are contained inside the interval $E_{k-1}$. We define a probability measure $\mu$ with support in $\EE$ setting $\mu(E_0):=1$, then assuming that $\mu(E_i)$ is defined for all $i=0,\dots,k-1$ and all intervals $E_i$ in level $\EE_i$, for any $E_k$ in the level $\EE_k$ we define inductively  
$$
\mu(E_k):=\frac{\mu(E_{k-1})}{m(E_{k-1})},
$$
where $E_{k-1}$ is the unique interval of the level $\EE_{k-1}$ such that $E_k\subset E_{k-1}$. One can see that this defines a probability measure on Borel subsets of $(0,1)$ (see Proposition 1.7 in \cite{Falconer}). Moreover, for such a measure $\mu$ supported on $\EE$, if there exists a constant $\delta$ with $0\leq \delta\leq 1$ and constants $c>0$ and $\rho>0$ such that 
$$
\mu(U)\leq c\cdot |U|^\delta
$$
for any interval $U$ with $|U|<\rho$ and with endpoints in $\EE$, then we have $\dim_H(\EE)\geq \delta$ (see \S~4.2 in \cite{Falconer}). We recall Lemma \ref{LemmaFalconer} below, which is a little variation of Example 4.4 in \cite{Falconer}. We provide a proof for sake of completeness.

\begin{lemma}
\label{LemmaFalconer}
Assume that there exists a constant $\delta$ with $0\leq \delta\leq 1$ and a constant $c>0$ such that for any $k\geq1$ the following holds.
\begin{enumerate}
\item
For any $E_{k-1}$ in $\EE_{k-1}$ we have
$
\displaystyle{\epsilon_k(E_{k-1})\geq c\cdot\frac{|E_{k-1}|}{m_k(E_{k-1})}}
$. 
\item
For any $E_{k-1}$ in $\EE_{k-1}$ and any $E_k$ in $E_{k-1}\cap\EE_k$ we have
$
\displaystyle{|E_k|^\delta\geq\frac{|E_{k-1}|^\delta}{m_k(E_{k-1})}}
$. 
\end{enumerate}
Then we have $\dim_H(\EE)\geq \delta$.
\end{lemma}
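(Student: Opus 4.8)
This is Lemma \ref{LemmaFalconer}, the mass-distribution estimate used to bound the Hausdorff dimension of the Cantor-like set $\EE$ from below.

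My plan is to verify the hypothesis of the mass distribution principle: I will show that the measure $\mu$ supported on $\EE$ satisfies $\mu(U)\le c'\cdot|U|^\delta$ for every sufficiently short interval $U$ with endpoints in $\EE$, from which $\dim_H(\EE)\ge\delta$ follows by the standard argument (\S~4.2 in \cite{Falconer}). The first step is to record the consequence of hypothesis (2): iterating $|E_k|^\delta\ge |E_{k-1}|^\delta/m_k(E_{k-1})$ and using the defining relation $\mu(E_k)=\mu(E_{k-1})/m_k(E_{k-1})$ gives, by induction on $k$, that $\mu(E_k)\le |E_k|^\delta$ for every interval $E_k$ of level $\EE_k$ (using $\mu(E_0)=|E_0|^\delta=1$). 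So on the "natural" intervals of the construction the desired estimate already holds with constant $1$.

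The core of the proof is to pass from this to an arbitrary short interval $U$. Given such $U$ with endpoints in $\EE$, let $k$ be the largest integer such that $U$ is contained in a single interval $E_{k-1}$ of level $\EE_{k-1}$; then $U$ meets at least two distinct intervals of level $\EE_k$ inside $E_{k-1}$, so by hypothesis (1), $|U|\ge\epsilon_k(E_{k-1})\ge c\,|E_{k-1}|/m_k(E_{k-1})$. Now I estimate $\mu(U)$ by counting the level-$k$ intervals $E_k\subset E_{k-1}$ that $U$ actually meets: since these are disjoint subintervals of the interval $E_{k-1}$ of common "spacing" at least $\epsilon_k(E_{k-1})$, the number of them meeting $U$ is at most $|U|/\epsilon_k(E_{k-1})+1\le 2|U|/\epsilon_k(E_{k-1})$, and each contributes mass $\mu(E_k)=\mu(E_{k-1})/m_k(E_{k-1})$. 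Hence
\[
\mu(U)\le \frac{2|U|}{\epsilon_k(E_{k-1})}\cdot\frac{\mu(E_{k-1})}{m_k(E_{k-1})}
\le \frac{2|U|}{c\,|E_{k-1}|/m_k(E_{k-1})}\cdot\frac{\mu(E_{k-1})}{m_k(E_{k-1})}
= \frac{2}{c}\cdot\frac{|U|}{|E_{k-1}|}\cdot\mu(E_{k-1}).
\]
Since $|U|\le|E_{k-1}|$ and $0\le\delta\le1$ we have $|U|/|E_{k-1}|\le(|U|/|E_{k-1}|)^\delta$, and combining with $\mu(E_{k-1})\le|E_{k-1}|^\delta$ from the first step yields $\mu(U)\le(2/c)\,|U|^\delta$. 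This is the required inequality with $c'=2/c$ and any $\rho$ small enough that such a $U$ forces $k\ge1$.

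I expect the main obstacle to be purely bookkeeping: handling the edge cases where $U$ meets only intervals near the endpoints of $E_{k-1}$, or where $U$ is so short it lies in the gap between two level-$k$ intervals (then $\mu(U)=0$ trivially), and making sure the "$+1$" in the counting bound is absorbed correctly (which is why one takes $U$ meeting at least one, and the maximality of $k$ guarantees at least two, level-$k$ intervals). None of this is deep; the substance is entirely in the two hypotheses, which have been arranged precisely so that the spacing controls the diameter and the multiplicities control the mass.
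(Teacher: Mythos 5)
Your proof is correct and follows essentially the same route as the paper's: iterate hypothesis~(2) to control $\mu(E_k)$ against $|E_k|^\delta$ on construction intervals, pick the maximal $k$ with $U\subset E_{k-1}$ (so that $U$ meets $j\geq2$ level-$k$ subintervals), use hypothesis~(1) to bound $j$ in terms of $|U|/\epsilon_k$, and close with the mass distribution principle. The only cosmetic differences are that you normalize by $\mu(E_0)/|E_0|^\delta=1$ rather than introducing the paper's constant $C=\sup_{E_1}\mu(E_1)/|E_1|^\delta$, and you absorb the slack via $|U|/|E_{k-1}|\le(|U|/|E_{k-1}|)^\delta$ rather than the paper's $(j/m_k)^{1-\delta}\le1$; both are equivalent bookkeeping.
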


\begin{proof}
Set 
$
C:=\sup_{E_1\in\EE_1}\mu(E_1)/|E_1|^\delta
$. 
For any $k\geq1$ and any interval $E_k$ in the level $\EE_k$ there is a nested sequence of intervals 
$
E_k\subset E_{k-1}\subset\dots\subset E_1
$, 
where $E_i$ belongs to the level $\EE_i$ for $i=1,\dots,k$, so that the definition of $\mu$ and Assumption (2) imply
$$
\frac{\mu(E_k)}{|E_k|^\delta}
=
\frac{\mu(E_{k-1})}{|E_k|^\delta\cdot m_k(E_{k-1})}
\leq
\frac{\mu(E_{k-1})}{|E_{k-1}|^\delta}
\leq
\dots
\leq
\frac{\mu(E_1)}{|E_1|^\delta}
\leq
\sup_{E_1\in\EE_1}\frac{\mu(E_1)}{|E_1|^\delta}=C.
$$
Let $U$ be an open interval with endpoints in $\EE$ and with 
$
|U|\leq\rho:=\min_{E_1\in\EE_1}|E_1|
$. 
Then there exists $k\geq1$ maximal such that $U\subset E_{k-1}$. Let $j\geq2$ be the number of intervals in $E_{k-1}\cap\EE_k$ which intersect $U$ (where $j\geq2$ by maximality of $k$). Assumption (1) implies
$$
|U|\geq 
(j-1)\cdot\epsilon_k(E_{k-1})\geq 
(j-1)\cdot c\cdot\frac{|E_{k-1}|}{m_k(E_{k-1})}\geq
|E_{k-1}|\cdot\frac{c}{2}\cdot\frac{j}{m_k(E_{k-1})}.
$$
Recalling that any interval $E_k$ in $E_{k-1}\cap\EE_k$ has measure 
$
\mu(E_k)=\mu(E_{k-1})/m_k(E_{k-1})
$, 
then the Lemma follows because the two last estimates imply
$$
\frac{\mu(U)}{|U|^\delta}\leq
\frac{1}{|U|^\delta}\cdot j\cdot\frac{\mu(E_{k-1})}{m_k(E_{k-1})}\leq
C\cdot \frac{|E_{k-1}|^\delta}{|U|^\delta}\frac{j}{m_k(E_{k-1})}\leq
C\cdot\frac{2^\delta}{c^\delta}\left(\frac{j}{m_k(E_{k-1})}\right)^{1-\delta}\leq
C\cdot\frac{2^\delta}{c^\delta}.
$$
\end{proof}

Fix $\eta\geq1$ and $s$ with $1\leq s\leq 2$, then let $\EE(X,\eta,s)$ be the set in Proposition~\ref{PropositionConstructionCantorSlopes}.

\begin{proposition}
\label{PropositinHausdorffDimensionCantor}
We have
$
\displaystyle{
\dim_H\big(\EE(X,\eta,s)\big)
\geq
\frac{\eta^{s-1}-1}{\eta^{s}-1}}
$.
\end{proposition}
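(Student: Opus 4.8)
The goal is to apply Lemma~\ref{LemmaFalconer} to the Cantor-like set $\EE=\EE(X,\eta,s)$ constructed in \S~\ref{SectionConstructionCantorSlopes}, with $\delta:=(\eta^{s-1}-1)/(\eta^s-1)=(\nu-1)/(\nu\eta-1)$, where $\nu=\eta^{s-1}$. The plan is to verify Assumptions (1) and (2) of that Lemma level by level, keeping in mind that the construction alternates between two kinds of passages: at an even level $\EE_{2k}$ one only varies the single entry $a_{2n(k)+2}$ in the range prescribed by Equation~\eqref{EquationSizeEntry2n+2}, and at an odd level $\EE_{2k-1}$ one varies a whole block of entries (subject to Equations~\eqref{EquationConnectionToOneCylinder}, \eqref{EquationConnectionToSplittingPair}) followed by the single entry $a_{2n(k)+1}$ ranging as in Equations~\eqref{EquationSizeEntry2n+1}--\eqref{EquationCongruenceEntry2n+1}. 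First I would record the elementary continued-fraction facts I need: for a word $(a_1,\dots,a_n)$ one has $|I(a_1,\dots,a_n)|\asymp q_n^{-2}$ by Equation~\eqref{EquationSizeCylinderContinuedFraction}, and for a child interval obtained by fixing $a_{n+1}=a$ one has $|I(a_1,\dots,a_n,a)|\asymp q_n^{-2}a^{-2}$; moreover the bounded-distortion estimate Equation~\eqref{EquationBoundedDistortion} controls the total length of a union of consecutive children, and the spacing between two adjacent children corresponding to $a_{n+1}=a$ and $a_{n+1}=a+1$ is comparable to $q_n^{-2}a^{-2}$ as well.

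\smallskip

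The heart of the argument is the estimate of the branching numbers $m_k$ and of the sizes $|E_k|$ at the two types of levels. At an even level: for $E_{2k-1}=E(a_1,\dots,a_{2n(k)})$ with $|E_{2k-1}|\asymp q_{2n(k)}^{-2}$, the children $E_{2k}$ are indexed by $a_{2n(k)+2}$ in an interval $[q_{2n(k)+1}^{\nu-1},2q_{2n(k)+1}^{\nu-1}-1]$ of length $\asymp q_{2n(k)+1}^{\nu-1}$, so $m_{2k}\asymp q_{2n(k)+1}^{\nu-1}$; each such child, being an $I$-cylinder one level deeper with last partial quotient $\asymp q_{2n(k)+1}^{\nu-1}$, has $|E_{2k}|\asymp q_{2n(k)+1}^{-2}q_{2n(k)+1}^{-2(\nu-1)}=q_{2n(k)+1}^{-2\nu}$, and recalling $q_{2n(k)+1}\asymp q_{2n(k)}^{\eta}$ (from $a_{2n(k)+1}\asymp q_{2n(k)}^{\eta-1}$ via Equation~\eqref{EquationSizeEntry2n+1}) one gets $|E_{2k}|\asymp q_{2n(k)}^{-2\nu\eta}$ and $|E_{2k-1}|/m_{2k}\asymp q_{2n(k)}^{-2-\eta(\nu-1)}$. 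At an odd level: for $E_{2k-2}=E(a_1,\dots,a_{2n(k-1)+1})$ with $|E_{2k-2}|\asymp q_{2n(k-1)+1}^{-2}$, one varies $a_{2n(k-1)+2}$ (fixed by the inductive choice, contributing the factor $q_{2n(k-1)+2}\asymp q_{2n(k-1)+1}^{\nu}$ recorded in Equation~\eqref{EquationDenominatorsCantor(1)}), then a bounded block of entries (a bounded multiplicative contribution $\asymp 1$, by Equations~\eqref{EquationConnectionToOneCylinder}, \eqref{EquationConnectionToSplittingPair}, since all those entries are $\le N+|\sigma_0^{(\ast)}|+2N$ and the block length is $\le 2N$), and finally $a_{2n(k)+1}$ ranging over a set of integers of size $\asymp q_{2n(k)}^{\eta-1}$ but restricted by the congruence Equation~\eqref{EquationCongruenceEntry2n+1} modulo $|\sigma_0|$, which only costs a further bounded factor $|\sigma_0|$. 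Since $q_{2n(k)}\asymp q_{2p(k)}\asymp q_{2n(k-1)+1}^{\nu}$ (Equations~\eqref{EquationDenominatorsCantor(2)}--\eqref{EquationDenominatorsCantor(3)}), the branching number is $m_{2k-1}\asymp q_{2n(k)}^{\eta-1}\asymp q_{2n(k-1)+1}^{\nu(\eta-1)}$; the children have $|E_{2k-1}|\asymp q_{2n(k)+1}^{-2}$, and using Equations~\eqref{EquationDenominatorsCantor(1)}--\eqref{EquationDenominatorsCantor(4)} one expresses both $|E_{2k-1}|$ and $|E_{2k-2}|/m_{2k-1}$ as powers of $q_{2n(k-1)+1}$.

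\smallskip

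With these asymptotics in hand, Assumption (1) of Lemma~\ref{LemmaFalconer} ($\epsilon_k(E_{k-1})\ge c\,|E_{k-1}|/m_k(E_{k-1})$) follows from the spacing estimate recalled above together with the bounded-distortion control Equation~\eqref{EquationBoundedDistortion}, uniformly in $k$: the children occupy an interval of length comparable to $|E_{k-1}|$, there are $m_k$ of them, and consecutive ones are separated by at least a fixed fraction of the average gap $|E_{k-1}|/m_k$ because the governing partial quotient varies within a range of bounded multiplicative width (a dyadic-type range for even levels, and a range of width $\le(|\sigma_0|+1)$ times its left endpoint for odd levels, with the congruence thinning it out only by the bounded factor $|\sigma_0|$). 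Assumption (2) ($|E_k|^{\delta}\ge |E_{k-1}|^{\delta}/m_k(E_{k-1})$, i.e. $m_k\ge (|E_{k-1}|/|E_k|)^{\delta}$) reduces, after substituting the power-of-$q$ expressions, to a single numerical inequality between exponents that must hold at even and at odd levels; the even-level condition forces $\delta\le$ (something), the odd-level condition forces $\delta\le$ (something else), and the choice $\delta=(\nu-1)/(\nu\eta-1)$ is precisely the value at which both are satisfied --- one finds that the even-level requirement is $(\eta-1)\cdot\delta\le$ exponent reading and the odd-level requirement is $\nu(\eta-1)\cdot\delta\le$ its reading, and both collapse to $\delta(\nu\eta-1)\le\nu-1$. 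The main obstacle, and the one place where care is genuinely required rather than routine, is this bookkeeping of exponents: one must track the two interleaved renormalization steps correctly through Equations~\eqref{EquationDenominatorsCantor(1)}--\eqref{EquationDenominatorsCantor(4)}, making sure the bounded factors ($N$-dependent constants, the factor $|\sigma_0|$ from the congruence, the constant $C$ of Proposition~\ref{PropositionConstructionCantorSlopes}) are genuinely absorbed into the constant $c$ of Lemma~\ref{LemmaFalconer} and do not degrade the exponent $\delta$, and checking that the two resulting exponent inequalities are simultaneously tight exactly at $\delta=(\eta^{s-1}-1)/(\eta^s-1)$. Once the exponent inequalities are verified at both parities, Lemma~\ref{LemmaFalconer} yields $\dim_H(\EE)\ge\delta$, which is the claim.
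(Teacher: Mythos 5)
Your overall strategy---apply Lemma~\ref{LemmaFalconer} to $\EE(X,\eta,s)$, verifying Assumptions (1) and (2) separately at even and odd levels---is exactly the paper's, but your size and branching estimates are wrong, and the error traces to a misreading of the level structure of the Cantor set from \S~\ref{SectionConstructionCantorSlopes}. An interval $E_{2k-1}=E(a_1,\dots,a_{2n(k)})$ at level $\EE_{2k-1}$ is \emph{not} the continued-fraction cylinder $I(a_1,\dots,a_{2n(k)})$: it is the union of $I(a_1,\dots,a_{2n(k)},a)$ over the admissible $a\in\cF_{2k-1}$. By Equations~\eqref{EquationSizeEntry2n+1}, \eqref{EquationBoundedDistortion} and \eqref{EquationSizeCylinderContinuedFraction} this gives $|E_{2k-1}|\asymp q_{2n(k)}^{-(\eta+1)}$, not $q_{2n(k)}^{-2}$ as you write; likewise $|E_{2k}|\asymp q_{2n(k)+1}^{-(\nu+1)}$, not $q_{2n(k)+1}^{-2\nu}$. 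You also shift the branching by one step: the children of $E_{2k-1}$ inside $\EE_{2k}$ are the intervals $E(a_1,\dots,a_{2n(k)+1})$ indexed by $a_{2n(k)+1}\in\cF_{2k-1}$, so $m_{2k}(E_{2k-1})=q_{2n(k)}^{\eta-1}$---the value you mistakenly assign to $m_{2k-1}$---while your claimed $m_{2k}\asymp q_{2n(k)+1}^{\nu-1}$ is really $m_{2k+1}(E_{2k})$.

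Because of this the exponent bookkeeping comes out wrong: substituting your values into Assumption (2) gives the threshold $\delta\le\eta(\nu-1)/\big(2(\nu\eta-1)\big)$ at even levels, which differs from the claimed value unless $\eta=2$, and a still different threshold at odd levels. Moreover, the two Falconer inequalities do \emph{not} ``collapse to $\delta(\nu\eta-1)\le\nu-1$'' as you assert. With the correct size and branching estimates the even-level comparison yields $\delta<(\eta-1)/(\eta\nu-1)$, the odd-level one yields $\delta<(\nu-1)/(\eta\nu-1)$, and the proof must close by observing that $\nu=\eta^{s-1}\le\eta$ makes the second constraint the binding one, which is precisely the exponent $f_\eta(s)$ in the statement. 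This asymmetry between $\eta$ and $\nu$ is the crux of the computation, and your proposal papers over it.
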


\begin{proof}
We check that Point (1) and Point (2) in Lemma~\ref{LemmaFalconer} are satisfied. Referring to the notation in \S~\ref{SectionConstructionCantorSlopes}, we consider separately intervals in even level and intervals in odd levels. Fix $k\geq2$.

\smallskip

According to the definition in \S~\ref{SectionOddLevelCantor}, for any interval $E_{2k-1}$ in the level $\EE_{2k-1}$ Equation~\eqref{EquationSizeEntry2n+1} and Equation~\eqref{EquationBoundedDistortion} imply
$$
\frac{1}{3}
\left(\frac{1}{q_{2n(k)}^{\eta-1}}-\frac{1}{(|\sigma_0|+1)q_{2n(k)}^{\eta-1}}\right)
\leq
\frac{|E_{2k-1}|}{|I(a_1,\dots,a_{2n(k)})|}
\leq
2\left(\frac{1}{q_{2n(k)}^{\eta-1}}-\frac{1}{(|\sigma_0|+1)q_{2n(k)}^{\eta-1}}\right).
$$
Therefore Equation~\eqref{EquationSizeCylinderContinuedFraction} gives
$$
\frac{1}{12\cdot q_{2n(k)}^{\eta+1}}
\leq
|E_{2k-1}|
\leq
\frac{2}{q_{2n(k)}^{\eta+1}}.
$$
Moreover, according to \S~\ref{SectionEvenLevelCantor}, we have 
$$
m_{2k}(E_{2k-1})=
\sharp\cF_{2k-1}(a_1,\dots,a_{2n(k)})
=
q_{2n(k)}^{\eta-1}.
$$
On the other hand Lemma~\ref{LemmaSolvingCongruence} implies
$
|a_{2n(k)+1}-a'_{2n(k)+1}|=|\sigma_0|
$ 
for any two consecutive elements $a_{2n(k)+1},a'_{2n(k)+1}$ in 
$
\cF_{2k-1}(a_1,\dots,a_{2n(k)})
$, and for any 
$
a_{2n(k)+1}\in\cF_{2k-1}(a_1,\dots,a_{2n(k)})
$ 
we have
$
|E(a_1,\dots,a_{2n(k)},a_{2n(k)+1})|<(1/2)|I(a_1,\dots,a_{2n(k)},a_{2n(k)+1})|
$, 
therefore
\begin{eqnarray*}
&&
\epsilon_{2k}(E_{2k-1})\geq
\frac{|\sigma_0|}{2}
\min_{a_{2n(k)+1}\in\cF_{2k-1}(a_1,\dots,a_{2n(k)})}
|I(a_1,\dots,a_{2n(k)},a_{2n(k)+1})|=
\\
&&
\frac{|\sigma_0|}{4}
\min_{a_{2n(k)+1}}
\frac{1}{q_{2n(k)+1}^2}=
\frac{|\sigma_0|}{2}
\frac{1}{\big((|\sigma_0|+1)q_{2n(k)}^{\eta-1}\cdot q_{2n(k)}\big)^2}=
\frac{|\sigma_0|}{4(|\sigma_0|+1)^2}
\frac{1}{q_{2n(k)}^{2\eta}}.
\end{eqnarray*}
Therefore we have
$$
\frac{|E_{2k-1}|}{m_{2k}(E_{2k-1})}
\leq
\frac{2}{q_{2n(k)}^{\eta+1}\cdot q_{2n(k)}^{\eta-1}}
\leq
\frac{8(|\sigma_0|+1)^2}{|\sigma_0|}\cdot\epsilon_{2k}(E_{2k-1}).
$$
Now consider any interval $E_{2k}$ in the level $\EE_{2k}$, which is defined in \S~\ref{SectionEvenLevelCantor}. Recalling Equation~\eqref{EquationSizeEntry2n+2} and reasoning as above we get
$$
\frac{1}{12\cdot q_{2n(k)+1}^{\nu+1}}
\leq
|E_{2k}|
\leq
\frac{2}{q_{2n(k)+1}^{\nu+1}},
$$
where the analogous estimate holds for any interval $E_{2k-2}$ in the level $\EE_{2k-2}$. Moreover, referring to the definitions in \S~\ref{SectionOddLevelCantor} and in \S~\ref{SectionEvenLevelCantor}, we have 
$$
m_{2k-1}(E_{2k-2})
=
\sharp\cE_{2k-1}(a_1,\dots,a_{2n(k-1)+1})
\geq
\sharp\cG_{2k-2}(a_1,\dots,a_{2n(k-1)+1})=
q_{2n(k-1)+1}^{\nu-1}
$$
where the last equality follows from Equation~\eqref{EquationSizeEntry2n+2}. On the other hand, observing that
$
|E(a_1,\dots,a_{2n(k)})|\leq(1/2)|I(a_1,\dots,a_{2n(k)})|
$, 
we have
\begin{eqnarray*}
&&
\epsilon_{2k-1}(E_{2k-2})
\geq
\frac{1}{2}
\min_{(a_1,\dots,a_{2n(k)})\in\cE_{2k-1}(a_1,\dots,a_{2n(k-1)+1})}
|I(a_1,\dots,a_{2n(k)})|\geq
\\
&&
\frac{1}{4}
\min_{(\cdots)}\frac{1}{q_{2n(k)}^2}
\geq
\frac{1}{4}
\frac{1}{\big(C_2\cdot C_1\cdot q_{2n(k-1)+1}^\nu\big)^2}
=
\frac{1}{4(C_1\cdot C_2)^{2}}
\frac{1}{q_{2n(k-1)+1}^{2\nu}},
\end{eqnarray*}
where 
$
E_{2k-2}=E(a_1,\dots,a_{2n(k-1)+1})
$, 
where the third inequality follows from Equation~\eqref{EquationDenominatorsCantor(2)} and Equation~\eqref{EquationDenominatorsCantor(3)}, and where the constants $C_1$ and $C_2$ are defined in \S~\ref{SectionGrowthDenominators}. It follows that 
$$
\frac{|E_{2k-2}|}{m_{2k-1}(E_{2k-2})}
\leq
\frac{2}{q_{2n(k-1)+1}^{\nu+1}\cdot q_{2n(k-1)+1}^{\nu-1}}
\leq
8(C_1\cdot C_2)^{2}\epsilon_{2k-1}(E_{2k-2}).
$$
Point (1) in Lemma~\ref{LemmaFalconer} is established for all intervals in all levels of $\EE$. Fix $\delta\geq0$. For any interval $E_{2k-1}$ in $\EE_{2k-1}$ we have
$$
\frac{|E_{2k-1}|^\delta}{m_{2k}(E_{2k-1})}
\leq
\frac{1}{2^\delta\cdot q_{2n(k)}^{\delta(\eta+1)}}
\cdot
\frac{1}{q_{2n(k)}^{\eta-1}}
=
\frac{1}{2^\delta}\cdot
\left(\frac{1}{q_{2n(k)}}\right)^{\delta(\eta+1)+(\eta-1)}
$$
On the other hand for any interval 
$
E_{2k}\in E_{2k-1}\cap\EE_{2k}
$, 
arguing as in \S~\ref{SectionGrowthDenominators}, we have
$$
|E_{2k}|^\delta\geq
\frac{1}{12^\delta\cdot q_{2n(k)+1}^{\delta(\nu+1)}}
\geq
\frac{1}{12^\delta\cdot ((|\sigma_0|+1)\cdot q_{2n(k)}^\eta)^{\delta(\nu+1)}}
\geq
c_1\cdot\left(\frac{1}{q_{2n(k)}}\right)^{\delta\eta(\nu+1)},
$$
where $c_1>0$ is a constant depending on $|\sigma_0|$ and on $\delta$. Therefore Condition (2) in Lemma~\ref{LemmaFalconer} is satisfied for any $k$ big enough whenever 
$$
\delta\eta(\nu+1)<\delta(\eta+1)+(\eta-1)
\quad
\Leftrightarrow
\quad
\delta<\frac{\eta-1}{\eta\nu-1}.
$$
For any interval $E_{2k-2}$ in $\EE_{2k-2}$ we have
$$
\frac{|E_{2k-2}|^\delta}{m_{2k-1}(E_{2k-2})}
\leq
\frac{2^\delta}{q_{2n(k-1)+1}^{\delta(\nu+1)}}
\cdot
\frac{1}{q_{2n(k-1)+1}^{\nu-1}}
=
2^\delta\cdot
\left(\frac{1}{q_{2n(k-1)+1}}\right)^{\delta(\nu+1)+(\nu-1)}
$$
On the other hand for any interval 
$
E_{2k-1}\in E_{2k-2}\cap\EE_{2k-1}
$ 
we have
$$
|E_{2k-1}|^\delta\geq
\frac{1}{12^\delta\cdot q_{2n(k)}^{\delta(\eta+1)}}\geq
\frac{1}{12^\delta\cdot \big((C_1\cdot C_2)^{2}\cdot q_{2n(k-1)+1}^\eta\big)^{\delta(\eta+1)}}\geq
c_2\left(\frac{1}{q_{2n(k-1)+1}}\right)^{\delta\nu(\eta+1)},
$$
where the second inequality follows from Equation~\eqref{EquationDenominatorsCantor(2)} and Equation~\eqref{EquationDenominatorsCantor(3)}, and where $c_2>0$ is a constant depending on $N$ and on $\delta$. Therefore Condition (2) in Lemma~\ref{LemmaFalconer} is satisfied for any $k$ big enough whenever 
$$
\delta\nu(\eta+1)<\delta(\nu+1)+(\nu-1)
\quad
\Leftrightarrow
\quad
\delta<\frac{\nu-1}{\eta\nu-1}.
$$
There is no loss in generality to assume that Condition (2) is verified for any $k\geq1$. The Proposition follows recalling that $\nu=\eta^{s-1}$ and that $\nu-1\leq\eta-1$.
\end{proof}

\subsection{Hitting time estimates}

Recall that we fix a reduced origami and we assume that its orbit $\cO(X)$ contains both an element $X_0$ with a vertical splitting pair and an element $X_0^{(\ast)}$ whose vertical is a one cylinder direction. Fix $\eta\geq 1$ and let $\EE=\EE(X,\eta,s)$ be the set of slopes given by Proposition~\ref{PropositionConstructionCantorSlopes}. For any $\alpha\in\EE$ consider the flow $\phi_\alpha:X\to X$. consider also the integers 
$
\big(p(k)\big)_{k\in\NN}
$ 
and 
$
\big(n(k)\big)_{k\in\NN}
$ 
given by Proposition~\ref{PropositionConstructionCantorSlopes}.

\begin{lemma}
\label{LemmaLoverBoundHittingTimeOrigami}
For any $\alpha\in\EE$ and for almost any $p,p'$ in $X$ we have
$$
\whitsup(\phi_\alpha,p,p')\geq \eta^s.
$$
\end{lemma}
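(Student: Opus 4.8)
The plan is to apply Proposition~\ref{PropositionSplittingPairs} along the subsequence of indices coming from the vertical splitting pair. Fix $\alpha\in\EE=\EE(X,\eta,s)$ and recall from Proposition~\ref{PropositionConstructionCantorSlopes} that there exist integers $\big(n(k)\big)_{k\in\NN}$ such that the continued fraction entries of $\alpha$ satisfy Equations~\eqref{EquationSplittingPairEntry2n}, \eqref{EquationSizeEntry2n+1}, \eqref{EquationCongruenceEntry2n+1} and \eqref{EquationSizeEntry2n+2}. These four equations are exactly the four hypotheses of Proposition~\ref{PropositionSplittingPairs} with $n=n(k)$: indeed \eqref{EquationSplittingPairEntry2n} gives $g(a_1,\dots,a_{2n(k)})\cdot X_0=X$, while \eqref{EquationSizeEntry2n+1} gives $a_{2n(k)+1}\geq q_{2n(k)}^{\eta-1}\geq 1\geq |\sigma_0|$ for $k$ large (recall $q_n\to\infty$, and we only need the conclusion for infinitely many $k$), \eqref{EquationSizeEntry2n+2} gives $a_{2n(k)+2}\geq q_{2n(k)+1}^{\nu-1}\geq W_0$ for $k$ large, and \eqref{EquationCongruenceEntry2n+1} is precisely the congruence condition $W_0\cdot a_{2n(k)+1}=\Delta_0\bmod|\sigma_0|$. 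Hence for every sufficiently large $k$ Proposition~\ref{PropositionSplittingPairs} provides subsets $F_{n(k)},E_{n(k)}\subset X$ with $|F_{n(k)}|\geq 1/3$ and $|E_{n(k)}|\geq 1/2$ such that
$$
R(X,\alpha,p,p',r_{n(k)})\geq \frac{a_{2n(k)+2}\cdot a_{2n(k)+1}\cdot q_{2n(k)}}{\sqrt{8}}
\qquad\text{where}\qquad r_{n(k)}:=\frac{1}{12\cdot q_{2n(k)}}
$$
for all $p\in E_{n(k)}$ and $p'\in F_{n(k)}$.

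The next step is the measure-theoretic one. Set $A_k:=E_{n(k)}\times F_{n(k)}\subset X\times X$; then $(\leb\times\leb)(A_k)\geq 1/6$ for all large $k$, so Lemma~\ref{LemmaMeasureLimsupSet} gives $(\leb\times\leb)\big(\limsup_k A_k\big)\geq 1/6>0$. For any $(p,p')\in\limsup_k A_k$ there are infinitely many $k$ with $(p,p')\in A_k$, and along that subsequence the displayed lower bound on the hitting time holds with $r_{n(k)}\to 0$. Therefore
$$
\whitsup(\phi_\alpha,p,p')\geq \limsup_{k}\frac{\log R(X,\alpha,p,p',r_{n(k)})}{-\log r_{n(k)}}\geq \limsup_k \frac{\log\big(a_{2n(k)+2}\, a_{2n(k)+1}\, q_{2n(k)}/\sqrt{8}\big)}{\log(12\, q_{2n(k)})}.
$$

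It remains to evaluate this $\limsup$. Using the size bounds \eqref{EquationSizeEntry2n+1} and \eqref{EquationSizeEntry2n+2}, together with the standard continued-fraction estimates $q_{2n(k)+1}^{\eta}\le a_{2n(k)+1}q_{2n(k)}^{2}$ (from $q_{2n(k)+1}\ge a_{2n(k)+1}q_{2n(k)}$ and $a_{2n(k)+1}\ge q_{2n(k)}^{\eta-1}$) and $q_{2n(k)+1}=a_{2n(k)+1}q_{2n(k)}+q_{2n(k)-1}\le (|\sigma_0|+1)q_{2n(k)}^{\eta}$, one gets $q_{2n(k)+1}=q_{2n(k)}^{\eta+O(1/\log q_{2n(k)})}$, and similarly $a_{2n(k)+2}=q_{2n(k)+1}^{\nu-1+o(1)}=q_{2n(k)}^{\eta(\nu-1)+o(1)}$ and $a_{2n(k)+1}=q_{2n(k)}^{\eta-1+o(1)}$. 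Multiplying, the numerator of the ratio above is $\log q_{2n(k)}\cdot\big(\eta(\nu-1)+(\eta-1)+1+o(1)\big)=\log q_{2n(k)}\cdot\big(\eta\nu+o(1)\big)$, while the denominator is $\log q_{2n(k)}\cdot(1+o(1))$. Since $\nu=\eta^{s-1}$ we have $\eta\nu=\eta^{s}$, so the ratio tends to $\eta^{s}$ and hence $\whitsup(\phi_\alpha,p,p')\ge\eta^{s}$ for every $(p,p')$ in a set of positive $\leb\times\leb$-measure. Finally, since $\alpha$ is irrational the flow $\phi_\alpha$ is ergodic for $\leb$ (origamis are Veech surfaces, so Keane directions are uniquely ergodic), so Lemma~\ref{LemmaRecurrenceHittingConstant} shows $\whitsup(\phi_\alpha,\cdot,\cdot)$ is $\leb\times\leb$-a.e.\ constant; a positive-measure set on which it is $\ge\eta^{s}$ therefore forces $\whitsup(\phi_\alpha,p,p')\ge\eta^{s}$ for almost every $p,p'$. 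I expect the bookkeeping of the continued-fraction size estimates (controlling all the lower-order factors uniformly so that the ratio genuinely converges to $\eta^{s}$ rather than merely being bounded below by it) to be the only delicate point; everything else is a direct citation of Proposition~\ref{PropositionSplittingPairs}, Lemma~\ref{LemmaMeasureLimsupSet} and Lemma~\ref{LemmaRecurrenceHittingConstant}.
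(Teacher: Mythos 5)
Your proof is correct and follows essentially the same route as the paper: apply Proposition~\ref{PropositionSplittingPairs} at the indices $n(k)$, invoke Lemma~\ref{LemmaMeasureLimsupSet} on the sets $E_{n(k)}\times F_{n(k)}$ to get a positive-measure limsup set, bound the hitting-time ratio there, and then upgrade to almost everywhere via the ergodic constancy of $\whitsup$ from Lemma~\ref{LemmaRecurrenceHittingConstant}. The only cosmetic difference is in the final numerics: the paper chains the explicit inequalities $a_{2n(k)+2}a_{2n(k)+1}q_{2n(k)}\geq (q_{2n(k)+1})^{\nu-1}a_{2n(k)+1}q_{2n(k)}\geq (a_{2n(k)+1}q_{2n(k)})^{\nu-1}a_{2n(k)+1}q_{2n(k)}\geq q_{2n(k)}^{\eta\nu}$ to get the clean bound $R\geq \text{const}\cdot r_{n(k)}^{-\eta\nu}$ directly, whereas you reach the same exponent through asymptotic $q_{2n(k)}^{\cdot+o(1)}$ estimates, which is fine but slightly heavier than needed.
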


\begin{proof}
According to Lemma \ref{LemmaRecurrenceHittingConstant}, since $\phi_\alpha$ is uniquely ergodic, it is enough to prove the inequality for any pair $(p,p')$ in a positive measure subset of $X\times X$. Fix an integer $k\geq1$ and apply Proposition \ref{PropositionSplittingPairs} for $n=n(k)$, where we observe that the assumption in Proposition \ref{PropositionSplittingPairs} are satisfied according to Equations~\eqref{EquationSplittingPairEntry2n},~\eqref{EquationSizeEntry2n+1},~\eqref{EquationCongruenceEntry2n+1} and~\eqref{EquationSizeEntry2n+2} in Proposition~\ref{PropositionConstructionCantorSlopes}. There exists two subsets 
$
F_{n(k)},E_{n(k)}\subset X
$ 
with area $|F_{n(k)}|\geq 1/3$ and $|E_{n(k)}|\geq 1/2$ such that for any $p\in E_{n(k)}$ and $p'\in F_{n(k)}$ and for 
$
r_{n(k)}=\big(12\cdot q_{2n(k)}\big)^{-1}
$  
we have
\begin{align*}
\sqrt{8}\cdot R(X,\alpha,p,p',r_{n(k)})
&\geq 
a_{2n(k)+2}\cdot a_{2n(k)+1}\cdot q_{2n(k)}
\geq
(q_{2n(k)+1})^{\nu-1}\cdot a_{2n(k)+1}\cdot q_{2n(k)} \\
&\geq
(a_{2n(k)+1}\cdot q_{2n(k)})^{\nu-1}\cdot a_{2n(k)+1}\cdot q_{2n(k)}\\
&\geq (q_{2n(k)})^{\eta(\nu-1)}\cdot (q_{2n(k)})^\eta \\
&=(q_{2n(k)})^{\eta\cdot\nu}=
\frac{1}{12^{\eta\cdot\nu}}
\cdot
\left(\frac{1}{r_{n(k)}}\right)^{\eta\cdot\nu},
\end{align*}
where the second inequality corresponds to Equation~\eqref{EquationSizeEntry2n+2} and the fourth holds because 
$
a_{2n(k)+1}\cdot q_{2n(k)}>q_{2n(k)}^\eta
$, 
according to Equation~\eqref{EquationSizeEntry2n+1}. Therefore 
$
\whitsup(\phi_\alpha,p,p')\geq \eta\cdot\nu
$ 
for any pair
$$
(p,p')\in
\bigcap_{K=1}^\infty\bigcup_{k\geq K}
E_{n(k)}\times F_{n(k)}.
$$
The Lemma follows observing that 
$$
\leb\times\leb
\left(
\bigcap_{K=1}^\infty\bigcup_{k\geq K}
E_{n(k)}\times F_{n(k)}
\right)
\geq
\inf_{k\geq1}\leb(E_{n(k)})\cdot\leb(F_{n(k)})\geq \frac{1}{6}.
$$
\end{proof}

\begin{lemma}
\label{LemmaUpperBoundHittingTimeOrigami}
For any $\alpha\in\EE$ and for any $p,p'$ with $p$ not on any $(X,\alpha)$-singular leaf we have
$$
\whitsup(\phi_\alpha,p,p')\leq \eta^s.
$$
\end{lemma}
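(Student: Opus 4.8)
The plan is to deduce this bound directly from Proposition~\ref{PropositionOneCylinderDirection}, applied along the sequence of indices $2p(k)$ supplied by Proposition~\ref{PropositionConstructionCantorSlopes}, and then to fill the gaps between consecutive scales $r_{p(k)}$ by monotonicity of the hitting time in the radius, controlling those gaps with the denominator growth estimate~\eqref{EquationRelationEntries2p(k)And2p(k-1)}.

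First I would fix $\alpha=[a_1,a_2,\dots]\in\EE$, a point $p$ not on any $(X,\alpha)$-singular leaf, and an arbitrary $p'\in X$. For each $k\geq1$, Equations~\eqref{EquationOneCylinderEntry2p} and~\eqref{EquationOneCylinderEntry2p+1} say precisely that $n=p(k)$ satisfies the hypotheses of Proposition~\ref{PropositionOneCylinderDirection} with the one cylinder origami $X_0^{(\ast)}$ and its vertical closed geodesic $\sigma_0^{(\ast)}$ in place of $X_0$ and $\sigma_0$: indeed $a_{2p(k)+1}\geq|\sigma_0^{(\ast)}|$ and $g(a_1,\dots,a_{2p(k)})\cdot X_0^{(\ast)}=X$. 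Hence, setting $r_{p(k)}:=2/q_{2p(k)}$, Proposition~\ref{PropositionOneCylinderDirection} gives
$$
R(X,\phi_\alpha,p,p',r_{p(k)})\leq 16\cdot|\sigma_0^{(\ast)}|\cdot q_{2p(k)}.
$$

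Next I would upgrade this into a bound valid for all small $r$, not only along the sequence $r_{p(k)}$. Since $p(k)$ is strictly increasing (because $n(k-1)<p(k)<n(k)$) and $q_n$ is increasing, the numbers $r_{p(k)}$ strictly decrease to $0$, so for $r$ small enough there is a unique $k$ with $r_{p(k+1)}\leq r<r_{p(k)}$, and $k\to\infty$ as $r\to0^+$. Because $B(p',r_{p(k+1)})\subseteq B(p',r)$, and because for $k$ large the quantity $R(X,\phi_\alpha,p,p',r_{p(k+1)})$ exceeds $r$, every instant at which the orbit of $p$ enters $B(p',r_{p(k+1)})$ also witnesses entry into $B(p',r)$; hence $R(X,\phi_\alpha,p,p',r)\leq R(X,\phi_\alpha,p,p',r_{p(k+1)})$. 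Combining this with $-\log r>-\log r_{p(k)}=\log(q_{2p(k)}/2)$ and with the previous display, and then invoking Equation~\eqref{EquationRelationEntries2p(k)And2p(k-1)} in the form $q_{2p(k+1)}\leq C\cdot q_{2p(k)}^{\nu\eta}$ (recall $\nu=\eta^{s-1}$), I get
$$
\frac{\log R(X,\phi_\alpha,p,p',r)}{-\log r}
\leq
\frac{\log\bigl(16\,|\sigma_0^{(\ast)}|\,q_{2p(k+1)}\bigr)}{\log(q_{2p(k)}/2)}
\leq
\frac{\log\bigl(16\,|\sigma_0^{(\ast)}|\,C\bigr)+\nu\eta\,\log q_{2p(k)}}{\log q_{2p(k)}-\log 2}.
$$
Since $q_{2p(k)}\to\infty$ as $k\to\infty$, the right-hand side converges to $\nu\eta=\eta^{s-1}\cdot\eta=\eta^{s}$, and taking the $\limsup$ as $r\to0^+$ yields $\whitsup(\phi_\alpha,p,p')\leq\eta^{s}$, as claimed.

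I do not expect a genuine obstacle here: Propositions~\ref{PropositionOneCylinderDirection} and~\ref{PropositionConstructionCantorSlopes} have been arranged so that this deduction is essentially mechanical. The only point requiring a little care is the passage from the discrete sequence $r_{p(k)}$ to all radii $r\to0^+$, which needs two ingredients: that the gaps $[r_{p(k+1)},r_{p(k)}]$ are bounded in logarithmic scale (this is exactly what~\eqref{EquationRelationEntries2p(k)And2p(k-1)} provides), and the elementary monotonicity $R(X,\phi_\alpha,p,p',r)\leq R(X,\phi_\alpha,p,p',r_{p(k+1)})$, which is routine once one notes that the hitting times in play diverge and so the auxiliary constraint $t>r$ in the definition of $R$ is immaterial for $k$ large.
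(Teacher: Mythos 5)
Your proof is correct and follows essentially the same route as the paper: apply Proposition~\ref{PropositionOneCylinderDirection} along the subsequence $n=p(k)$ using the structural equations~\eqref{EquationOneCylinderEntry2p} and~\eqref{EquationOneCylinderEntry2p+1} from Proposition~\ref{PropositionConstructionCantorSlopes}, then interpolate between the scales $2/q_{2p(k)}$ using the controlled growth~\eqref{EquationRelationEntries2p(k)And2p(k-1)}. The only cosmetic difference is an index shift (you bracket $r$ between $r_{p(k+1)}$ and $r_{p(k)}$ whereas the paper brackets between $r_k$ and $r_{k-1}$), which changes nothing.
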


\begin{proof}
Fix $k\in\NN$ and apply Proposition~\ref{PropositionOneCylinderDirection} for $n=p(k)$, where we observe that the assumption in Proposition~\ref{PropositionOneCylinderDirection} are satified according to Equation~\eqref{EquationOneCylinderEntry2p} and Equation~\eqref{EquationOneCylinderEntry2p+1} in Proposition~\ref{PropositionConstructionCantorSlopes}. It follows that for any pair of points $p,p'$ in $X$ as above and any $k\in\NN$ we have 
$$
R(X,\alpha,p,p',r_k)\leq 16\cdot|\sigma_0^{(\ast)}|\cdot q_{2p(k)}
\quad
\textrm{ where }
\quad
r_k:=\frac{2}{q_{2p(k)}}.
$$
Finally, fix any $r>0$ and let $k$ be the unique integer with $r_k\leq r< r_{k-1}$. The Lemma follows observing that for any $p,p'$ as above we have 
\begin{align*}
\frac{\log R(X,\alpha,p,p',r)}{-\log r}
&
\leq 
\frac{\log R(X,\alpha,p,p',r_k)}{-\log r_{k-1}}
\leq 
\frac
{\log (16\cdot|\sigma_0^{(\ast)}|) + \log q_{2p(k)}}
{\log q_{2p(k-1)}-\log2}
\leq 
\\
&
\leq
\frac
{\log (16\cdot|\sigma_0^{(\ast)}|) +\log C + \log q_{2p(k-1)}^{\eta\cdot\nu}}
{\log q_{2p(k-1)}-\log2}
\to \eta\cdot\nu
\quad
\textrm{ for }
k\to\infty,
\end{align*}
where the last inequality follows from Equation~\eqref{EquationRelationEntries2p(k)And2p(k-1)}. The Lemma is proved.
\end{proof}

\subsection{Reduced origamis are enough}
\label{SectionReducedOrigamiAreEnough}

Let $X$ be any origami and let $a\geq1$ and $b\geq1$ be integers such that 
$
\langle\hol(X)\rangle= a\cdot\ZZ\oplus b\cdot\ZZ
$, 
so that $X$ is reduced if and only if $a=b=1$. If $X$ is not reduced, let 
$
t:=(\log b-\log a)/2
$ 
and $\mu:=1/\sqrt{ab}$, then consider the combined action of $g_t$ and the homothety $a_\mu:=\mu\cdot\id_{\RR^2}$. The origami 
$
X':=G\cdot X
$ 
with $G:=a_\mu\cdot g_t$ is reduced, indeed we have
$$
\langle\hol(G\cdot X)\rangle=
a_\mu\cdot g_t\cdot \langle\hol(X)\rangle=
a_\mu\cdot \big(\sqrt{ab}\cdot (\ZZ\oplus\ZZ)\big)=
\ZZ\oplus\ZZ.
$$
The homographic action of $G$ on slopes is 
$
\alpha\mapsto G\cdot\alpha=(b/a)\alpha
$, 
which is a bijection from $\QQ$ to itself. Moreover we have 
$
w(G\cdot\alpha)=w(\alpha)
$ 
for any $\alpha$, indeed for any $w\geq 1$ the change of variable
$
p/q=G\cdot (p'/q')=(bp')/(aq')
$ 
gives the equivalence 
$$
\left|G\cdot\alpha-\frac{p}{q}\right|<
\frac{1}{q^{1+w}}
\Leftrightarrow
\left|\alpha-\frac{p'}{q'}\right|<
(b^w\cdot a)\cdot\frac{1}{(q')^{1+w}}.
$$
On the other hand consider the surface $X'=G\cdot X$ and let $f_G:X\to X'$ be the affine diffeomorphism with derivative $Df=G$, then for any $\alpha$ let 
$
\alpha':=G\cdot\alpha
$ 
and consider the flow $\phi'_{\alpha'}$ with slope $\theta'$ on the surface $X'$. For any pair of points $p_1,p_2$ in $X$ we have 
$$
\whitsup(\phi_\alpha,p_1,p_2)=
\whitsup\big(\phi'_{\alpha'},f_G(p_1),f_G(p_2)\big).
$$
Finally, it is obvious from the definition that $X$ admits a splitting direction $\theta_{split}$ or respectively a one cylinder direction $\theta_{one-cyl}$ if and only it $X'$ does. Resuming, Theorem~\ref{TheoremHittingSpectrumOrigami} holds for an origami $X$ if and only if it holds for the reduced origami $X':=G\cdot X$ as above.

\subsection{Proof of Theorem~\ref{TheoremHittingSpectrumOrigami}}
\label{SectionProofTheoremHittingSpectrumOrigami}

According to \S~\ref{SectionReducedOrigamiAreEnough}, there is no loss of generality assuming that $X$ is reduced. Since the map $\alpha\mapsto\theta=\arctan\alpha$ preserves Hausdorff dimension, the proof can be done in the slope variable $\alpha$. Assume that $\cO(X)$ contains both an element $X_0$ with a vertical splitting pair and an element $X_0^{(\ast)}$ whose vertical is a one cylinder direction. For $s=1$ the Theorem follows from Part (1) of Proposition~\ref{PropositionHittingSpectrumOrigami}, which will be proved later. For $1<s\leq 2$, consider the set $\EE(X,\eta,s)$ given by Proposition~\ref{PropositionConstructionCantorSlopes}, so that 
$
\dim_H\big(\EE(X,\eta,s)\big)\geq f_\eta(s)
$, 
according to Proposition~\ref{PropositinHausdorffDimensionCantor}. The Theorem follows combining Lemma~\ref{LemmaLoverBoundHittingTimeOrigami} and Lemma~\ref{LemmaUpperBoundHittingTimeOrigami}. Modulo the proof of Proposition~\ref{PropositionHittingSpectrumOrigami} (for the case $s=1$), Theorem~\ref{TheoremHittingSpectrumOrigami} is proved. $\qed$

\subsection{Proof of Proposition~\ref{PropositionHittingSpectrumOrigami}}
\label{SectionProofPropositionHittingSpectrumOrigami}

As in \S~\ref{SectionProofTheoremHittingSpectrumOrigami}, assume that $X$ is reduced and consider the slope variable $\alpha$. Consider separately the two cases.

\smallskip

\emph{Case $s=2$.} Assume only that $\cO(X)$ contains $X_0$ with a vertical splitting pair. Let $X_0^\ast$ be any element in $\cO(X)$, whose vertical is not necessarily a one cylinder direction. The set $\EE(X,\eta,s=2)$ can be defined as in Proposition~\ref{PropositionConstructionCantorSlopes} and the dimension estimate in Proposition~\ref{PropositinHausdorffDimensionCantor} still holds. Then Part (2) of Proposition~\ref{PropositionHittingSpectrumOrigami} follows from Lemma~\ref{LemmaLoverBoundHittingTimeOrigami}.

\smallskip

\emph{Case $s=1$.} Assume only that $\cO(X)$ contains $X_0^\ast$ whose vertical is a one cylinder direction. The set $\EE(X,\eta,s=1)$ can be defined as in Proposition~\ref{PropositionConstructionCantorSlopes}, replacing $X_0$ be any element in $\cO(X)$, not necessarily admitting a one cylinder direction, but for $s=1$ the dimension estimate in Proposition~\ref{PropositinHausdorffDimensionCantor} gives the trivial bound 
$
\dim_H\big(\EE(X,\eta,s=1)\big)\geq0
$. 
Thus in this case we replace the set $\EE(X,\eta,s=1)$ in Proposition~\ref{PropositionConstructionCantorSlopes} by the one provided by Lemma~\ref{LemmaConstructionCantorSlopesBis} below.

\begin{lemma}
\label{LemmaConstructionCantorSlopesBis}
Fix $\eta\geq1$. There exists a Cantor-like set $\EE'=\EE'(X,\eta,s=1)$ with dimension 
$
\dim_H(\EE')\geq f_\eta(1)=(1+\eta)^{-1}
$ 
such that for any $\alpha\in\EE'$ we have $w(\alpha)=\eta$ and moreover there exist integers 
$
\big(p(k)\big)_{k\in\NN}
$ 
with $p(0)=0$ such that for any $k\geq1$ we have $p(k-1)<p(k)$, Equation~\eqref{EquationOneCylinderEntry2p} and Equation~\eqref{EquationOneCylinderEntry2p+1} are satisfied, and we have also
\begin{equation}
\label{EquationRatioConsecutiveEntries2p+1Bis}
q_{2p(k)}\leq C_4\cdot q_{2p(k-1)}^\eta
\quad
\textrm{ where }
\quad
C_4:=4(N+1)^{2N-2}.
\end{equation}
\end{lemma}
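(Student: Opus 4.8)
The plan is to reproduce the inductive construction of \S\ref{SectionConstructionCantorSlopes} in the simpler regime where only the one cylinder origami $X_0^{(\ast)}$ is available, so that each ``period'' of the continued fraction expansion contains a single large partial quotient rather than the pair of large entries used for the splitting pair. Concretely, I would build $\EE'=\bigcap_{k\ge0}\EE'_k$ level by level, where the passage from $\EE'_{k-1}$ to $\EE'_k$ prescribes one block $a_{2p(k-1)+1},\dots,a_{2p(k)}$ consisting of: a single \emph{large} entry, placed at an even position within bounded distance of both $p(k-1)$ and $p(k)$ and chosen among the multiples of the horizontal cusp width of the origami reached so far (so that it acts trivially on the orbit graph $\cO(X)$); bounded connecting words of length $\le 2N$ with entries $\le N$, produced by Lemma~\ref{LemmaConnectionCusps}, which restore $g(a_1,\dots,a_{2p(k)})\cdot X_0^{(\ast)}=X$, i.e. Equation~\eqref{EquationOneCylinderEntry2p}; and one entry normalised by Lemma~\ref{LemmaSizeEntry2p+1} to lie in $[\,|\sigma_0^{(\ast)}|,\,|\sigma_0^{(\ast)}|+2N\,]$, recording Equation~\eqref{EquationOneCylinderEntry2p+1}. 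Taking the large entry comparable to $q^{\,\eta-1}$, where $q$ is the denominator at the preceding position, while all other entries of any $\alpha\in\EE'$ are uniformly bounded (by $N$, or by $|\sigma_0^{(\ast)}|+2N$), the characterisation $w(\alpha)=\sup\{w:a_n\ge q_{n-1}^{\,w-1}\ \text{infinitely often}\}$ gives $w(\alpha)=\eta$ on $\EE'$, exactly as in \S\ref{SectionProofPropositionCantor}. Since a bounded connecting word, or the normalised entry, multiplies the denominator by a bounded factor while the large entry multiplies it by a factor comparable to $q^{\,\eta-1}$, collecting the estimates as in \S\ref{SectionGrowthDenominators} gives $q_{2p(k)}\le C_4\,q_{2p(k-1)}^{\,\eta}$ with $C_4=4(N+1)^{2N-2}$, which is Equation~\eqref{EquationRatioConsecutiveEntries2p+1Bis}.

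For the dimension I would follow \S\ref{SectionDimensionEstimate}: put on $\EE'$ the natural measure $\mu$ and verify the two hypotheses of Lemma~\ref{LemmaFalconer} with $\delta:=(1+\eta)^{-1}$. As in the proof of Proposition~\ref{PropositinHausdorffDimensionCantor}, the correct choice of a level-$k$ interval $E_k$ is the \emph{cluster} of the cylinders $I(\dots,a)$ ranging over the admissible values $a$ of the next large entry; then one computes $|E_{k-1}|\asymp q_{2p(k-1)}^{-(\eta+1)}$, the number of children is $m_k(E_{k-1})\asymp q_{2p(k-1)}^{\,\eta-1}$ (the choices of the large entry, the connecting words being rigid up to a bounded multiplicity and the normalised entry ranging over at most $2N+1$ values), and each child satisfies $|E_k|\asymp q_{2p(k)}^{-(\eta+1)}\asymp q_{2p(k-1)}^{-\eta(\eta+1)}$ by the denominator estimate above. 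Hypothesis~(2) of Lemma~\ref{LemmaFalconer} then reduces to $\eta(\eta+1)\delta\le(\eta+1)\delta+(\eta-1)$, i.e. $\delta(\eta+1)\le1$, which holds for $\delta=(1+\eta)^{-1}$; Hypothesis~(1) holds because two children coming from consecutive admissible values of the large entry are separated by a gap comparable to $q_{2p(k)}^{-2}\asymp q_{2p(k-1)}^{-2\eta}\asymp|E_{k-1}|/m_k(E_{k-1})$ — here I would, if necessary, let the connecting word following the large entry begin with a fixed entry $\ge 2$ so that each child stays at distance $\asymp q_{2p(k)}^{-2}$ from the endpoints of its cylinder $I(\dots,a)$. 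This gives $\dim_H(\EE')\ge\delta=(1+\eta)^{-1}=f_\eta(1)$, as required.

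The one step that is not a verbatim restriction of Proposition~\ref{PropositionConstructionCantorSlopes}, and hence the point to get right, is the compatibility of the large entry with the return condition~\eqref{EquationOneCylinderEntry2p}: left unconstrained, the large entry moves the vertex of $\cO(X)$ reached by $g(a_1,\dots)$ out of control. One either restricts it, as above, to multiples of the relevant cusp width so that it acts as the identity on $\cO(X)$ — this costs only a bounded factor in $m_k$, the cusp width being $\le N$, and if anything improves the spacing in Hypothesis~(1) — or, alternatively, one lets the connecting word placed immediately after the large entry depend on its residue modulo the cusp widths, which is again furnished by Lemma~\ref{LemmaConnectionCusps} at the cost of bounded length and bounded entries; either way the counts above are unaffected and the parity of the total word is adjusted exactly as in the proof of Lemma~\ref{LemmaConnectionCusps}. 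Once this is arranged, the verifications of $w(\alpha)=\eta$, of Equation~\eqref{EquationRatioConsecutiveEntries2p+1Bis}, and of the dimension bound are transcriptions of \S\ref{SectionProofPropositionCantor}, \S\ref{SectionGrowthDenominators} and \S\ref{SectionDimensionEstimate} respectively.
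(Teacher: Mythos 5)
Your proposal is correct and follows essentially the same route as the paper: collapse the Cantor-like construction of Proposition~\ref{PropositionConstructionCantorSlopes} to one large partial quotient per period, use Lemma~\ref{LemmaConnectionCusps} to restore the return condition~\eqref{EquationOneCylinderEntry2p} (your ``second alternative'' --- connecting word depending on the residue of the large entry --- is exactly what the paper does), and verify Lemma~\ref{LemmaFalconer} with $\delta=(1+\eta)^{-1}$. The only notable streamlining in the paper is that the large entry is placed directly at position $2p(k)+1$ and serves simultaneously as the Hausdorff-dimension subdivision parameter and as the entry required by Proposition~\ref{PropositionOneCylinderDirection}; no separate normalised entry via Lemma~\ref{LemmaSizeEntry2p+1} is introduced, because only the lower bound $a_{2p(k)+1}\ge|\sigma_0^{(\ast)}|$ from~\eqref{EquationOneCylinderEntry2p+1} is actually used, and it holds automatically once $q_{2p(k)}^{\eta-1}\ge|\sigma_0^{(\ast)}|$.
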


Fix $\alpha\in\EE'$ and consider $\phi_\alpha:X\to X$. Replying the argument in Lemma~\ref{LemmaUpperBoundHittingTimeOrigami}, where the Equation~\eqref{EquationRelationEntries2p(k)And2p(k-1)} used in the proof of Lemma~\ref{LemmaUpperBoundHittingTimeOrigami} is replaced by Equation~\eqref{EquationRatioConsecutiveEntries2p+1Bis}, we get 
$$
\whitsup(\phi_\alpha,p,p')\leq \eta 
$$
for any $p,p'$ with $p$ not on any $(X,\alpha)$-singular leaf. For almost any $p,p'$ the last inequality turns into an equality according to the general lower bound established by Equation~\eqref{EquationLimSupHittingVeech}, recalling that 
$
w(X,\arctan\alpha)=w(\alpha)
$ 
if $X$ is an origami. Modulo the proof of Lemma~\ref{LemmaConstructionCantorSlopesBis}, Proposition~\ref{PropositionHittingSpectrumOrigami} is proved. $\qed$

\subsection{Proof of Lemma~\ref{LemmaConstructionCantorSlopesBis}}

The proof is a simplified version of the proof of Proposition~\ref{PropositionConstructionCantorSlopes}. In particular for $k\geq1$ we define families $\cE'_k$ whose elements are words which label the intervals $E'_k$ in the level $\EE'_k$, so that the required Cantor-like set is 
$
\EE'=\bigcap_{k\in\NN}\EE'_k
$. 
For a word $(a_1,\dots,a_{2p})$ define $\cF'(a_1,\dots,a_{2p})$ as the set of integers $a_{2p+1}$ which satisfy 
\begin{equation}
\label{EquationOneCylinderEntry2p+1Bis}
q_{2p}^{\eta-1}\leq a_{2p+1}\leq 2\cdot q_{2p}^{\eta-1}-1,
\end{equation}
then consider the interval
\begin{equation}
\label{EquationInterval2pBis}
E'(a_1,\dots,a_{2p}):=
\bigcup_{a_{2p+1}\in\cF'_1(a_1,\dots,a_{2p})}I(a_1,\dots,a_{2p},a_{2p+1}).
\end{equation}
In the construction below, just in order to refer to the notation in \S~\ref{SectionConstructionCantorSlopes}, we introduce instants $n(k)$ with $n(k)=p(k)$ for any $k\in\NN$.

Reasoning as in \S~\ref{SectionFirstLevelCantor}, let $\cE'_1$ be the maximal disjoint family of finite words 
$
(a_1,\dots,a_{2p(1)})
$ 
which satisfy Equation~\eqref{EquationOneCylinderEntry2p}, where the first two entries are defined by $a_1:=1$ and $a_2:=1$ and where the block 
$
(a_3,\dots,a_{2p(1)})
$ 
satisfies Equation~\eqref{EquationConnectionToOneCylinder} with $n(0):=0$. Fix $k\geq1$ and assume that the family $\cE'_{k-1}$ is defined. For any 
$
(a_1,\dots,a_{2p(k-1)})\in\cE'_{k-1}
$ 
let  
$
\cF'(a_1,\dots,a_{2p(k-1)})
$ 
be the set defined by Equation~\eqref{EquationOneCylinderEntry2p+1Bis}, then let 
$
\cE'_{k}(a_1,\dots,a_{2p(k-1)})
$ 
be the family of concatenated words 
\begin{align*}
&
(a_1,\dots,a_{2p(k)})=
(a_1,\dots,a_{2p(k-1)+1},a_{2p(k-1)+2},a_{2p(k-1)+3},\dots,a_{2p(k)}),
\\
&
\textrm{where }
\quad 
a_{2p(k-1)+1}\in\cF'_{k-1}(a_1,\dots,a_{2p(k-1)})
\quad
\textrm{ and }
\quad
a_{2p(k-1)+2}:=1,
\end{align*}
and where the block 
$
(a_{2p(k-1)+3},\dots,a_{2p(k)})
$ 
satisfies Equation~\eqref{EquationConnectionToOneCylinder} with $n(k-1):=p(k-1)$, and the condition
$$
g(a_{2p(k-1)+3},\dots,a_{2p(k)})\cdot X_0^{(\ast)}=
g(a_{2p(k-1)+1},a_{2p(k-1)+2})^{-1}\cdot X_0^{(\ast)},
$$
which is possible applying Lemma~\ref{LemmaConnectionCusps} with 
$
Y_2:=g(a_{2p(k-1)+1},a_{2p(k-1)+2})^{-1}\cdot X_0^{(\ast)}
$ 
and $Y_1:=X_0^{(\ast)}$. Arguing as in \S~\ref{SectionOddLevelCantor} one can see that any 
$
(a_1,\dots,a_{2p(k)})\in\cE'_k(a_1,\dots,a_{2p(k-1)})
$ 
satisfies Equation~\eqref{EquationOneCylinderEntry2p}. Moreover, for any $k\geq1$ big enough, Equation~\eqref{EquationOneCylinderEntry2p+1} is also satisfied by the entry $a_{2p(k)+1}$, according to Equation~\eqref{EquationOneCylinderEntry2p+1Bis}. Define the family $\cE'_k$ as the union, over 
$
(a_1,\dots,a_{2p(k-1)})\in\cE'_{k-1}
$ 
of the families  
$
\cE'_{k}(a_1,\dots,a_{2p(k-1)})
$, 
then define the level $\EE'_k$ as the union, over 
$
(a_1,\dots,a_{2p(k)})\in\cE'_{k}
$ 
of the intervals 
$
E'_{k}=E(a_1,\dots,a_{2p(k)})
$ 
given by Equation~\eqref{EquationInterval2pBis}. The definition of $\EE'$ is complete. It is clear that $w(\alpha)=\eta$ for any $\alpha\in\EE'$. Moreover Equation~\eqref{EquationRatioConsecutiveEntries2p+1Bis} holds, indeed arguing as in \S~\ref{SectionGrowthDenominators} we have
$$
q_{2p(k)}\leq 
(N+1)^{2N-2}\cdot q_{2p(k-1)+2}\leq
2(N+1)^{2N-2}\cdot q_{2p(k-1)+1}\leq
4(N+1)^{2N-2}\cdot q_{2p(k-1)},
$$
where the first inequality follows from Equation~\eqref{EquationConnectionToOneCylinder}, the second holds because $a_{2p(k-1)+2}=1$ and the third follows from Equation~\eqref{EquationOneCylinderEntry2p+1Bis}. Finally, in order to prove the dimension estimate on $\EE'$, recall the notation of \S~\ref{SectionDimensionEstimate} and observe that reasoning as in the proof of Proposition~\ref{PropositinHausdorffDimensionCantor}, we have 
\begin{align*}
&
\frac{1}{12\cdot q_{2p(k)}^{\eta+1}}
\leq
|E'_k|
\leq
\frac{2}{q_{2p(k)}^{\eta+1}},
\\
&
m_k(E'_{k-1})=
\sharp\cF'_k(a_1,\dots,a_{2p(k-1)})=q_{2p(k-1)}^{\eta-1},
\\
&
\epsilon_k(E'_{k-1})\geq
\frac{1}{2}
\min_{a_{2p(k-1)+1}\in\cF(a_1,\dots,a_{2p(k-1)})}
|I(a_1,\dots,a_{2p(k-1)},a_{2p(k-1)+1})|
\\
&
\geq
\min_{a_{2p(k-1)+1}}
\frac{1}{2\cdot q_{2p(k-1)+1}^2}
\geq
\frac{1}{2\cdot (2\cdot q_{2p(k-1)}^{\eta-1}\cdot q_{2p(k-1)})^2}
=
\frac{1}{8\cdot q_{2p(k-1)}^{2\eta}}.
\end{align*}
Therefore Condition (1) in Lemma~\ref{LemmaFalconer} holds, indeed we have
$$
\frac{|E'_{k-1}|}{m_k(E'_{k-1})}
\leq
\frac{2}{q_{2p(k-1)}^{\eta+1}\cdot q_{2p(k-1)}^{\eta-1}}
\leq 
16\cdot \epsilon_k(E'_{k-1}).
$$
Moreover Condition (2) in Lemma~\ref{LemmaFalconer} is also satisfied for any $\delta<(1+\eta)^{-1}$, which is equivalent to 
$
\delta\cdot\eta\cdot(\eta+1)<\delta\cdot(\eta+1)+(\eta-1)
$, 
indeed for any $k\geq1$ big enough we have
$$
|E'_k|^\delta
\geq
\frac{1}{12^\delta\cdot q_{2p(k)}^{\delta\cdot (\eta+1)}}
\geq
\frac{1}{(12\cdot C_4)^\delta\cdot q_{2p(k-1)}^{\delta\cdot\eta\cdot(\eta+1)}}
\geq
\frac{2^\delta}{q_{2p(k-1)}^{\delta\cdot(\eta+1)}\cdot q_{2p(k-1)}^{\eta-1}}
\geq
\frac{|E'_{k-1}|^\delta}{m_k(E'_{k-1})}.
$$
Lemma~\ref{LemmaConstructionCantorSlopesBis} is proved. $\qed$

\section{Proof of Proposition~\ref{PropositionEierlegendeWollmilchsau}}
\label{SectionEierlegendeWollmilchsau}

This section follows \S~\ref{SectionOneCylinderVerticalDirections} and represents an adaptation of the arguments therein to the \emph{Eierlegende Wollmilchsau} origami $X_{EW}$.

\subsection{The Eierlegende Wollmilchsau origami}

Following \S~8.4 in \cite{ForniMatheus}, recall that the \emph{quaternion group} is the group of eight elements 
$
\cQ:=\{\pm1,\pm i,\pm j,\pm k\}
$ 
with relations 
$$
i^2=j^2=k^2=-1
\quad
\textrm{ and }
\quad
i\cdot j=k.
$$
It is easy to see that the other multiplication rules are $i\cdot k=-j$, $j\cdot i=-k$, $j\cdot k=i$, $k\cdot i=j$ and $k\cdot j=-i$. Moreover, recall from \S~\ref{SectionIntroductionOrigamis} that we can describe an origami considering a finite family of labelled squares, each square being a copy of $[0,1]^2$, and then defining identifications between their sides. The \emph{Eierlegende Wollmilchsau} origami $X_{EW}$ is the origami obtained considering the quaternion group $\cQ$ as set of labels, with identifications given by the right multiplication by the two generators $i$ and $j$. More precisely, for any $g\in\cQ$ consider the square 
$
Q_g:=\{g\}\times [0,1]^2
$, 
whose sides are $l_g:=\{g\}\times l$, $r_g:=\{g\}\times r$, $b_g:=\{g\}\times b$ and $t_g:=\{g\}\times t$, where $l$, $r$, $b$ and $t$ are the sides of the standard square $[0,1]^2$ defined by 
$$
l:=\{0\}\times[0,1]
\textrm{ , }
r:=\{1\}\times[0,1]
\textrm{ , }
b:=[0,1]\times\{0\}
\textrm{ , }
t:=[0,1]\times\{1\}.
$$ 
The surface $X_{EW}$ is obtained identifying, for any $g\in\cQ$, the right side $r_g$ of the square $Q_g$ with the left side $l_{g\cdot i}$ of the square $Q_{g\cdot i}$ and the top side $t_g$ of $Q_g$ with the bottom side $b_{g\cdot j}$ of $Q_{g\cdot j}$. Turning around the vertices of the squares and following the identifications, it is easy to check that $X_{EW}$ has $4$ conical singularities, corresponding to the orbits of the right multiplication on $\cQ$ by the commutator $[j,-i]=-1$. Each singularity has conical angle $4\pi$, thus $X_{EW}$ belongs to the stratum $\cH(1,1,1,1)$ and has genus $g=3$. Very specific dynamical and geometric properties of the surface $X_{EW}$ are explained in \S~7 and \S~8 in \cite{ForniMatheus}. Two different representations of $X_{EW}$ are given in Figure~\ref{FigureEierlegendeWollmilchsau}. In particular, by direct computation of the action on $X_{EW}$ of the generators $T$ and $V$ of $\sltwoz$ (see also Remark~87 in~\cite{ForniMatheus}), one can see that the stabilizer of $X_{EW}$ is the entire group $\sltwoz$, that is 
\begin{equation}
\label{EquationOrbitEierlegendeWollmilchsau}
\cO(X_{EW})=\{X_{EW}\}.
\end{equation}

\begin{figure}[ht]
\begin{minipage}[c]{0.20\textwidth}
\begin{center}  

{\begin{tikzpicture}[scale=0.1]

%%%%%%%%%%%%%%%%%%%%%%%%%%%%%%%%
%VERTICAL CYLINDER DECOPOSITION%
%%%%%%%%%%%%%%%%%%%%%%%%%%%%%%%% 

\tikzset
{->-/.style={decoration={markings,mark=at position .5 with {\arrow{>}}},postaction={decorate}}}

%LEFT CYLINDER FROM ABOVE TO BELOW--------------

%Square Q(1,7)----------------------------------

\draw[-,thick] (0,70) -- (10,70) {};

\node [circle,fill,inner sep=1pt] at (0,70) {};
\node [circle,fill,inner sep=1pt] at (10,70) {};

\draw[-,thick] (0,60) -- (0,70) {};
\draw[-,thick] (10,60) -- (10,70) {};

\node at (5,65) {$-j$};

%Square Q(1,6)----------------------------------

\draw[-,thin,dashed] (0,60) -- (10,60);

\node [circle,fill,inner sep=1pt] at (0,60) {};
\node [circle,fill,inner sep=1pt] at (10,60) {};

\draw[-,thick] (0,50) -- (0,60) {};
\draw[-,thick] (10,50) -- (10,60) {};

\node at (5,55) {$-1$};

%Square Q(1,5)----------------------------------

\draw[-,thin,dashed] (0,50) -- (10,50);

\node [circle,fill,inner sep=1pt] at (0,50) {};
\node [circle,fill,inner sep=1pt] at (10,50) {};

\draw[-,thick] (0,40) -- (0,50) {};
\draw[-,thick] (10,40) -- (10,50) {};

\node at (5,45) {$j$};

%Square Q(1,4)----------------------------------

\draw[-,thin,dashed] (0,40) -- (10,40);

\node [circle,fill,inner sep=1pt] at (0,40) {};
%\node [circle,fill,inner sep=1pt] at (10,40) {};

\draw[-,thick] (0,30) -- (0,40) {};
\draw[-,thin,dashed] (10,30) -- (10,40) {};

\node at (5,35) {$1$};

\draw[-,thick] (0,30) -- (10,30);

\node [circle,fill,inner sep=1pt] at (0,30) {};
\node [circle,fill,inner sep=1pt] at (10,30) {};

\node at (2,20) {$C^L_0$};
\draw[->,thin,dashed] (2,23) -- (2,29);

%RIGHT CYLINDER FROM ABOVE TO BELOW------------

%Square Q(2,4)----------------------------------

\node at (18,50) {$C^R_0$};
\draw[->,thin,dashed] (18,47) -- (18,41);

\draw[-,thick] (10,40) -- (20,40) {};

\node [circle,fill,inner sep=1pt] at (10,40) {};
\node [circle,fill,inner sep=1pt] at (20,40) {};

\draw[-,thin,dashed] (10,30) -- (10,40) {};
\draw[-,thick] (20,30) -- (20,40) {};

\node at (15,35) {$i$};

%Square Q(2,6)----------------------------------

\draw[-,thin,dashed] (10,30) -- (20,30);

%\node [circle,fill,inner sep=1pt] at (10,30) {};
\node [circle,fill,inner sep=1pt] at (20,30) {};

\draw[-,thick] (10,20) -- (10,30) {};
\draw[-,thick] (20,20) -- (20,30) {};

\node at (15,25) {$-k$};

%Square Q(2,5)----------------------------------

\draw[-,thin,dashed] (10,20) -- (20,20);

\node [circle,fill,inner sep=1pt] at (10,20) {};
\node [circle,fill,inner sep=1pt] at (20,20) {};

\draw[-,thick] (10,10) -- (10,20) {};
\draw[-,thick] (20,10) -- (20,20) {};

\node at (15,15) {$-i$};

%Square Q(2,4)----------------------------------

\draw[-,thin,dashed] (10,10) -- (20,10);

\node [circle,fill,inner sep=1pt] at (10,10) {};
\node [circle,fill,inner sep=1pt] at (20,10) {};

\draw[-,thick] (10,0) -- (10,10) {};
\draw[-,thick] (20,0) -- (20,10) {};

\node at (15,5) {$k$};

\draw[-,thick] (10,0) -- (20,00);

\node [circle,fill,inner sep=1pt] at (10,0) {};
\node [circle,fill,inner sep=1pt] at (20,0) {};

%Orbit Segment

\draw[->-,thick,red] (4,62) -- (6,70) {};
\draw[->-,thick,red] (6,30) -- (10,46) {};
\draw[->-,thick,red] (10,26) -- (12,32) {};

\end{tikzpicture}}

\end{center}
\end{minipage}\hfill
\begin{minipage}[c]{0.80\textwidth}

\begin{center}

{\begin{tikzpicture}[scale=0.1]

%%%%%%%%%%%%%%%%%%%%%%%%%%%%%%%%
%HORIZONTAL CYLINDER DECOPOSITION%
%%%%%%%%%%%%%%%%%%%%%%%%%%%%%%%% 

\tikzset
{->-/.style={decoration={markings,mark=at position .7 with {\arrow{>}}},postaction={decorate}}}

%LOWER CYLINDER FROM LEFT TO RIGHT--------------

%Square Q(1,1)----------------------------------

\draw[-,thick] (0,0) -- (0,10) {};

\node [circle,fill,inner sep=1pt] at (0,0) {};
\node [circle,fill,inner sep=1pt] at (0,10) {};

\draw[-,thick] (0,0) -- (10,0) {};
\draw[-,thick] (0,10) -- (10,10) {};

\node at (5,5) {$1$};

%Square Q(2,1)----------------------------------

\draw[-,thin,dashed] (10,0) -- (10,10);

\node [circle,fill,inner sep=1pt] at (10,0) {};
\node [circle,fill,inner sep=1pt] at (10,10) {};

\draw[-,thick] (10,0) -- (20,0) {};
\draw[-,thick] (10,10) -- (20,10) {};

\node at (15,5) {$i$};

%Square Q(3,1)----------------------------------

\draw[-,thin,dashed] (20,0) -- (20,10);

\node [circle,fill,inner sep=1pt] at (20,0) {};
\node [circle,fill,inner sep=1pt] at (20,10) {};

\draw[-,thick] (20,0) -- (30,0) {};
\draw[-,thick] (20,10) -- (30,10) {};

\node at (25,5) {$-1$};

%Square Q(4,1)----------------------------------

\draw[-,thin,dashed] (30,0) -- (30,10);

\node [circle,fill,inner sep=1pt] at (30,0) {};
%\node [circle,fill,inner sep=1pt] at (30,10) {};

\draw[-,thick] (30,0) -- (40,0) {};
\draw[-,thin,dashed] (30,10) -- (40,10) {};

\node at (35,5) {$-i$};

\draw[-,thick] (40,0) -- (40,10);

\node [circle,fill,inner sep=1pt] at (40,0) {};
%\node [circle,fill,inner sep=1pt] at (40,10) {};

\node at (50,2) {$C^R_\infty$};
\draw[->,thin,dashed] (46,2) -- (41,2);

%UPPER CYLINDER FROM LEFT TO RIGHT

\node at (20,18) {$C^L_\infty$};
\draw[->,thin,dashed] (23,18) -- (29,18);

%Square Q(4,2)----------------------------------

\draw[-,thick] (30,10) -- (30,20) {};

\node [circle,fill,inner sep=1pt] at (30,10) {};
\node [circle,fill,inner sep=1pt] at (30,20) {};

%\draw[-,thick] (30,10) -- (40,10) {};
\draw[-,thick] (30,20) -- (40,20) {};

\node at (35,15) {$-k$};

%Square Q(5,2)----------------------------------

\draw[-,thin,dashed] (40,10) -- (40,20);

%\node [circle,fill,inner sep=1pt] at (40,10) {};
\node [circle,fill,inner sep=1pt] at (40,20) {};

\draw[-,thick] (40,10) -- (50,10) {};
\draw[-,thick] (40,20) -- (50,20) {};

\node at (45,15) {$-j$};

%Square Q(6,2)----------------------------------

\draw[-,thin,dashed] (50,10) -- (50,20);

\node [circle,fill,inner sep=1pt] at (50,10) {};
\node [circle,fill,inner sep=1pt] at (50,20) {};

\draw[-,thick] (40,10) -- (50,10) {};
\draw[-,thick] (50,20) -- (50,20) {};

\node at (55,15) {$k$};

%Square Q(7,2)----------------------------------

\draw[-,thin,dashed] (50,10) -- (50,20);

\node [circle,fill,inner sep=1pt] at (60,10) {};
\node [circle,fill,inner sep=1pt] at (60,20) {};

\draw[-,thick] (50,10) -- (60,10) {};
\draw[-,thick] (50,20) -- (60,20) {};

\node at (65,15) {$j$};

\draw[-,thin,dashed] (60,10) -- (60,20);

\node [circle,fill,inner sep=1pt] at (70,10) {};
\node [circle,fill,inner sep=1pt] at (70,20) {};

\draw[-,thick] (60,10) -- (70,10) {};
\draw[-,thick] (60,20) -- (70,20) {};

\draw[-,thick] (70,10) -- (70,20);

%Orbit Segment

\draw[->-,thick,red] (44,12) -- (46,20) {};
\draw[->-,thick,red] (6,0) -- (8.5,10) {};
\draw[-,thick,red] (68.5,10) -- (70,16) {};
\draw[-,thick,red] (30,16) -- (31,20) {};
\draw[->-,thick,red] (11,0) -- (12,4) {};

\end{tikzpicture}}

\vspace{30px}

{\begin{tikzpicture}[scale=0.1]

%Intersection criterion (1)----------------------------------

\tikzset
{->-/.style={decoration={markings,mark=at position .5 with {\arrow{>}}},postaction={decorate}}}

%Square(1)

\draw[-,thick] (0,0) -- (0,10) {};

\node [circle,fill,inner sep=1pt] at (0,0) {};
\node [circle,fill,inner sep=1pt] at (0,10) {};

\draw[-,thick] (0,0) -- (10,0) {};
\draw[-,thick] (0,10) -- (10,10) {};

\node at (5,5) {$1$};

\draw[-,thin,dashed] (10,0) -- (10,10) {};

%Square(i)

\node [circle,fill,inner sep=1pt] at (10,0) {};
\node [circle,fill,inner sep=1pt] at (10,10) {};

\draw[-,thick] (10,0) -- (20,0) {};
\draw[-,thick] (10,10) -- (20,10) {};

\node at (15,5) {$i$};

\draw[-,thin,dashed] (20,0) -- (20,10) {};

\node [circle,fill,inner sep=1pt] at (20,0) {};
\node [circle,fill,inner sep=1pt] at (20,10) {};

%BorderingSquares

\draw[-,thin,dashed] (20,0) -- (25,0) {};
\draw[-,thin,dashed] (20,10) -- (25,10) {};
\draw[-,thin,dashed] (20,0) -- (20,-5) {};

\node at (25,5) {$-1$};
\node at (25,-5) {$j$};

%Line Segments

\draw[->-,thick,green] (0,9) -- (10,3) {};
\draw[->-,thick,green] (10,3) -- (20,-3) {};

\node at (-5,7) {$S$};
\draw[->,thin,dashed] (-3,7) -- (-1,7) {};

\draw[->-,thick,red] (5,0) -- (13,10) {};

\node at (5,-5) {$I$};
\draw[->,thin,dashed] (5,-3) -- (5,-1) {};

\end{tikzpicture}}
\hspace{20px}
{\begin{tikzpicture}[scale=0.1]

%Intersection criterion (2)----------------------------------

\tikzset
{->-/.style={decoration={markings,mark=at position .5 with {\arrow{>}}},postaction={decorate}}}

%Square(1)

\node [circle,fill,inner sep=1pt] at (0,0) {};
\node [circle,fill,inner sep=1pt] at (0,10) {};
\node [circle,fill,inner sep=1pt] at (10,0) {};
\node [circle,fill,inner sep=1pt] at (10,10) {};

\draw[-,thick] (0,0) -- (0,10) {};
\draw[-,thin,dashed] (0,0) -- (10,0) {};
\draw[-,thick] (0,10) -- (10,10) {};
\draw[-,thick] (10,0) -- (10,10) {};

\node at (5,5) {$1$};

%Square(-j)

\node [circle,fill,inner sep=1pt] at (0,-10) {};
\node [circle,fill,inner sep=1pt] at (10,-10) {};

\draw[-,thick] (0,-10) -- (10,-10) {};
\draw[-,thick] (0,0) -- (0,-10) {};
\draw[-,thin,dashed] (10,0) -- (10,-10) {};

\node at (5,-5) {$-j$};

\draw[-,thin,dashed] (10,-10) -- (10,0) {};

%Square(k)

\node [circle,fill,inner sep=1pt] at (20,0) {};
\node [circle,fill,inner sep=1pt] at (20,-10) {};

\draw[-,thick] (10,0) -- (20,-0) {};
\draw[-,thick] (10,-10) -- (20,-10) {};
\draw[-,thin,dashed] (20,-10) -- (20,0) {};

\node at (15,-5) {$k$};

%BorderingSquares

\draw[-,thin,dashed] (20,0) -- (25,0) {};
\draw[-,thin,dashed] (20,-10) -- (25,-10) {};

\node at (25,-5) {$j$};

%Line Segments

\draw[->-,thick,green] (0,2) -- (10,-3) {};
\draw[->-,thick,green] (10,-3) -- (20,-8) {};

\node at (-5,2) {$S$};
\draw[->,thin,dashed] (-3,2) -- (-1,2) {};

\draw[-,thick,red,dashed] (3,-10) -- (4.5,-5) {};
\draw[-,thick,red] (4.5,-5) -- (6,0) {};
\draw[->-,thick,red] (6,0) -- (9,10) {};

\node at (3,-15) {$I$};
\draw[->,thin,dashed] (3,-13) -- (3,-11) {};

\end{tikzpicture}}
\hspace{20px}
{\begin{tikzpicture}[scale=0.1]

%Intersection criterion (3)----------------------------------

\tikzset
{->-/.style={decoration={markings,mark=at position .5 with {\arrow{>}}},postaction={decorate}}}

%Square(1)

\node [circle,fill,inner sep=1pt] at (0,0) {};
\node [circle,fill,inner sep=1pt] at (0,10) {};
\node [circle,fill,inner sep=1pt] at (10,0) {};
\node [circle,fill,inner sep=1pt] at (10,10) {};

\draw[-,thick] (0,0) -- (0,10) {};
\draw[-,thin,dashed] (0,0) -- (10,0) {};
\draw[-,thick] (0,10) -- (10,10) {};
\draw[-,thick] (10,0) -- (10,10) {};

\node at (5,5) {$1$};

%Square(-j)

\node [circle,fill,inner sep=1pt] at (0,-10) {};
\node [circle,fill,inner sep=1pt] at (10,-10) {};

\draw[-,thick] (0,-10) -- (10,-10) {};
\draw[-,thick] (0,0) -- (0,-10) {};
\draw[-,thin,dashed] (10,0) -- (10,-10) {};

\node at (5,-5) {$-j$};

\draw[-,thin,dashed] (10,-10) -- (10,0) {};

%Square(k)

\node [circle,fill,inner sep=1pt] at (20,0) {};
\node [circle,fill,inner sep=1pt] at (20,-10) {};

\draw[-,thick] (10,0) -- (20,-0) {};
\draw[-,thick] (10,-10) -- (20,-10) {};
\draw[-,thin,dashed] (20,-10) -- (20,0) {};

\node at (15,-5) {$k$};

%BorderingSquares

\draw[-,thin,dashed] (20,0) -- (25,0) {};
\draw[-,thin,dashed] (20,-10) -- (25,-10) {};
\draw[-,thin,dashed] (20,-10) -- (20,-15) {};

\node at (25,-5) {$j$};
\node at (25,-15) {$-1$};

%Line Segments

\draw[->-,thick,green] (0,2) -- (10,-6) {};
\draw[->-,thick,green] (10,-6) -- (20,-14) {};

\node at (-5,2) {$S$};
\draw[->,thin,dashed] (-3,2) -- (-1,2) {};

\draw[->-,thick,red] (5,-10) -- (14,0) {};

\node at (5,-15) {$I$};
\draw[->,thin,dashed] (5,-13) -- (5,-11) {};
\end{tikzpicture}}
\end{center}
\end{minipage}
\caption{The Eierlegende Wollmilchsau surface $X_{EW}$. On the left its vertical cylinder decomposition, while the horizontal cylinder decomposition appears on the upper part of the right side of the picture. In both figures it is represented the same path. On the lower part of the right side of the picture are represented the three intersection criteria stated in Lemma~\ref{LemmaTransversalityCriteriaWollmilchsau}, where the line segment $S$ with slope $-\infty\leq \alpha(S)<-1$ is represented in green and the line segment $I$ with slope $0<\alpha(I)<1$ is represented in red.}
\label{FigureEierlegendeWollmilchsau}
\end{figure}
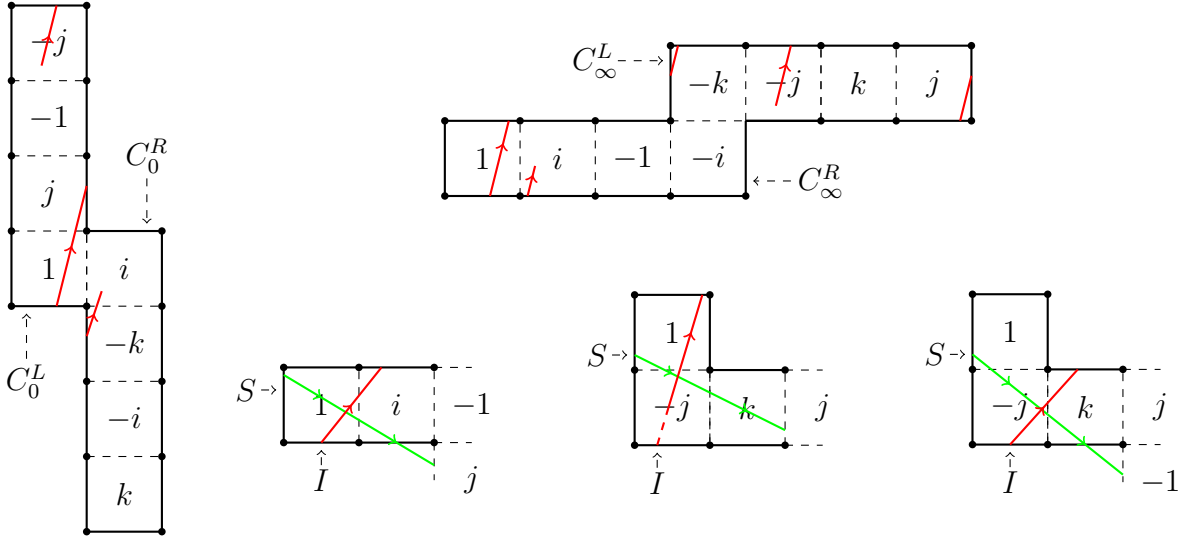

The surface $X_{EW}$ has two cylinders $C_0^L$ and $C_0^R$ in the vertical slope $p/q=0$, respectively around closed geodesics $\sigma_0^L$ and $\sigma_0^R$. These closed geodesics have length 
$
|\sigma_0^L|=|\sigma_0^R|=4
$ 
and the corresponding cylinders have transversal width $W(C_0^L)=W(C_0^R)=1$. Equation~\eqref{EquationOrbitEierlegendeWollmilchsau} implies that for any $A\in\sltwoz$, there are two cylinders $C_{p/q}^L$ and $C_{p/q}^R$ in the rational slope $p/q=A\cdot 0$, and in particular this holds for the horizontal slope $p/q=\infty$. 

Consider a line segment $S\subset X_{EW}$, that is a segment $S:(a,b)\to X_{EW}$ parametrized with constant speed $dS(t)/dt=(u_1,u_2)\in\RR^2$, so that the slope of $S$ is $\alpha(S):=u_1/u_2$. Any finite segment of trajectory of $\phi_\alpha$ is a natural example, but in the following we will consider both flow segments and segments transversal to the flow. If $\alpha(S)\not=0$, that is $S$ is not vertical, then it admits a \emph{vertical cutting sequence} 
$$
[S]^V=(g_0,\dots,g_L)
\quad
\textrm{ with }
\quad
g_r\in\cQ
\quad
\textrm{ for }
\quad
r=0,\dots,L,
$$
where we define 
$
t_0:=\min\{t\geq a,\exists g\in\cQ:S(t)\in l_g\}
$ 
and inductively for $r=0,\dots,L$ the instants $t_r\in(a,b)$ by 
$
t_r:=\min\{t>t_{r-1},\exists g\in\cQ:S(t)\in l_g\}
$ 
and the symbols $g_r\in\cQ$ by 
$$
S(t_r)\in l_{g_r}.
$$
Similarly, if $\alpha(S)\not=\infty$, that is $S$ is not horizontal, then it admits an \emph{horizontal cutting sequence} 
$$
[S]^H=(g_0,\dots,g_L)
\quad
\textrm{ with }
\quad
g_r\in\cQ
\quad
\textrm{ for }
\quad
r=0,\dots,L,
$$
where we define 
$
s_0:=\min\{s\geq a,\exists g\in\cQ:S(s)\in b_g\}
$ 
and inductively for $r=0,\dots,L$ the instants $s_r\in(a,b)$ by 
$
s_r:=\min\{s>s_{r-1},\exists g\in\cQ:S(s)\in b_g\}
$ 
and the symbols $g_r\in\cQ$ by 
$$
S(s_r)\in b_{g_r}.
$$
It is convenient to express both vertical and horizontal cutting sequences of line segments $S$ in a reduced form. If  
$
[S]^{V/H}=(g_0,\dots,g_L)
$ 
is such cutting sequence, we write
$$
[S]^{V/H}=g_0\cdot (1,\dots,g'_L)
\quad
\textrm{ where }
\quad
g'_r:=g_0^{-1}\cdot g_r
\quad
\textrm{ for }
\quad
r=0,\dots,L.
$$ 
On the other hand, if the cutting sequence 
$
[S]^{V/H}=(g_0,\dots,g_L)
$ 
is in its non-reduced form, we write its $r$-th letter as $[S]^{V/H}_r:=g_r$ for $r=0,\dots,L$.

\begin{lemma}
\label{LemmaEierlegendeWollmilchsauTransitionsHorizontal}
Fix a subset $\cE\subset\cQ$ and let $I$ be a line segment with slope $0<\alpha(I)<1$, whose horizontal cutting sequence is $[I]^H=(g_0,\dots,g_L)$. Let $r\leq L-1$ be such that  
$$
[I]^H_r \in \cE\cdot(-j)\cap \cE\cdot(-k).
$$
Then $[I]^H_{r+1}\in \cE$.
\end{lemma}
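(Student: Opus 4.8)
The plan is to reduce the statement to an elementary \emph{transition rule} for horizontal cutting sequences of segments with slope in $(0,1)$, and then to conclude by a one-line computation in $\cQ$.

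The key step I would carry out first is the claim: if $I$ is a line segment with $0<\alpha(I)<1$, parametrized so that its velocity $(u_1,u_2)$ has $u_1,u_2>0$ (the natural orientation, shared by every subarc of a $\phi_\alpha$-orbit with $0<\alpha<1$, to which this Lemma is applied), and not passing through a vertex of the square tiling, and if $[I]^H=(g_0,\dots,g_L)$ in non-reduced form, then for every $r$ with $0\le r\le L-1$ one has
$$
g_{r+1}\in\{\,g_r\cdot j,\ g_r\cdot k\,\}.
$$
To prove this I would argue as follows. Between the consecutive instants $s_r$ and $s_{r+1}$ at which $I$ meets a bottom side, its second coordinate increases by exactly $1$, so $I$ crosses exactly one horizontal grid line and its first coordinate increases by $\alpha(I)<1$; hence in that horizontal strip $I$ crosses \emph{at most one} vertical grid line, and when it does it crosses from left to right since $u_1>0$. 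If $I$ crosses no vertical line in the strip, it leaves $Q_{g_r}$ through its top side $t_{g_r}$, glued to $b_{g_r\cdot j}$, so $g_{r+1}=g_r\cdot j$. If it crosses one, it leaves $Q_{g_r}$ through its right side $r_{g_r}=l_{g_r\cdot i}$ into $Q_{g_r\cdot i}$ and, since its first coordinate has by then increased by less than $1$ since $s_r$, it must leave $Q_{g_r\cdot i}$ through its top side, glued to $b_{g_r\cdot i\cdot j}$; using $i\cdot j=k$ this gives $g_{r+1}=g_r\cdot k$. (See Figure~\ref{FigureEierlegendeWollmilchsau}.)

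Granting the claim, the Lemma is immediate. Write $[I]^H_r=g_r=e_1\cdot(-j)=e_2\cdot(-k)$ with $e_1,e_2\in\cE$, using $g_r\in\cE\cdot(-j)\cap\cE\cdot(-k)$. If $g_{r+1}=g_r\cdot j$ then, since $(-j)\cdot j=1$, associativity gives $g_{r+1}=e_1\cdot(-j)\cdot j=e_1\in\cE$; if $g_{r+1}=g_r\cdot k$ then, since $(-k)\cdot k=1$, we get $g_{r+1}=e_2\cdot(-k)\cdot k=e_2\in\cE$. In either case $[I]^H_{r+1}\in\cE$. Note that the hypothesis is an intersection precisely because one does not know a priori which of the two transitions occurs.

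The one genuinely delicate point is the geometric bookkeeping in the claim: one must be sure that between two consecutive crossings of bottom sides the segment meets exactly one horizontal and at most one vertical grid line, and then follow the two gluing rules of $X_{EW}$ correctly — and the constraint $0<\alpha(I)<1$ is exactly what makes this true. The degenerate case in which $I$ passes through a vertex is excluded, since cutting sequences are considered only for segments in general position (and it does not occur for the segments to which the Lemma is applied); alternatively it is handled as a limit of the two generic situations.
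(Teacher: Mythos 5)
Your proof is correct and follows essentially the same route as the paper's: you establish the transition rule $g_{r+1}\in\{g_r\cdot j,\ g_r\cdot k\}$ for segments of slope in $(0,1)$ and then conclude by the same short computation in $\cQ$. The only difference is that the paper dismisses the transition rule as ``easy to see from the definition of $X_{EW}$'' while you spell out the geometric bookkeeping (at most one vertical grid line crossed per strip, hence either top-side gluing by $j$ or right-then-top gluing by $i\cdot j=k$), which is a welcome amplification rather than a different approach.
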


\begin{proof}
Recalling that $i\cdot j=k$, it is easy to see from the definition of $X_{EW}$ that for a line segment $I$ with slope $\alpha(I)\in (0,1)$ we have 
$$
\quad
\textrm{ either }
\quad
[I]^H_{r+1}=[I]^H_{r}\cdot j
\quad
\textrm{ or }
\quad
[I]^H_{r+1}=[I]^H_{r}\cdot k.
$$
The Lemma follows observing that in order to have $[I]^H_{r+1}\in S$ it is enough to have 
$$
\left\{
\begin{array}{c}
[I]^H_r\cdot j\in \cE\\[1ex]
[I]^H_r\cdot k\in \cE
\end{array}
\right.
\Leftrightarrow
\left\{
\begin{array}{c}
[I]^H_r \in \cE\cdot (-j)\\[1ex]
[I]^H_r \in \cE\cdot (-k)
\end{array}
\right.
\Leftrightarrow
[I]^H_r \in \cE\cdot (-j)\cap \cE\cdot (-k).
$$
\end{proof}

\subsection{Intersection Lemmas}

Lemma~\ref{LemmaTransversalityCriteriaWollmilchsau} below establishes easy intersection criteria in terms of vertical and horizontal cutting sequences, corresponding to situations which are represented in 
Figure~\ref{FigureEierlegendeWollmilchsau}. The proof is left to the reader.

\begin{lemma}
\label{LemmaTransversalityCriteriaWollmilchsau}
Let $S$ and $I$ be segments in $X_{EW}$ with slopes respectively
$
-\infty\leq \alpha(S)<-1
$ 
and  
$
0<\alpha(I)<1
$. 
Let $L=L(I)$ and $L'=L'(S)$ in $\NN$ be such that
$
[I]^H=(g_0,\dots,g_{L})
$ 
and respectively 
$
[S]^V=(g_0,\dots,g_{L'})
$. 
We have $S\cap I\not=\emptyset$ whenever there exists $g\in\cQ$ satisfying one of the conditions above.
\begin{enumerate}
\item
We have $[I]^H_r=g$ for some $r$ with $0\leq r\leq L-1$ and 
$
([S]^V_m,[S]^V_{m+1})=g\cdot (1,i)
$ 
for some $m$ with $m\leq L'-1$.
\item
We have $[I]^H_r=g$ for some $r$ with $1\leq r\leq L-1$ and 
$
([S]^V_m,[S]^V_{m+1})=g\cdot(1,k)
$ 
for some $m$ with $1\leq m\leq L'$.
\item
We have $[I]^H_r=g\cdot(-j)$ for some $r$ with $0\leq r\leq L-2$ and 
$
([S]^V_m,[S]^V_{m+1})=g\cdot(1,k)
$ 
for some $m$ with $0\leq m\leq L'-1$. 
\end{enumerate}
\end{lemma}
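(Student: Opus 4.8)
The plan is to reduce each of the three criteria to an elementary planar intersection statement taking place inside one, or at most two adjacent, of the eight squares that make up $X_{EW}$, and then to conclude by the intermediate value theorem. The geometric content is precisely the one drawn in Figure~\ref{FigureEierlegendeWollmilchsau}: I would fix orientations so that a segment $I$ with $0<\alpha(I)<1$ travels upward and slightly to the right, so that inside any square it traverses its height is a strictly increasing affine function of the horizontal coordinate, while a segment $S$ with $-\infty\le\alpha(S)<-1$ is parametrized to travel rightward and slightly downward, so that inside any square it traverses its height is a weakly decreasing affine function of the horizontal coordinate. The first concrete step is to record, from the gluings $r_g\sim l_{g\cdot i}$, $t_g\sim b_{g\cdot j}$ and the relations $i\cdot j=k$, $j\cdot i=-k$, the transition dictionaries of the two cutting sequences: for $I$ one recovers Lemma~\ref{LemmaEierlegendeWollmilchsauTransitionsHorizontal}, namely $[I]^H_{r+1}\in\{[I]^H_r\cdot j,\ [I]^H_r\cdot k\}$, the letter $\cdot j$ meaning that $I$ leaves the current square through its top and the letter $\cdot k$ meaning that $I$ first leaves through a right edge and then through a top edge; for $S$ one gets $[S]^V_{m+1}\in\{[S]^V_m\cdot i,\ [S]^V_m\cdot k\}$, the letter $\cdot i$ meaning that $S$ passes directly into the square to its right, hence crosses the entire width of the square it leaves, and the letter $\cdot k$ meaning that $S$ first drops through a bottom edge and then crosses a right edge.

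With this dictionary in hand I would treat case (1) first. The hypothesis $[I]^H_r=g$ with $r\le L-1$ says exactly that $I$ enters $Q_g$ through $b_g$ at an interior horizontal coordinate and then leaves $Q_g$ again; the hypothesis $([S]^V_m,[S]^V_{m+1})=(g,g\cdot i)$ says, by the dictionary, that $S$ enters $Q_g$ through its left edge at an interior height and leaves through its right edge, so that inside $Q_g$ the graph of $S$ spans the whole horizontal interval while staying strictly between heights $0$ and $1$. Inside $Q_g$ the graph of $I$ starts at height $0$, hence lies strictly below the graph of $S$ where $I$ enters; if $I$ then leaves $Q_g$ through its top, it reaches height $1$ above the graph of $S$ and the intermediate value theorem produces an interior crossing already in $Q_g$; if instead $I$ leaves $Q_g$ low through its right edge, then $S$ also enters the next square $Q_{g\cdot i}$ through its left edge (this is precisely the meaning of the letter $g\cdot i$), and the same comparison, now carried out in $Q_{g\cdot i}$, yields the crossing there. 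In all cases the crossing point lies in an open square, hence is a genuine intersection point of $S$ and $I$ on $X_{EW}$.

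Cases (2) and (3) I would handle in the same spirit, the relevant region now being the union of $Q_g$ with the square $Q_{g\cdot(-j)}$ glued below it along the edge $b_g=t_{g\cdot(-j)}$. In both cases the pattern $([S]^V_m,[S]^V_{m+1})=(g,g\cdot k)$ forces $S$ to leave $Q_g$ downward through $b_g$; in case (2) the condition $r\ge1$ on the index forces $I$ to enter $Q_g$ from below, that is, from $Q_{g\cdot(-j)}$, and in case (3) the hypothesis $[I]^H_r=g\cdot(-j)$ together with $r\le L-2$ places $I$ one square lower and lets it travel far enough to reach $Q_g$. In either case both $S$ and $I$ cross the horizontal edge $b_g$, at interior coordinates $x_S$ and $x_I$ respectively, and I would run the intermediate value argument inside $Q_g$ when $x_I<x_S$ and inside $Q_{g\cdot(-j)}$ when $x_I>x_S$, using again that along one side of the chosen square one of the two segments sits strictly above the other while along the opposite side the order is reversed (the segment entering through a bottom edge at height $0$, respectively exiting through a top edge at height $1$, being the extreme one).

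The main obstacle is not any single estimate but the bookkeeping: for each prescribed pattern of consecutive letters I must follow the exact itinerary of squares and edges traversed by each of the two segments, so as to be certain that the two sub-segments really lie in a common square or pair of squares and that the ``enters low / enters high'' (or ``crosses the full width'') configuration holds at the appropriate sides of the chosen square. This is entirely routine once the orientations and the two transition dictionaries are fixed, and it is exactly what the three configurations of Figure~\ref{FigureEierlegendeWollmilchsau} depict, so I would present the argument by reproducing those pictures and leaving the remaining elementary verification to the reader.
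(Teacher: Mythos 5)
Your overall strategy---unfold the cutting-sequence data into positional information about where $S$ and $I$ enter and leave each square, then locate a crossing by an intermediate value argument applied to the two heights---is the intended one, and your treatment of cases~(1) and~(2) is essentially correct. There is, however, a concrete error in case~(3). You assert that ``both $S$ and $I$ cross the horizontal edge $b_g$'' and then run the intermediate value argument in $Q_g$ or $Q_{g\cdot(-j)}$ according to whether the crossing abscissa $x_I$ is smaller or larger than $x_S$. But in case~(3) the hypothesis only places $[I]^H_r=g\cdot(-j)$: $I$ enters $Q_{g\cdot(-j)}$ through its bottom edge, and no constraint is imposed on the next letter. In particular $I$ may exit $Q_{g\cdot(-j)}$ through its \emph{right} edge, which in the transition dictionary corresponds to $[I]^H_{r+1}=g\cdot(-j)\cdot k=g\cdot(-i)$; in that sub-case $I$ never meets $b_g$ and the quantity $x_I$ your argument hinges on does not exist. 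The third picture at the bottom of Figure~\ref{FigureEierlegendeWollmilchsau} in fact depicts precisely this sub-case: the red trajectory leaves the $-j$ square through its right side into the $k$ square, not upward into the $1$ square.

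The omitted sub-case is repaired by replacing $Q_g$ with $Q_{g\cdot k}$. Since $([S]^V_m,[S]^V_{m+1})=(g,g\cdot k)$, the segment $S$, after dropping from $Q_g$ into $Q_{g\cdot(-j)}$, exits through $r_{g\cdot(-j)}=l_{g\cdot k}$ (because $(-j)\cdot i=k$); so when $I$ also exits $Q_{g\cdot(-j)}$ on the right, both segments cross the common vertical edge $l_{g\cdot k}$, at heights $y_I$ and $y_S$ say. If $y_I>y_S$ one applies the intermediate value theorem in $Q_{g\cdot(-j)}$, where $I$ rises from height $0$ while $S$ descends from height $1$; if $y_I<y_S$ one applies it in $Q_{g\cdot k}$, where $I$ enters below $S$ on the left edge and rises to the top while $S$ descends. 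This is the same two-sided comparison you describe, rotated by ninety degrees. A related point worth flagging: both in case~(1), once $I$ has left $Q_g$ through the right and entered $Q_{g\cdot i}$, and in the sub-case just discussed, your intermediate value step implicitly needs $S$ to travel a positive distance inside the final square, which the condition $m\leq L'-1$ alone does not guarantee; this is harmless in the only place the lemma is used (Lemma~\ref{LemmaSmallSlopeWollmilchsauVertical}), where the relevant blocks are explicitly chosen ``not in last position'' of $[S]^V$, but it is exactly the kind of bookkeeping that should not be waved away as routine.
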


Let $C_0\subset X_{EW}$ be any of the two cylinders $C_0^L$ and $C_0^R$ in the vertical slope $p/q=0$. Recall from \S~\ref{SectionOneCylinderVerticalDirections} that a line segment $S:(0,1)\to C_0$ is said transversal to $C_0$ if it is contained in its interior, and moreover $S(0)\in\partial C_0$, 
$
S(1)\in\partial C_0
$ 
and $-\infty\leq\alpha(S)\leq-1$. A line segment $S:(0,4)\to X_{EW}$ is \emph{strongly transversal to the vertical} if the restriction $S_r:=S|_{(r,r+1)}$ is transversal to either $C^L_0$ or $C^R_0$ for any $r=0,1,2,3$. The vertical cutting sequence of such segment contains five letters, that is 
$
[S]^V=(g_0,g_1,g_2,g_3,g_4)
$.

\begin{lemma}
\label{LemmaSmallSlopeWollmilchsauVertical}
Let $S:(0,4)\to X_{EW}$ be a segment strongly transversal to the vertical, and consider any slope $\alpha\in(0,1)$. Then for any $p$ not on any $(X_{EW},\alpha)$-singular leaf we have
$$
I\cap S\not=\emptyset
\quad
\textrm{ where }
\quad
I:=\left\{\phi_\alpha^t(p);0\leq t\leq 7\cdot\sqrt{2}\right\}.
$$ 
\end{lemma}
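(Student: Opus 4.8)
The plan is to reduce the statement to a finite combinatorial verification on cutting sequences, combining the intersection criteria of Lemma~\ref{LemmaTransversalityCriteriaWollmilchsau} with the transition rules for cutting sequences (for horizontal sequences this is the computation in the proof of Lemma~\ref{LemmaEierlegendeWollmilchsauTransitionsHorizontal}; the analogue for vertical sequences of segments of slope $<-1$ is implicit in the shape of the patterns in Lemma~\ref{LemmaTransversalityCriteriaWollmilchsau}). First I would normalise. The quaternion group $\cQ$ acts on $X_{EW}$ by the left translations $L_h:Q_g\mapsto Q_{hg}$; since the gluings of $X_{EW}$ are performed by \emph{right} multiplication by $i$ and $j$, each $L_h$ is a translation automorphism of $X_{EW}$, and on cutting sequences it acts letterwise, $[L_h(S)]^{V}=h\cdot[S]^{V}$. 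Hence, after applying a suitable $L_h$, I may assume $[S]^V=(1,g_1,g_2,g_3,g_4)$. Because $S$ is strongly transversal to the vertical, each restriction $S_r$ enters a width‑one vertical cylinder through a left side, drops by less than one square (its slope being $\le-1$ or infinite), and exits through a right side; therefore the letters of $[S]^V$ satisfy $g_{m+1}\in\{g_m i,\ g_m k\}$ for $m=0,1,2,3$, which is exactly the form $g\cdot(1,i)$, $g\cdot(1,k)$ occurring in Lemma~\ref{LemmaTransversalityCriteriaWollmilchsau}. A short check using the relations in $\cQ$ shows the four letters $g_0,g_1,g_2,g_3$ are always pairwise distinct. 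This leaves at most $2^4$ admissible sequences $[S]^V$, fewer modulo the residual symmetry.

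Next I would control the orbit segment $I$. Since $\alpha\in(0,1)$ one has $\theta=\arctan\alpha\in(0,\pi/4)$, so the vertical component $\cos\theta$ of the unit velocity $e_\alpha$ exceeds $\tfrac1{\sqrt2}$; thus over $0\le t\le 7\sqrt2$ the orbit of $p$ rises by strictly more than $7$, hence crosses at least $7$ bottom sides, so $[I]^H=(h_0,\dots,h_L)$ with $L\ge6$. By the computation in the proof of Lemma~\ref{LemmaEierlegendeWollmilchsauTransitionsHorizontal}, $h_{r+1}\in\{h_r j,\ h_r k\}$ for every $r$. Writing $\langle i\rangle=\{1,-1,i,-i\}\le\cQ$, both $j$ and $k$ lie outside $\langle i\rangle$, so the cosets $h_r\langle i\rangle$ alternate in $\cQ/\langle i\rangle$: the letters $h_0,h_2,h_4,\dots$ lie in one coset and $h_1,h_3,\dots$ in the complementary one.

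For the matching, given a fixed admissible $[S]^V$ I set
\[
T:=\{g_0,g_1,g_2,g_3\}\ \cup\ \{\,g_m(-j)\ :\ 0\le m\le3,\ g_{m+1}=g_m k\,\}\subset\cQ.
\]
By Lemma~\ref{LemmaTransversalityCriteriaWollmilchsau} — case (1) for the indices $m$ with an $i$‑transition, cases (2) and (3) for those with a $k$‑transition — one gets $I\cap S\ne\emptyset$ as soon as some letter $h_r$ of $[I]^H$ lies in $T$ with an admissible index (namely $r\le L-1$ for cases (1)–(2), with the extra requirement $r\ge1$, $m\ge1$ for case (2), and $r\le L-2$ for case (3)); by $L\ge6$ each of $r=0,1,2$ is admissible for the relevant case. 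It remains to check that for every admissible $[S]^V$ one of $h_0,h_1,h_2$ lands in $T$ with a legal index. Here the structure of $\cQ$ is used: $T$ is always large — it equals $\langle i\rangle$ when every transition is an $i$‑step, it equals all of $\cQ$ when every transition is a $k$‑step, and in every intermediate case it meets \emph{both} cosets of $\langle i\rangle$, containing at least two elements of each; since $h_0,h_1$ lie in complementary cosets and $h_2$ in the coset of $h_0$, one of the first three letters of $[I]^H$ is forced into $T$, and a short inspection of the finitely many $[S]^V$ shows the index is always legal. This is a finite verification.

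The main obstacle is precisely this bookkeeping in the matching step. The three criteria of Lemma~\ref{LemmaTransversalityCriteriaWollmilchsau} are mildly asymmetric (case (2) needs $m\ge1$ and $r\ge1$, case (3) needs $r\le L-2$), and one must make sure these restrictions never simultaneously exclude $h_0,h_1,h_2$; this is why the lemma asks for a long orbit, the factor $\sqrt2$ in $7\sqrt2$ guaranteeing that for \emph{every} $\alpha\in(0,1)$ and every starting point $p$ — in particular in the degenerate limit $\alpha\to1^-$ or with $p$ near the top of a square — the segment $I$ still crosses at least seven bottom sides. The remaining points are routine: the hypothesis $p\notin$ singular leaves makes $[I]^H$ well defined on the whole range, and transversality of $S$ keeps its interior off the singularities, so both cutting sequences are honest.
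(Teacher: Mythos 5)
Your approach is genuinely different from the paper's: you argue \emph{forward} from the first letters $h_0,h_1,h_2$ of $[I]^H$ using the coset alternation in $\cQ/\langle i\rangle$, whereas the paper argues \emph{backward} from $\cE_{L-2}$, propagating forbidden letters via Lemma~\ref{LemmaEierlegendeWollmilchsauTransitionsHorizontal} until the forbidden set is all of $\cQ$. Unfortunately your key deduction contains a gap that is not just a deferred finite check — it is false as stated.

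The step ``$T$ meets both cosets of $\langle i\rangle$, containing at least two elements of each; since $h_0,h_1$ lie in complementary cosets and $h_2$ in the coset of $h_0$, one of $h_0,h_1,h_2$ is forced into $T$'' is a non sequitur: each coset has four elements, and knowing that $T$ contains two of them does not force a particular $h_r$ to be one of those two. Indeed both the hypothesis and the conclusion fail. For the hypothesis, take $[S]^V=(1,k,j,-k,-j)$ (transition pattern $k,i,i,i$): then $T=\{1,k,j,-k,-j\}$ has a single element in $\langle i\rangle$. For the conclusion, take $[S]^V=(1,i,-j,k,j)$ (transitions $i,k,i,i$, which is admissible since $ik=-j$, $(-j)i=k$, $ki=j$): then $T=\{1,i,-j,k\}\cup\{i\cdot(-j)\}=\{1,i,-j,k,-k\}$, so $\langle i\rangle\setminus T=\{-1,-i\}$ and $j\langle i\rangle\setminus T=\{j\}$. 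The orbit cutting sequence $[I]^H=(-i,j,-1,\ldots)$ is admissible (since $(-i)k=j$ and $j\cdot j=-1$) and has $h_0=-i$, $h_1=j$, $h_2=-1$, none in $T$. Thus your finite verification for $r\le 2$ does not close. (For this $[I]^H$ one does have $h_3\in\{-j,-k\}\subset T$, so the Lemma itself is not violated — the intersection simply occurs farther along the sequence than your argument looks.)

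This is precisely why the paper goes deeper: starting from $\cE_{L-2}$ it iterates Lemma~\ref{LemmaEierlegendeWollmilchsauTransitionsHorizontal} four or five times, using \emph{all} of $h_{L-6},\ldots,h_{L-2}$ rather than just the first three letters, and the resulting forbidden set becomes $\cQ$, giving the contradiction. Any forward version of your coset argument would have to be pushed further along $[I]^H$ (not just $r\le 2$), and also respect the index asymmetries of Lemma~\ref{LemmaTransversalityCriteriaWollmilchsau} more carefully (your $T$ includes $g_0$ unconditionally even when the $m=0$ transition is a $k$-step, in which case criterion (2) does not apply at $m=0$). As written, the proposal has a genuine gap.
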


\begin{proof}
Observe that, since $0<\alpha<1$ then the cutting sequence $[I]^H=(g_0,\dots,g_L)$ satisfies $L\geq 7$. Observe also that there is no loss of generality to assume $[S]^V_0=1$, so that the reduced form of the cutting sequence of $S$ is $[S]^V=(1,g_1,g_2,g_3,g_4)$. Finally, since 
$
-\infty\leq\alpha(S)<-1
$, 
then, reasoning as in the proof of Lemma~\ref{LemmaEierlegendeWollmilchsauTransitionsHorizontal}, it is easy to see that the block $([S]^H_0,[S]^H_1,[S]^H_2)$ admits only the four values $(1,i,-1)$, $(1,i,-j)$, $(1,k,j)$ and $(1,k,-1)$. We reason by absurd, considering separately the four cases. In each case, assuming that $I\cap S=\emptyset$, for any $r$ with $1\leq r\leq L-2$ we obtain a set of \emph{forbidden letters} $\cE_r\subset\cQ$ for which Lemma~\ref{LemmaTransversalityCriteriaWollmilchsau} implies $[I]^H_r\not\in\cE_r$. The proof finishes when we get $\cE_1=\cQ$, which is of course absurd.

If $([S]^H_0,[S]^H_1,[S]^H_2)=(1,i,-1)$, then $[S]^H$ contains the 2 blocks $(1,i)$ and $i\cdot (1,i)$, and also either the block $-1\cdot(1,i)$ or $-1\cdot(1,k)$, all the blocks not in last position in $[S]^V$. According to Points (1) and (2) of Lemma~\ref{LemmaTransversalityCriteriaWollmilchsau} we have 
$
[I]^H_{L-2}\not\in\cE_{L-2}:=\{1,i,-1\}
$. 
Lemma~\ref{LemmaEierlegendeWollmilchsauTransitionsHorizontal} gives the sequence of increasing sets of forbidden letters
\begin{align*}
&
\cE_{L-3}=
\cE_{L-2}\cup\big(\cE_{L-2}\cdot(-k)\cap \cE_{L-2}\cdot(-j)\big)=
\{1,i,-1\}\cup\{-k,j\}
\\
&
\cE_{L-4}=
\cE_{L-3}\cup\big(\cE_{L-3}\cdot(-k)\cap \cE_{L-3}\cdot(-j)\big)=
\{1,i,-1,-k,j\}\cup\{-i\}
\\
&
\cE_{L-5}=
\cE_{L-4}\cup\big(\cE_{L-4}\cdot(-k)\cap \cE_{L-4}\cdot(-j)\big)=
\{1,i,-1,-k,j,-i\}\cup\{-j,k\}=\cQ.
\end{align*}

If $([S]^H_0,[S]^H_1,[S]^H_2)=(1,i,-j)$, then $[S]^H$ contains the 2 blocks $(1,i)$ and $i\cdot (1,k)$, and also either the block $-j\cdot(1,i)$ or $-j\cdot(1,k)$, all the blocks not in last position in $[S]^V$. According to Points (1), (2) and (3) of Lemma~\ref{LemmaTransversalityCriteriaWollmilchsau} we have 
$
[I]^H_{L-2}\not\in\cE_{L-2}:=\{1,i,-k,-j\}
$. 
Lemma~\ref{LemmaEierlegendeWollmilchsauTransitionsHorizontal} gives the sequence of increasing sets of forbidden letters
\begin{align*}
&
\cE_{L-3}=
\cE_{L-2}\cup\big(\cE_{L-2}\cdot(-k)\cap \cE_{L-2}\cdot(-j)\big)=
\{1,i,-k,-j\}\cup\{-k,-1\}
\\
&
\cE_{L-4}=
\cE_{L-3}\cup\big(\cE_{L-3}\cdot(-k)\cap \cE_{L-3}\cdot(-j)\big)=
\{1,i,-k,-j,-1\}\cup\{j\}
\\
&
\cE_{L-5}=
\cE_{L-4}\cup\big(\cE_{L-4}\cdot(-k)\cap \cE_{L-4}\cdot(-j)\big)=
\{1,i,-k,-j,-1,j\}\cup\{-i\}
\\
&
\cE_{L-6}=
\cE_{L-5}\cup\big(\cE_{L-5}\cdot(-k)\cap \cE_{L-5}\cdot(-j)\big)=
\{1,i,-k,-j,-1,j,-i\}\cup\{k\}=\cQ.
\end{align*}

If $([S]^H_0,[S]^H_1,[S]^H_2)=(1,k,j)$, then $[S]^H$ contains the 2 blocks $(1,k)$ and $k\cdot (1,i)$, and also either the block $j\cdot(1,i)$ or $j\cdot(1,k)$, all the blocks not in last position in $[S]^V$. According to Points (1), (2) and (3) of Lemma~\ref{LemmaTransversalityCriteriaWollmilchsau} we have 
$
[I]^H_{L-2}\not\in\cE_{L-2}:=\{1,-j,k,j\}
$. 
Lemma~\ref{LemmaEierlegendeWollmilchsauTransitionsHorizontal} gives the sequence of increasing sets of forbidden letters
\begin{align*}
&
\cE_{L-3}=
\cE_{L-2}\cup\big(\cE_{L-2}\cdot(-k)\cap \cE_{L-2}\cdot(-j)\big)=
\{1,-j,k,j\}\cup\{i,1\}
\\
&
\cE_{L-4}=
\cE_{L-3}\cup\big(\cE_{L-3}\cdot(-k)\cap \cE_{L-3}\cdot(-j)\big)=
\{1,-j,k,j,i\}\cup\{-k\}
\\
&
\cE_{L-5}=
\cE_{L-4}\cup\big(\cE_{L-4}\cdot(-k)\cap \cE_{L-4}\cdot(-j)\big)=
\{1,-j,k,j,i,-k\}\cup\{-1,-i\}=\cQ.
\end{align*}

If $([S]^H_0,[S]^H_1,[S]^H_2)=(1,k,-1)$, then $[S]^H$ contains the 2 blocks $(1,k)$ and $k\cdot (1,k)$, and also either the block $-1\cdot(1,i)$ or $-1\cdot(1,k)$, all the blocks not in last position in $[S]^V$. According to Points (1), (2) and (3) of Lemma~\ref{LemmaTransversalityCriteriaWollmilchsau} we have 
$
[I]^H_{L-2}\not\in\cE_{L-2}:=\{1,-j,k,i,-1\}
$. 
Lemma~\ref{LemmaEierlegendeWollmilchsauTransitionsHorizontal} gives the sequence of increasing sets of forbidden letters
\begin{align*}
&
\cE_{L-3}=
\cE_{L-2}\cup\big(\cE_{L-2}\cdot(-k)\cap \cE_{L-2}\cdot(-j)\big)=
\{1,-j,k,i,-1\}\cup\{-k,j\}
\\
&
\cE_{L-4}=
\cE_{L-3}\cup\big(\cE_{L-3}\cdot(-k)\cap \cE_{L-3}\cdot(-j)\big)=
\{1,-j,k,i,-1,-k,j\}\cup\{-i\}=\cQ.
\end{align*}
\end{proof}

Now let $C_\infty$ be either $C^R_\infty$ or $C^L_\infty$. A line segment $S:(0,1)\to C_\infty$ is \emph{transversal to} $C_\infty$ if it is contained in its interior and moreover $S(0)\in\partial C_\infty$, $S(1)\in\partial C_\infty$ and $-1\leq \alpha(S)<0$. A line segment $S:(0,4)\to X_{EW}$ is said \emph{strongly transversal to the horizontal} if $S_r:=S|_{(r,r+1)}$ is transversal either $C^R_\infty$ of $C^L_\infty$ for $r=0,1,2,3$. Arguing as in Lemma~\ref{LemmaSmallSlopeWollmilchsauVertical} one can show the Lemma below, whose proof is left to the reader.

\begin{lemma}
\label{LemmaSmallSlopeWollmilchsauHorizontal}
Let $S:(0,4)\to X_{EW}$ be a segment strongly transversal to the horizontal, and consider any slope $\alpha\in(1,+\infty)$. Then for any $p$ not on any $(X_{EW},\alpha)$-singular leaf we have
$$
I\cap S\not=\emptyset
\quad
\textrm{ where }
\quad
I:=\left\{\phi_\alpha^t(p);0\leq t\leq 7\cdot\sqrt{2}\right\}.
$$ 
\end{lemma}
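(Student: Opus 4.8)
The plan is to deduce Lemma~\ref{LemmaSmallSlopeWollmilchsauHorizontal} from Lemma~\ref{LemmaSmallSlopeWollmilchsauVertical} by exploiting the symmetry of $X_{EW}$ that exchanges the two coordinate axes. Let $R\in\gltwor$ be the reflection $(x_1,x_2)\mapsto(x_2,x_1)$; it is orthogonal and acts on slopes by $\alpha\mapsto 1/\alpha$, so it carries the interval $[-1,0)$ onto $(-\infty,-1]$ and $(1,+\infty)$ onto $(0,1)$. Reading off the description of $X_{EW}$ in terms of the quaternion group $\cQ$, the surface $R\cdot X_{EW}$ is tiled by the same unit squares but with the gluing roles of $i$ and $j$ interchanged; since the map $\psi$ of $\cQ$ determined by $\psi(i)=j$ and $\psi(j)=i$ (whence $\psi(k)=\psi(i)\psi(j)=j\cdot i=-k$) is a group automorphism, relabelling the square of index $g$ by the index $\psi(g)$ turns $R\cdot X_{EW}$ back into $X_{EW}$. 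Composing this relabelling with $R$ yields an affine involution $\Phi\colon X_{EW}\to X_{EW}$ with $D\Phi=R$, which on each square $Q_g$ acts by $(x_1,x_2)\mapsto(x_2,x_1)$ and lands in $Q_{\psi(g)}$.

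Next I would run the reduction. Because $R$ is orthogonal, $\Phi$ is a flat isometry: it preserves the length of every segment, it maps $\Sigma$ to $\Sigma$, and it conjugates $\phi_\alpha$ to $\phi_{1/\alpha}$, hence carries $(X_{EW},\alpha)$-singular leaves bijectively onto $(X_{EW},1/\alpha)$-singular leaves. It sends the horizontal cylinders $C_\infty^{L}$, $C_\infty^{R}$ onto the vertical cylinders $C_0^{L}$, $C_0^{R}$, and a segment transversal to one of the former onto a segment transversal to the corresponding one of the latter (this is where the slope identity $[-1,0)\to(-\infty,-1]$ is used). Thus, if $S\colon(0,4)\to X_{EW}$ is strongly transversal to the horizontal then $\Phi\circ S$ is strongly transversal to the vertical, if $\alpha\in(1,+\infty)$ then $1/\alpha\in(0,1)$, and if $p$ is off every $(X_{EW},\alpha)$-singular leaf then $\Phi(p)$ is off every $(X_{EW},1/\alpha)$-singular leaf. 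Applying Lemma~\ref{LemmaSmallSlopeWollmilchsauVertical} to the data $\Phi(p)$, $\Phi\circ S$, $1/\alpha$ produces some $t\in[0,7\sqrt2]$ with $\phi_{1/\alpha}^t(\Phi(p))\in\Phi(S)$; using $\phi_{1/\alpha}^t\circ\Phi=\Phi\circ\phi_\alpha^t$ together with injectivity of $\Phi$ gives $\phi_\alpha^t(p)\in S$ with the same $t$, which is exactly the assertion $I\cap S\ne\emptyset$.

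The only genuinely delicate point is the existence of $\Phi$: one must verify on the multiplication table of $\cQ$ that $\psi$ is an automorphism and that the induced permutation of the eight squares respects every identification defining $X_{EW}$, and one must keep the endpoint conventions of the slope intervals straight so that ``transversal to $C_\infty$'' really does become ``transversal to $C_0$'' under $\alpha\mapsto 1/\alpha$. Should one prefer not to exhibit $\Phi$, the same conclusion follows by copying the proof of Lemma~\ref{LemmaSmallSlopeWollmilchsauVertical} line by line with the roles of the horizontal and vertical cutting sequences, and of the generators $i$ and $j$, interchanged throughout: one first proves the vertical-cutting-sequence analogue of Lemma~\ref{LemmaEierlegendeWollmilchsauTransitionsHorizontal} and the version of Lemma~\ref{LemmaTransversalityCriteriaWollmilchsau} relating $[I]^V$ to $[S]^H$, and then the identical four-case analysis of an increasing family of forbidden letters exhausts $\cQ$ after at most five steps, which is why the time $7\sqrt2$ (which forces $[I]^V$ to have at least seven letters) suffices.
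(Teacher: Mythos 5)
The paper itself offers no proof of this lemma: it simply asserts that it follows ``arguing as in Lemma~\ref{LemmaSmallSlopeWollmilchsauVertical},'' i.e.\ by repeating the four-case cutting-sequence analysis with horizontal and vertical exchanged, which is exactly the fallback you sketch at the end. Your primary route is genuinely different and arguably cleaner: you construct an orientation-reversing flat isometry $\Phi$ of $X_{EW}$ with $D\Phi=R$, where $R(x_1,x_2)=(x_2,x_1)$, by composing $R$ with the relabelling of squares induced by the quaternion automorphism $\psi$ with $\psi(i)=j$, $\psi(j)=i$, $\psi(k)=-k$, and then transport the problem via the time-preserving conjugacy $\Phi\circ\phi^t_\alpha=\phi^t_{1/\alpha}\circ\Phi$. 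The existence of $\Phi$ is a nontrivial but entirely elementary check — one verifies that $\psi$ is multiplicative on $\cQ$ and that $(\text{relabel by }\psi)\circ R$ carries the identifications $r_g\sim l_{gi}$, $t_g\sim b_{gj}$ of $R\cdot X_{EW}$ back to those defining $X_{EW}$ — and your sketch of that check is correct. Since $R$ is orthogonal, lengths of flow segments are preserved, so the constant $7\sqrt2$ transfers verbatim.

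The one genuine loose end is the slope-endpoint convention for transversality. The paper requires $\alpha(S)\in[-\infty,-1)$ for transversal-to-$C_0$ and $\alpha(S)\in[-1,0)$ for transversal-to-$C_\infty$, and under $\alpha\mapsto1/\alpha$ the second interval maps onto $(-\infty,-1]$, which differs from the first at \emph{both} endpoints. Thus a segment $S$ with some sub-segment $S_r$ of slope exactly $-1$ is strongly transversal to the horizontal, yet $\Phi\circ S$ fails the letter of ``strongly transversal to the vertical,'' and Lemma~\ref{LemmaSmallSlopeWollmilchsauVertical} cannot be cited as a black box for it. You note this as something to ``keep straight'' but do not actually close it. The gap is harmless and can be closed in a sentence — e.g.\ by observing that the four-case argument underlying Lemma~\ref{LemmaSmallSlopeWollmilchsauVertical} still applies verbatim at slope $-1$, or by approximating $S$ by nearby segments of slope strictly in $(-\infty,-1)$ and passing to the limit on the compact time interval $[0,7\sqrt2]$, or by remarking that the only segment to which these lemmas are ever applied, the $S^\perp$ in Proposition~\ref{PropositionSmallSlopeWollmilchsau}, has slope strictly bounded away from $-1$ — but since your argument quotes a lemma with a precisely stated hypothesis rather than its proof, the boundary case ought to be dispatched explicitly rather than gestured at.
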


\subsection{End of the proof}

Arguing as in \S~\ref{SectionOneCylinderVerticalDirections} and replacing Lemma~\ref{LemmaVerticalDirection} by Lemma~\ref{LemmaSmallSlopeWollmilchsauVertical} and Lemma~\ref{LemmaSmallSlopeWollmilchsauHorizontal}, we get the Proposition below.

\begin{proposition}
\label{PropositionSmallSlopeWollmilchsau}
Fix any slope  
$
\alpha=[a_1,a_2,\dots]\in(0,1)
$. 
Let $p\in X_{EW}$ be a point not on any singular leaf and $p'\in X_{EW}$ be a point not on any saddle connection. Then for any $n\in\NN$ we have
$$
R(X_{EW},\alpha,p,p',r_n)\leq \sqrt{1568}\cdot q_n
\quad
\textrm{ where }
\quad
r_n:=\frac{16}{q_{n}}
$$
\end{proposition}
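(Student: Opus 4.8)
The plan is to run the argument of Proposition~\ref{PropositionOneCylinderDirection} with $X_{EW}$ playing the role of both $X_0$ and $X$, the only new feature being that $X_{EW}$ carries two cylinders (not one) in each rational direction, which is exactly why the intersection statements of Lemma~\ref{LemmaSmallSlopeWollmilchsauVertical} and Lemma~\ref{LemmaSmallSlopeWollmilchsauHorizontal} are phrased for segments that cross four cylinders in a row. Concretely, set $A_n:=g(a_1,\dots,a_n)\in\sltwoz$; since $\cO(X_{EW})=\{X_{EW}\}$ by Equation~\eqref{EquationOrbitEierlegendeWollmilchsau} we have $A_n\cdot X_{EW}=X_{EW}$, and we fix an affine diffeomorphism $f_{A_n}:X_{EW}\to X_{EW}$ with $Df_{A_n}=A_n$ (the choice is immaterial, as in the footnote of §\ref{SectionFlowSegmentsVerticalCylinder}). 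By Lemma~\ref{LemmaActionSL(2,Z)Slopes}, $f_{A_n}$ conjugates $\phi_\alpha$ to the renormalised flow of slope $\alpha_n:=G^n(\alpha)\in(0,1)$ when $n$ is even, and of slope $1/\alpha_n\in(1,+\infty)$ when $n$ is odd; and it carries the two vertical cylinders $C_0^L,C_0^R$ (resp. the two horizontal cylinders $C_\infty^L,C_\infty^R$) of $X_{EW}$, each of core length $4$ and transversal width $1$, onto the two cylinders $C_n^L,C_n^R$ of slope $p_n/q_n$, each of area $4$, core length $4\sqrt{p_n^2+q_n^2}$ and transversal width $w_n=1/\sqrt{p_n^2+q_n^2}\le 1/q_n$.

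First I would build the target segment. Since $p'$ lies on no saddle connection it lies on no saddle connection in the direction orthogonal to $p_n/q_n$, so the leaf of that linear foliation through $p'$ is regular; let $S^\perp$ be the sub-arc of it made of exactly four consecutive crossings of the cylinders $C_n^L,C_n^R$ and containing $p'$, so that $p'\in S^\perp$ and $|S^\perp|=4w_n\le 4/q_n$. Put $S:=f_{A_n}^{-1}(S^\perp)$, a straight segment of $X_{EW}$ avoiding the singular set. Its slope is $A_n^{-1}\cdot(-q_n/p_n)$, which the computation in Equation~\eqref{EquationPropositionOneCylinderDirection} evaluates: for $n$ even it equals $-(q_n^2+p_n^2)/(q_{n-1}q_n+p_{n-1}p_n)<-a_n\le -1$, and for $n$ odd it equals $-(q_{n-1}q_n+p_{n-1}p_n)/(q_n^2+p_n^2)\in(-1,0)$. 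Parametrising $S$ as $S:(0,4)\to X_{EW}$ along its four successive crossings of $C_0^L,C_0^R$ (resp. of $C_\infty^L,C_\infty^R$) thus exhibits it as a segment strongly transversal to the vertical (resp. to the horizontal), in which each crossing has unit extent in the transversal direction of the relevant cylinder, hence length $<\sqrt2$.

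Then I would apply Lemma~\ref{LemmaSmallSlopeWollmilchsauVertical} (if $n$ is even) or Lemma~\ref{LemmaSmallSlopeWollmilchsauHorizontal} (if $n$ is odd) to the point $\tilde p:=f_{A_n}^{-1}(p)$, which is not on a singular leaf of the renormalised flow because $p$ is not on a singular leaf of $\phi_\alpha$; this produces $\tilde t\in[0,7\sqrt2]$ with $\phi^{\tilde t}(\tilde p)\in S$ for the renormalised flow. Letting $T$ be the length of the $\phi_\alpha$-trajectory corresponding under $f_{A_n}$ to $\{\phi^s(\tilde p):0\le s\le \tilde t\}$, the estimate $\|A_n\|\le p_n+q_n+p_{n-1}+q_{n-1}<4q_n$ used in Equation~\eqref{EquationMaximalDilatation} gives $T\le 4q_n\cdot\tilde t\le 28\sqrt2\,q_n=\sqrt{1568}\,q_n$, while $\phi_\alpha^T(p)=f_{A_n}(\phi^{\tilde t}(\tilde p))\in f_{A_n}(S)=S^\perp$, so $|\phi_\alpha^T(p)-p'|\le |S^\perp|\le 4/q_n<16/q_n=r_n$; hence $R(X_{EW},\alpha,p,p',r_n)\le T\le \sqrt{1568}\,q_n$. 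The only genuinely non-routine point is the split on the parity of $n$: one must check in each case that the pulled-back segment $S$ has slope in the interval required by the relevant transversality lemma ($[-\infty,-1)$ in the vertical case, $[-1,0)$ in the horizontal case), and that the four-crossing arc $S^\perp$ through $p'$ can indeed be formed, which is precisely where the hypothesis that $p'$ avoids every saddle connection enters.
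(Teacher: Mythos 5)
Your proof is correct and follows the same approach as the paper's: renormalise via the affine diffeomorphism $f_{A_n}$, construct the perpendicular four-crossing target segment $S^\perp$ through $p'$, pull it back to a strongly transversal segment $S$, apply Lemma~\ref{LemmaSmallSlopeWollmilchsauVertical} or Lemma~\ref{LemmaSmallSlopeWollmilchsauHorizontal} according to the parity of $n$, and transfer the hitting bound using the dilatation estimate $\|A_n\|<4q_n$. Your treatment is in fact a little cleaner than the paper's: you carry out the odd-$n$ case explicitly (the paper leaves its details to the reader), and your $|S^\perp|=4/\sqrt{p_n^2+q_n^2}$ is the correct crossing length, whereas the paper's $16\cdot(q_{2k}^2+p_{2k}^2)^{-1/2}$ overlooks the factor $4$ in the image core length $4\sqrt{p_{2k}^2+q_{2k}^2}$ — harmless, since $r_n=16/q_n$ is generous enough that the estimate still closes.
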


\begin{proof}
Consider the case of even $n$, that is $n=2k$. Set 
$
A:=g(a_1,\dots,a_{2k})
$ 
and let $\alpha_{2k}$ be the slope related to $\alpha$ by Equation~\eqref{EquationActionSL(2,Z)SlopesIrrational}, that is 
$
\alpha=A\cdot \alpha_{2k}
$. 
Following \S~\ref{SectionFlowSegmentsVerticalCylinder} and recalling Equation~\eqref{EquationOrbitEierlegendeWollmilchsau}, consider the affine diffeomorphism $f_A:X_{EW}\to X_{EW}$, which sends the cylinders $C_0^L$ and $C_0^R$ to the cylinders $C_{2k}^L$ and $C_{2k}^R$ respectively, which have slope $p_{2k}/q_{2k}=A\cdot 0$. Let $S^\perp$ be a segment passing through $p'$ with slope $-q_{2k}/p_{2k}$, that is orthogonal to $p_{2k}/q_{2k}$, and such that $S^\perp$ crosses exactly twice both cylinders $C_{2k}^L$ and $C_{2k}^R$. Such segment exists because $p'$ does not belong to any saddle connection. Moreover $S^\perp$ has length  
$
|S^\perp|=16\cdot (q_{2k}^2+p_{2k}^2)^{-1/2}
$, 
indeed we have 
$
4=\area(C^{L/R}_{p_{2k}/q_{2k}})=|S^\perp_\ast|\cdot\sqrt{q_{2k}^2+p_{2k}^2}
$ 
for any subsegment $S^\perp_\ast\subset S^\perp$ crossing perpendicularly $C^{L/R}_{p_{2k}/q_{2k}}$ exactly once. The slope $\alpha=\alpha(S)$ of the segment $S:=f_A^{-1}(S^\perp)$ is given by the same computation as in Equation~\eqref{EquationPropositionOneCylinderDirection}, thus $S$ is strongly transversal to the vertical. Therefore, according to Lemma~\ref{LemmaSmallSlopeWollmilchsauVertical} we have $t>0$ with
$$
\phi_{\alpha_{2k}}^t\big(f_A^{-1}(p)\big)\in f_A^{-1}(S^\perp)
\quad
\textrm{ and }
\quad
0\leq t\leq 2\cdot\sqrt{1+\alpha_{2k}^2}.
$$ 
Reasoning as in Proposition~\ref{PropositionOneCylinderDirection}, consider $T>0$ such that 
$
f_A\circ \phi_{\alpha_{2k}}^t=\phi_\alpha^T\circ f_A
$, 
that is 
$
\phi^T_\alpha(p)\in S^\perp
$. 
Equation~\eqref{EquationMaximalDilatation} implies 
$$
0\leq T\leq 4\cdot q_{2n}\cdot7\cdot\sqrt{2}
=
\sqrt{1568}\cdot q_{2k}.
$$
Finally, since both $p'$ and $\phi^T_\alpha$ belong to $S^\perp$, we get 
$$
\left|
\phi_{\alpha}^{T}(p)-p'
\right|
\leq
|S^\perp|
= 
\frac{16}{\sqrt{q_{2k}^2+p_{2k}^2}}
\leq
\frac{16}{q_{2k}}
=
r_k.
$$

In the case $n=2k-1$, set 
$
A:=g(a_1,\dots,a_{2k},a_{2k+1})
$ 
and let $\alpha_{2k+1}$ be the slope related to $\alpha$ by Equation~\eqref{EquationActionSL(2,Z)SlopesIrrational}, that is 
$
\alpha=A\cdot \alpha_{2k+1}^{-1}
$. 
As above, consider the affine diffeomorphism $f_A:X_{EW}\to X_{EW}$ which sends the cylinders $C_\infty^L$ and $C_\infty^R$ to the cylinders $C_{2k+1}^L$ and $C_{2k+1}^R$ respectively, which have slope $p_{2k+1}/q_{2k+1}=A\cdot \infty$. The required estimate follows by the same argument as above, replacing Lemma~\ref{LemmaSmallSlopeWollmilchsauVertical} by Lemma~\ref{LemmaSmallSlopeWollmilchsauHorizontal}. Details are left to the reader. Proposition~\ref{PropositionSmallSlopeWollmilchsau} is proved.
\end{proof}

Here we finish the proof of Proposition~\ref{PropositionEierlegendeWollmilchsau}. It is no loss of generality to consider $\alpha\in(0,1)$. Indeed, letting $a:=[\alpha]\in\ZZ$ the integer part of $\alpha$ and $\beta:=\alpha-a$, we have obviously 
$
w(\alpha)=w(\beta)
$. 
On the other hand, reasoning as in \S~\ref{SectionReducedOrigamiAreEnough} and setting $A:=T^{-a}$, it is also clear that the flow $\phi_\alpha$ on $X$ corresponds to the flow $\phi_\beta$ on $A\cdot X$ under some affine diffeomorphism $f_A:X\to A\cdot X$, thus the two flows have the same hitting time.

\medskip

Fix points $p,p'$, with $p'$ not on any saddle connection and $p$ not on any $(X_{EW},\alpha)$-singular leaf. For any $n\in\NN$ set $r_n:=16\cdot q_{n}^{-1}$. For any $r>0$ small enough consider $n$ such that $r_{n-1}\leq r<r_{n}$. Since $w(\alpha)=\eta$ then $q_n\leq C\cdot q_{n-1}^\eta$ for some $C$ and all $n$ big enough. Proposition~\ref{PropositionSmallSlopeWollmilchsau} implies
\begin{align*}
\frac{\log R(X_{EW},\alpha,p,p',r)}{|\log r|}
&
\leq
\frac{\log R(X_{EW},\alpha,p,p',r_{n})}{|\log r_{n-1}|}
\leq
\frac{\log \sqrt{1568}+\log q_n}{\log q_{n-1}-\log 16}
\\
&
\leq
\frac{\sqrt{1568}+\log C+\eta\cdot \log q_{n-1}}{\log q_{n-1}-\log 16}\to\eta
\quad
\textrm{ for }
\quad
n\to+\infty.
\end{align*}
Therefore $\whitsup(\phi_\alpha,p,p')\leq \eta$ for any $p,p'$ as above. The last inequality turns into an equality for almost any $p,p'$ in $X_{EW}$ according to Equation~\eqref{EquationLimSupHittingVeech}. Proposition~\ref{PropositionEierlegendeWollmilchsau} is proved. $\qed$

\appendix

%%%%%%%%%%%%%%%%%%%%%%%%%%%%%
%%%%%%%%%%%%%%%%%%%%%%%%%%%%%
\section{Proof of Lemma \ref{LemmaExistenceSplittingPairs}}
\label{SectionProofExistenceSplittingPairsH(2)}
%%%%%%%%%%%%%%%%%%%%%%%%%%%%%
%%%%%%%%%%%%%%%%%%%%%%%%%%%%%

This subsection follows \cite{HubertLelievre} closely. It is more convenient to represent long cylinders along the horizontal direction. Thus we prove that any orbit in $\cH(2)$ contains an origami $X_0$ with an horizontal splitting pair. The required origami is $R\cdot X_0$, where 
$$
R=
\begin{pmatrix}
0 & -1 \\
1 & 0
\end{pmatrix}
=
T^{-1}VT^{-1}.
$$

\subsection{Separatrix diagrams}

An origami $X$ in $\cH(2)$ has a cone point of angle $6\pi$, thus for any $\alpha$ there are $3$ outgoing $(X,\alpha)$-singular leaves at $p$, also called \emph{separatrices}. The horizontal direction $\alpha=\infty$ is completely periodic, so that the horizontal separatrices are saddle connections. More precisely, each of them returns to the conical point making an angle $\pi$, $3\pi$ or $5\pi$ with itself. Label the horizontal saddle connections with symbols in $\{1,2,3\}$ according to their counterclockwise order around the conical point, then define a permutation $f$ of $\{1,2,3\}$ by setting $f(i)=j$ if the saddle connection $i$ comes back between the saddle connections $j$ and $j+1\mod3$, that is with angle 
$$
\big(2\big(f(i)-i\mod 3\big)+1\big)\pi.
$$
The combinatorics of these connections is called a \emph{separatrix diagram}. The surface is obtained from such diagram by gluing cylinders along the saddle connections in the diagram, and is determined uniquely by the diagram and the metric data of the cylinders. At a conical point of angle $6\pi$, up to rotation by $2\pi$ around the conical point, there are in total four separatrix diagrams, which correspond to return angles $(\pi,\pi,\pi)$, $(\pi,3\pi,5\pi)$, $(3\pi,3\pi,3\pi)$ and $(5\pi,5\pi,5\pi)$ and are shown in the upper part of Figure~\ref{FigureSeparatrixDiagrams}. These four separatrix diagrams correspond to conjugacy classes of permutations $f\in S_3$ under cyclic permutations. Fix any such $f$ and consider an origami $X\in\cH(2)$ with separatrix diagram encoded by $f$. A cycle of $f$ corresponds to the lower boundary of an horizontal cylinder in $X$, the saddle connections composing it being those whose labels are the elements of the cycle. The upper boundaries of the horizontal cylinders in $X$ correspond to the cycles of the permutation $f'\in S_3$ defined by
$$
f'(i)=f(i)+1\mod3
$$
The separatrix diagram associated to $f\in S_3$ can be realized geometrically by a surface $X$ in $\cH(2)$ if the cycles of $f$ and $f'$ appear as lower and upper boundaries of horizontal cylinders, which is a non-trivial condition. The first diagram in the upper part of Figure~\ref{FigureSeparatrixDiagrams} corresponds to $f=(1)(2)(3)$, that is the identity of $S_3$, which gives $f'=(1,2,3)$, thus the diagram is not realizable because the cycles of $f$ correspond to $3$ horizontal cylinders while $f'$ has only one cycle, which corresponds to just one cylinder. Similarly the fourth diagram is not geometrically realizable, because $f=(1,3,2)$ gives $f'=(1)(2)(3)$. The second diagram in the upper part of Figure~\ref{FigureSeparatrixDiagrams} corresponds to $f=(1)(2,3)$, which gives $f'=(1,2)(3)$, and is realized geometrically by the first surface in the lower part of the figure, which has two horizontal cylinders. Finally, the third diagram in the upper part of Figure~\ref{FigureSeparatrixDiagrams} corresponds to $f=(1,2,3)$, which gives $f'=(1,3,2)$, and is realized geometrically by the second surface in the lower part of the figure, which has just one horizontal cylinder.

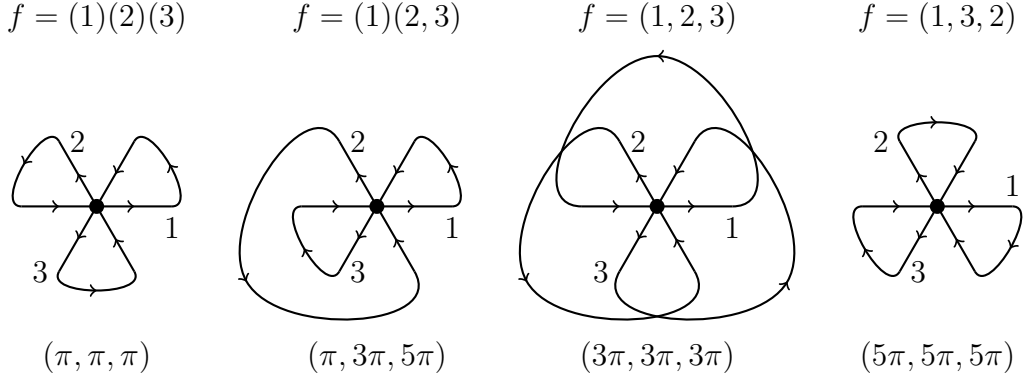
\begin{figure}
\begin{center}
%-------- first diagram (impossible)---------------
{\begin{tikzpicture}[scale=0.5]
\tikzset{->-/.style={decoration={markings,mark=at position .5 with {\arrow{>}}},postaction={decorate}}}
\coordinate (A0) at ({2*cos(0)},{2*sin(0)});
\coordinate (A1) at ({2*cos(60)},{2*sin(60)});
\coordinate (A2) at ({2*cos(120)},{2*sin(120)});
\coordinate (A3) at ({2*cos(180)},{2*sin(180)});
\coordinate (A4) at ({2*cos(240)},{2*sin(240)});
\coordinate (A5) at ({2*cos(300)},{2*sin(300)});

\node [] at (A0) [below] {$1$};
\node [] at (A1) {};
\node [] at (A2) [right] {$2$};
\node [] at (A3) {};
\node [] at (A4) [left ]{$3$};
\node [] at (A5) {};
 
\node [circle,fill,inner sep=2pt] at (0,0) {};

\draw[->-,thick] (0,0) -- ({2*cos(0)},{2*sin(0)});
\draw[->-,thick] ({2*cos(60)},{2*sin(60)}) -- (0,0);
\draw[->-,thick] (0,0) -- ({2*cos(120)},{2*sin(120)});
\draw[->-,thick] ({2*cos(180)},{2*sin(180)}) -- (0,0);
\draw[->-,thick] (0,0) -- ({2*cos(240)},{2*sin(240)});
\draw[->-,thick] ({2*cos(300)},{2*sin(300)}) -- (0,0);

\draw [->-,thick] (A0) to[out=0,in=60] (A1);

\draw [->-,thick] (A2) to[out=120,in=180] (A3);

\draw [->-,thick] (A4) to[out=240,in=300] (A5);

\node [] at (0,-4) {$(\pi,\pi,\pi)$};
\node [] at (0,5) {$f=(1)(2)(3)$};

\end{tikzpicture}}
%-------- second diagram (possible)---------------
{\begin{tikzpicture}[scale=0.5]
\tikzset{->-/.style={decoration={markings,mark=at position .5 with {\arrow{>}}},postaction={decorate}}}

\coordinate (B0) at ({2*cos(0)},{2*sin(0)});
\coordinate (B1) at ({2*cos(60)},{2*sin(60)});
\coordinate (B2) at ({2*cos(120)},{2*sin(120)});
\coordinate (B3) at ({2*cos(180)},{2*sin(180)});
\coordinate (B4) at ({2*cos(240)},{2*sin(240)});
\coordinate (B5) at ({2*cos(300)},{2*sin(300)});

\node [] at (B0) [below] {$1$};
\node [] at (B1) {};
\node [] at (B2) [right] {$2$};
\node [] at (B3) {};
\node [] at (B4) [right] {$3$};
\node [] at (B5) {};
 
\node [circle,fill,inner sep=2pt] at (0,0) {};

\draw[->-,thick] (0,0) -- ({2*cos(0)},{2*sin(0)});
\draw[->-,thick] ({2*cos(60)},{2*sin(60)}) -- (0,0);
\draw[->-,thick] (0,0) -- ({2*cos(120)},{2*sin(120)});
\draw[->-,thick] ({2*cos(180)},{2*sin(180)}) -- (0,0);
\draw[->-,thick] (0,0) -- ({2*cos(240)},{2*sin(240)});
\draw[->-,thick] ({2*cos(300)},{2*sin(300)}) -- (0,0);

\draw [->-,thick] (B0) to[out=0,in=60] (B1);

\draw [->-,thick] (B4) to[out=240,in=180] (B3);

\coordinate (B2-5) at ({4*cos(210)},{4*sin(210)});
\draw [->-,thick] (B2) to[out=120,in=120] (B2-5) to[out=300,in=300] (B5);

\node [] at (0,-4) {$(\pi,3\pi,5\pi)$};
\node [] at (0,5) {$f=(1)(2,3)$};

\end{tikzpicture}}
%-------- third diagram (possible)---------------
{\begin{tikzpicture}[scale=0.5]
\tikzset{->-/.style={decoration={markings,mark=at position .5 with {\arrow{>}}},postaction={decorate}}}

\coordinate (C0) at ({2*cos(0)},{2*sin(0)});
\coordinate (C1) at ({2*cos(60)},{2*sin(60)});
\coordinate (C2) at ({2*cos(120)},{2*sin(120)});
\coordinate (C3) at ({2*cos(180)},{2*sin(180)});
\coordinate (C4) at ({2*cos(240)},{2*sin(240)});
\coordinate (C5) at ({2*cos(300)},{2*sin(300)});

\node [] at (C0) [below] {$1$};
\node [] at (C1) {};
\node [] at (C2) [right] {$2$};
\node [] at (C3) {};
\node [] at (C4) [left] {$3$};
\node [] at (C5) {};
 
\node [circle,fill,inner sep=2pt] at (0,0) {};

\draw[->-,thick] (0,0) -- ({2*cos(0)},{2*sin(0)});
\draw[->-,thick] ({2*cos(60)},{2*sin(60)}) -- (0,0);
\draw[->-,thick] (0,0) -- ({2*cos(120)},{2*sin(120)});
\draw[->-,thick] ({2*cos(180)},{2*sin(180)}) -- (0,0);
\draw[->-,thick] (0,0) -- ({2*cos(240)},{2*sin(240)});
\draw[->-,thick] ({2*cos(300)},{2*sin(300)}) -- (0,0);

\coordinate (C0-3) at ({4*cos(90)},{4*sin(90)});
\draw [->-,thick] (C0) to[out=0,in=0] (C0-3) to[out=180,in=180] (C3);

\coordinate (C2-5) at ({4*cos(210)},{4*sin(210)});
\draw [->-,thick] (C2) to[out=120,in=120] (C2-5) to[out=300,in=300] (C5);

\coordinate (C4-1) at ({4*cos(330)},{4*sin(330)});
\draw [->-,thick] (C4) to[out=240,in=240] (C4-1) to[out=60,in=60] (C1);

\node [] at (0,-4) {$(3\pi,3\pi,3\pi)$};
\node [] at (0,5) {$f=(1,2,3)$};

\end{tikzpicture}}
%-------- fourt diagram (impossible)---------------
{\begin{tikzpicture}[scale=0.5]
\tikzset{->-/.style={decoration={markings,mark=at position .5 with {\arrow{>}}},postaction={decorate}}}

\coordinate (D0) at ({2*cos(0)},{2*sin(0)});
\coordinate (D1) at ({2*cos(60)},{2*sin(60)});
\coordinate (D2) at ({2*cos(120)},{2*sin(120)});
\coordinate (D3) at ({2*cos(180)},{2*sin(180)});
\coordinate (D4) at ({2*cos(240)},{2*sin(240)});
\coordinate (D5) at ({2*cos(300)},{2*sin(300)});

%\node [] at (D0) [above] {$1$};

\node [] at (D0) [above] {$1$};
\node [] at (D1) {};
\node [] at (D2) [left] {$2$};
\node [] at (D3) {};
\node [] at (D4) [right] {$3$};
\node [] at (D5) {};
 
\node [circle,fill,inner sep=2pt] at (0,0) {};

\draw[->-,thick] (0,0) -- ({2*cos(0)},{2*sin(0)});
\draw[->-,thick] ({2*cos(60)},{2*sin(60)}) -- (0,0);
\draw[->-,thick] (0,0) -- ({2*cos(120)},{2*sin(120)});
\draw[->-,thick] ({2*cos(180)},{2*sin(180)}) -- (0,0);
\draw[->-,thick] (0,0) -- ({2*cos(240)},{2*sin(240)});
\draw[->-,thick] ({2*cos(300)},{2*sin(300)}) -- (0,0);

\draw [->-,thick] (D0) to[out=0,in=300] (D5);

\draw [->-,thick] (D2) to[out=120,in=60] (D1);

\draw [->-,thick] (D4) to[out=240,in=180] (D3);

\node [] at (0,-4) {$(5\pi,5\pi,5\pi)$};
\node [] at (0,5) {$f=(1,3,2)$};

\end{tikzpicture}}
\end{center}
\caption{The four separatrix diagrams at a conical point of angle $6\pi$, with the corresponding $f\in S_3$ and the return angles represented respectively above and below each of them. The second and third  diagrams can be realized by surfaces in $\cH(2)$, which are represented below.}
\label{FigureSeparatrixDiagrams}

\begin{center}
{\begin{tikzpicture}[scale=1]
%-------diagramB-----------------
\tikzset
{->-/.style={decoration={markings,mark=at position .5 with {\arrow{>}}},postaction={decorate}}}

\draw [help lines] (0,0) grid (2,1);

\node [circle,fill,inner sep=1pt] at (0,0) {};
\node [circle,fill,inner sep=1pt] at (1,0) {};
\node [circle,fill,inner sep=1pt] at (2,0) {};
\node [circle,fill,inner sep=1pt] at (0,1) {};
\node [circle,fill,inner sep=1pt] at (1,1) {};
\node [circle,fill,inner sep=1pt] at (2,1) {};
\node [circle,fill,inner sep=1pt] at (1,2) {};
\node [circle,fill,inner sep=1pt] at (2,2) {};

\draw[->-,thick] (0,0) -- (1,0) node[pos=0.7,above] {$2$};
\draw[->-,thick] (1,0) -- (2,0) node[pos=0.7,above] {$3$};
\draw[->-,thick] (1,1) -- (2,1) node[pos=0.7,above] {$1$};
\draw[->-,thick] (0,1) -- (1,1) node[pos=0.7,above] {$2$};
\draw[->-,thick] (1,1) -- (2,1) node[pos=0.7,above] {};
\draw[->-,thick] (1,2) -- (2,2) node[pos=0.7,above] {$3$};

\draw[-,thick] (0,0) -- (0,1) {};
\draw[-,thick] (2,0) -- (2,2) {};
\draw[-,thick] (1,1) -- (1,2) {};

\node [] at (3.5,0.5) {$(\pi,3\pi,5\pi)$};

\end{tikzpicture}}
\quad
\quad
{\begin{tikzpicture}[scale=1]
%-------diagramC-----------------
\tikzset
{->-/.style={decoration={markings,mark=at position .5 with {\arrow{>}}},postaction={decorate}}}

\draw [help lines] (0,0) grid (3,1);

\node [circle,fill,inner sep=1pt] at (0,0) {};
\node [circle,fill,inner sep=1pt] at (1,0) {};
\node [circle,fill,inner sep=1pt] at (2,0) {};
\node [circle,fill,inner sep=1pt] at (3,0) {};
\node [circle,fill,inner sep=1pt] at (0,1) {};
\node [circle,fill,inner sep=1pt] at (1,1) {};
\node [circle,fill,inner sep=1pt] at (2,1) {};
\node [circle,fill,inner sep=1pt] at (3,1) {};

\draw[->-,thick] (0,0) -- (1,0) node[pos=0.7,above] {$2$};
\draw[->-,thick] (1,0) -- (2,0) node[pos=0.7,above] {$1$};
\draw[->-,thick] (2,0) -- (3,0) node[pos=0.7,above] {$3$};
\draw[->-,thick] (0,1) -- (1,1) node[pos=0.7,above] {$3$};
\draw[->-,thick] (1,1) -- (2,1) node[pos=0.7,above] {$1$};
\draw[->-,thick] (2,1) -- (3,1) node[pos=0.7,above] {$2$};

\draw[-,thick] (0,0) -- (0,1) {};
\draw[-,thick] (3,0) -- (3,1) {};

\node [] at (4.5,0.5) {$(3\pi,3\pi,3\pi)$};

\end{tikzpicture}}

\end{center}
\end{figure}

\subsection{Classification of orbits in $\cH(2)$}

Any translation surface $X\in\cH(2)$ admits an unique affine diffeomorphism $\iota:X\to X$ such that $\iota^2=\id_X$ and $D\iota=-\id_{\RR^2}$, which is known as \emph{hyperelliptic involution}. Such involution has $6$ fixed points on $X$, which are known as \emph{Weierstrass points}, and the conical point is always one of them. If $X$ is an origami in $\cH(2)$, then the conical point has always integer coordinates. According to Lemma 4.2 in \cite{HubertLelievre}, the number of integer Weierstrass points, that we denote $\textrm{IWP}\in\NN^\ast$ is an invariant of $\sltwoz$-orbits of origamis $X$ in $\cH(2)$. Moreover it gives a complete invariant, according to the Theorem below, which was first proved for prime $N$ in \cite{HubertLelievre} and then by \cite{McMullen} in the general case.

\begin{theorem}
\label{TheoremClassificationHubertLelievreMcMullen}
Consider $\sltwoz$-orbits of reduced origamis in $\cH(2)$ with $N\geq 3$ squares. For any integer $N\geq 3$ the following holds.
\begin{enumerate}
\item
If $N=3$, there exist an unique orbit. We have $\textrm{IWP}=1$ for any $X$ in such orbit.
\item
If $N$ is even, $N\geq 4$, then there exists an unique orbit $\cE_N$. We have $\textrm{IWP}=2$ for any $X\in\cE_N$.
\item
If $N$ is odd and $N\geq 5$, there there exist exactly two orbits $\cA_N$ and $\cB_N$. We have $\textrm{IWP}=1$ for any $X\in\cA_N$ and $\textrm{IWP}=3$ for any $X\in\cB_N$.
\end{enumerate}
\end{theorem}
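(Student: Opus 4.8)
The plan is to prove the Theorem in two steps. First I would determine which values $\textrm{IWP}$ can take for each residue of $N$, and then I would show that $\textrm{IWP}$ together with $N$ is a \emph{complete} invariant of $\sltwoz$-orbits, i.e. two reduced origamis in $\cH(2)$ with the same number of squares and the same $\textrm{IWP}$ lie in one orbit, and that each admissible value is realized. That $\textrm{IWP}$ is $\sltwoz$-invariant is Lemma 4.2 of \cite{HubertLelievre}; to pin down its values I would exploit the hyperelliptic involution $\iota$ of $X\in\cH(2)$, which is affine with $D\iota=-\id$ and hence descends through the cover $\rho:X\to\TT^2$ to the elliptic involution $-\id$ of $\TT^2$, whose four fixed points are the $2$-torsion points. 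The six Weierstrass points of $X$ (the fixed points of $\iota$) therefore lie over these four points, and $\textrm{IWP}$ counts those over $[0]$. Over $[0]$ the cone point has local degree $3$ and every other point of every fibre is a regular point of $\rho$ (a second cone point is excluded in $\cH(2)$), so the fibre over $[0]$ has $N-2$ points while each fibre over a non-zero $2$-torsion point has $N$ points. Since $\iota$ permutes each such fibre, the number of its fixed points in a fibre of cardinality $k$ is congruent to $k$ mod $2$; running this over the four fibres, together with the facts that the total number of fixed points is $6$ and that the cone point is always one of them, gives $\textrm{IWP}\equiv N\pmod 2$ and, after a short count, $\textrm{IWP}\in\{1,3\}$ for odd $N\ge 5$, $\textrm{IWP}=2$ for even $N\ge 4$, and $\textrm{IWP}=1$ when $N=3$ (where the fibre over $[0]$ is the single cone point).

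For the completeness step I would follow \cite{HubertLelievre} and \cite{McMullen} and reduce to a cylinder normal form. By the separatrix-diagram analysis recalled in \S~\ref{SectionProofExistenceSplittingPairsH(2)}, a horizontally periodic reduced origami in $\cH(2)$ has either one horizontal cylinder of circumference $N$ or two horizontal cylinders of circumferences $(m,N-m)$, and is determined by this combinatorial type plus finitely many integer gluing parameters. The Dehn twist $T$ normalizes the twist parameter inside a cylinder, while $R=T^{-1}VT^{-1}$ interchanges the horizontal and vertical decompositions and lets one pass between the one- and two-cylinder presentations. The crux is then a finite list of elementary combinatorial moves, each realized by an explicit element of $\sltwoz$, which I would show suffices to bring any reduced origami with $N$ squares to one of a short list of normal forms indexed exactly by the admissible values of $\textrm{IWP}$: in the one-cylinder case the data amount to the positions of two interior marked points on a circle of length $N$, and $\textrm{IWP}$ records their residues modulo the relevant sublattice; in the two-cylinder case a Euclidean-algorithm style sequence of moves reduces the width pair $(m,N-m)$. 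That distinct normal forms lie in distinct orbits is automatic from the invariance of $\textrm{IWP}$, and exhibiting one normal form for each admissible value proves realizability; the substance is that every origami reaches its normal form.

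The base case $N=3$ is immediate: only the two geometrically realizable separatrix diagrams of Figure~\ref{FigureSeparatrixDiagrams} occur, reducedness leaves essentially one gluing, and a short enumeration puts all three-square origamis in a single orbit with $\textrm{IWP}=1$. The main obstacle is the completeness step for general $N$: showing that the elementary moves genuinely connect all origamis of a fixed pair $(N,\textrm{IWP})$ is a lengthy and delicate bookkeeping of the $\sltwoz$-action on the combinatorial data. The two subtle points are controlling the parity phenomena that separate the even-$N$ and odd-$N$ cases — precisely where $\textrm{IWP}$ acts as the obstruction — and ensuring that the Euclidean-type reduction on the two-cylinder width data terminates, i.e. strictly decreases a suitable complexity while never leaving $\cH(2)$. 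Since the full statement is Theorem~1 of \cite{HubertLelievre} for prime $N$ and the corresponding result of \cite{McMullen} in general, in the body of the paper we invoke it directly; the outline above indicates the route by which one would reprove it from scratch.
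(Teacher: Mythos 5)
The paper does not prove Theorem~\ref{TheoremClassificationHubertLelievreMcMullen}; it simply cites Hubert--Leli\`evre (for prime $N$) and McMullen (for general $N$), and your closing remark correctly registers this. Your sketch of how one would argue from scratch is therefore more than the paper itself offers; still, the first step has a concrete gap. Write $n_0,n_1,n_2,n_3$ for the numbers of Weierstrass points over $[0]$ and over the three non-zero $2$-torsion points of $\TT^2$. Your parity observation gives $n_0 \equiv N-2 \equiv N \pmod 2$ and $n_i \equiv N \pmod 2$ for $i=1,2,3$, together with $n_0+n_1+n_2+n_3 = 6$ and $n_0 \geq 1$. For odd $N\geq 5$ this really does force $n_0 \in \{1,3\}$, since each $n_i$ is odd, hence $\geq 1$, so $n_0 \leq 3$. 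But for even $N$ the constraints only give $n_0 \in \{2,4,6\}$: nothing in the count excludes, say, $(n_0,n_1,n_2,n_3)=(4,2,0,0)$ or $(6,0,0,0)$, because each $n_i$ can now vanish. Ruling out $\textrm{IWP}=4$ and $6$ for even $N$ (and, symmetrically, $\textrm{IWP}=5$ for odd $N$ if you had not already used $n_i\geq 1$) requires a genuinely different argument; in the cited references it comes from an explicit description of where the five non-singular Weierstrass points sit in a horizontal cylinder decomposition, not from parity alone. The ``short count'' you invoke is therefore not short. The second half of your plan --- completeness of $(N,\textrm{IWP})$ as an invariant via normal forms and explicit $\sltwoz$-moves --- is correctly identified as the substantial combinatorial part of Hubert--Leli\`evre and McMullen, and deferring it to those references is exactly what the paper does.
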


\begin{figure}
\begin{center}  
%--------N=2k-----------------
{\begin{tikzpicture}[scale=0.8]
\tikzset
{->-/.style={decoration={markings,mark=at position 0.5 with {\arrow{>}}},postaction={decorate}}}
\tikzset
{-->-/.style={decoration={markings,mark=at position 0.75 with {\arrow{>}}},postaction={decorate}}}

\draw [help lines] (0,0) grid (7,1);
\draw [help lines] (0,1) grid (5,2);

\draw[->-,thick] (0,0) -- (5,0) node[pos=0.3,below] {$\gamma_1$};
\draw[->-,thick] (5,0) -- (7,0) node[pos=0.3,below] {$\gamma_2$};
\draw[->-,thick] (0,1) -- (5,1) node[pos=0.3,above] {$\gamma_3$};
\draw[-,thick] (5,1) -- (7,1) node[pos=0.5,below] {};
\draw[-,thick] (0,2) -- (5,2) node[pos=0.5,below] {};

\draw[->-,thick,dashed] (0,0.5) -- (7,0.5) node[pos=0,left] {$\sigma_2$};
\draw[-->-,thick,dashed] (0,1.5) -- (5,1.5) node[pos=0,left] {$\sigma_1$};

\draw[-,thick] (0,0) -- (0,2) node[pos=0.5,below] {};
\draw[-,thick] (7,0) -- (7,1) node[pos=0.5,below] {};
\draw[-,thick] (5,1) -- (5,2) node[pos=0.5,below] {};

\draw[<->,thin,dashed] (0,-0.5) -- (7,-0.5) node[pos=0.5,below] {$k+1$};
\draw[<->,thin,dashed] (0,2.5) -- (5,2.5) node[pos=0.5,above] {$k-1$};

\node [circle,fill,inner sep=1pt] at (0,0) {};
\node [circle,fill,inner sep=1pt] at (5,0) {};
\node [circle,fill,inner sep=1pt] at (7,0) {};
\node [circle,fill,inner sep=1pt] at (0,1) {};
\node [circle,fill,inner sep=1pt] at (5,1) {};
\node [circle,fill,inner sep=1pt] at (7,1) {};
\node [circle,fill,inner sep=1pt] at (0,2) {};
\node [circle,fill,inner sep=1pt] at (5,2) {};

\node [circle,fill,inner sep=1.5pt] at (2.5,0.5) {};
\node [circle,fill,inner sep=1.5pt] at (6,0.5) {};
\node [circle,fill,inner sep=1.5pt] at (6,1) {};
\node [circle,fill,inner sep=1.5pt] at (2.5,1.5) {};
\node [circle,fill,inner sep=1.5pt] at (5,1.5) {};

\node [] at (9,2) [right] {Orbit $\cE_N$, $N=2k$, $\textrm{IWP}=2$};
\node [] at (9,1) [right] {$|\sigma_2|=k+1$, $W(C_{\sigma_2})=1$};

\end{tikzpicture}}

%--------N=2k+1--OrbitA-----------------
{\begin{tikzpicture}[scale=0.8]
\tikzset
{->-/.style={decoration={markings,mark=at position 0.5 with {\arrow{>}}},postaction={decorate}}}
\tikzset
{-->-/.style={decoration={markings,mark=at position 0.75 with {\arrow{>}}},postaction={decorate}}}

\draw [help lines] (0,0) grid (6,1);
\draw [help lines] (0,1) grid (5,2);

\draw[->-,thick] (0,0) -- (5,0) node[pos=0.3,below] {$\gamma_1$};
\draw[->-,thick] (5,0) -- (6,0) node[pos=0.3,below] {$\gamma_2$};
\draw[->-,thick] (0,1) -- (5,1) node[pos=0.3,above] {$\gamma_3$};
\draw[-,thick] (5,1) -- (6,1) node[pos=0.5,below] {};
\draw[-,thick] (0,2) -- (5,2) node[pos=0.5,below] {};

\draw[->-,thick,dashed] (0,0.5) -- (6,0.5) node[pos=0,left] {$\sigma_2$};
\draw[-->-,thick,dashed] (0,1.5) -- (5,1.5) node[pos=0,left] {$\sigma_1$};

\draw[-,thick] (0,0) -- (0,2) node[pos=0.5,below] {};
\draw[-,thick] (6,0) -- (6,1) node[pos=0.5,below] {};
\draw[-,thick] (5,1) -- (5,2) node[pos=0.5,below] {};

\draw[<->,thin,dashed] (0,-0.5) -- (6,-0.5) node[pos=0.5,below] {$k+1$};
\draw[<->,thin,dashed] (0,2.5) -- (5,2.5) node[pos=0.5,above] {$k$};

\node [circle,fill,inner sep=1pt] at (0,0) {};
\node [circle,fill,inner sep=1pt] at (5,0) {};
\node [circle,fill,inner sep=1pt] at (6,0) {};
\node [circle,fill,inner sep=1pt] at (0,1) {};
\node [circle,fill,inner sep=1pt] at (5,1) {};
\node [circle,fill,inner sep=1pt] at (0,2) {};
\node [circle,fill,inner sep=1pt] at (5,2) {};
\node [circle,fill,inner sep=1pt] at (6,1) {};

\node [circle,fill,inner sep=1.5pt] at (2.5,0.5) {};
\node [circle,fill,inner sep=1.5pt] at (5.5,0.5) {};
\node [circle,fill,inner sep=1.5pt] at (5.5,1) {};
\node [circle,fill,inner sep=1.5pt] at (2.5,1.5) {};
\node [circle,fill,inner sep=1.5pt] at (5,1.5) {};

\node [] at (9,2) [right] {Orbit $\cA_N$, $N=2k+1$, $\textrm{IWP}=1$};
\node [] at (9,1) [right] {$|\sigma_2|=k+1$, $W(C_{\sigma_2})=1$};

\end{tikzpicture}}

%--------N=4k+1--OrbitB-----------------
{\begin{tikzpicture}[scale=0.8]
\tikzset
{->-/.style={decoration={markings,mark=at position 0.5 with {\arrow{>}}},postaction={decorate}}}
\tikzset
{-->-/.style={decoration={markings,mark=at position 0.8 with {\arrow{>}}},postaction={decorate}}}

\draw [help lines] (0,0) grid (1,5);
\draw [help lines] (1,0) grid (5,1);

\draw[->-,thick] (0,0) -- (1,0) node[pos=0.3,below] {$\gamma_1$};
\draw[->-,thick] (1,0) -- (5,0) node[pos=0.3,below] {$\gamma_2$};
\draw[->-,thick] (0,1) -- (1,1) node[pos=0.3,above] {$\gamma_3$};
\draw[-,thick] (1,1) -- (5,1) node[pos=0.5,below] {};
\draw[-,thick] (0,5) -- (1,5) node[pos=0.5,below] {};

\draw[->-,thick,dashed] (0,0.5) -- (5,0.5) node[pos=0,left] {$\sigma_2$};
\draw[-->-,thick,dashed] (0,3) -- (1,3) node[pos=0,left] {$\sigma_1$};

\draw[-,thick] (0,0) -- (0,5) node[pos=0.5,below] {};
\draw[-,thick] (5,0) -- (5,1) node[pos=0.5,below] {};
\draw[-,thick] (1,1) -- (1,5) node[pos=0.5,below] {};

\draw[<->,thin,dashed] (1,-0.5) -- (5,-0.5) node[pos=0.5,below] {$2k$};
\draw[<->,thin,dashed] (-1,1) -- (-1,5) node[pos=0.5,left] {$2k$};

\node [circle,fill,inner sep=1pt] at (0,0) {};
\node [circle,fill,inner sep=1pt] at (1,0) {};
\node [circle,fill,inner sep=1pt] at (5,0) {};
\node [circle,fill,inner sep=1pt] at (0,1) {};
\node [circle,fill,inner sep=1pt] at (1,1) {};
\node [circle,fill,inner sep=1pt] at (5,1) {};
\node [circle,fill,inner sep=1pt] at (0,5) {};
\node [circle,fill,inner sep=1pt] at (1,5) {};

\node [circle,fill,inner sep=1.5pt] at (0.5,0.5) {};
\node [circle,fill,inner sep=1.5pt] at (3,0.5) {};
\node [circle,fill,inner sep=1.5pt] at (3,1) {};
\node [circle,fill,inner sep=1.5pt] at (0.5,3) {};
\node [circle,fill,inner sep=1.5pt] at (1,3) {};

\node [] at (9,2) [right] {Orbit $\cB_N$, $N=4k+1$, $\textrm{IWP}=3$};
\node [] at (9,1) [right] {$|\sigma_2|=2k+1$, $W(C_{\sigma_2})=1$};

\end{tikzpicture}}

%--------N=4k+3--OrbitB-----------------
{\begin{tikzpicture}[scale=0.8]
\tikzset
{->-/.style={decoration={markings,mark=at position 0.5 with {\arrow{>}}},postaction={decorate}}}
\tikzset
{-->-/.style={decoration={markings,mark=at position 0.8 with {\arrow{>}}},postaction={decorate}}}

\draw [help lines] (0,0) grid (1,3);
\draw [help lines] (1,0) grid (7,2);

\draw[->-,thick] (0,0) -- (1,0) node[pos=0.3,below] {$\gamma_1$};
\draw[->-,thick] (1,0) -- (7,0) node[pos=0.3,below] {$\gamma_2$};
\draw[->-,thick] (0,2) -- (1,2) node[pos=0.3,below] {$\gamma_3$};
\draw[-,thick] (1,2) -- (7,2) node[pos=0.5,below] {};
\draw[-,thick] (0,3) -- (1,3) node[pos=0.5,below] {};

\draw[->-,thick,dashed] (0,1) -- (7,1) node[pos=0,left] {$\sigma_2$};
\draw[-->-,thick,dashed] (0,2.5) -- (1,2.5) node[pos=0,left] {$\sigma_1$};

\draw[-,thick] (0,0) -- (0,3) node[pos=0.5,below] {};
\draw[-,thick] (7,0) -- (7,2) node[pos=0.5,below] {};
\draw[-,thick] (1,2) -- (1,3) node[pos=0.5,below] {};

\draw[<->,thin,dashed] (1,-0.5) -- (7,-0.5) node[pos=0.5,below] {$2k$};
\draw[<->,thin,dashed] (-1,0) -- (-1,2) node[pos=0.5,left] {$2$};

\node [circle,fill,inner sep=1pt] at (0,0) {};
\node [circle,fill,inner sep=1pt] at (1,0) {};
\node [circle,fill,inner sep=1pt] at (7,0) {};
\node [circle,fill,inner sep=1pt] at (0,2) {};
\node [circle,fill,inner sep=1pt] at (1,2) {};
\node [circle,fill,inner sep=1pt] at (7,2) {};
\node [circle,fill,inner sep=1pt] at (0,3) {};
\node [circle,fill,inner sep=1pt] at (1,3) {};

\node [circle,fill,inner sep=1.5pt] at (0.5,1) {};
\node [circle,fill,inner sep=1.5pt] at (4,1) {};
\node [circle,fill,inner sep=1.5pt] at (4,2) {};
\node [circle,fill,inner sep=1.5pt] at (0.5,2.5) {};
\node [circle,fill,inner sep=1.5pt] at (1,2.5) {};

\node [] at (9,2) [right] {Orbit $\cB_N$, $N=4k+3$, $\textrm{IWP}=3$};
\node [] at (9,1) [right] {$|\sigma_2|=7$, $W(C_{\sigma_2})=2$};

\end{tikzpicture}}

\end{center}
\caption{For each integer $N\geq 4$ and any value of the invariant $\textrm{IWP}$ the pair $(\sigma_2,\gamma_2)$ is an horizontal splitting pair. One Weierstrass points is always given by the conical point, which has integer coordinates. In each of the four figures, the other five Weierstrass points are represented by a black dot}
\label{FigureExistenceSplittingPair}
\end{figure}
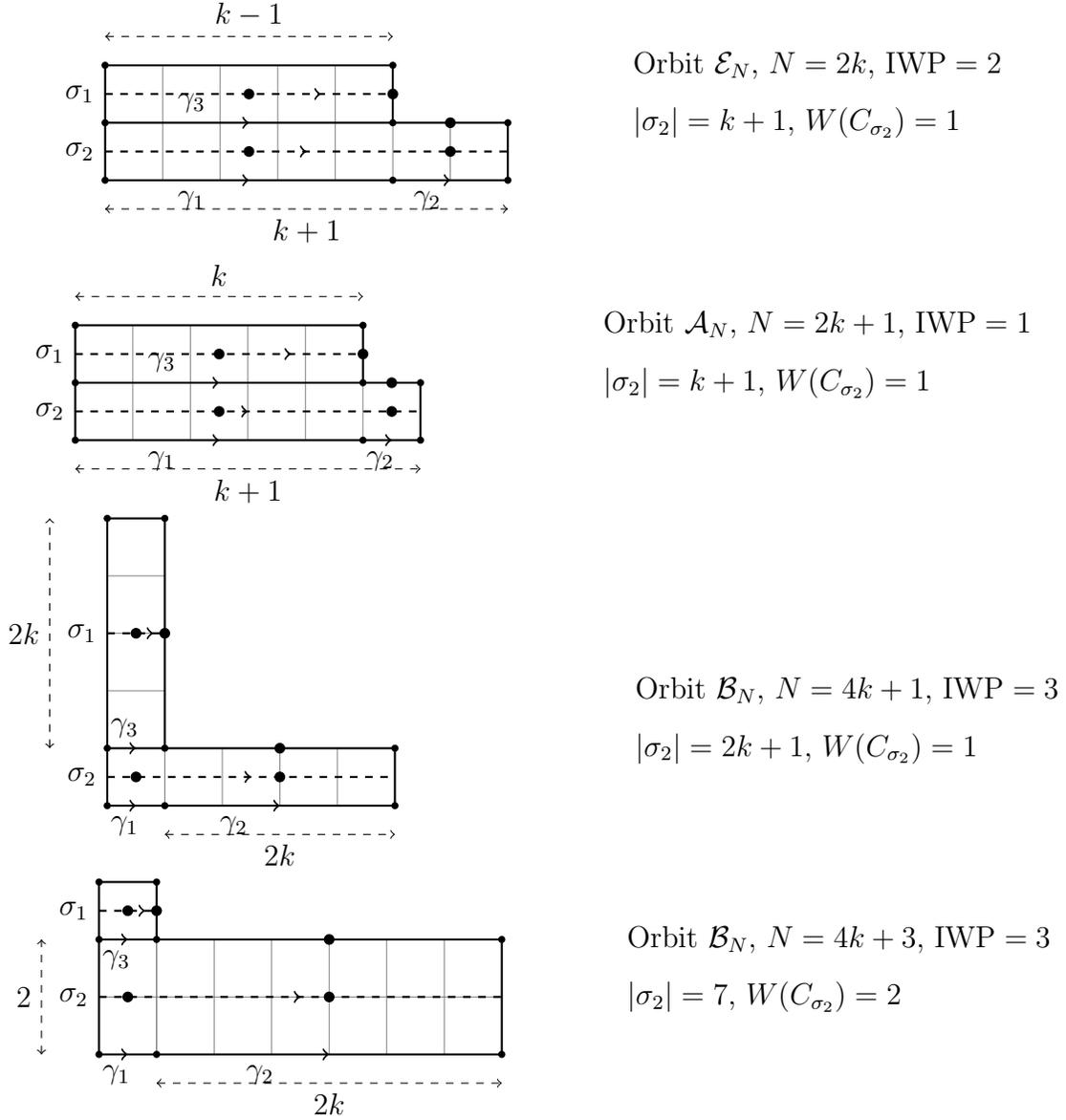

For an origami $X$ in $\cH(2)$ there exist and explicit expressions for the involution $\iota$ and for the remaining 5 Weierstrass points (other than the conical point), according to the combinatorial type of the separatrix diagram of the horizontal direction $\alpha=+\infty$ (see also \S~5.1 in \cite{HubertLelievre}).

\smallskip

\emph{If $\alpha=+\infty$ has type $(3\pi,3\pi,3\pi)$} then $X$ has only one cylinder $C$ in the horizontal direction, with core curve $\sigma$. Since $X$ is reduced, we have $W(C)=1$. In this case $\iota$ is the central symmetry around the center of $C$. The closed geodesic $\sigma$ contains 2 Weirstrass points, one of them being the center of $C$, the other being its opposite point in $\sigma$. They are never integer point, since $W(C)=1$. The remaining 3 Weierstrass points are the centers of the horizontal saddle connections $\gamma_1$, $\gamma_2$ and $\gamma_3$, which may be integer or not, according to the values of the parameters $|\gamma_i|$ for $i=1,2,3$.

\smallskip

\emph{If $\alpha=+\infty$ has type $(\pi,3\pi,5\pi)$} then $X$ has two cylinders $C_1$ and $C_2$ in the horizontal direction, with core curves $\sigma_1$ and $\sigma_2$ respectively, which satisfy 
$
|\sigma_1|=|\gamma_1|
$ 
and 
$
|\sigma_2|=|\gamma_1|+|\gamma_2|=|\gamma_2|+|\gamma_3|
$. 
In this case the longer cylinder $C_2$ can be decomposed as $C_2=P_1\sqcup P_2$, where $P_1$ is a parallelogram whose horizontal boundary is composed by $\gamma_1$ and $\gamma_3$, and where $P_2$ is a parallelogram whose horizontal boundary is given by $\gamma_2$ repeated on both sides. In this case $\iota$ acts separately on $C_1$, $P_1$ and $P_2$ as the central symmetry around their centers, then compatibility at the boundary gives a global map on $X$. The core curve $\sigma_1$ contains 2 Weierstrass points, which are the center of $C_1$ and its opposite point. The core curve $\sigma_2$ contains other 2 Weierstrass points, which are the centers of $P_1$ and of $P_2$. The last Weierstrass point is the center of $\gamma_2$. These points may have integer coordinates or not, according to the values of the parameters $|\gamma_2|$ and $W(C_i)$ and $|\sigma_i|$ for $i=1,2$.

\subsection{End of the proof: existence of splitting pairs in $\cH(2)$} 

Since Theorem \ref{TheoremClassificationHubertLelievreMcMullen} gives a complete invariant for the classification of orbits in $\cH(2)$, then it is enough to find a representative $X_0$ with an horizontal splitting pair for any value of the invariant. In particular we need at least two cylinders in the horizontal direction $\alpha=+\infty$, thus we will consider only surfaces were the horizontal has type $(\pi,3\pi,5\pi)$. In Figure \ref{FigureExistenceSplittingPair}, for each integer $N\geq 4$ and any value of the invariant $\textrm{IWP}$ the pair $(\sigma_2,\gamma_2)$ is an horizontal splitting pair. The case $N=3$ is left to the reader. Lemma \ref{LemmaExistenceSplittingPairs} is proved.

\end{document}